\def\tang{\ThisStyle{\abovebaseline[0pt]{\scalebox{-1}{$\SavedStyle\perp$}}}}
\tikzset{
	labl/.style={anchor=south, rotate=90, inner sep=10mm}
}
\newcommand*{\rom}[1]{\expandafter\@slowromancap\romannumeral #1@}
\newcommand\msmall[1]{\mbox{\small\ensuremath{#1}}}
\newtheorem{thm}{Theorem}[section]
\newtheorem{prob}[thm]{Problem}
\newtheorem{prop}[thm]{Proposition}
\newtheorem{lemma}[thm]{Lemma}
\newtheorem{cor}[thm]{Corollary}
\newtheorem{ex}[thm]{Example}
\newtheorem{remark}[thm]{Remark}
\theoremstyle{definition}
\newtheorem{defn}[thm]{Definition}
\newenvironment{myproof}[2] {\paragraph{\it Proof of {#1} {\ref{#2}}:}}{\hfill$\square$}
\newcommand{\set}[1]{\left\{\, {#1} \,\right\}}
\newcommand{\seq}[1]{{\left( {#1} \right)}}
\newcommand{\W}{{\Omega}}
\newcommand{\g}{\mathsf{G}}
\newcommand{\ali}{\textup{\textmd{\textsc{Alice}}}}
\newcommand{\bob}{\textup{\textmd{\textsc{Bob}}}}
\newcommand{\A}{\textup{\textmd{\textsc{A}}}}
\newcommand{\B}{\textup{\textmd{\textsc{B}}}}
\newcommand{\Run}{{\mathrm{Run}}}
\newcommand{\M}{\mathrm{M}}
\newcommand{\K}{\mathsf{K}}
\newcommand{\D}{\mathrm{D}}
\newcommand{\Cp}{\mathrm{C}_{\mathrm {p}}}
\DeclareMathOperator{\dom}{dom}
\DeclareMathOperator{\id}{id}
\DeclareMathOperator{\restrict}{\upharpoonright}
\DeclareMathOperator{\level}{l}
\DeclareMathOperator{\embed}{\hookrightarrow}
\newcommand{\mG}{{\mathcal{G}}}
\newcommand{\mU}{{\mathcal{U}}}
\newcommand{\mW}{{\mathcal{W}}}
\newcommand{\NN}{\mathbb{N}}
\newcommand{\RR}{\mathbb{R}}
\newcommand{\Obj}[1]{\mathsf{Obj}\left({#1}\right)}
\newcommand{\Sets}{\mathbf{Set}}
\newcommand{\CUMet}{\mathbf{CUltMet}_1}
\newcommand{\MetGame}{\mathbf{MetGame}}
\newcommand{\CTop}{\mathbf{Top}}
\newcommand{\Tree}{\mathbf{Tree}}
\newcommand{\PrTree}{\mathbf{PrTree}}
\newcommand{\TrGame}{\mathbf{ArbGame}}
\newcommand{\CTopp}{\mathbf{Top_\ast}}
\newcommand{\BM}{\mathsf{B}\mathsf{M}}
\newcommand{\Games}{\mathbf{Game}}
\newcommand{\Gmes}{\mathbf{Gme}}
\newcommand{\FunGame}{\mathbf{FunGame}}
\newcommand{\SetNop}{\Sets^{\omega^{\op}}}
\newcommand{\Sub}[1]{\mathsf{Sub}\left({#1}\right)}
\newcommand{\Multi}{\mathsf{Multi}}
\newcommand{\op}{\mathrm{op}}
\newcommand{\Tight}{\mathrm{Tight}}
\newcommand{\Cover}{\mathrm{Cover}}
\newcommand{\Br}{\mathrm{Branch}}
\tikzset{
	invisible/.style={opacity=0},
	visible on/.style={alt=#1{}{invisible}},
	alt/.code args={<#1>#2#3}{
		\alt<#1>{\pgfkeysalso{#2}}{\pgfkeysalso{#3}} 
	},
}
\tikzset{
	block/.style = {
		rectangle,
		thick,
		text width=6em,
		align=center,
		rounded corners,
		draw=cyan!40!black,
		fill=cyan!20,
		inner ysep=10pt
	}
}
\begin{document}
	\title{Infinitely ludic categories}
	\author[M. Duzi]{Matheus Duzi$^1$}
	\address{Instituto de Ci\^encias Matem\'aticas e de Computa\c c\~ao, Universidade de S\~ao Paulo\\
		Avenida Trabalhador s\~ao-carlense, 400,  S\~ao Carlos, SP, 13566-590, Brazil}
	\email{matheus.duzi.costa@usp.br}
	\thanks{$^1$Supported by FAPESP, grants no 2019/16357-1 and 2021/13427-9}
	\author[P. Szeptycki]{Paul Szeptycki$^2$}
	\address{Department of Mathematics and Statistics, York University\\
		Toronto, Ontario, Canada M3J 1P3}
	\email{szeptyck@yorku.ca}
	\thanks{$^2$Supported by NSERC Discovery Program, grant no 504276}
	\author[W. Tholen]{Walter Tholen$^3$}
	\address{Department of Mathematics and Statistics, York University\\
		Toronto, Ontario, Canada M3J 1P3}
	\email{tholen@yorku.ca}
	\thanks{$^3$Supported by NSERC Discovery Program, grant no 501260}
	
	\begin{abstract}
		Pursuing a new approach to the study of infinite games in combinatorics, we introduce the categories $\mathbf{Game}_{\textsc{A}}$ and $\mathbf{Game}_{\textsc{B}}$ and improve some classical results concerning topological games related to the duality between covering properties of $X$ and convergence properties of $\Cp(X)$ by establishing the existence and key role of certain natural transformations. We then describe these ludic categories in various equivalent forms, viewing their objects as certain structured trees, presheaves, or metric spaces, and we thereby obtain their arboreal, functorial and metrical appearances. We use their metrical disguise to demonstrate a universality property of the Banach-Mazur game. The various equivalent descriptions come with underlying functors to more familiar categories which help establishing some important properties of the game categories: they are complete, cocomplete, extensive, cartesian closed, and coregular, but neither regular nor locally cartesian closed. We prove that their classes of strong epimorphisms, of regular epimorphisms, and of descent morphisms, are all distinct, and we show that these categories have weak classifiers for strong partial maps. Some of the categorical constructions have interesting game-theoretic interpretations.
	\end{abstract}
	\maketitle		
	\pagenumbering{arabic}
	
{\sc Keywords:} infinite game, game category, game tree, Banach-Mazur game, tightness game, covering game, selection principles, arboreal game, metric game, functorial game, multiboard game, exponential game, chronological map, game morphism, locally surjective map, (strict) quotient map, weak classifier.

{\sc Subject Classification:} 91A44, 18B99; 18A20, 18B50, 18D15, 54C35, 54D20.

\section{Introduction}
An abstract infinite game with $\omega$-many innings is a turn-based game with no draws in which two players, $\ali$ and $\bob$, compete and all moves by the players are fully known to both parties (that is, a game of \emph{perfect information}, in which no choice is hidden). One example of such is the Banach-Mazur game, considered to be the first infinite game (for a thorough description of the history of the study of infinite games, we refer to \cite{Telgarsky1987}):

\begin{ex}[Banach-Mazur game]\label{EX_BMGame}
	Given a non-empty topological space $X$, consider the following game, denoted by $\BM X$:
	\begin{itemize}
		\item In the first inning, $\ali$ chooses a non-empty open set $U_0$ and $\bob$ responds with a non-empty open set $V_0\subseteq U_0$.
		
		\item In the n-th subsequent inning, $\ali$ chooses a non-empty open set $U_n \subseteq V_{n-1}$ and $\bob$ responds with a non-empty open set $V_n\subseteq U_n$.
	\end{itemize} 
	We say that $\ali$ wins the run $\seq{U_0, V_0, \dotsc U_n, V_n, \dotsc}$ if $\bigcap_{n<\omega} V_n= \emptyset$, and that $\bob$ wins otherwise.	
\end{ex}

One often finds in many areas of mathematics the problem of whether a sequence $\seq{x_n:n<\omega}$ can be constructed with some desired properties. Usually infinite turn-based games played between two players are then defined as a way to strengthen the potential of such constructions. Indeed, through the existence of winning strategies we can then pose the question of whether it is possible to construct a sequence $\seq{x_n:n<\omega}$ with the desired properties \textit{even with an opposing force (the other player) attempting to hinder this process}.

The Banach-Mazur game, for instance, was introduced in the study of Baire subspaces of the real line and, indeed, in \cite{Oxtoby1957}, Oxtoby showed that a space is Baire if, and only if, $\ali$ has no winning strategy in $\BM X$. 

In addition to topological applications, games have been introduced and studied to classify Banach spaces in functional analysis (as seen in, e.g., \cite{Ferenczi2009}), to explore properties of algebraic structures (as seen in \cite{Brandenburg2018}), and notably in mathematical logic with the Axiom of Determinacy.

A categorical framework for a wide range of mathematical games has already been proposed in the literature (see \cite{Streufert2018, Streufert2020, Streufert2021}), but here we focus on two-player and perfect-information games of length $\omega$. The categories we introduce are significantly different from the one presented in \cite{Streufert2021}, even when restricted to those games of present interest (as we clarify further in Section \ref{SEC_Morph}). Such differences play an important role in the proofs of the Theorems \ref{THM_ClassOmega}, \ref{THM_ClassGamma} and \ref{THM_BMUni_Fun}, as well as in the other categorical characterizations shown in Sections \ref{SEC_SubTree} and \ref{SEC_MetGames}. 

Within this categorical framework we are able to shed new light onto the functorial depth of some results about topological games. Namely, we show that two natural transformations between game functors entail the following classical result of Scheepers \cite{Scheepers1997, Scheepers2014} arising from the duality between covering properties of a space $X$ and convergence properties of $\mathrm{C_p}(X)$, the space of real valued functions over $X$ with the topology of pointwise convergence. We use the standard notation for the games involved; for their precise definition, see Examples \ref{EX_Tightness} and \ref{EX_Covering}.

\begin{thm}[\cite{Scheepers1997}, Theorem 13, and \cite{Scheepers2014}, Theorem 29]\label{THM_ClassOmega}
	Let $X$ be a $T_{3\frac{1}{2}}$-space\footnote{$T_{3\frac{1}{2}}$ is not assumed to entail $T_1$; see the end of the Introduction.}. Then:
	\begin{itemize}
		\item $\ali$ has a winning strategy in $\g_1(\W,\W)$ over $X$ if, and only if, $\ali$ has a winning strategy in $\g_1(\W_{\bar{0}},\W_{\bar{0}})$ over $\Cp(X)$.
		\item $\bob$ has a winning strategy in $\g_1(\W,\W)$ over $X$ if, and only if, $\bob$ has a winning strategy in $ \g_1(\W_{\bar{0}},\W_{\bar{0}})$ over $\Cp(X)$. 
	\end{itemize}
\end{thm}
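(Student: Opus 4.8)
The plan is to prove both equivalences by setting up the classical Gerlits--Nagy correspondence between open $\omega$-covers of $X$ and subsets of $\Cp(X)$ having $\bar 0$ in their closure, and then transferring winning strategies across this dictionary inning by inning. Since $X$ is $T_{3\frac{1}{2}}$, its cozero sets form a base, so every open $\omega$-cover may be replaced by a cozero $\omega$-cover, and I shall assume throughout that all covers are cozero. The dictionary has two directions. Given $A \subseteq \Cp(X)$ with $\bar 0 \in \overline A \setminus A$ and a fixed $\epsilon > 0$, the family $\{\,[\,|f| < \epsilon\,] : f \in A\,\}$ of cozero sets is an $\omega$-cover of $X$: the defining property of $\bar 0 \in \overline A$ is precisely that for every finite $F \subseteq X$ some $f \in A$ is $\epsilon$-small on $F$, i.e.\ $F \subseteq [\,|f| < \epsilon\,]$. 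Conversely, given a cozero $\omega$-cover $\mathcal U$, choosing for each $U \in \mathcal U$ a witnessing function $f_U$ that is uniformly small on $U$ produces a set $\{f_U : U \in \mathcal U\}$ with $\bar 0$ in its closure. The crucial bookkeeping is that these maps send ``selecting one member of a cover'' to ``selecting one function from a family'' and back, and that a selected sequence is an $\omega$-cover if and only if the corresponding sequence of functions accumulates at $\bar 0$.

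With the dictionary in place, the strategy transfer is routine in structure. For the $\ali$-direction, suppose $\sigma$ is a winning strategy for $\ali$ in $\g_1(\W,\W)$ over $X$. I would define a strategy $\sigma'$ for $\ali$ in $\g_1(\W_{\bar{0}},\W_{\bar{0}})$ over $\Cp(X)$ as follows: decode $\bob$'s previous function-choices into cover-members, feed them to $\sigma$ to obtain the next $\omega$-cover, and re-encode that cover as a family in $\W_{\bar{0}}$ via witnessing functions. Any run consistent with $\sigma'$ then decodes to a run consistent with $\sigma$; since $\sigma$ wins, $\bob$'s decoded selections fail to form an $\omega$-cover, whence by the dictionary $\bob$'s actual function-selections fail to accumulate at $\bar 0$, so $\sigma'$ wins. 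The reverse implication, and the two $\bob$-directions, are handled symmetrically by decoding in the opposite direction.

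The main obstacle I anticipate is not any single step but the uniform management of the scale parameter $\epsilon$ across infinitely many innings. Both the encoding of a cover into a function-family and the decoding back depend on a choice of $\epsilon$ (or $1/n$), and as the game proceeds one must ensure that the scales chosen in successive innings cohere, so that the final selected sequence genuinely witnesses membership in $\W_{\bar{0}}$ (which quantifies over \emph{all} $\epsilon$ and all finite $F$) rather than only for one fixed scale; concretely, one diagonalizes so that inning $n$ uses scale $1/(n+1)$ and then checks $\bar 0 \in \overline{\{f_n : n < \omega\}}$. A second delicate point is the clause $\bar 0 \notin A$ in the definition of $\W_{\bar{0}}$, together with the exclusion of the trivial cover $\{X\}$ from $\W$: one must verify that the encodings never produce such degenerate objects. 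I expect the categorical framework of this paper to be precisely the device that organizes the transfer cleanly: the two directions of the dictionary should assemble into two \emph{natural transformations} between the relevant game functors, and naturality is exactly the assertion that strategies transfer coherently across all innings, thereby absorbing the diagonalization bookkeeping into a single structural statement.
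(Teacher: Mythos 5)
Your skeleton---translate runs inning by inning through the covers-versus-functions dictionary, using intervals $I_n=(-\tfrac{1}{n+1},\tfrac{1}{n+1})$---is in essence the paper's route: its maps $\theta_X$ and $\eta_X$ are exactly the fixed-scale and the diagonalized translations, and the inning-by-inning coherence you hope naturality will absorb is in fact handled by \emph{local surjectivity} of the translation restricted to its image (Corollaries \ref{LEMMA_StratImage} and \ref{LEMMA_StratPreimage}), naturality in the paper being naturality in the space variable $X$, not across innings. The genuine gap is your claimed equivalence ``a selected sequence is an $\omega$-cover if and only if the corresponding functions accumulate at $\bar{0}$,'' together with the assertion that the remaining three implications follow ``symmetrically.'' No single scale regime gives both directions. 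At the fixed scale $I_0$, accumulation of $\bob$'s functions at $\bar{0}$ does force the preimages to be $\omega$-covers, but the converse fails: $\varphi_n\equiv\tfrac12$ yields $\varphi_n^{-1}(I_0)=X$ for all $n$ while $\seq{\varphi_n:n<\omega}$ stays far from $\bar{0}$. At the diagonal scale, failure of accumulation transfers to failure of covering, but accumulation does not transfer: on $X=\{0\}$ take $\varphi_n(0)=\tfrac{2}{n+1}$, so $\varphi_n\to\bar{0}$ yet $\varphi_n^{-1}(I_n)=\emptyset$. Consequently your diagonalized transfer proves only the pair ``$\ali$ wins over $\Cp(X)$ $\Rightarrow$ $\ali$ wins over $X$'' and ``$\bob$ wins over $X$ $\Rightarrow$ $\bob$ wins over $\Cp(X)$''; the other two implications require the fixed-scale translation, run separately. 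The paper makes this asymmetry structural: $\theta_X$ (fixed scale) is a $\B$-morphism but not an $\A$-morphism, while $\eta_X$ (diagonal scale) is an $\A$-morphism but not a $\B$-morphism, and each of the two yields exactly one half of the theorem.

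A second, concrete error sits in your encoding of a cover $\mW$ as a function family: a witness $f_U$ ``uniformly small on $U$'' gives the containment $U\subseteq\set{x:|f_U(x)|<\epsilon}$, which points the wrong way. When $\bob$'s chosen functions accumulate at $\bar{0}$ you then learn only $F\subseteq\set{x:|f_{U_n}(x)|<\epsilon}$, which says nothing about $F\subseteq U_n$, so winning does not decode back to the covering game. The correct witnesses---and the precise point where $T_{3\frac12}$ is used---are indexed by \emph{finite sets}: for each finite $F\subseteq X$ choose $W_F\in\mW$ with $F\subseteq W_F$ and a continuous $\varphi_F$ with $\varphi_F[F]=\{0\}$ and $\varphi_F[X\setminus W_F]=\{1\}$; then $\bar{0}\in\overline{\set{\varphi_F:F \text{ finite}}}$ and $\varphi_F^{-1}(I_0)\subseteq W_F$, so the encoded family decodes to a \emph{refinement} of $\mW$, and the standard refinement argument (the paper's Lemma \ref{LEMMA_thetaImgEqv}, that the image of $\theta_X$ is equivalent to $\Cover^\W_1X$) closes the loop; your reduction to cozero covers then becomes unnecessary. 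With these two repairs---two distinct scale regimes assigned to the two halves, and inside-out witnesses indexed by finite sets---your outline coincides with the paper's proof.
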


We are also able to provide the same categorical perspective for the following result, which can be obtained by adapting the original proof of Theorem \ref{THM_ClassOmega}. 
\begin{thm}\label{THM_ClassGamma}	Let $X$ be a $T_{3\frac{1}{2}}$-space. Then:
	\begin{itemize}
		\item $\ali$ has a winning strategy in $\g_1(\W,\Gamma)$ over $X$ if, and only if,  $\ali$ has a winning strategy in $\g_1(\W_{\bar{0}},\Gamma_{\bar{0}})$ over $\Cp(X)$.
		\item $\bob$ has a winning strategy in $\g_1(\W,\Gamma)$ over $X$ if, and only if, $\bob$ has a winning strategy in $ \g_1(\W_{\bar{0}},\Gamma_{\bar{0}})$ over $\Cp(X)$.
	\end{itemize}
\end{thm}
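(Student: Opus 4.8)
The plan is to adapt the strategy-translation argument behind Theorem~\ref{THM_ClassOmega} (\cite{Scheepers1997}), replacing the ``cluster-point'' target classes $\W,\W_{\bar 0}$ by the ``convergence'' target classes $\Gamma,\Gamma_{\bar 0}$. Everything rests on a dictionary between plays over $X$ and plays over $\Cp(X)$, built from two constructions. Going forward, from an open $\omega$-cover $\mathcal{U}$ of $X$ I form the set $A_{\mathcal{U}}=\{g_{U,F}: U\in\mathcal{U},\ F\subseteq U\text{ finite}\}\subseteq\Cp(X)$, where complete regularity ($T_{3\frac12}$) supplies a continuous $g_{U,F}\colon X\to[0,1]$ with $g_{U,F}|_F\equiv 0$ and $g_{U,F}|_{X\setminus U}\equiv 1$; then $A_{\mathcal{U}}\in\W_{\bar 0}$, since the $\omega$-cover property gives, for each finite $E$, a $U\supseteq E$ and hence a $g_{U,E}\in[E,\varepsilon]$. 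Going backward, from $A\in\W_{\bar 0}$ and a threshold $1/k$ I form the open family $\mathcal{U}_{A,k}=\{\,[\,|f|<1/k\,]: f\in A\,\}$, which is again an $\omega$-cover because $\bar 0\in\overline{A}$. The two key alignments are: $g_{U,F}\equiv 1$ off $U$, so a selected sequence of such functions converges to $\bar 0$ exactly when the underlying open sets eventually contain every point (i.e.\ form a $\gamma$-cover); and $[\,|f_n|<1/k_n\,]$ forms a $\gamma$-cover with $k_n\to\infty$ exactly when $f_n\to\bar 0$ pointwise.

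First I would prove the two implications that map $X$-plays forward. For ``$\ali$ over $X\Rightarrow\ali$ over $\Cp(X)$'', let $\ali$ run her $X$-strategy internally: in round $n$ she plays $A_{\mathcal{U}_n}\in\W_{\bar 0}$, reads off from $\bob$'s choice $f_n=g_{U_n,F_n}$ the set $U_n\in\mathcal{U}_n$, and feeds $U_n$ back to the internal strategy. If that strategy is winning, $\{U_n\}$ is not a $\gamma$-cover, so some $x$ lies outside $U_n$ for infinitely many $n$, whence $f_n(x)=1$ infinitely often and $f_n\not\to\bar 0$. The symmetric statement for $\bob$ (``$\bob$ over $\Cp(X)\Rightarrow\bob$ over $X$'') uses the same forward dictionary in reverse: $\bob$ converts $\ali$'s $\omega$-covers into $A_{\mathcal{U}_n}$, applies his winning $\Cp(X)$-strategy to obtain $f_n=g_{U_n,F_n}\to\bar 0$, and plays $U_n$; since $f_n(x)\to 0<1$ forces $x\in U_n$ eventually, $\{U_n\}$ is a $\gamma$-cover.

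Next I would prove the two implications that map $\Cp(X)$-plays backward, and here the decisive device is a vanishing threshold schedule. In round $n$ I use threshold $1/(n+1)$: from $\ali$'s move $A_n\in\W_{\bar 0}$ form the $\omega$-cover $\mathcal{U}_n=\{\,[\,|f|<1/(n+1)\,]:f\in A_n\,\}$, and recover from a selected open set a witnessing $f_n\in A_n$. For ``$\ali$ over $\Cp(X)\Rightarrow\ali$ over $X$'' she plays these $\mathcal{U}_n$; were $\bob$ to win, $\{U_n\}$ would be a $\gamma$-cover, giving $|f_n(x)|<1/(n+1)\to 0$ eventually, i.e.\ $f_n\to\bar 0$, contradicting that the internally simulated $\Cp(X)$-strategy is winning. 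For ``$\bob$ over $X\Rightarrow\bob$ over $\Cp(X)$'' $\bob$ feeds $\mathcal{U}_n$ to his winning $X$-strategy, obtains $U_n=[\,|f_n|<1/(n+1)\,]$ forming a $\gamma$-cover, and plays $f_n$; the same estimate yields $f_n\to\bar 0$. Because the correspondence is round-for-round, causality is automatic and no pairing of innings is needed.

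I expect the main obstacle to be precisely the feature that distinguishes this from Theorem~\ref{THM_ClassOmega}: matching the cofinite (``eventually in'') nature of $\gamma$-covers with genuine convergence rather than mere clustering. This is what forces the monotone schedule $1/(n+1)$ (a fixed threshold suffices in the $\W$-case but only yields a cluster point here) and what makes the choice $g_{U,F}\equiv 1$ off $U$ essential. The remaining points are routine but must be checked: that every $\mathcal{U}_{A,k}$ is a bona fide $\omega$-cover not containing $X$ (discarding, as in \cite{Scheepers1997}, the degenerate functions that are uniformly small, using $\bar 0\notin A$), and that the recovery maps $f_n\mapsto U_n$ and $U_n\mapsto f_n$ are well defined via a fixed choice. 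I would finally remark that the paper's preferred route realizes all four implications at once, as the action on winning strategies of two natural transformations of game functors, so that the combinatorial bookkeeping above is absorbed into the naturality.
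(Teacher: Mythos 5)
Your proposal is correct, and all four of your simulations go through; but your route is the direct, hand-to-hand strategy translation (what the paper itself describes as ``adapting the original proof of Theorem \ref{THM_ClassOmega}''), whereas the paper's own proof is categorical. The paper packages your fixed-threshold forward alignment as the natural transformation $\tilde{\theta}_X\colon\Tight^\Gamma_1({\Cp}_*X)\to\Cover^\Gamma_1X$ (a $\B$-morphism in $\Games_\B$) and your vanishing-schedule dictionary as $\tilde{\eta}_X$, built from $I_n=(\frac{-1}{n+1},\frac{1}{n+1})$ (an $\A$-morphism, which lives only over $\Gmes$ since it fails to be a $\B$-morphism); it then shows both are locally surjective onto their images, shows that for $T_{3\frac{1}{2}}$-spaces those images are game-equivalent to $\Cover^\Gamma_1X$ --- via exactly your functions $g_{U,F}$, there called $\varphi_F$ --- and finally invokes the general transfer principles for locally surjective morphisms (Corollaries \ref{LEMMA_StratImage} and \ref{LEMMA_StratPreimage}) to obtain all four implications at once. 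Your inline bookkeeping replays those corollaries by hand; what the paper's factorization buys is reusability of the transfer lemmas and naturality in $X$, which you correctly anticipate in your closing remark, while your version buys self-containedness and makes the round-for-round causality explicit.

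Two small caveats. First, your stated dictionary ``$[\,|f_n|<1/k_n\,]$ forms a $\gamma$-cover with $k_n\to\infty$ exactly when $f_n\to\bar 0$ pointwise'' is an overstatement: only the implication ($\gamma$-cover $\Rightarrow$ convergence) holds, and the converse fails --- take $f_n$ with constant value $\frac{2}{n+1}$, so $f_n\to\bar 0$ but $[\,|f_n|<\frac{1}{n+1}\,]=\emptyset$; this is the paper's own counterexample showing $\eta_X$ is not a $\B$-morphism. Your four simulations use only the valid direction, so nothing breaks; indeed this one-sidedness is precisely why two separate dictionaries (your $g_{U,F}$-based one for the first pair of implications, the threshold one for the second pair) are needed, mirroring the paper's need for both $\tilde{\theta}$ and $\tilde{\eta}$. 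Second, the degeneracy bookkeeping you flag (ensuring $\mU_{A,k}$ does not contain $X$, ``using $\bar 0\notin A$'') is unnecessary --- and in fact unavailable --- under the paper's conventions: its $\omega$-covers (Example \ref{EX_Covering}) are allowed to contain $X$, and $\W_{\bar 0}$ does not exclude $\bar 0\in A$; this is exactly the deviation from the classical formulations that the paper points out after Example \ref{EX_Covering}.
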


In Section \ref{SEC_Objects} we formally define the infinite games of interest in this paper, and they serve as the objects of our principal game categories. We give some notable examples, and also define the concept of a strategy.
Two different candidates for morphisms between games are defined in Section \ref{SEC_Morph}, leading us to the categories $\Games_\A$ and $\Games_\B$, with the same objects. Despite being isomorphic, distinguishing between the two categories turns out to be convenient. In addition to discussing some elementary properties of morphisms of games, we introduce the important novel notion of \textit{local surjectivity}, which we then characterize in strictly categorical terms.

We dedicate Section \ref{SEC_TopGames} to establishing rather naturally the functors $\Cover^\W_1$, $\Tight^\W_1$, $\Cover^\Gamma_1$ and $\Tight^\Gamma_1$. They take as arguments topological spaces and send them respectively to the games  $\g_1(\W,\W)$, $\g_1(\W_x,\W_x)$, $\g_1(\W,\Gamma)$ and $\g_1(\W_x,\Gamma_x)$. Our point then is to establish a couple of natural transformations linking these functors, which then enable us to present a largely categorified proof of Theorem \ref{THM_ClassOmega}. Likewise for Theorem \ref{THM_ClassGamma}. We believe that our use of these categorical elements adds clarity to the understanding of the theorems in question.

In Section \ref{SEC_TreeTopos} we begin to analyze the structure of our categories of infinite games and explore their \textit{arboreal} and their \textit{functorial} structure. Hence, first disregarding the payoff set of a game, we show how the so-obtained common underlying category $\Gmes$ of  $\Games_A$ and $\Games_\B$ can be coreflectively embedded into both the category $\Tree$ of trees and the presheaf category $\SetNop$ (with $\omega$ denoting the first ordinal, also considered as a small category). Hence, inside this Grothendieck topos, which plays a prominent role in categorical logic (see in particular \cite{Birkedal2012}), there is a coreflective copy of the category $\Gmes$ of ``games without  payoff sets''.

In Section \ref{SEC_SubTree},	we show that re-introducing the payoff sets to the objects considered in the previous section leads us to a network of categories, each of which
is topological \cite{Adamek1990} over its poorer cousin as considered in Section  \ref{SEC_TreeTopos}. As a result, adjunctions existing at the ground level (where payoff sets are disregarded) can be ``lifted'' to the level of game-theoretic interest. Consequently, we present our two game categories equivalently in their arboreal disguise, $\mathbf{ArbGame}$, and in their functorial disguise, $\mathbf{FunGame}$, the latter category being a full coreflective subcategory of a topological category over a presheaf topos. 

The well-known categorical techniques used in Section \ref{SEC_SubTree} have multiple roots and modern ramifications. In our context, it suffices to exploit the fact that every functor $K:\mathbf C\to\Sets$ gives rise to a new category of $\mathbf C$-objects $X$ equipped with a subset of $KX$ that is topological over $\mathbf C$, and that there is an easy way of lifting adjunctions along topological functors, as first explored by Wyler \cite{Wyler1971a, Wyler1971b}. As the new category is simply a full subcategory of the comma category $(\Sets\downarrow K)$, this process is nowadays often referred to as the {\em sconing} (formery {\em Artin glueing}) or {\em subsconing} of the functor $K$ (whose codomain may be taken to be a general category, rather than $\Sets$). Sconing has become a basic, but important tool in the computer science literature on the categorical semantics for logical relations; see, for example, \cite{Goubault2008, Lucatelli2023}.

The topos $\SetNop$ has been presented in the computer science literature as a model of guarded recursion, and its objects were related via adjunction to {\em certain} ultrametric spaces; see in particular \cite{Birkedal2012}. It follows that the full subcategory of these ultrametric spaces and their non-expanding maps must be equivalent to our category $\Gmes$. In 
Section \ref{SEC_MetGames} we give a more direct and game-oriented proof of this equivalence and, applying the same general categorical techniques as in Section  \ref{SEC_SubTree}, show that there is a full coreflective subcategory $\mathbf{MetGame}$ of the category $\CUMet$ of complete ultrametric spaces of diameter at most $1$ that is equivalent to $\Games_\A$. Hence, in addition to their arboreal and functorial descriptions, we also have an equivalent {\em metric} presentation of games which turns out to be particularly useful in the remainder of the paper

With the metric presentation of games at hand, in Section \ref{SEC_BMUniversal} we show that every game may be embedded into a Banach-Mazur game. While this result may be considered as an analogue of Cayley's Theorem in group theory, thus putting Banach-Mazur games at par with symmetric groups, we show that our embedding result actually goes beyond this predecessor: under a restriction to so-called locally surjective game morphisms, the emdedding defines a natural transformation. 

Aided by the various equivalent presentations of games, in Section \ref{SEC_CatProp} we finally explore some relevant categorical properties of $\Games_\A\cong\Games_\B$. At times, this becomes challenging from both, the game-theoretic and the categorical perspective. Of course, being equivalent to a full coreflective subcategory of a topological category over a presheaf category, our category of games is complete and cocomplete. Our point, however, is to give concrete descriptions of limits and colimits from a game-theoretic perspective, and we often proceed likewise when we prove that the category $\Games_\A$ is cartesian closed, infinitely extensive \cite{Carboni1993, Centazzo2004}, dual to a regular category \cite{Bourn2004}, but not regular itself, and that it has further interesting orthogonal factorization systems. But the category fails to be locally cartesian closed or to possess a classifier of strong subobjects and, hence, it fails to be a quasi-topos, although it does possess \emph{weak} classifiers for strong partial maps \cite{Adamek1990}. 

\medskip
Some notational conventions follow. Given a finite sequence $t = \seq{x_0, \dotsc, x_{n-1}}$ of $n<\omega=\{0,1,2,,\dotsc\}$ elements, we denote by $|t|=n$ the \textit{length} of $t$ and, for each $k\le |t|$, by 
\[t\upharpoonright k=\seq{x_0,\cdots,x_{k-1}}
\]
its initial segment of length $k$, also called the \textit{truncation} of $t$ of its first $k$ elements. For $n=0$, the sequence $t=\seq{\,}$ is empty. We use the truncation notation also for infinite sequences $R=\seq{x_i:i<\omega}$, so that $R\upharpoonright k$ denotes the finite sequence of the first $k$ elements of $R$.

Given two finite sequences $t=\seq{x_i: i< n}$ and $s=\seq{y_i:i< m}$, 
\[
t^\smallfrown s = \seq{x_0, \dotsc, x_{n-1}, y_0, \dotsc, y_{m-1}}.
\]
is the \textit{concatenation} of $t$ with $s$.	
Furthermore, for a single element $x$ we have the constant sequence $\seq{x: i< n}$ of $n\ge 1$ copies of $x$, and in case $n=1$ we write $t^\smallfrown x = t^\smallfrown \seq{x}$ for the concatenation of $t$ with $\seq{x}$.

Unless stated otherwise, by a \textit{space} we always mean a topological space. A space $X$ is $T_{3\frac{1}{2}}$ if, for all closed sets $Y\subseteq X$ and $x\in X\setminus Y$, there is a continuous function $f\colon X\to [0,1]$ with $f(x)=0$ and $f[Y]\subseteq\{1\}$.

\medskip
{\sc \textbf{\textit{Acknowledgement.}}} We are grateful for helpful comments and pointers received after a presentation by the first-named author at the conference Category Theory 2023, held in July 2023 in Louvain-la-Neuve, especially by Marcelo Fiore and Rui Prezado.

\section{The objects}\label{SEC_Objects}
We establish our notion of infinite game by first formalizing its underlying aboreal component.

\begin{defn}\label{DEF_GameTree}
	For a set $M$, a set $T\subseteq M^{<\omega}=\bigcup_{n<\omega}{M^n}$ of finite sequences in $M$ is a \textit{game tree over $M$} if 
	\begin{itemize}
		\item[(\rom{1})] For every $t\in T$ and $k\le|t|$, one has $t\restrict k\in T$;
		\item[(\rom{2})] For every $t\in T$, there is an element $x$ such that $t^\smallfrown x\in T$.
	\end{itemize}	
	We say that $T$ is a \textit{game tree} if $T$ is a game tree over some set $M$.
\end{defn}

\begin{defn}\label{DEF_infinite_game}
	A pair $G = (T, A)$ is an \textit{infinite game} if $T$ is a game tree over a set $M$ and $A$ is a subset of 
	\[
	\Run(T) = \set{R\in M^{\omega}: R\restrict n\in T \text{ for every $n<\omega$}}.
	\]
\end{defn}

In what follows, ``game'' always means ``infinite game". Furthermore,
for a game tree $T$ and a game $G=(T,A)$, we use the following suggestive language and notations:

\begin{itemize}
	\item An infinite sequence $R\in \Run(T)$ is a \textit{run} of $T$ and of $G$.
	
	\item A finite sequence $t\in T$ is a \textit{moment} of the game $G$. 
	
	\item The minimal set $M$ with $T\subseteq \bigcup_{n<\omega}{M^n}$, denoted by $\M(G)$, is the set of \textit{moves} of $G$.
	
	\item If $|t|$  is even, then $t\in T$ is $\ali$'s \textit{turn}, and $\set{x\in \M(G): t^\smallfrown x\in T}$ is the set of all possible moves $\ali$ can make at the moment $t$.
	
	\item If $|t|$ is odd, then $t\in T$ is $\bob$'s \textit{turn}, and $\set{x\in \M(G): t^\smallfrown x\in T}$ is the set of all possible moves $\bob$ can make at the moment $t$.
	
	\item If $t\in T$  and $|t|=2n$ or $|t|=2n+1$, then $t$ is in the $n$th \textit{inning} of $G$.
	
	\item The set $A$ is the \textit{payoff set} of $G$; a run $R$ is  \textit{won by} $\ali$ if $R\in A$; and $R$ is \textit{won by $\bob$} otherwise.
	
	\item For every $n<\omega$, we write 
	\begin{gather*}
		T\restrict n = \set{t\in T: |t|\le n},\\
		{T(n)} = \set{t\in T: |t| = n}.
	\end{gather*}
\end{itemize}	

In this paper a \textit{graph} is a pair $(V,E)$, with $V$ the set of its vertices and $E\subseteq[V]^2$ the set of its edges, where $[V]^2$ is the set of two-element subsets of $V$.
A \textit{path} in the graph $(V,E)$ is a finite sequence $\seq{x_0, \dotsc, x_n}$ of distinct vertices such that $\{x_i,x_{i+1}\}\in E$ for every $i\le n$. We call a graph $(V,E)$ a \textit{tree} if, for all distinct vertices $x,y\in V$, there is a unique path $\seq{x_0, \dotsc, x_n}$ with $x_0 = x$ and $x_n = y$.

A \textit{rooted tree} is a tree with a distinguished vertex $r$, that is, a triple $(V,E,r)$, with $(V,E)$ a tree and $r\in V$. Then, given $x\in V\setminus \{r\}$, we know that there is a unique path from $r$ to $x$, so we can naturally make the edges of the tree \textit{directed} by stating that an edge $\{x,y\}\in E$ goes from $x$ to $y$ if the path from $r$ to $y$ contains $x$ (the direction \textit{upwards from the root}); such edges may now be written ordered pairs $(x,y)$. 

If $G=(T,A)$ is a game with $T\neq \emptyset$, then the empty sequence $\seq{\,}$ lies in $T$. Hence, the game tree $T$ of a non-empty game $G$ has a natural structure of a rooted directed tree, with root $\seq{\,}$ and edges of the kind $(t,t^\smallfrown x)$. We will thus use the standard graph-theoretic vocabulary (as it can be seen in, e.g., \cite{Pitz2023}): for instance, condition (\rom{2}) in Definition \ref{DEF_GameTree} can be restated as $T$ being \textit{pruned}, and an element $R\in \Run(T)$ may also be called a \textit{branch} of $T$. We will further explore this relation with graph theory in Sections \ref{SEC_TreeTopos} and \ref{SEC_SubTree}.

For specific games, such as the Banach-Mazur game of Example \ref{EX_BMGame}, the rules found there for the moves of the players should be used to recursively define the game tree of the game in terms of Definition \ref{DEF_infinite_game} and the the payoff set of the game is then determined by the winning criteria. Explicitly:

\begin{ex}\label{EX_BMInfGame}
	For a non-empty space $X = (X,\tau)$, let $\BM X = (T,A)$ be such that
	\begin{gather*}
		T = \set{t\in (\tau\setminus\{\emptyset\})^n:n<\omega, \forall i\le j<n\; (t(i)\supseteq t(j))},\\
		A = \set{R\in (\tau\setminus\{\emptyset\})^\omega: \bigcap_{n<\omega}R(2n+1)=\emptyset}.
	\end{gather*}
	In this case, $\M(\BM X) = \tau\setminus\{\emptyset\}$.
\end{ex}

With respect to some examples we note that condition (\rom{2}) may seem to be rather restrictive at first glance, as there are interesting games in which some runs (or even all of them) are finite. But in this case we can artificially extend every finite run to a single infinite branch preserving the winning criteria. In this way the ``essence'' of the game with finite runs is preserved within the setting of our infinite games. 

We now present a few more examples of games that will come up in the subsequent sections. Despite their triviality, the first ones play essential roles in our categories of games, such as making sure that they are complete and cocomplete (see Section 9).

\begin{ex}[Empty game] \label{EX_trivial}
	By vacuity, the empty game $(\emptyset, \emptyset)$ satisfies the conditions of an infinite game, with $\M((\emptyset,\emptyset))=\emptyset$. In this case, one obtains the \emph{initial game} (see {\rm Section \ref{SUBSECT_coproducts}}).
\end{ex}

\begin{ex}[World's most boring games]\label{EX_boring}
	The games with a non-empty game tree over a one-element set $\{*\}$ are necessarily of the form $(\{*\}^{<\omega},A)$; for $A=\emptyset$, we call this game {\em generating} (a terminology to be explained in {\em Remark \ref{REM_generator}(1)}), and  for $A=\{*\}^{\omega}$ one obtains the {\em terminal game} (see {\rm Section \ref{SUBSECT_products}}). 
\end{ex}

\begin{ex}\label{EX_cogen}
	Also easily described is the game $(T,\Run(T))$ with decision tree
	\[
	T = \set{\seq{1:i<n}^\smallfrown \seq{0:j<k}:n,k\in\NN}
	\]
	whose finite sequences consist of a string of $1$s followed by a string of $0$s. 
	We call this game {\em cogenerating}, a terminology to be explained in {\em Remark \ref{REM_generator}(4)}.
\end{ex}

\begin{ex}[Tightness games]\label{EX_Tightness}
	For a space $X$ and a distinguished point $x\in X$, we let 
	\begin{gather*}
		\W_x=\set{A\subseteq X: x\in\overline{A}},\\
		\Gamma_x = \set{(x_n)_{n<\omega}\in X^\omega: x_n\stackrel{n\to\infty}{\longrightarrow}x}.
	\end{gather*}
	and consider the following games.
	\begin{description}
		\item[$\g_1(\W_x,\W_x)$] In each inning $n<\omega$,
		\begin{itemize}
			\item $\ali$ chooses a subset $A_n\subseteq X$ with $x\in \overline{A_n}$;
			\item $\bob$ responds by choosing $a_n\in A_n$.
		\end{itemize} 
		$\bob$ wins the run $\seq{A_0,a_0,\dotsc, A_n,a_n,\dotsc}$ if $x\in \overline{\set{a_n:n\ge k}}$ for all $k<\omega$, and $\ali$ wins otherwise. In other words, if $\g_1(\W_x,\W_x) = (T,A)$, then 
		\[
		A = \set{R\in \Run(T): \exists k<\omega \set{R(2n+1):n\ge k}\notin \W_x}.
		\]
		
		\item[$\g_1(\W_x,\Gamma_x)$] The game is played in the same way as $\g_1(\W_x,\W_x)$, but now\\
		$\bob$ wins the run $\seq{A_0,a_0,\dotsc, A_n,a_n,\dotsc}$ if $\lim_{n\to\infty}a_n=x$, and $\ali$ wins otherwise. That is, if $\g_1(\W_x,\Gamma_x) = (T,A)$, then 
		\[
		A = \set{R\in \Run(T): \exists k<\omega \left((R(2n+1))_{n\ge k}\notin \Gamma_x\right)}.
		\]
	\end{description}
	For these games, $\M(\g_1(\W_x,\W_x)) = \M(\g_1(\W_x,\Gamma_x))=\W_x\cup X$.	
\end{ex}

\begin{ex}[Covering games]\label{EX_Covering}
	Given a space $X=(X,\tau)$, we call $\mU\subseteq \tau$ an \emph{$\omega$-cover} if 
	\[
	\forall F\in [X]^{<\omega}(\exists U\in\mU(F\subseteq U)),
	\] 
	with $[X]^{<\omega}$ denoting the set of finite subsets of $X$.
	
	A sequence $(U_n)_{n<\omega}\in \tau^{\omega}$ is called a $\gamma$-cover if, for every infinite $S\subseteq \omega$, $\set{U_n:n\in S}$ is an $\omega$-cover.
	
	Letting
	\begin{gather*}
		\W=\set{\mU\subseteq \tau: \mU \text{ is an $\omega$-cover}},\\
		\Gamma = \set{(U_n)_{n<\omega}\in \tau^\omega: (U_n)_{n<\omega} \text{ is a $\gamma$-cover}}.
	\end{gather*}
	
	we consider the following games.
	\begin{description}
		\item[$\g_1(\W,\W)$] In each inning $n<\omega$,
		\begin{itemize}
			\item $\ali$ chooses an $\omega$-cover for $X$;
			\item $\bob$ responds by choosing an open set $U_n\in\mU_n$.
		\end{itemize} 
		$\bob$ wins the run $\seq{\mU_0,U_0,\dotsc, \mU_n,U_n,\dotsc}$ if $\set{U_n:n\ge k}$ is an $\omega$-cover for every $k<\omega$, and $\ali$ wins otherwise. In other words, if $\g_1(\W,\W) = (T,A)$, then 
		\[
		A = \set{R\in \Run(T): \exists k<\omega \set{R(2n+1):n\ge k}\notin \W}.
		\]
		
		\item[$\g_1(\W,\Gamma)$] The game is played in the same way as $\g_1(\W,\W)$, but now\\  $\bob$ wins the run $\seq{\mU_0,U_0,\dotsc, \mU_n,U_n,\dotsc}$ if, for every infinite set $S\subseteq\omega$, $\set{U_n:n\in S}$ is an $\omega$-cover, 
		and $\ali$ wins otherwise.	That is, if $\g_1(\W,\Gamma) = (T,A)$, then 
		\[
		A = \set{R\in \Run(T): \exists k<\omega \left((R(2n+1))_{n\ge k}\notin \Gamma\right)}.
		\]
		
	\end{description}
	For these games, $\M(\g_1(\W,\W)) = \M(\g_1(\W,\Gamma)) = \W\cup \tau$.
\end{ex}

Examples \ref{EX_Tightness} and \ref{EX_Covering} will be further explored in Section \ref{SEC_TopGames} when we look at some topological games as functors. 

We note that our formulations of the games $\g_1(\W_x,\W_x)$, $\g_1(\W_x,\Gamma_x)$, $\g_1(\W,\W)$ and $\g_1(\W,\Gamma)$ as given in Examples \ref{EX_Tightness} and \ref{EX_Covering} deviate from those in the literature (as described in, {\em e.g.}, \cite{Scheepers1997}). In fact, ours are more general since they work also for types of spaces $X$ not admitted previously, such as finite spaces. However, game theoretically they are equivalent to presentations found in the literature, in terms of the existence of winning strategies.

Of course, any given games lead to new ones. For instance, forming subgames proceeds as follows.

\begin{defn}\label{DEF_SubGames}
	A game $G' = (T',A')$ is a \textit{subgame} of a game $G=(T,A)$, written as $G'\le G$, if: 
	\begin{itemize}
		\item[(a)] $T' \subseteq T$,
		\item[(b)] $A' = A \cap \Run(T')$.
	\end{itemize}
\end{defn}

Since $G'$ is assumed to be a game, condition (a) just says that $T'$ is a pruned subtree of $T$, and then condition (b) implies 
\[
(T',A')\le (T,A) \iff A'= A \cap \Run(T').
\]
Briefly, subgames of $G=(T,A)$ are \emph{determined} by pruned subtrees of $T$. Hence, we may write $T'\le G$, rather than $(T', A \cap \Run(T'))\le (T,A)$, whenever $T'$ is a pruned subtree of $T$. 

The next proposition follows trivially from Definition \ref{DEF_SubGames}.
\begin{prop}\label{PROP_UnionSubGame}
	For any family $\set{G_\alpha = (T_\alpha, A_\alpha): \alpha \in \kappa}$ of subgames of a game $G = (T,A)$ one has $\bigcup_{\alpha \in \kappa}T_\alpha\le G$. We write $\bigcup_{\alpha \in \kappa}G_\alpha$ for the subgame $(\bigcup_{\alpha \in \kappa}T_\alpha, \bigcup_{\alpha \in \kappa}A_\alpha)$ of $G$.
\end{prop}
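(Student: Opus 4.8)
The plan is to split the assertion into two parts: the structural claim that $\bigcup_{\alpha\in\kappa}T_\alpha$ is again a pruned subtree of $T$ (equivalently, $\bigcup_{\alpha\in\kappa}T_\alpha\le G$), and the identification of the payoff set of the resulting subgame. The first part is genuinely routine and is where the word ``trivially'' earns its keep, so I would dispatch it straight from Definitions \ref{DEF_GameTree} and \ref{DEF_SubGames}. The reverse inclusion in the payoff part is where I expect the only real work to sit.

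First I would check the two clauses of Definition \ref{DEF_GameTree} for $U:=\bigcup_{\alpha\in\kappa}T_\alpha$. Both pass to the union for the same reason: any $t\in U$ is witnessed by a single index, $t\in T_\beta$. Clause (\rom{1}) then gives $t\restrict k\in T_\beta\subseteq U$ for all $k\le|t|$, and clause (\rom{2}) supplies an $x$ with $t^\smallfrown x\in T_\beta\subseteq U$; hence $U$ is truncation-closed and pruned. Since each $T_\alpha\subseteq T$, also $U\subseteq T$, so $U$ is a pruned subtree of $T$ and $\bigcup_{\alpha\in\kappa}T_\alpha\le G$ follows. By the observation following Definition \ref{DEF_SubGames}, $U$ determines a unique subgame of $G$, namely $(U,\,A\cap\Run(U))$.

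It then remains to see that this payoff is $\bigcup_{\alpha\in\kappa}A_\alpha$. One inclusion is purely formal: from $A_\alpha=A\cap\Run(T_\alpha)$ and $\Run(T_\alpha)\subseteq\Run(U)$ we get $\bigcup_\alpha A_\alpha\subseteq A\cap\Run(U)$, and in particular $(U,\bigcup_\alpha A_\alpha)$ is at least a well-formed game in the sense of Definition \ref{DEF_infinite_game}. The reverse inclusion is the main obstacle: given $R\in A\cap\Run(U)$ I must produce a single $\alpha$ with $R\in\Run(T_\alpha)$, i.e. with \emph{all} truncations $R\restrict n$ lying in one and the same member of the family. Here I would exploit that closure under truncation makes ``$R\in\Run(T_\alpha)$'' equivalent to ``$R\restrict n\in T_\alpha$ for infinitely many $n$'' (a shorter truncation of a node in $T_\alpha$ is again in $T_\alpha$), so that when $\kappa$ is finite a pigeonhole on the witnessing indices concludes at once. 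For infinite $\kappa$ this pigeonhole is unavailable, as the indices witnessing the various $R\restrict n$ may all differ and a branch of $U$ need not be a branch of any single $T_\alpha$; this is the genuinely delicate point. Absent a finiteness or compactness hypothesis I would therefore read $\bigcup_\alpha G_\alpha$ as the subgame of $G$ determined by $U$, whose payoff $A\cap\Run(U)$ then contains $\bigcup_\alpha A_\alpha$.
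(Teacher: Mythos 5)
The paper contains no argument to compare against: it simply declares that the proposition ``follows trivially from Definition \ref{DEF_SubGames}''. Your proof of the first claim---each of conditions (\rom{1}) and (\rom{2}) passes to $U=\bigcup_{\alpha\in\kappa}T_\alpha$ because membership in the union is witnessed by a single index---is exactly that trivial argument, so on the part of the statement that is actually true, you and the paper coincide.

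Your hesitation about the second sentence is not a mere hedge: it pinpoints a genuine flaw in the statement as written, and you could have closed the matter with a counterexample instead of leaving it conditional. Take $G=(T,A)$ with $T=\{0,1\}^{<\omega}$ and $A=\Run(T)$, and for each $n<\omega$ let $T_n=\set{t\in T:\ t(i)=0 \text{ for all } i \text{ with } n\le i<|t|}$. Each $T_n$ is a truncation-closed pruned subtree (extend by $0$), so $G_n=(T_n,A_n)$ with $A_n=A\cap\Run(T_n)=\Run(T_n)$ is a subgame of $G$; the family is even an increasing chain. But $\bigcup_n T_n=T$, since a sequence of length $m$ lies in $T_m$ vacuously, so $A\cap\Run\bigl(\bigcup_n T_n\bigr)=\Run(T)$ consists of \emph{all} binary runs, whereas $\bigcup_n A_n$ contains only the eventually-zero runs; the run $\seq{1:i<\omega}$ witnesses the strict inclusion. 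Hence $\bigl(\bigcup_n T_n,\bigcup_n A_n\bigr)$ violates clause (b) of Definition \ref{DEF_SubGames}: it is a well-formed game, but not a subgame of $G$, so the proposition's closing sentence is literally false for infinite families. Your positive assertions are all correct: $\bigcup_\alpha A_\alpha\subseteq A\cap\Run(U)$ always holds, and for finite $\kappa$ your pigeonhole argument (truncation-closure upgrades ``cofinally many truncations lie in $T_\alpha$'' to ``all truncations lie in $T_\alpha$'') restores equality. The statement should accordingly either be read as you propose, with $\bigcup_\alpha G_\alpha$ denoting the subgame $\bigl(U,\,A\cap\Run(U)\bigr)$ determined by the pruned subtree $U$, or be restricted to families satisfying $A\cap\Run\bigl(\bigcup_\alpha T_\alpha\bigr)=\bigcup_\alpha A_\alpha$, as finite ones do.
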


Studying games is usually a matter of studying strategies for such games. In what follows we give two distinct formal definitions of strategies for $\ali$ within the setting of Definition \ref{DEF_infinite_game}, one in terms of a mapping which tells $\ali$ which move to make at each of her turns and the other, which implicitly extracts such a mapping from a subgame via a uniqueness condition. From a game-theoretic perspective, the two formulations are obviously equivalent, and therefore we will not distinguish them terminologically. For the sake of completeness, we formulate the (more often used) subgame version of a strategy also from $\bob$'s perspective.

\begin{defn}[$\ali$'s strategy mapping]\label{DEF_AStratMap}
	For a non-empty game $G = (T,A)$, a mapping $\gamma\colon S_\gamma\to \M(G)$ is a \textit{strategy for $\ali$} in $G$ if
	\begin{enumerate}
		\item[(a)] $S_\gamma\subseteq T$;
		\item[(b)] $\seq{\,}\in S_\gamma$;
		\item[(c)] every $t\in S_\gamma$ is $\ali$'s turn, and $\gamma(t)$ is a possible move for $\ali$ at the moment $t$;
		\item[(d)] for every $t\in S_\gamma$ one has $t^\smallfrown x^\smallfrown y\in S_\gamma$ if, and only if, $x=\gamma(t)$ and $y$ satisfies  $t^\smallfrown \gamma(t)^\smallfrown y\in T$.
	\end{enumerate} 
	We say that $\gamma$ is a \textit{winning strategy} if, furthermore, every increasing chain of finite sequences $\left(t_n:n<\omega\right)$ of $S_\gamma$
	satisfies $\lim\limits_{\longrightarrow} t_n\in A$, where $\lim\limits_{\longrightarrow} t_n$ is the infinite sequence which is the common extension of all the $t_n$s.
\end{defn}

So, a strategy $\gamma$ for $\ali$ in a game $G=(T,A)$ is a mapping with domain $S_\gamma\subseteq T$ which tells $\ali$ which moves to make at the moments in $S_\gamma$, subject to the following constraints: condition (b) ensures that $\gamma$ makes $\ali$ prepared to start the game, condition (c) ensures that $\gamma$ tells $\ali$ to respond with valid moves in $G$, and condition (d) ensures that $\gamma$ makes $\ali$ prepared to respond at every possible scenario that might come up when playing according to $\gamma$, and that $S_\gamma$ is minimal amongst the sets with properties (a--d).

\begin{defn}[$\ali$'s strategy subgame]\label{DEF_AStrat}
	A pair $\gamma = (T_\gamma, A_\gamma)$ is a \textit{strategy for $\ali$ in the game $G=(T,A)$} if 
	\begin{enumerate}
		\item[(a)] $\gamma$ is a subgame of $G$,
		\item [(b)] $T_\gamma\neq \emptyset$,
		\item[(c)] for all $t\in T_\gamma$, if $t$ is $\ali$'s turn, then there is a \textit{unique} $x\in \M(G)$ such that $t^\smallfrown x\in T_\gamma$,
		\item[(d)] for all $t\in T_\gamma$, if $t$ is $\bob$'s turn, then $t^\smallfrown x\in T_\gamma$ for all $x$ such that $t^\smallfrown x \in T$.
	\end{enumerate} 
	Furthermore, we say that $\gamma$ is a \textit{winning strategy (for $\ali$)} if $A_\gamma = \Run(T_\gamma)$.
\end{defn}

\begin{defn}[$\bob$'s strategy subgame]\label{DEF_BStrat}
	A pair $\sigma = (T_\sigma, A_\sigma)$ is a \textit{strategy for $\bob$ in the game $G=(T,A)$} if 
	\begin{enumerate}
		\item[(a)] $\sigma$ is a subgame of $G$,
		\item [(b)] $T_\sigma\neq \emptyset$,
		\item[(c)] for all $t\in T_\sigma$, if $t$ is $\bob$'s turn, then there is a \textit{unique} $x$ such that $t^\smallfrown x\in T_\sigma$,
		\item[(d)] for all $t\in T_\sigma$, if $t$ is $\ali$'s turn, then $t^\smallfrown x\in T_\sigma$ for all $x$ such that $t^\smallfrown x \in T$.
	\end{enumerate} 
	Furthermore, we say that $\sigma$ is a \textit{winning strategy (for $\bob$)} if $A_\sigma = \emptyset$.
\end{defn}

\section{Game morphisms}\label{SEC_Morph}
We now define the morphisms of our game categories.
\begin{defn}[Chronological map]\label{DEF_fChronological}
	For game trees $T_1$ and $T_2$, a mapping $f\colon T_1\to T_2$ is \textit{chronological} (in which case we write $T_1\stackrel{f}{\to}T_2$) if $f$ preserves length and truncation of moments; that is: if for every $t\in T_1$, $|f(t)|=|t|$ and $f(t\restrict k)=f(t)\restrict k$ for all $k\le |t|$.
\end{defn}

One trivially has:
\begin{prop}\label{PROP_morphcat_gme}
	Given game trees $T_1$, $T_2$ and $T_3$, if $T_1\stackrel{f}{\to}T_2$ and $T_2\stackrel{g}{\to}T_3$, then $T_1\stackrel{g\circ f}{\to}T_3$.
	
	Moreover, the identity map over a game tree $T$ is chronological.
\end{prop}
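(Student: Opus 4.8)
The plan is to verify directly that the defining conditions of a chronological map (Definition \ref{DEF_fChronological}) are preserved under composition and satisfied by the identity. Both claims are entirely mechanical, so I would simply unwind the two requirements — preservation of length and preservation of truncation — for $g \circ f$ and for $\id_T$.

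First I would handle composition. Suppose $T_1 \stackrel{f}{\to} T_2$ and $T_2 \stackrel{g}{\to} T_3$ are chronological. Fix $t \in T_1$. For the length condition, note that $|(g\circ f)(t)| = |g(f(t))| = |f(t)| = |t|$, where the two middle equalities use that $g$ and $f$ respectively preserve length (and implicitly that $f(t) \in T_2$, so that $g$ may be applied to it). For the truncation condition, fix $k \le |t|$. Applying chronologicality of $f$ gives $f(t\restrict k) = f(t)\restrict k$; then, since $k \le |t| = |f(t)|$, applying chronologicality of $g$ to the moment $f(t) \in T_2$ gives $g(f(t)\restrict k) = g(f(t))\restrict k$. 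Chaining these,
\[
(g\circ f)(t\restrict k) = g\!\left(f(t\restrict k)\right) = g\!\left(f(t)\restrict k\right) = g(f(t))\restrict k = (g\circ f)(t)\restrict k,
\]
which is exactly the required identity. Hence $g\circ f$ is chronological.

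For the identity map $\id_T \colon T \to T$, both conditions are immediate: for every $t \in T$ one has $|\id_T(t)| = |t|$, and for every $k \le |t|$ one has $\id_T(t\restrict k) = t\restrict k = \id_T(t)\restrict k$, so $\id_T$ preserves length and truncation trivially.

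I do not anticipate a genuine obstacle here — the proposition is flagged in the paper as something one "trivially has", and its only role is to record that chronological maps form the morphisms of a category with the obvious identities and composition. The single point worth stating carefully is the well-typedness in the composition step: one must observe that $f(t)$ genuinely lies in $T_2$ so that $g$ is applicable and its chronologicality may be invoked at the moment $f(t)$; once this is noted the verification is a direct substitution as displayed above.
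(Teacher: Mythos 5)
Your verification is correct and is precisely the routine unwinding the paper has in mind — the paper itself offers no proof, introducing the proposition with ``One trivially has,'' so the direct check of length and truncation preservation for $g\circ f$ and $\id_T$ is the intended argument. Your explicit note that $f(t)\in T_2$ (so that $g$'s chronologicality can be invoked at the moment $f(t)$) is the only nontrivial point, and you handle it properly.
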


We note that any chronological map $f\colon T_1\to T_2$ extends to a mapping $\overline{f}\colon \Run(T_1)\to \Run(T_2)$, uniquely determined by  $\overline{f}(R)\restrict n=f(R\upharpoonright n)$ for every $n<\omega$; we may write
\[
\overline{f}=\lim_{\longrightarrow}f_n,
\]
with $f_n: T_1(n)\to T_2(n)$ denoting the indicated restriction of $f$.

\begin{defn}[Game morphisms]\label{DEF_Amorph}
	Let $G_1=(T_1,A_1)$ and $G_2=(T_2,A_2)$ be games and $T_1\stackrel{f}{\to}T_2$ be chronological. Then 
	\begin{itemize}
		\item[(A)] $f$ is an $\A$-\textit{morphism} if $\overline{f}(R)\in A_2$ for every run $R\in A_1$, and
		\item[(B)] $f$ is a $\B$-\textit{morphism} if $\overline{f}(R)\in \Run(T_2)\setminus A_2$ for every run $R\in \Run(T_1)\setminus A_1$.
	\end{itemize}
	We indicate these properties respectively by $G_1\stackrel{f}{\to}_\A G_2$ and $G_1\stackrel{f}{\to}_\B G_2$, omitting the subscript to the arrow when the context provides sufficient clarity.		
\end{defn}

Intuitively, an $\A$-morphism between $G_1=(T_1, A_1)$ and $G_2=(T_2, A_2)$ is a mapping $f\colon T_1\to T_2$ that \textit{chronologically} respects the histories of the games and guarantees that, if $\ali$ wins a run $R$ of $G_1$, then $\ali$ also wins in the run $\overline{f}(R)$, and likewise for $\B$-morphisms. 

Of course:

\begin{prop}\label{PROP_morphcat}
	$\A$-morphisms are closed under composition, and so are $\B$-morphisms. The identity map of the game tree of a game is both an $\A$-morphism and a $\B$-morphism.
\end{prop}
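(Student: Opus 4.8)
The plan is to verify the two claims of Proposition \ref{PROP_morphcat} directly from the definitions, leveraging the already-established Proposition \ref{PROP_morphcat_gme} (chronological maps compose and identities are chronological) together with the defining properties of the extension $\overline{f}$. Since $\A$-morphisms and $\B$-morphisms are chronological maps satisfying an additional payoff-preservation condition, the underlying chronological structure is handled by Proposition \ref{PROP_morphcat_gme}; what remains is to check that the payoff conditions in Definition \ref{DEF_Amorph} are stable under composition and hold for identities.

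First I would record the key functoriality of the run-extension operator: if $T_1\stackrel{f}{\to}T_2$ and $T_2\stackrel{g}{\to}T_3$ are chronological, then $\overline{g\circ f}=\overline{g}\circ\overline{f}$. This follows from the uniqueness clause characterizing $\overline{(\cdot)}$, since for every $n<\omega$ one has $\overline{g}(\overline{f}(R))\restrict n = g(\overline{f}(R)\restrict n)=g(f(R\restrict n))=(g\circ f)(R\restrict n)$, so $\overline{g}\circ\overline{f}$ satisfies the defining property of $\overline{g\circ f}$. I would likewise note $\overline{\id_T}=\id_{\Run(T)}$.

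With this in hand, the composition claims are immediate. For $\A$-morphisms $G_1\stackrel{f}{\to}_\A G_2$ and $G_2\stackrel{g}{\to}_\A G_3$: given $R\in A_1$, the $\A$-condition on $f$ gives $\overline{f}(R)\in A_2$, and then the $\A$-condition on $g$ gives $\overline{g}(\overline{f}(R))\in A_3$; since $\overline{g\circ f}(R)=\overline{g}(\overline{f}(R))$, we conclude $\overline{g\circ f}(R)\in A_3$, so $g\circ f$ is an $\A$-morphism. For $\B$-morphisms the argument is the mirror image, tracking complements: if $R\in\Run(T_1)\setminus A_1$, then $\overline{f}(R)\in\Run(T_2)\setminus A_2$ by the $\B$-condition on $f$, and then $\overline{g}(\overline{f}(R))\in\Run(T_3)\setminus A_3$ by the $\B$-condition on $g$. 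For the identity, $\overline{\id_T}$ is the identity on runs, so both payoff conditions hold trivially (every $R\in A$ is sent to $R\in A$, and every $R\notin A$ to $R\notin A$).

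There is no genuine obstacle here; the statement is flagged as ``Of course'' in the text. The only point requiring a moment of care is the compatibility $\overline{g\circ f}=\overline{g}\circ\overline{f}$, which is why I would isolate it first rather than conflate the finite-level composition (guaranteed by Proposition \ref{PROP_morphcat_gme}) with the induced action on infinite runs. Everything else is a routine chase through the complementary payoff conditions defining $\A$- and $\B$-morphisms.
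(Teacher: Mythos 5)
Your proof is correct and follows exactly the routine verification that the paper intends when it prefaces the proposition with ``Of course'' and omits the argument: the underlying chronological structure is delegated to Proposition \ref{PROP_morphcat_gme}, and the payoff conditions are chased through the complements. You rightly isolate the one step deserving care, namely $\overline{g\circ f}=\overline{g}\circ\overline{f}$, which follows (as you show) from the uniqueness clause $\overline{f}(R)\restrict n=f(R\restrict n)$ characterizing the run-extension.
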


In view of Propositions \ref{PROP_morphcat_gme} and \ref{PROP_morphcat}, we have three emergent categories: 

\begin{itemize}
	\item $\Gmes$: 
	\begin{itemize}
		\item objects are game trees,
		\item morphisms are chronological mappings.
	\end{itemize}
	\item $\Games_\A$: 
	\begin{itemize}
		\item objects are games,
		\item morphisms are $\A$-morphisms.
	\end{itemize}
	\item $\Games_\B$: 
	\begin{itemize}
		\item objects are games,
		\item morphisms are $\B$-morphisms.
	\end{itemize}  
\end{itemize} 

Our primary interest lies in studying the categories $\Games_{\A}$ and $\Games_{\B}$. Exploring their common ``base category'' $\Gmes$ assists in this pursuit, since its objects, considered as ``games with forgotten $A$\,s'', {\textit i.e.}, as ``games without their payoff sets'', appear in many distinct guises, which help us understand the overall structure of $\Gmes$.

\begin{remark}
	To better contextualize our approach with what has already been done in the literaure, we should point out that the category $\Gmes$ can easily be embedded in the category $\mathbf{Tree}$ of \cite{Streufert2018} (although such embedding is not full, since morphisms in Streufert's $\mathbf{Tree}$ are not required to map the root of its trees into other roots, while chronological mappings always associate $\seq{\,}$ to $\seq{\,}$). 
	
	We note that, although the objects of the category $\Games_{\A}$ (and $\Games_{\B}$) can naturally be seen as a subclass of objects in the category $\mathbf{Gm}$ of \cite{Streufert2021}, $\Games_{\A}$ (or $\Games_{\B}$) cannot be embedded in $\mathbf{Gm}$ by such association: it can be shown that $\mathbf{Gm}$ (restricted to our class of games) only allow for a morphism $f\colon (T,A)\to (T',A')$ with nonempty $A'$ and $\Run(T')\setminus A'$ to be \emph{both} an $\A$ and $\B$-morphism, while this is not be the case in $\Games_{A}$. Thus, the identity map $f\colon T\to T$ with
	\begin{gather*}
		T = \set{\seq{i:k<n}i\in \{0,1\}, \, n<\omega}
	\end{gather*} 
	defines an $\A$-morphism if we consider 
	\[f\colon (T,\emptyset)\to (T,\{\seq{0:n<\omega}\}),\] 
	but it does not define a game morphism in $\mathbf{Gm}$. And indeed, such distinction between Streufert's $\mathbf{Gm}$ and our $\Games_{\A}$ and $\Games_{\B}$ plays a crucial role in almost all of our results that follow (such as exploiting the functorial nature of prominent games as in Theorems  \ref{THM_ClassOmega}, \ref{THM_ClassGamma} in Section \ref{SEC_TopGames} and in Theorem \ref{THM_BMUni_Fun}).
\end{remark}

The following proposition characterizes some special types of morphisms in $\Games_{\A}$ and $\Games_{\B}$. We prove the first statement since it may come as a surprise that there are non-injective monomorphisms in these categories,  but we omit the elementary proofs of the other statements.

\begin{prop}\label{PROP_morphproperties}
	Let $G_1=(T_1, A_1)$ and $G_2=(T_2, A_2)$ be games. Then:
	\begin{itemize}
		\item[(a)] An $\A$-morphism $G_1\stackrel{f}{\to}_\A G_2$ is a monomorphism in $\Games_\A$ if, and only if, the mapping $\overline{f}\colon \Run(T_1)\to \Run(T_2)$ is injective. Likewise for $\B$-morphisms. 
		
		\item[(b)] An $\A$-morphism $G_1\stackrel{f}{\to}_\A G_2$ is an epimorphism in $\Games_\A$ if, and only if, the mapping $f\colon T_1\to T_2$ is surjective. Likewise for $\B$-morphisms. 
		
		\item[(c)]\label{PROP_IT_iso} A mapping $f\colon T_1\to T_2$ becomes an isomorphism $G_1\stackrel{f}{\to}_\A G_2$ in $\Games_{\A}$ if, and only if, $f$ is a bijection that is both an $\A$-morphism and a $\B$-morphism. These poroperties characterize also the isomorphisms in $\Games_\B$.			
		\item[(d)] If $G_1\stackrel{f}{\to} G_2$ and $T\le G_1$, then $f[T]\le G_2$.
		
		\item[(e)]\label{PROP_IT_embbeding} If $f\colon T_1\to T_2$ is injective and both an $\A$-morphism and a $\B$-morphism, then
		\[
		\Run(f[T_1])\cap A_2 = \overline{f}[A_1],
		\]
		and $\left(f[T_1],\overline{f}[A_1]\right)$ is a subgame of $G_2$ that is $A$-isomorphic to $G_1$.
	\end{itemize}
\end{prop}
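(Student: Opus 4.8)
The plan is to reduce everything to a good understanding of how $\overline{f}$ relates the branches of $T_1$ to those of its image $f[T_1]$. First I would apply Proposition \ref{PROP_morphproperties}(d) with $T=T_1$ to record that $f[T_1]$ is a pruned subtree of $T_2$, so that the tree component of the desired subgame is already legitimate. By Definition \ref{DEF_SubGames} the only remaining tasks are to show that the payoff set $\overline{f}[A_1]$ is contained in $\Run(f[T_1])$ and equals $A_2\cap\Run(f[T_1])$ (which is exactly the displayed equality), and then to produce the isomorphism. Thus the displayed equality does most of the work, and I would prove it first.

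The key lemma, and the only step with genuine content, is that $\overline{f}$ restricts to a \emph{bijection} $\Run(T_1)\to\Run(f[T_1])$. Injectivity is immediate: if $\overline{f}(R)=\overline{f}(R')$ then $f(R\restrict n)=f(R'\restrict n)$ for all $n$, so injectivity of $f$ gives $R\restrict n=R'\restrict n$ and hence $R=R'$. That the image lands in $\Run(f[T_1])$ is clear since $\overline{f}(R)\restrict n=f(R\restrict n)\in f[T_1]$. The harder direction is surjectivity: given a branch $S\in\Run(f[T_1])$, each truncation $S\restrict n$ lies in $f[T_1]$, so by injectivity there is a unique $t_n\in T_1$ with $f(t_n)=S\restrict n$ (and $|t_n|=n$, as $f$ preserves length). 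The point is that these $t_n$ cohere: since $f$ preserves truncation,
\[
f(t_n\restrict k)=f(t_n)\restrict k=(S\restrict n)\restrict k=S\restrict k=f(t_k),
\]
so injectivity forces $t_n\restrict k=t_k$ for $k\le n$. Hence the $t_n$ assemble into a run $R\in\Run(T_1)$ with $\overline{f}(R)=S$. This coherence argument, resting jointly on injectivity and the chronological (length- and truncation-preserving) nature of $f$, is where I expect the main effort to lie.

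With the bijection in hand, the equality $\Run(f[T_1])\cap A_2=\overline{f}[A_1]$ follows by combining the two morphism conditions. For $\supseteq$, if $S=\overline{f}(R)$ with $R\in A_1\subseteq\Run(T_1)$, then $S\in\Run(f[T_1])$ by the lemma, and $S\in A_2$ because $f$ is an $\A$-morphism. For $\subseteq$, if $S\in\Run(f[T_1])\cap A_2$, write $S=\overline{f}(R)$ with $R\in\Run(T_1)$; were $R\notin A_1$, the $\B$-morphism condition would give $\overline{f}(R)=S\notin A_2$, a contradiction, so $R\in A_1$ and $S\in\overline{f}[A_1]$. In particular $\overline{f}[A_1]\subseteq\Run(f[T_1])$, so the pair $(f[T_1],\overline{f}[A_1])$ is a genuine game.

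Finally I would read off both conclusions. Condition (b) of Definition \ref{DEF_SubGames} is precisely the displayed equality, so $(f[T_1],\overline{f}[A_1])$ is a subgame of $G_2$. For the isomorphism I would apply Proposition \ref{PROP_morphproperties}(c) to the corestriction $f\colon T_1\to f[T_1]$: it is a bijection by injectivity; it is an $\A$-morphism into $(f[T_1],\overline{f}[A_1])$ trivially, since $R\in A_1$ gives $\overline{f}(R)\in\overline{f}[A_1]$; and it is a $\B$-morphism, since for $R\in\Run(T_1)\setminus A_1$ we have $\overline{f}(R)\in\Run(f[T_1])$ while $\overline{f}(R)\notin\overline{f}[A_1]$ (otherwise injectivity of $\overline{f}$ would place $R$ in $A_1$). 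By (c) this corestriction is an isomorphism $G_1\stackrel{f}{\to}_\A(f[T_1],\overline{f}[A_1])$ in $\Games_\A$, which is the asserted $\A$-isomorphism.
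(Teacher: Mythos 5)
Your argument, as written, proves only item (e) of the Proposition, and that is exactly the opposite of where the paper puts its effort. The paper's own proof is of item (a) alone — the authors prove the monomorphism characterization explicitly ``since it may come as a surprise that there are non-injective monomorphisms in these categories'' and declare the proofs of (b)--(e) elementary and omitted. Nothing in your proposal supplies the content of (a): the ``only if'' direction there needs the generating game $G=(\{\ast\}^{<\omega},\emptyset)$ together with the observation that $\A$-morphisms $G\to G_1$ correspond bijectively to runs of $G_1$, so that $\overline{f}(R)=\overline{f}(R')$ yields equal composites $f\circ r=f\circ r'$ and hence $R=R'$ when $f$ is monic; the ``if'' direction needs that every moment $t'$ extends (by prunedness) to a run, so that $\overline{f}\circ\overline{g}=\overline{f}\circ\overline{h}$ with $\overline{f}$ injective forces $g=h$ moment by moment. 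Your key lemma — that $\overline{f}$ is a bijection onto $\Run(f[T_1])$ — is proved under the hypothesis that $f$ is \emph{injective on moments}, and so cannot substitute for (a), precisely because monomorphisms in $\Games_\A$ need not be injective (Remark \ref{REM_generator}(2)). Items (b) and (c) are likewise untouched, and (c) and (d) are invoked as ingredients without proof; that is harmless mathematically (both are routine), but strictly speaking they are part of the statement being proved.

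On the portion you do treat, your proof is correct and complete, and it is a faithful reconstruction of a verification the paper leaves entirely to the reader. The coherence argument for surjectivity — using $f(t_n\restrict k)=f(t_n)\restrict k=S\restrict k=f(t_k)$ and injectivity to force $t_n\restrict k=t_k$, so that the $t_n$ assemble into a run mapping to $S$ — is exactly the right use of the chronological identities, and the derivation of $\Run(f[T_1])\cap A_2=\overline{f}[A_1]$ from the $\A$-condition (for $\supseteq$) and the $\B$-condition plus injectivity of $\overline{f}$ (for $\subseteq$), followed by the appeal to (c) for the corestriction $f\colon T_1\to f[T_1]$, is sound. So the verdict is: item (e) is fully and correctly established, but the proposal has a genuine gap relative to the stated Proposition, namely the absence of any argument for (a) — the one part whose proof requires an actual idea and which the paper itself proves — as well as of the (admittedly easy) verifications of (b), (c), and (d).
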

\begin{proof}
	(a) With $G$ denoting the generating game $(\{*\}^{<\omega},\emptyset)$ of Example \ref{EX_boring}, one first observes that the A-morphisms $G\to G_1$ are in bijective correspondence to the runs of the game $G_1$. Hence, assuming $f$ to be monic in $\Games_{\A}$ and letting $\overline{f}(R)=\overline{f}(R')$ for $R,R'\in\mathrm{Run}(T_1)$, the corresponding A-morphisms $r,r':G\to G_1$ satisfy $f\circ r=f\circ r'$. Consequently, $r=r'$, and then $R=R'$ follows. 
	
	Conversely, assuming $\overline{f}$ to be injective, we consider any A-morphisms $g,h:G'\to G_1$ with $f\circ g=f\circ h$ for some game $G'=(T',A')$. Then $\overline{f}\circ\overline{g}=\overline{f}\circ\overline{h}$, and since any $t'\in T'$ may be ``extended" to a run $R'$ in $G'$ with $R'\restrict n=t'$ where $n=|t'|$, we obtain $\overline{g}(R')=\overline{h}(R')$ and then $g(t')=\overline{g}(R')\restrict n=\overline{h}(R')\restrict n=h(t')$. Therefore, $g=h$. 
\end{proof}

\begin{remark}\label{REM_generator} 
	{\em(1)} Similarly as in the above proof one shows that the generating game $G=(\{*\}^{<\omega},\emptyset)$ has the property that, for all distinct $\A$-morphisms $f,g:G_1\to G_2$, there is some $\A$-morphism $r:G\to G_1$ with $f\circ r\neq g\circ r$; that is: $G$ is indeed a {\em generator} (also called {\em separator} \cite{Adamek1990}) of the category $\Games_{\A}$.
	
	{\em(2)} For a chronological map $f:T_1\to T_2$ of game trees, injectivity of $f$ obviously implies injectivity of $\overline{f}:\mathrm{Run}(T_1)\to\mathrm{Run}(T_2)$, and it is not difficult to see that the converse implication generally fails. (For instance, let $T_1$ be the game tree $T$ of {\em Example \ref{EX_PreImgNotSub}} below and let $f$ identify only the two moments  
	of length $1$.)  One therefore has examples of monomorphisms in $\Games_{\A}$ which fail to be injective maps, even surjective ones. In particular, a morphism that is simultaneously monic and epic may not be an isomorphism in the category $\Games_{\A}$.
	
	{\em(3)} For a chronological map $f:T_1\to T_2$, the surjectivity of $\overline{f}$ implies the surjectivity of $f$, but the converse statement generally fails. In order to see this, consider for $T_2$ the game tree $T$ of {\rm Example \ref{EX_cogen}} and let
	\[
	T_1=\set{\seq{k:i<n}, k,n<\omega}
	\]
	and $(T_1,\emptyset)\stackrel{f}{\to} (T,\mathrm{Run}(T))$ be such that
	\[
	f(\seq{k:i<n}) = \begin{cases}
		\seq{1:i<k}^\smallfrown \seq{0:j<n-k}, \text{ if $n>k$},\\
		\seq{1:i<n}, \text{ otherwise.}
	\end{cases}
	\]
	Then $\overline{f}(R)\neq\seq{1:n<\omega}$ for any $R\in \Run(T_1)$.
	
	\emph{(4)} The game $G=(T,\Run(T))$ of {\rm Example \ref{EX_cogen}} is indeed a cogenerator (also coseparator \cite{Adamek1990}) in $\Games_{\rm A}$. In order to see this, suppose $f,g\colon G_1=(T_1,A_1)\to G_2=(T_2,A_2)$ are distinct and fix $s\in T_1$ such that $f(s)\neq g(s)$. Without loss of generality, let us assume that $f(s\restrict n)=g(s\restrict n)$ for every $n<|s|$. Define $h\colon T_2\to T$ as
	\[
	h(t) = \begin{cases}
		\seq{1:i<|s|-1}^\smallfrown \seq{0:j<|t|-(|s|-1)}, \text{ if $|t|>|s|$ and $t\restrict |s| = f(s)$},\\
		\seq{1:i<|t|}, \text{ otherwise}.
	\end{cases}
	\]
	Then $h\circ f(s) = \seq{1:i<|s|}^\smallfrown 0\neq \seq{1:i<|s|}^\smallfrown 1 = h\circ f(s)$. It remains to be shown that $h$ is chronological. Indeed, for any $t\in T_2$, if $|t|<|s|$ or  $t\restrict |s|\neq f(s)$, then 
	\[
	h(t\restrict n) = \seq{1:i<n} = \seq{1:i<|t|}\restrict n = h(t)\restrict n;
	\]
	otherwise,
	\[
	h(t\restrict n) = \begin{cases}
		\seq{1:i<|s|-1}^\smallfrown \seq{0:j<n-(|s|-1)} = h(t)\restrict n, \text{ if $n>|s|$},\\
		\seq{1:i<n} = h(t)\restrict n, \text{ else}.
	\end{cases}
	\]
	Hence, $h$ is chronological.
\end{remark}

Given games $G_1=(T_1, A_2)$, $G_2=(T_2, A_2)$, paraphrasing item (c) of Proposition \ref{PROP_morphproperties}, we have that a mapping $f\colon T_1\to T_2$ is an $\A$-\textit{isomorphism} if, and only if, it is a $\B$-\textit{isomorphism}, in which case we can just say that $f$ is an isomorphism.

Property (e) of Proposition \ref{PROP_morphproperties} deserves a special name:

\begin{defn}\label{DEF_Emb}
	Let $G_1=(T_1,A_1)$ and $G_2=(T_2,A_2)$ be games. We say that a chronological mapping $f\colon T_1\to T_2$ is an \textit{embedding} of $G_1$ into $G_2$ {(indicated by $G_1\stackrel{f}{\embed}G_2$)} if $f$ is injective and both an $\A$- and a $\B$-morphism.
\end{defn}

Game embeddings are characterized as the strong, and even regular, monomorphisms in the categories $\Games_{\A}$ and $\Games_\B$: see Corollary \ref{COR_ortfactsyst} below.

\medskip
Trivially, an isomorphism maps a winning strategy in the domain to a winning strategy in the range. Next we identify a much larger class of morphisms of games preserving winning strategies.

\begin{defn}
	For game trees $T_1$, $T_2$, we call a chronological map  $T_1\stackrel{f}{\to} T_2$ {\em locally surjective at $t\in T_1$} if, for every element $y$ with $f(t)^\smallfrown y\in T_2$, there is an element $x$ with  $t^\smallfrown x\in T_1$ and $f(t^\smallfrown x)=f(t)^\smallfrown y$. 
	The mapping $f$ is {\em locally surjective} if
	\begin{itemize}
		\item $T_1= \emptyset$ implies $T_2= \emptyset$, and
		\item $f$ is locally surjective at every $t\in T_1$.
	\end{itemize}
\end{defn}

A locally surjective $T_1\stackrel{f}{\to} T_2$ is necessarily surjective. (This is true by definition when $T_1= \emptyset$, and for $T_1\neq\emptyset$ note in particular that$f(\seq{\,})=\seq{\,}$ holds.)

The following proposition provides a categorical characterization of locally surjective morphisms. Recall that a family of morphisms $g_i:A_i\to B\;(i\in I)$ in a category $\mathbf{C}$ is \emph{collectively epic in $\mathbf{C}$} if, for any morphisms $f,f'\colon B\to C$ of $\mathbf{C}$ with $f\circ g_i= f'\circ g_i$ for all $i\in I$, one necessarily has $f=f'$.

\begin{prop}
	Let $T_1$, $T_2$ be game trees and $T_1\stackrel{f}{\to}T_2$ be chronological. Then $f$ is locally surjective if, and only if, there is a collectively epic family $(T_2\stackrel{g_i}{\to}T_1:i\in I)$ of morphisms in $\Gmes$ such that $f\circ g_i = \id_{T_2}$ for all $i\in I$.
\end{prop}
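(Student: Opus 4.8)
The statement is an iff, so I would prove the two implications separately, with the ``backward'' direction being the quick one and the ``forward'' direction requiring the actual construction.

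For the backward direction, suppose we have a collectively epic family $(T_2 \stackrel{g_i}{\to} T_1 : i \in I)$ in $\Gmes$ with $f \circ g_i = \id_{T_2}$ for all $i$. First I would dispose of the degenerate case: if $T_1 = \emptyset$, then there are no morphisms into $T_1$, so $I$ must be such that the empty family is collectively epic into $T_2$, forcing $T_2 = \emptyset$ (since the empty family is collectively epic only into an object admitting at most one morphism to any target, and one checks this forces $T_2=\emptyset$). For $T_1 \neq \emptyset$, I would verify local surjectivity at each $t \in T_1$ directly. Fix $t \in T_1$ and $y$ with $f(t)^\smallfrown y \in T_2$. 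The relation $f \circ g_i = \id_{T_2}$ tells us each $g_i$ is a section of $f$; the key point is to extract from the collectively epic hypothesis that every edge of $T_2$ is hit ``compatibly'' by some $g_i$ lying over $t$. Concretely, I would use that $g_i$ is chronological and $f(g_i(f(t)^\smallfrown y)) = f(t)^\smallfrown y$, so $g_i(f(t)^\smallfrown y)$ is a length-$(|t|+1)$ extension in $T_1$ mapping under $f$ to $f(t)^\smallfrown y$; provided some $g_i$ sends $f(t)$ to $t$ itself, this witnesses local surjectivity at $t$. The collectively-epic condition is exactly what I would invoke to guarantee that for each moment $t$ there is an index $i$ with $g_i(f(t)) = t$.

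For the forward direction, assume $f$ is locally surjective. The plan is to construct an explicit family of sections, one for each choice function that picks, at every moment, a single $f$-preimage edge. Using the axiom of choice and local surjectivity, for each $t_2 \in T_2$ and each candidate preimage behavior, I would build chronological maps $g \colon T_2 \to T_1$ satisfying $f \circ g = \id_{T_2}$ by recursion on length: set $g(\seq{\,}) = \seq{\,}$, and having defined $g(s)$ with $f(g(s)) = s$, for each $y$ with $s^\smallfrown y \in T_2$ use local surjectivity at $g(s)$ to choose an $x$ with $g(s)^\smallfrown x \in T_1$ and $f(g(s)^\smallfrown x) = s^\smallfrown y = f(g(s))^\smallfrown y$, then set $g(s^\smallfrown y) = g(s)^\smallfrown x$. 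This $g$ is chronological and a section of $f$ by construction. To make the family collectively epic, I would let $I$ index enough such sections so that every $t_1 \in T_1$ lies in the image of some $g_i$; since any $t_1$ can be reached by steering the recursive choices to follow the edges of $t_1$ (each edge $g(s)^\smallfrown x$ along the way is a legitimate local-surjectivity choice when $f(t_1 \restrict (k+1))$ is the relevant $y$), the family $\{g_i\}$ covers all of $T_1$ in its images. Collective epicness then follows from the characterization of monomorphisms/generators: two maps $h, h' \colon T_2 \to C$ agreeing after precomposition with every $g_i$ must agree on $f(t_1)$ for every $t_1$, hence on all of $T_2$ by surjectivity of $f$.

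The main obstacle I anticipate is the bookkeeping in the forward direction to guarantee the family is simultaneously collectively epic \emph{and} consists of genuine sections. Building a single section is routine, but ensuring the images of the chosen sections cover every moment of $T_1$ requires care: a single recursively-defined section makes one choice at each $T_2$-edge and therefore cannot by itself surject onto $T_1$ whenever $f$ is genuinely many-to-one. I would resolve this by indexing sections over all global choice functions (or at least a subfamily large enough that each $t_1 \in T_1$ is the chosen preimage along its own path for some index), and then argue collective epicness through the fact that $f$ is surjective so that $\{f(t_1) : t_1 \in T_1\} = T_2$. A secondary subtlety is the empty-tree edge case in the backward direction, which must be handled explicitly since the definition of local surjectivity treats $T_1 = \emptyset$ separately.
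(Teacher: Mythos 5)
Your proposal is correct and takes essentially the same approach as the paper: in the backward direction you reduce collective epicness in $\Gmes$ to joint surjectivity, pick an index $i$ with $g_i(f(t))=t$, and let chronology of $g_i$ produce the lift $t^\smallfrown x = g_i(f(t)^\smallfrown y)$; in the forward direction you construct, exactly as the paper does, one section per moment $t\in T_1$ by recursion on length, steering the local-surjectivity choices along $t$'s own path so that the images of the sections jointly cover $T_1$. Two slips are cosmetic rather than structural: the test maps $h,h'$ in the collective-epicness check must have domain $T_1$, not $T_2$ (agreement at each $t_1=g_{t_1}(f(t_1))$ finishes the argument, with no appeal to surjectivity of $f$), and your empty-case reasoning is circular -- the empty family into $T_1=\emptyset$ is collectively epic no matter what $T_2$ is, so it does not force $T_2=\emptyset$ -- but the paper dispatches that degenerate case just as summarily (``the assertion is trivial if $T_1=\emptyset$'').
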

\begin{proof}
	The assertion is trivial if $T_1=\emptyset$, so we may assume $T_1\neq \emptyset$ throughout. Furthermore, one easily verifies that a family $(T_2\stackrel{g_i}{\to}T_1:i\in I)$ is collectively epic in $\Gmes$ if, and only if, $\bigcup_{i\in I}g_i[T_2]=T_1$. Now, given such a family with $f\circ g_i = \id_{T_2}$ for all $i\in I$, we must confirm that $f$ is locally surjective. Indeed,
	considering $t\in T_1$ and $y$ with $f(t)^\smallfrown y\in T_2$, we first find some $i\in I$ and $t'\in T_2$ with $t=g_i(t')$. Since $f\circ g_i=\id_{T_2}$ we get $f(t)=t'$ and then $g_i(f(t))=t$. Since $g_i$ is chronological, this forces $g_i(f(t)^\smallfrown y)=t^\smallfrown x\in T_1$ with some element $x$, and  $f(t^\smallfrown x)=f(t)^\smallfrown y$ follows.
	
	Conversely, for $f$ locally surjective, it suffices to construct a family $(T_2\stackrel{g_t}{\to}T_1:t\in T_1)$ with $f\circ g_t=\id_{T_2}$ and $t\in g_t[T_2]$, for all $t\in T_1$. So, fixing $t\in T_1$, we establish the chronological mapping $g_t\colon T_2\to T_1$ by recursively defining $g_t(s)\in T_1$ with $f(g_t(s))=s$, for all $s\in T_2=\bigcup_{n} T_2(n)$, as follows.
	\begin{itemize}
		\item Put $g_t(\seq{\,})=\seq{\,}$.
		\item Suppose that $g_t$ is defined on $T_2\restrict n$ as desired, and now consider $s\in T_2(n+1)$. If $s=f(t\restrict n+1)$, then simply put $g_t(s)=t\restrict n+1$. Otherwise, we write $s$ as $s=s'^\smallfrown y$ with $s'\in T_2(n)$, and we can assume that $g_t(s')\in T_1(n)$ is already defined, satisfying $f(g_t(s'))=s'$. Now the local surjectivity of $f$ gives us an $x$ with $g_t(s')^\smallfrown x\in T_1$ and $f(g_t(s')^\smallfrown x)=s'^\smallfrown y=s$. Hence, putting $g_t(s)=g_t(s')^\smallfrown x$ completes the induction step.
	\end{itemize}
	Clearly, the mapping $g_t$ is chronological and satisfies $g_t(f(t))=t$ and $f\circ g_t=\id_{T_2}$, as desired.
\end{proof}

If $G_1\stackrel{f}{\to}G_2$ and $\gamma=(T_\gamma,A_\gamma)$ is a strategy for $\ali$ in $G_1$, for $f[T_\gamma]$ to be a strategy for $\ali$ in $G_2$, it is necessary that $f$ be locally surjective at moments which are $\bob$'s turn: indeed, $f[T_\gamma]$ must account for every possible response $\bob$ might give to $\ali$ in $G_2$ in order to be a strategy in $G_2$. It should also be clear that this necessary condition is sufficient as well. Therefore:

\begin{cor}\label{LEMMA_StratImage}
	Let $G_1\stackrel{f}{\to}_\A G_2$ and $G_1\stackrel{g}{\to}_\B G_2$ be locally surjective morphisms of the games $G_1=(T_1,A_1)$ and $G_2=(T_2,A_2)$. Then:
	\begin{itemize}
		\item[(a)] If $\gamma=(T_\gamma,A_\gamma)$ is a winning strategy for $\ali$ in $G_1$ (so that $A_{\gamma}=\Run(T_{\gamma})$), then $(f[T_\gamma], \overline{f}[\Run(T_\gamma)])$ is a winning strategy for $\ali$ in $G_2$.
		\item[(b)] If $\sigma =(T_\sigma,A_\sigma)$ is a winning strategy for $\bob$ in $G_1$ (so that $A_{\gamma}=\emptyset$), then $(g[T_\sigma], \emptyset)$ is a winning strategy for $\bob$ in $G_2$. 
	\end{itemize}
\end{cor}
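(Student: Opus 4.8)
The plan is to verify the two claims directly from the definitions of strategy subgame (Definitions~\ref{DEF_AStrat} and \ref{DEF_BStrat}), using the preceding proposition that characterizes local surjectivity and the elementary fact (Proposition~\ref{PROP_morphproperties}(d)) that chronological maps send pruned subtrees to pruned subtrees. The crux is that $f[T_\gamma]$ is already known to be a pruned subtree of $T_2$; what must be checked is that it satisfies the \emph{uniqueness} condition at $\ali$'s turns (condition (c)) and the \emph{completeness} condition at $\bob$'s turns (condition (d)), and then that the payoff set is correct so that the resulting subgame is genuinely winning.

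For part (a), first I would observe that $f[T_\gamma]$ is nonempty (it contains $\seq{\,}$) and pruned. At a moment $s = f(t) \in f[T_\gamma]$ with $t \in T_\gamma$, the key step is to transfer the structural properties of $T_\gamma$ across $f$. Since $f$ is chronological and length-preserving, $s$ is $\ali$'s turn in $G_2$ exactly when $t$ is $\ali$'s turn in $G_1$. If $s$ is $\ali$'s turn, uniqueness of the $\ali$-extension in $T_\gamma$ together with the fact that $f$ preserves the parity-determined move-structure should force a unique extension of $s$ inside $f[T_\gamma]$; I would spell out that any two extensions $s^\smallfrown y_0, s^\smallfrown y_1 \in f[T_\gamma]$ must come from extensions of $t$ in $T_\gamma$, of which there is only one at $\ali$'s turn, giving $y_0 = y_1$. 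If $s$ is $\bob$'s turn, I must show every legal $\bob$-response $s^\smallfrown y \in T_2$ already lies in $f[T_\gamma]$; this is exactly where local surjectivity of $f$ at $t$ is used — it produces an $x$ with $t^\smallfrown x \in T_1$ and $f(t^\smallfrown x) = s^\smallfrown y$, and since $\gamma$ is a winning strategy for $\ali$, $T_\gamma$ contains \emph{all} of $\bob$'s responses at $t$, so $t^\smallfrown x \in T_\gamma$ and hence $s^\smallfrown y \in f[T_\gamma]$. Finally, because $\gamma$ is winning we have $A_\gamma = \Run(T_\gamma)$, so its image determines the payoff $\overline{f}[\Run(T_\gamma)] = \Run(f[T_\gamma]) \cap A_2$; using that $f$ is an $\A$-morphism (so $\overline{f}$ carries $A_1$-runs, and in particular all of $A_\gamma = \Run(T_\gamma)$, into $A_2$) I would conclude $\Run(f[T_\gamma]) = \overline{f}[\Run(T_\gamma)] \subseteq A_2$, which is precisely the condition $A_{f[T_\gamma]} = \Run(f[T_\gamma])$ for a winning strategy for $\ali$.

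Part (b) is the dual, and I would carry it out by swapping the roles of $\ali$ and $\bob$ throughout. Now $\sigma$ is a winning strategy for $\bob$ in $G_1$ with $A_\sigma = \emptyset$, so $\Run(T_\sigma) \cap A_1 = \emptyset$, i.e.\ every run of $T_\sigma$ lies in $\Run(T_1)\setminus A_1$. At $\bob$'s turns uniqueness transfers across $g$ exactly as before; at $\ali$'s turns local surjectivity of $g$ at $t$ together with condition (d) of Definition~\ref{DEF_BStrat} guarantees every legal $\ali$-move is captured in $g[T_\sigma]$. The payoff claim uses that $g$ is a $\B$-morphism: since every run of $T_\sigma$ lies in $\Run(T_1)\setminus A_1$, its $\overline{g}$-image lies in $\Run(T_2)\setminus A_2$, whence $\Run(g[T_\sigma]) \cap A_2 = \emptyset$, i.e.\ $A_{g[T_\sigma]} = \emptyset$, as required for a winning strategy for $\bob$.

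The main obstacle I anticipate is not any single hard step but the care needed at the uniqueness condition (c): since $f$ need not be injective (the paper has emphasized that non-injective monomorphisms and even identifications occur), one must be sure that distinct $\ali$-responses in $T_\gamma$ cannot collapse to the same image while a \emph{spurious} second extension of $s$ in $f[T_2]$ arises from some other preimage moment $t'' \neq t$ with $f(t'')=s$. Handling this cleanly requires observing that membership in $f[T_\gamma]$ is witnessed moment-by-moment and that every extension $s^\smallfrown y \in f[T_\gamma]$, regardless of which preimage witnesses $s$, must arise from an $\ali$-extension inside the single strategy subtree $T_\gamma$; the uniqueness of $\ali$'s move is a property of $T_\gamma$ and is what ultimately pins down $y$. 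I expect this bookkeeping, rather than any conceptual difficulty, to be the part most in need of a careful phrasing.
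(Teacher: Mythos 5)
Your overall plan (check conditions (b)--(d) of Definition~\ref{DEF_AStrat} for the image tree, with local surjectivity used at \bob's turns and the morphism property used for the payoff) is exactly the paper's informal sketch -- the paper itself offers nothing beyond the sentence ``it should also be clear that this necessary condition is sufficient as well.'' But the step you yourself single out, uniqueness (c) at \ali's turns, is where the argument genuinely breaks, and your proposed resolution is a non sequitur. Uniqueness of $\gamma$'s move is a property of each moment of $T_\gamma$ \emph{separately}, not of its $f$-image: since $f$ need not be injective, and since condition (d) forces $T_\gamma$ to retain \emph{all} of \bob's moves -- which $f$ may identify -- there can be distinct $t',t''\in T_\gamma$ with $f(t')=f(t'')=s$, each carrying its own unique $\gamma$-extension, whose images are \emph{different} extensions of $s$. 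Concretely, let \bob\ have moves $b_0,b_1$ at $\seq{a}$ with $f(\seq{a,b_0})=f(\seq{a,b_1})=s$, give each $\seq{a,b_i}$ the two moves $u_0,u_1$ with $f(\seq{a,b_i,u_j})=s^\smallfrown y_j$ (so $f$ stays locally surjective), and let $\gamma$ answer $b_0$ with $u_0$ and $b_1$ with $u_1$: then $s^\smallfrown y_0$ and $s^\smallfrown y_1$ both lie in $f[T_\gamma]$, so the image is not a strategy subgame at all. Your closing sentence -- ``the uniqueness of \ali's move is a property of $T_\gamma$ and is what ultimately pins down $y$'' -- is precisely the inference that fails; no bookkeeping of witnesses can repair it, because the literal claim that $f[T_\gamma]$ satisfies (c) is false in general.

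There is a second, independent gap in your payoff step: you assert $\overline{f}[\Run(T_\gamma)]=\Run(f[T_\gamma])$, but only the inclusion $\subseteq$ is automatic. A run $R$ of $f[T_\gamma]$ guarantees preimages in $T_\gamma$ of every \emph{prefix}, yet the preimage tree $\set{t\in T_\gamma: f(t)=R\restrict|t|}$ may have nonempty levels at every height and still no infinite branch: it can branch infinitely at \bob's turns (so K\"onig's lemma does not apply), and $f$ restricted to $T_\gamma$ is \emph{not} locally surjective at \ali's turns, since $\gamma$ retains only one of \ali's moves while the move witnessing local surjectivity in $T_1$ may be a different one. Such an $R$ need not lie in $A_2$, so winningness is also at risk. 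What is true -- and all that the applications to Theorems \ref{THM_ClassOmega} and \ref{THM_ClassGamma} require -- is that $f[T_\gamma]$ \emph{contains} a winning strategy for \ali\ (the formulation the paper itself adopts in Corollary~\ref{LEMMA_StratPreimage} for preimages). To prove that, argue by the same moment-by-moment recursion used in the paper's categorical characterization of local surjectivity (the construction of the maps $g_t$): maintain a preimage $t\in T_\gamma$ of the current position of $G_2$; at \ali's turns play the $f$-image of $\gamma$'s unique move, and at \bob's turns lift his move into $T_1$ by local surjectivity and into $T_\gamma$ by condition (d). Every run of the strategy so obtained is the $\overline{f}$-image of a run of $T_\gamma$, hence lies in $A_2$ because $A_\gamma=\Run(T_\gamma)\subseteq A_1$ and $f$ is an $\A$-morphism; part (b) dualizes verbatim, with runs of $T_\sigma$ lying in $\Run(T_1)\setminus A_1$ and $g$ a $\B$-morphism. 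In short: your instinct to flag (c) was correct, but the fix is preimage-tracking, not taking the raw image.
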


We saw in item (d) of Proposition \ref{PROP_morphproperties} that the image of a game under a chronological mapping is always a subgame of the codomain. So, it is only natural to ask whether pre-images of subgames under chronological mappings are subgames as well. The following example shows that this is not necessarily the case:

\begin{ex}\label{EX_PreImgNotSub}
	Consider the set
	\[
	T =  \left(\bigcup_{n<\omega}\{\seq{0:i< n}\}\right) \cup \left(\bigcup_{n<\omega}\{\seq{1:i< n}\}\right),\\
	\]
	of finite sequences that are constantly $0$ or constantly $1$, and let $T' = \{\seq{\,}\}\cup\set{\seq{0}^\smallfrown t: t\in  T}$. With $A=A'=\emptyset$
	we have the surjective $\A$-morphism $(T,A)\stackrel{f}{\to}_\A(T',A')$ defined by
	\[
	f(\seq{j:i< n})=\begin{cases}
		\seq{\,}, \text{ if $n=0$,}\\
		\seq{0}^\smallfrown\seq{j:i< n-1}, \text{ otherwise.}
	\end{cases}
	\]
	where $j\in\{0,1\}$. For the subgame
	\[
	T''=\{\seq{\,}\}\cup\set{\seq{0}^\smallfrown t: t\in  \bigcup_{n<\omega}\{\seq{1:i< n}\}},
	\]
	of $T'$ we have 
	\[
	f^{-1}(T'')=\{\seq{0}\}\cup \left(\bigcup_{n<\omega}\{\seq{1:i<n}\}\right),
	\]
	but this set is not pruned: although $\seq{0}\in f^{-1}(T'')$, there is no $x$ such that $\seq{0}^\smallfrown x \in f^{-1}(T'')$). Consequently, $f^{-1}(T'')\not\le G$.
\end{ex}

We note that $f^{-1}(T'')$ in Example \ref{EX_PreImgNotSub} is a subtree of $T$--the only obstruction for $f^{-1}(T'')\le G$ is the fact that $f^{-1}(T'')$ is not pruned. This observation may be easily generalized, as follows:

\begin{lemma}\label{PROP_SubChar}
	For games $G=(T,A)$ and $G'=(T',A')$, $G\stackrel{f}{\to}G'$ and $T''\le G'$, $f^{-1}(T'')\le G$ if, and only if, $f^{-1}(T'')$ is a pruned tree, that is: if for every $t\in f^{-1}(T'')$ there is an $x\in \M(G)$ such that $t^\smallfrown x\in f^{-1}(T'')$.
\end{lemma}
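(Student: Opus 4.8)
The plan is to reduce the statement to the elementary observation that chronological maps commute with truncation, so that the only property of $f^{-1}(T'')$ capable of failing is prunedness. The forward implication is immediate: by definition, $f^{-1}(T'')\le G$ says that $f^{-1}(T'')$ is a pruned subtree of $T$, and prunedness is part of this. Hence the entire content lies in the converse, and even there the argument is short.

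For the converse, I would assume $f^{-1}(T'')$ is pruned and observe that, by Definition \ref{DEF_GameTree}, it remains only to verify closure under truncation, i.e.\ condition (\rom{1}): that $t\in f^{-1}(T'')$ and $k\le|t|$ force $t\restrict k\in f^{-1}(T'')$. The key point is that this closure holds \emph{unconditionally} for the pre-image of any pruned subtree under a chronological map. Indeed, fix $t\in f^{-1}(T'')$, so that $f(t)\in T''$, and let $k\le|t|$. Since $f$ is chronological, $f(t\restrict k)=f(t)\restrict k$; and since $T''$ is itself a game tree (being a pruned subtree of $T'$), it is closed under truncation, whence $f(t)\restrict k\in T''$. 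Therefore $f(t\restrict k)\in T''$, i.e.\ $t\restrict k\in f^{-1}(T'')$, where we also use that $t\restrict k\in T$ because $T$ is a game tree. Combining this truncation closure with the assumed prunedness yields that $f^{-1}(T'')$ is a pruned subtree of $T$, that is, $f^{-1}(T'')\le G$.

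There is, in truth, no serious obstacle here: the lemma merely isolates prunedness as the sole possible point of failure, exactly as witnessed by Example \ref{EX_PreImgNotSub}, where $f^{-1}(T'')$ is a subtree closed under truncation yet fails to be pruned. The only subtlety worth flagging is the appeal to the chronological identity $f(t\restrict k)=f(t)\restrict k$, which is precisely what guarantees that pre-images respect initial segments; without it the truncation-closure step would not go through, and the stated equivalence would be false.
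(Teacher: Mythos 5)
Your proof is correct and takes essentially the same approach the paper intends: the paper states this lemma without an explicit proof, having just observed after Example \ref{EX_PreImgNotSub} that failure of prunedness is the \emph{only} obstruction to $f^{-1}(T'')\le G$, and your verification that closure under truncation holds automatically for preimages under chronological maps (via $f(t\restrict k)=f(t)\restrict k$ and the truncation-closure of $T''$) is precisely the ``easy generalization'' the authors leave to the reader. Nothing further is needed.
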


The Lemma can be usefully applied to locally surjective morphisms:

\begin{prop}
	Given games $G=(T,A)$, $G'=(T',A')$, $G\stackrel{f}{\to}G'$ and $G''=(T'',A'')\le G'$, if the morphism $G\stackrel{f}{\to}G'$ is locally surjective, then $f^{-1}(T'')\le G$.
\end{prop}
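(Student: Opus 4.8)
The plan is to apply Lemma \ref{PROP_SubChar}, which reduces the claim $f^{-1}(T'')\le G$ to a single verification: that $f^{-1}(T'')$ is a pruned tree. Since $T''\le G'$ means $T''$ is itself a pruned subtree of $T'$, the only thing left to check is condition (\rom{2}) of Definition \ref{DEF_GameTree} for $f^{-1}(T'')$, namely that every moment in the preimage can be extended by one move within the preimage. (That $f^{-1}(T'')$ is closed under truncation, condition (\rom{1}), is automatic because $f$ is chronological and $T''$ is truncation-closed: if $t\in f^{-1}(T'')$ then $f(t)\in T''$, so $f(t\restrict k)=f(t)\restrict k\in T''$, whence $t\restrict k\in f^{-1}(T'')$.)

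First I would fix an arbitrary $t\in f^{-1}(T'')$, so that $f(t)\in T''$. Since $T''$ is pruned, there is some element $y$ with $f(t)^\smallfrown y\in T''$. Now I would invoke the local surjectivity of $f$ at $t$: this yields an element $x$ with $t^\smallfrown x\in T$ and $f(t^\smallfrown x)=f(t)^\smallfrown y$. Because $f(t)^\smallfrown y\in T''$, we conclude $f(t^\smallfrown x)\in T''$, that is $t^\smallfrown x\in f^{-1}(T'')$. This exhibits the required extension of $t$ inside the preimage, so $f^{-1}(T'')$ is pruned, and Lemma \ref{PROP_SubChar} delivers $f^{-1}(T'')\le G$.

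There is essentially no obstacle here; the proposition is a clean corollary of the Lemma once local surjectivity is used in exactly the place where pruning could fail. The whole point of Example \ref{EX_PreImgNotSub} was that a generic chronological (even surjective) map need not lift the extendability of $T''$ back to the preimage — the moment $\seq{0}$ had a successor in $T'$ that happened to leave $T''$, with no preimage successor staying inside. Local surjectivity at $t$ is precisely the hypothesis guaranteeing that every successor direction $y$ of $f(t)$ in $T''$ is realized by some successor direction $x$ of $t$, which is what repairs that gap. The argument is uniform and requires no separate treatment of $\ali$'s versus $\bob$'s turns, nor any reference to the payoff sets $A, A', A''$.
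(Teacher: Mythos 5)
Your proof is correct and follows essentially the same route as the paper: the paper likewise invokes Lemma \ref{PROP_SubChar} to reduce the claim to prunedness of $f^{-1}(T'')$, then for $t\in f^{-1}(T'')$ uses prunedness of $T''$ to get $y$ with $f(t)^\smallfrown y\in T''$ and local surjectivity to lift it to some $x$ with $f(t^\smallfrown x)=f(t)^\smallfrown y$. Your added remarks---the explicit check of truncation-closure and the connection to Example \ref{EX_PreImgNotSub}---are accurate but not needed beyond what the Lemma already supplies.
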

\begin{proof}
	We show that, for every $t\in f^{-1}(T'')$, there is an $x\in \M(G)$ with $f(t^\smallfrown x)\in T''$.
	Indeed, since $T''$ is pruned, from $f(t)\in T''$ one has $f(t)^\smallfrown y\in T''$ for some $y\in \M(G')$. Now, because $f$ is locally surjective, there is an $x\in \M(G)$ with $t^\smallfrown x\in T$ and $f(t^\smallfrown x)=f(t)^\smallfrown y\in T''$, as desired.
\end{proof}

\begin{cor}\label{LEMMA_StratPreimage}
	Let $G\stackrel{f}{\to}_\A G'$ and $G\stackrel{g}{\to}_\B G'$ be locally surjective morphisms of the games $G=(T,A)$ and $G'=(T',A')$. Then:
	\begin{itemize}
		\item[(a)] If $\gamma=(T_\gamma,A_\gamma)\le G'$ is a winning strategy for $\ali$, then $g^{-1}(T_\gamma)$ contains a winning strategy for $\ali$ in $G$;
		\item[(b)] If $\sigma =(T_\sigma,A_\sigma)\le G'$ is a winning strategy for $\bob$, then $f^{-1}(T_\sigma)$ contains a winning strategy for $\bob$ in $G$.
	\end{itemize}
\end{cor}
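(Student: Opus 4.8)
The plan is to handle both parts by the same two-stage method: use local surjectivity to pull the strategy subtree back to $G$, prune it down to an honest strategy, and then use the \emph{appropriate} morphism property to verify that the result is winning. I focus on (a); part (b) is strictly dual.

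First, since $g$ is locally surjective, the preceding proposition (locally surjective preimages of subgames are subgames) gives at once that $g^{-1}(T_\gamma)$ is a pruned subtree of $T$, so $g^{-1}(T_\gamma)\le G$. The next step is to read off the branching of $g^{-1}(T_\gamma)$ according to whose turn it is. As $g$ is chronological it preserves the length, hence the parity, of every moment, so $t\in T$ is $\ali$'s (resp. $\bob$'s) turn exactly when $g(t)$ is. At a moment $t\in g^{-1}(T_\gamma)$ that is $\bob$'s turn, applying Definition \ref{DEF_AStrat}(d) to $\gamma$ at $g(t)$, together with the fact that $g(t^\smallfrown x)\in T'$ for any child $t^\smallfrown x\in T$, forces $g(t^\smallfrown x)\in T_\gamma$, so \emph{every} child of $t$ in $T$ lies again in $g^{-1}(T_\gamma)$. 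At a moment $t$ that is $\ali$'s turn, local surjectivity of $g$ produces at least one child $t^\smallfrown x\in g^{-1}(T_\gamma)$, while the uniqueness clause Definition \ref{DEF_AStrat}(c) for $g(t)$ guarantees that all such children map to the single $\ali$-move prescribed by $\gamma$.

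With this branching picture, I would define $S\subseteq g^{-1}(T_\gamma)$ by recursion on length: retain $\seq{\,}$, keep all of $\bob$'s moves, and at each of $\ali$'s turns select exactly one child that exists in $g^{-1}(T_\gamma)$. Checking conditions (a)--(d) of Definition \ref{DEF_AStrat} is then routine: $S$ is pruned because $T$ is pruned and $\bob$'s moves are never removed while $\ali$'s chosen move always exists, $S\le G$ follows from $S\subseteq T$ being pruned, (c) holds by the single choice at $\ali$'s turns, and (d) holds because at $\bob$'s turns every child in $T$ was shown to lie in $g^{-1}(T_\gamma)$ and hence is kept.

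Finally, to see $S$ is winning, take any run $R\in\Run(S)$. Then $\overline{g}(R)\restrict n=g(R\restrict n)\in T_\gamma$ for all $n$, so $\overline{g}(R)\in\Run(T_\gamma)$; since $\gamma$ is winning for $\ali$ and $\gamma\le G'$, we have $\Run(T_\gamma)=A_\gamma=A'\cap\Run(T_\gamma)\subseteq A'$, whence $\overline{g}(R)\in A'$. Because $g$ is a $\B$-morphism, the contrapositive of Definition \ref{DEF_Amorph}(B) gives $R\in A$; thus $A\cap\Run(S)=\Run(S)$ and $S$ is the desired winning strategy for $\ali$ contained in $g^{-1}(T_\gamma)$. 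For (b) one argues symmetrically with $f$ in place of $g$, building $\bob$'s strategy by a unique choice at each of $\bob$'s turns, using that $\sigma$ winning means $A_\sigma=\emptyset$ so no run of $T_\sigma$ lies in $A'$, and invoking the contrapositive of the $\A$-morphism condition Definition \ref{DEF_Amorph}(A) to conclude no run of the constructed tree lies in $A$. The only genuinely delicate point is bookkeeping: one must use $g$ (the $\B$-morphism) for $\ali$ and $f$ (the $\A$-morphism) for $\bob$, since it is precisely the correct choice of morphism type whose contrapositive delivers the matching winning criterion.
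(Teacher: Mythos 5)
Your proof is correct and follows essentially the same route as the paper: both rest on the preceding proposition that locally surjective preimages of subgames are subgames, both verify condition (d) of Definition \ref{DEF_AStrat} at $\bob$'s (resp.\ $\ali$'s) turns by pushing children through the morphism into the strategy tree, and both conclude via the contrapositive of the $\B$- (resp.\ $\A$-) morphism condition that every run of the resulting tree is won by the intended player. The only cosmetic difference is that you construct the strategy explicitly by recursion, whereas the paper takes \emph{any} strategy in the relativized game $(g^{-1}(T_\gamma),\Run(g^{-1}(T_\gamma)))$ and checks that it remains a strategy in $G$.
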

\begin{proof}
	By the previous corollary, $g^{-1}(T_\gamma)\le G$ and $f^{-1}(T_\sigma)\le G$. Moreover, since $g$ is a $\B$-morphism and $\gamma$ is a winning strategy for $\ali$, we have $A\cap \Run(g^{-1}(T_\gamma))=\Run(g^{-1}(T_\gamma))$. Likewise, since $f$ is an $\A$-morphism and $\sigma$ is a winning strategy for $\bob$, we have $\Run(f^{-1}(T_\sigma))=\emptyset$. So, in order to obtain the desired winning strategies, it suffices to consider \textit{any} strategy $\tilde{\gamma}=(T_{\tilde{\gamma}},A_{\tilde{\gamma}})$ for $\ali$ in $G_\A:=(g^{-1}(T_\gamma),\Run(g^{-1}(T_\gamma)))$, and \textit{any} strategy $\tilde{\sigma}=(T_{\tilde{\sigma}},A_{\tilde{\sigma}})$ for $\bob$ in $G_\B:=(f^{-1}(T_\sigma),\emptyset)$.
	
	Indeed, let $t\in T_{\tilde{\gamma}}$ be such that $t$ is $\bob$'s turn, and let $x$ be such that $t^\smallfrown x\in G$.
	Then, because $\gamma$ is a strategy for $\ali$ in $G'$, $g(t^\smallfrown x)\in T_\gamma$, so $t^\smallfrown x\in g^{-1}(T_\gamma)$. Now, because $\gamma_G$ is a strategy for $\ali$ in $G_\A$, we obtain $t^\smallfrown x\in T_{\gamma_G}$, as desired.
	
	Finally, we note that every run in $G_\A$ is won by $\ali$ and conclude that $\gamma_G$ is a winning strategy for $\ali$ in $G$.
	
	Of course, the proof is analogous for $\tilde{\sigma}$.
\end{proof}

Corollaries \ref{LEMMA_StratImage} and \ref{LEMMA_StratPreimage} will be useful later when we show how some classical theorems for topological games may be obtained using locally surjective mappings.

\section{Topological games as functors}\label{SEC_TopGames}
It is high time for us to point out a trivial fact:
\begin{prop}\label{PROP_EquivCat}
	The categories $\Games_\A$ and $\Games_\B$ are isomorphic.
\end{prop}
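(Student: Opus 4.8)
The plan is to exhibit an explicit isomorphism functor whose effect is simply to \emph{swap the winner} of each run, leaving everything else untouched. Concretely, I would define $\Phi\colon\Games_\A\to\Games_\B$ on objects by complementing the payoff set within the runs,
\[
\Phi(T,A)=(T,\Run(T)\setminus A),
\]
and on morphisms by sending a chronological map to itself, $\Phi(f)=f$. Since every game satisfies $A\subseteq\Run(T)$, this assignment is an involution on objects, which is what will ultimately make $\Phi$ invertible.

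First I would check that $\Phi$ is well defined on morphisms, i.e.\ that it really lands in $\B$-morphisms. So suppose $f\colon(T_1,A_1)\to_\A(T_2,A_2)$ is an $\A$-morphism; I claim the same chronological map is a $\B$-morphism $(T_1,\Run(T_1)\setminus A_1)\to_\B(T_2,\Run(T_2)\setminus A_2)$. Unpacking Definition \ref{DEF_Amorph}(B) for the complemented games, the requirement is that $\overline{f}(R)\in\Run(T_2)\setminus(\Run(T_2)\setminus A_2)$ for every $R\in\Run(T_1)\setminus(\Run(T_1)\setminus A_1)$. Because $A_i\subseteq\Run(T_i)$, the two double complements collapse to $A_2$ and $A_1$ respectively, so the condition reads: $\overline{f}(R)\in A_2$ for every $R\in A_1$. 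This is precisely the hypothesis that $f$ is an $\A$-morphism, so the claim holds.

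Next I would record functoriality, which is immediate: $\Phi$ fixes the underlying chronological map, so it preserves identities and respects composition by Propositions \ref{PROP_morphcat_gme} and \ref{PROP_morphcat} (the composite $\A$-morphism and the composite $\B$-morphism share the same underlying map). Finally I would produce the inverse by the identical recipe: define $\Psi\colon\Games_\B\to\Games_\A$ by $\Psi(T,A)=(T,\Run(T)\setminus A)$ on objects and $\Psi(g)=g$ on morphisms. The same double-complement computation, now reading Definition \ref{DEF_Amorph} in the opposite direction, shows $\Psi$ sends $\B$-morphisms to $\A$-morphisms. Since complementation of the payoff set inside $\Run(T)$ is an involution and both functors act as the identity on underlying chronological maps, one gets $\Psi\circ\Phi=\id_{\Games_\A}$ and $\Phi\circ\Psi=\id_{\Games_\B}$ on objects and on morphisms alike. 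Hence $\Phi$ is an isomorphism of categories.

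There is essentially no hard step here; the only point demanding care is the set-theoretic bookkeeping of the double complements, where one must keep invoking that $A\subseteq\Run(T)$ holds for every game (built into Definition \ref{DEF_infinite_game}) so that $\Run(T)\setminus(\Run(T)\setminus A)=A$. The conceptual content is simply that an $\A$-morphism preserves ``Alice wins'' exactly when, after swapping the roles of the two players in both games, it preserves ``Bob wins''.
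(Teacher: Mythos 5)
Your proposal is correct and coincides with the paper's own proof, which uses exactly the same complementation functor $(T,A)\mapsto(T,\Run(T)\setminus A)$ acting as the identity on underlying chronological maps and notes it is inverse to itself. Your version merely spells out the double-complement verification that the paper leaves implicit.
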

\begin{proof}
	The functor $\Games_\A \stackrel{}{\to}\Games_\B$  sending the game $(T,A)$	to $(T,\Run(T)\setminus A)$ makes every A-morphism a B-morphism and is trivially inverse to itself.	
\end{proof}

Despite being isomorphic, the distinction of our two game categories is useful. Indeed, we will see next that some topological games are naturally described as functors over only one of the two categories.

\begin{ex}
	Let $\CTopp$ denote the category of pointed topological spaces. Hence, an object $(X,x)$ is a topological space $X$ with a distinguished point $x$, and a morphism $f:(X,x)\to(Y,y)$ is a continuous map with $f(x)=y$. 
	The tightness games naturally lead to the following functors:
	\begin{description}
		\item[$\g_1(\W_x,\W_x)$] The functor \begin{tikzcd}[column sep = 2em]
			\CTopp \arrow[r, "\Tight^\W_1"] &	\Games_\B
		\end{tikzcd} sends
		\begin{itemize}
			\item an object $(X,x)$ to the game $\g_1(\W_x,\W_x)$ over $X$, and
			\item a morphism $f\colon (X,x)\to (Y,y)$ to the $\B$-morphism 
			\begin{center}
				\begin{tikzcd}[row sep = 0em]
					\Tight^\W_1(X,x) \arrow[r, "\Tight^\W_1(f)"] &	\Tight^\W_1(Y,y) \\
					\seq{A_0, a_0, \dotsc, A_n, a_n}  \arrow[r, mapsto] &   \seq{f[A_0], f(a_0), \dotsc, f[A_n], f(a_n)}.
				\end{tikzcd}
			\end{center}
			(Indeed, for every $n<\omega$, $x\in\overline{A_n}$ implies $f(x)=y\in\overline{f[A_n]}$ and, for the same reason, $x\in \overline{\set{a_n:n\ge k}}$ implies $f(x)=y\in\overline{\set{f(a_n):n\ge k}}$ for all $k<\omega$.)
		\end{itemize} 
		\item[$\g_1(\W_x,\Gamma_x)$] The functor \begin{tikzcd}[column sep = 2em]
			\CTopp \arrow[r, "\Tight^\Gamma_1"] &	\Games_\B
		\end{tikzcd} sends
		\begin{itemize}
			\item an object $(X,x)$ to $\g_1(\W_x,\Gamma_x)$ over $X$, and 
			\item a morphism $f\colon (X,x)\to (Y,y)$ to the $\B$-morphism 
			\begin{center}
				\begin{tikzcd}[row sep = 0em]
					\Tight^\Gamma_1(X,x) \arrow[r, "\Tight^\Gamma_1(f)"] &	\Tight^\Gamma_1(Y,y) \\
					\seq{A_0, a_0, \dotsc, A_n, a_n}  \arrow[r, mapsto] &   \seq{f[A_0], f(a_0), \dotsc, f[A_n], f(a_n)}.
				\end{tikzcd}
			\end{center}
		\end{itemize} 
	\end{description}
\end{ex}

\begin{ex}
	Also the covering games can naturally be seen as (contravariant) functors:
	\begin{itemize}
		\item $\g_1(\W,\W)$: Here, \begin{tikzcd}[column sep = 2em]
			\CTop^{\op} \arrow[r, "\Cover^\W_1"] &	\Games_\B
		\end{tikzcd} is defined
		\begin{itemize}
			\item on objects by $\Cover^\W_1X=\g_1(\W,\W)$ over $X$, and
			\item on morphisms $f\colon X\to Y$ by
			\begin{center}
				\begin{tikzcd}[row sep = 0em]
					\Cover^\W_1Y \arrow[r, "\Cover^\W_1f"] &	\Cover^\W_1X \\
					\seq{\mU_0, U_0, \dotsc, \mU_n, U_n}  \arrow[r, mapsto] &   \seq{f^{-1}[\mU_0], f^{-1}(U_0), \dotsc, f^{-1}[\mU_n], f^{-1}(U_n)},
				\end{tikzcd}
			\end{center}
			where $f^{-1}[\mU_k]=\set{f^{-1}(U):U\in \mU_k}$. (Indeed, if  $n<\omega$ and $F\subseteq X$ finite, for the finite set $f[F]\subseteq Y$ one has some $U\in \mU_n$ with $f[F]\subseteq U$ and, hence, $F\subseteq f^{-1}(U)\in f^{-1}[\mU_n]$. By the same argument it is clear that, if $\set{U_n:n\ge k}$ is an $\omega$-cover, then $\set{f^{-1}(U_n):n\ge k}$ is also an $\omega$-cover.)
		\end{itemize} 
		\item $\g_1(\W,\Gamma)$: The functor \begin{tikzcd}[column sep = 2em]
			\CTop^{\op} \arrow[r, "\Cover^\Gamma_1"] &	\Games_\B
		\end{tikzcd} is defined
		\begin{itemize}
			\item on objects by $\Cover^\Gamma_1X=\g_1(\W,\W)$ over $X$. 
			\item on morphisms $f\colon X\to Y$ by				
			\begin{center}
				\begin{tikzcd}[row sep = 0em]
					\Cover^\Gamma_1Y \arrow[r, "\Cover^\Gamma_1f"] &	\Cover^\Gamma_1X \\
					\seq{\mU_0, U_0, \dotsc, \mU_n, U_n}  \arrow[r, mapsto] &   \seq{f^{-1}[\mU_0], f^{-1}(U_0), \dotsc, f^{-1}[\mU_n], f^{-1}(U_n)}.
				\end{tikzcd}
			\end{center}
			(Indeed, if $n<\omega$ and $F\subseteq X$ finite, for the finite set $f[F]\subseteq Y$ one finds $U\in \mU_n$ with $f[F]\subseteq U$ and, hence, $F\subseteq f^{-1}(U)\in f^{-1}[\mU_n]$. Similarly, given $S\subseteq \omega$ infinite and an $\omega$-cover $\set{U_n:n\in S}$ is an $\omega$, also $\set{f^{-1}(U_n):n\in S}$ is an $\omega$-cover.)
		\end{itemize} 
	\end{itemize}
\end{ex}

We now proceed to provide some new categorical depth to Theorems \ref{THM_ClassOmega} and \ref{THM_ClassGamma} as stated in the Introduction, in each case facilitated by the fact that there are two natural transformations linking some of the functors just presented. To this end, we also need the auxiliary ``lifted'' hom-functor 
\[
{\Cp}_*:\CTop^{\op}\to \CTopp	
\]	
\begin{itemize}
	\item sending a space $X$ to the function space $\Cp(X)$ with its topology of pointwise convergence, pointed by the function  $\bar{0}:X\to\RR$ with constant value $0$, \textit{i.e.}, {${\Cp}_*X = (\Cp(X), \bar{0})$}, and
	\item a continuous mapping $f\colon X\to Y$ to the $\CTopp$-morphism
	\begin{center}
		\begin{tikzcd}[row sep = 0em]
			(\Cp(Y), \bar{0}) \arrow[r, "{\Cp}_*f"] &	(\Cp(X), \bar{0}) \\
			\varphi  \arrow[r, mapsto] &   \varphi\circ f.
		\end{tikzcd}
	\end{center}
\end{itemize}

Each of the proofs of the two logical directions of the equivalence statement in Theorem  \ref{THM_ClassOmega} will be based on a proposition establishing the required natural transformation, followed by two lemmata. The proof of Theorem \ref{THM_ClassGamma} follows the same scheme, so that should suffice for us to give just an adaptation of a shortened replica of the proof of Theorem \ref{THM_ClassOmega}. 

We  recall some aspects of the original proof of Theorem \ref{THM_ClassOmega} in \cite{Scheepers1997}, as follows.
For every space $X$ and $A\subseteq \Cp(X)$, with $I_0$  denoting the open interval $(-1,1)$, one considers
\[
\mU_0(A)=\set{\varphi^{-1}\left(I_0\right): \varphi\in A}.
\]
(Here we carry the index $0$ only in order to stay consistent with a notation used lateron.)
Trivially, if $X=\emptyset$, then $\mU_0(A)$ is an $\omega$-cover.  If $X\neq\emptyset$ and $\bar{0}\in \overline{A}$, then $\mU_0(A)$ is an $\omega$-cover as well. Indeed, for a finite set $F\subseteq X$, since $\bar{0}\in \overline{A}$, there is some $\varphi\in A$ such that $\varphi(x)\in I_0$ for every $x\in F$, so $F\subseteq \varphi^{-1}\left(I_0\right)\in \mU_0(A)$.

\begin{prop}\label{PROP_theta}
	For every space $X$ one has the $\B$-morphism	
	\begin{center}
		\begin{tikzcd}[row sep = 0em]
			\Tight^\Omega_1( {\Cp}_*X )\arrow[r, "\theta_X"] &	\Cover^\Omega_1X \\
			\seq{A_0, \varphi_0, \dotsc, A_n, \varphi_n}  \arrow[r, mapsto] &   \seq{\mU_0(A_0), \varphi_0^{-1}(I_0), \dotsc,\mU_0(A_n), \varphi_n^{-1}(I_0)},
		\end{tikzcd}
	\end{center}
	defining a natural transformation $\theta\colon \Tight^\Omega_1\circ {\Cp}_* \longrightarrow \Cover^\Omega_1$.
	
	\[
	\xymatrix{ \CTop^{\op}\ar[rr]^{{\Cp}_*}_\theta\ar[rd]_{\Cover^\Omega_1}^{\qquad\;\;\,\Longleftarrow}			 && \CTopp\ar[ld]^{\Tight^\Omega_1}		\\
		& \Games_\B & \\
	}
	\]
\end{prop}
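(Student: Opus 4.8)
The plan is to verify two things: first, that each component $\theta_X$ is a well-defined $\B$-morphism, and second, that the collection $\{\theta_X\}$ is natural in $X$. Since the functors $\Tight^\Omega_1 \circ {\Cp}_*$ and $\Cover^\Omega_1$ both map into $\Games_\B$ and are contravariant on $\CTop^{\op}$ (equivalently covariant on $\CTop$), naturality means that for every continuous $f\colon X\to Y$ the square relating $\theta_X$, $\theta_Y$ and the two functors' actions on $f$ commutes.

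First I would confirm $\theta_X$ is chronological: it preserves length (a moment of $2m$ or $2m+1$ entries maps to one of the same length) and commutes with truncation, since the image of an initial segment $\seq{A_0,\varphi_0,\dots,A_k}$ is obtained by applying the same entry-wise rules. The entry-wise rules are: on $\ali$'s moves $A_n$ (a subset of $\Cp(X)$ with $\bar 0 \in \overline{A_n}$) we apply $A_n \mapsto \mU_0(A_n)$, and on $\bob$'s moves $\varphi_n$ we apply $\varphi_n \mapsto \varphi_n^{-1}(I_0)$. I must check these land in the right game tree: the recollection preceding the statement already establishes that $\bar 0 \in \overline{A_n}$ forces $\mU_0(A_n)$ to be an $\omega$-cover, so $\ali$'s image move is legal in $\Cover^\Omega_1 X$; and $\varphi_n \in A_n$ means $\varphi_n^{-1}(I_0) \in \mU_0(A_n)$, so $\bob$'s image move is a legal response. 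Thus $\theta_X$ maps moments to moments.

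Next I would verify the $\B$-morphism property of Definition \ref{DEF_Amorph}(B): if a run $R$ is won by $\bob$ in $\Tight^\Omega_1({\Cp}_*X)$, then $\overline{\theta_X}(R)$ is won by $\bob$ in $\Cover^\Omega_1 X$. By the payoff descriptions in Examples \ref{EX_Tightness} and \ref{EX_Covering}, $\bob$ winning the source run means $\bar 0 \in \overline{\set{\varphi_n : n\ge k}}$ for every $k$, and $\bob$ winning the target run means $\set{\varphi_n^{-1}(I_0):n\ge k}$ is an $\omega$-cover for every $k$. The implication is precisely the same calculation as in the recollection: fixing $k$ and a finite $F\subseteq X$, the condition $\bar 0 \in \overline{\set{\varphi_n:n\ge k}}$ yields some $n\ge k$ with $\varphi_n(x)\in I_0$ for all $x\in F$, so $F\subseteq \varphi_n^{-1}(I_0)$; this gives the required $\omega$-cover tail. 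This is the substantive content of $\theta_X$ being a morphism in $\Games_\B$.

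Finally I would check naturality. For $f\colon X\to Y$ in $\CTop$, I must show $\Cover^\Omega_1 f \circ \theta_Y = \theta_X \circ \Tight^\Omega_1({\Cp}_* f)$ as $\B$-morphisms $\Tight^\Omega_1({\Cp}_* Y) \to \Cover^\Omega_1 X$. Since a chronological map is determined entry-wise, it suffices to trace a single $\bob$-move $\varphi \in \Cp(Y)$ and a single $\ali$-move $A\subseteq \Cp(Y)$ through both composites. Here ${\Cp}_* f$ sends $\varphi \mapsto \varphi\circ f$ and $A \mapsto \set{\psi\circ f:\psi\in A}$, while $\Cover^\Omega_1 f$ takes preimages under $f$. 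The crux is the set-theoretic identity $(\varphi\circ f)^{-1}(I_0) = f^{-1}(\varphi^{-1}(I_0))$ on single functions, and correspondingly $\mU_0(\set{\psi\circ f:\psi\in A}) = f^{-1}[\mU_0(A)]$ on covers; both are immediate from the definitions. I expect the main obstacle to be purely bookkeeping: organizing the entry-wise verification so that the chronological structure lets me reduce the whole commuting square to these two elementary preimage identities, rather than any genuine difficulty — once the identity $(\varphi\circ f)^{-1}(I_0)=f^{-1}(\varphi^{-1}(I_0))$ is in hand, naturality follows directly.
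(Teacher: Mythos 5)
Your proposal is correct and follows essentially the same route as the paper's proof: chronology plus the recollection argument (that $\bar 0\in\overline{A}$ makes $\mU_0(A)$ an $\omega$-cover, applied to tails $\set{\varphi_n:n\ge k}$) gives the $\B$-morphism property, and naturality reduces to the entry-wise identities $(\varphi\circ f)^{-1}(I_0)=f^{-1}(\varphi^{-1}(I_0))$ and $\mU_0(A\circ f)=f^{-1}[\mU_0(A)]$, exactly as in the paper's displayed computation. Your explicit check that image moves are legal in $\Cover^\Omega_1X$ (in particular that $\varphi_n^{-1}(I_0)\in\mU_0(A_n)$) only makes precise what the paper subsumes under ``$\theta_X$ is clearly chronological.''
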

\begin{proof}
	To see that $\theta_X$ is a $\B$-morphism, we first note that $\theta_X$ is clearly chronological. Suppose now that $\seq{A_0, \varphi_0, \dotsc, A_n, \varphi_n,\dotsc}$ is a run in $\Tight^\Omega_1({\Cp}_*X)$ in which $\bob$ wins. Showing that $\bob$ wins the run $\seq{\mU_0(A_0), \varphi_0^{-1}(I_0), \dotsc,\mU_0(A_n), \varphi_n^{-1}(I_0)}$ is then analogous to the proof that $\mU_0(A)$ is an $\omega$-cover if $\overline{0}\in \overline{A}$.
	
	In order to confirm that $\theta$ is a natural transformation, for every continuous map $f:X\to Y$ we need to show the commutativity of the diagram
	\[
	\xymatrix{\Tight^\Omega_1({\Cp}_*Y)\ar[rr]^{\Tight^\Omega_1({\Cp}_*f)}\ar[d]_{\theta_Y}	 && \Tight^\Omega_1({\Cp}_*X)\ar[d]^{\theta_X} \\
		\Cover^\Omega_1Y\ar[rr]^{\Cover^\Omega_1f}	&& \Cover^\Omega_1X\,.	\\	
	}
	\]	
	Here the mapping $\Tight^\Omega_1({\Cp}_*f)$ sends every sequence $\seq{A_0, \varphi_0, \dotsc, A_n, \varphi_n}$ of the domain to the sequence
	$\seq{A_0\circ f, \varphi_0\circ f, \dotsc, A_n\circ f, \varphi_n\circ f}$, with the abbreviation $A\circ f = \set{\varphi\circ f:\varphi\in A}$.
	Therefore, 
	\begin{align*}
		& \theta_X(\Tight^\Omega_1( \Cp f(\seq{A_0, \varphi_0, \dotsc, A_n, \varphi_n}))\\
		=\; & \seq{\mU_0(A_0\circ f), (\varphi_0\circ f)^{-1}(I_0), \dotsc,\mU_0(A_n\circ f), (\varphi_n\circ f)^{-1}(I_0)}  \\
		=\; & \seq{\mU_0(A_0\circ f), f^{-1}(\varphi_0^{-1}(I_0)), \dotsc,\mU_0(A_n\circ f), f^{-1}(\varphi_n^{-1}(I_0))}\\
		=\; & \Cover^\Omega_1f(\theta_Y(\seq{A_0, \varphi_0, \dotsc, A_n, \varphi_n})).\\
	\end{align*}	
\end{proof}

\begin{lemma}\label{LEMMA_thetaLocalSurj}
	For every space $X$, the codomain restriction of $\Tight^\Omega_1({\Cp}_*X)\stackrel{\theta_X}{\to} \Cover^\W_1X$ to its image is locally surjective.
\end{lemma}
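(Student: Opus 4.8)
The plan is to verify the definition of local surjectivity directly at each moment of the domain tree, splitting into two cases according to whose turn it is and using the passage to the image in an essential way only for $\ali$'s moves. Write $T_1$ for the game tree of $\Tight^\W_1({\Cp}_*X)=\g_1(\W_{\bar{0}},\W_{\bar{0}})$ over $\Cp(X)$ and $T_2$ for that of $\Cover^\W_1 X=\g_1(\W,\W)$ over $X$; by Proposition \ref{PROP_morphproperties}(d) the image $\theta_X[T_1]$ is a subgame of $\Cover^\W_1 X$, and it is the map $\theta_X\colon T_1\to \theta_X[T_1]$ that I must show to be locally surjective. The first clause in the definition of local surjectivity is automatic here, since the codomain is the image (if $T_1=\emptyset$ then so is $\theta_X[T_1]$). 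So I fix $t\in T_1$ and $y$ with $\theta_X(t)^\smallfrown y\in\theta_X[T_1]$, and I must produce an $x$ with $t^\smallfrown x\in T_1$ and $\theta_X(t^\smallfrown x)=\theta_X(t)^\smallfrown y$.

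First I would treat the case where $t$ is $\bob$'s turn, say $t=\seq{A_0,\varphi_0,\dotsc,A_{n-1},\varphi_{n-1},A_n}$, so that $\theta_X(t)$ ends in the $\omega$-cover $\mU_0(A_n)$. Because $\theta_X(t)^\smallfrown y$ is a moment of the covering tree $T_2$, the move $y$ must be some $U\in\mU_0(A_n)$. By the very definition $\mU_0(A_n)=\set{\varphi^{-1}(I_0):\varphi\in A_n}$ there is then a $\varphi\in A_n$ with $\varphi^{-1}(I_0)=U$; this $\varphi$ is a legal response for $\bob$ at $t$ in the tightness game, so $t^\smallfrown\varphi\in T_1$ and $\theta_X(t^\smallfrown\varphi)=\theta_X(t)^\smallfrown U=\theta_X(t)^\smallfrown y$, as required. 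In fact this already shows that $\theta_X$ is locally surjective at every $\bob$-moment, even before the codomain is restricted.

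Next I would treat the case where $t$ is $\ali$'s turn. Since $\theta_X(t)^\smallfrown y$ belongs to the image, there is some $s\in T_1$ with $\theta_X(s)=\theta_X(t)^\smallfrown y$; as $|s|=|t|+1$ is odd, the last entry of $s$ is one of $\ali$'s sets $A$ with $\bar{0}\in\overline{A}$, and comparing last coordinates gives $y=\mU_0(A)$. This is exactly where the restriction to the image is indispensable, since an arbitrary $\omega$-cover of $X$ need not be of the form $\mU_0(A)$. Having extracted such an $A$, I simply replay it: because the tightness game imposes no constraint on $\ali$'s choice coming from the history of play, $t^\smallfrown A\in T_1$, and $\theta_X(t^\smallfrown A)=\theta_X(t)^\smallfrown\mU_0(A)=\theta_X(t)^\smallfrown y$.

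The two cases together establish the claim. I do not anticipate a genuine obstacle; the single conceptual point to keep in view is that the \emph{unrestricted} map $\theta_X\colon T_1\to T_2$ fails to be locally surjective at $\ali$'s moments (again because covers of the shape $\mU_0(A)$ are special), so the statement really does require passing to the image — after which the witnessing set $A$ is handed to us by the definition of the image and may be replayed thanks to the history-independence of $\ali$'s moves in the tightness game.
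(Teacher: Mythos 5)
Your argument is correct and follows essentially the same route as the paper's proof: the same case split on whose turn it is, the same observation that $\theta_X$ is locally surjective at $\bob$'s moments even before restricting the codomain, and the same key step at $\ali$'s moments of using membership in the image to extract a set $A$ with $\bar{0}\in\overline{A}$ and $y=\mU_0(A)$, which can then be replayed after $t$ since $\ali$'s moves in the tightness game are unconstrained by the history. You merely spell out the $\bob$ case (which the paper leaves as "clear") in more detail; there is nothing to correct.
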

\begin{proof}
	The local surjectivity at a moment $t$ in $\Tight^\Omega_1({\Cp}_*X)$ that is $\bob$'s turn should be clear; in fact, in this case we even have  local surjectivity of the unrestricted mapping $\theta_X$. 
	So, suppose $t$ is a moment in $\Tight^\Omega_1({\Cp}_*X)$ that is $\ali$'s turn, \textit{i.e.}, $t=\seq{A_0, \varphi_0, \dotsc, A_n, \varphi_n}$. Then $\theta_X(t)^\smallfrown y$ is a moment in the image of $\theta_X$ if, and only if, there is an $A_{n+1}\subseteq \Cp(X)$ such that $\bar{0}\in \overline{A_{n+1}}$ and $y=\mU_0(A_{n+1})$. But note that $t^\smallfrown A_{n+1}$ is a moment in $\Tight^\Omega_1({\Cp}_*X)$ and $\theta_X(t^\smallfrown A_{n+1})=\theta_X(t)^\smallfrown \mU_0(A_{n+1}) = \theta_X(t)^\smallfrown y$, which concludes the proof.
\end{proof}

Recall that two games $G,G'$ are \emph{equivalent} if $\ali$ has a winning strategy for $G$ precisely when $\ali$ has a winning stragey for $G'$, and likewise for $\bob$.

\begin{lemma}\label{LEMMA_thetaImgEqv}
	For every $T_{3\frac{1}{2}}$-space $X$, the image of $\theta_X$ is equivalent to $\Cover^\Omega_1X$. 
\end{lemma}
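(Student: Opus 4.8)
The plan is to show that the image of $\theta_X$, call it $\mathcal{I}_X$, is equivalent to $\Cover^\Omega_1 X$ by exhibiting a locally surjective $\B$-morphism in the \emph{reverse} direction, namely an embedding-like map $\Cover^\Omega_1 X \to \mathcal{I}_X$ (or a morphism witnessing that each game's winning strategies transfer to the other). By Lemma \ref{LEMMA_thetaLocalSurj} the inclusion $\mathcal{I}_X \hookrightarrow \Cover^\Omega_1 X$ is already locally surjective, so by Corollaries \ref{LEMMA_StratImage} and \ref{LEMMA_StratPreimage} winning strategies for either player in $\mathcal{I}_X$ push forward to winning strategies in $\Cover^\Omega_1 X$. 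The remaining content is therefore the converse: a winning strategy in $\Cover^\Omega_1 X$ must yield a winning strategy in $\mathcal{I}_X$. The natural device is to build a locally surjective morphism $\Cover^\Omega_1 X \to \mathcal{I}_X$ so that Corollary \ref{LEMMA_StratPreimage} applies in the other direction.

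The key construction uses the $T_{3\frac12}$ hypothesis, which is exactly where it is needed: given an $\omega$-cover $\mathcal{U}$ of $X$, I want to realize it (or a refinement of it that suffices) as $\mathcal{U}_0(A)$ for some $A \subseteq \Cp(X)$ with $\bar0 \in \overline A$. Concretely, for each open $U \in \mathcal{U}$ whose complement $X \setminus U$ is closed, complete regularity gives, for each finite $F \subseteq U$, a continuous $\varphi_{F,U}\colon X \to [0,1]$ (rescaled into $(-1,1)$) separating $F$ from $X\setminus U$, so that $\varphi_{F,U}^{-1}(I_0) \subseteq U$ while $F \subseteq \varphi_{F,U}^{-1}(I_0)$. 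Collecting such functions into a set $A$ produces $\bar0 \in \overline A$ (since every finite $F$ is captured) and $\mathcal{U}_0(A)$ an $\omega$-cover refining $\mathcal{U}$. First I would verify that this refinement is good enough: because $\omega$-covers are preserved under passing to refinements of this shape, and $\bob$'s subsequent choices of $\varphi^{-1}(I_0) \in \mathcal{U}_0(A)$ can be lifted to choices of the underlying $\varphi \in A$, the play in $\mathcal{I}_X$ can be made to track the play in $\Cover^\Omega_1 X$ inning by inning. I would then check that the winning condition is respected — that a tail $\{\varphi_n^{-1}(I_0) : n \ge k\}$ being an $\omega$-cover corresponds appropriately to $\bar0 \in \overline{\{\varphi_n : n \ge k\}}$ — which is again the complete-regularity computation recalled before Proposition \ref{PROP_theta}, run in reverse.

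With the map in hand, I would argue local surjectivity at $\bob$'s moments (where $\bob$ selects an element of the $\omega$-cover, and every such selection lifts to a function) and at $\ali$'s moments (where the refinement construction above produces, for each $\omega$-cover $\ali$ might play, a witnessing $A$). Then Corollary \ref{LEMMA_StratPreimage} transfers winning strategies for both $\ali$ and $\bob$ from $\Cover^\Omega_1 X$ back into $\mathcal{I}_X$, completing the equivalence. The two directions together — Lemma \ref{LEMMA_thetaLocalSurj} for one, the new construction for the other — give that the two games are equivalent in the sense just recalled.

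The main obstacle I anticipate is the \emph{matching of winning conditions across the refinement}. The forward map $\theta_X$ sends $A \mapsto \mathcal{U}_0(A)$ cleanly, but going backward one only recovers an $\omega$-cover that \emph{refines} $\mathcal{U}$, not $\mathcal{U}$ itself, so I must be careful that (i) $\bob$'s lifted responses in $\mathcal{I}_X$ genuinely correspond to legal responses in the original covering game, and (ii) the tail condition defining the payoff set is preserved under this refinement in both directions. Handling $\ali$'s strategies requires that each $\omega$-cover $\ali$ plays be hit by the image of some point of $\mathcal{I}_X$ (local surjectivity at $\ali$'s turns), and this is precisely the step where finite subsets $F$ and the separating functions must be organized so that the resulting $A$ is independent of which finite set is being covered — i.e. one single $A$ must simultaneously witness $\bar0\in\overline A$ and yield a refinement fine enough to serve every later move. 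Ensuring these dependencies are consistent, rather than circular, is the delicate point; everything else reduces to the complete-regularity estimate already displayed in the text and to routine application of the two strategy-transfer corollaries.
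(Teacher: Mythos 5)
Your central construction --- choosing, for each finite $F\subseteq X$, a witness $W_F\in\mW$ and a separating function $\varphi_F$ with $\varphi_F[F]=\{0\}$ and $\varphi_F[X\setminus W_F]=\{1\}$, then collecting these into a single set $A(\mW)$ with $\bar{0}\in\overline{A(\mW)}$ and $\mU_0(A(\mW))$ refining $\mW$ --- is exactly the paper's, and the worry you flag at the end (that one set $A$ must serve all finite sets simultaneously) is dissolved precisely by indexing over \emph{all} finite $F$ at once, as both you and the paper do. The gap lies in the categorical packaging around this construction. First, you misquote Lemma \ref{LEMMA_thetaLocalSurj}: it asserts local surjectivity of $\theta_X$ \emph{corestricted to its image}, not of the inclusion $\mathcal{I}_X\hookrightarrow\Cover^\Omega_1X$. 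That inclusion fails to be locally surjective at moments where $\ali$ is to move, since in the full game she may play arbitrary $\omega$-covers, not only those of the form $\mU_0(A)$; consequently $\bob$'s winning strategies do \emph{not} push forward along the inclusion via Corollary \ref{LEMMA_StratImage}, and one of the four transfer directions you claim for free is simply missing. Second, the reverse map you propose cannot exist as stated: a chronological $g\colon\Cover^\Omega_1X\to\mathcal{I}_X$ must send each of $\bob$'s moves $U\in\mU$ to some $\varphi^{-1}(I_0)$, a typically much smaller set, so $g$ cannot be a $\B$-morphism --- if $\bob$ wins the original run (say he always plays $U_n=X$, so every tail is an $\omega$-cover), nothing forces the tails of the shrunken image moves to remain $\omega$-covers. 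Finally, the variance of the corollaries is inverted in your text: Corollary \ref{LEMMA_StratPreimage} applied to $g\colon\Cover^\Omega_1X\to\mathcal{I}_X$ transfers strategies from the \emph{codomain} $\mathcal{I}_X$ into $\Cover^\Omega_1X$, not ``from $\Cover^\Omega_1X$ back into $\mathcal{I}_X$''; to transfer full-game strategies into the image along $g$ you would need the pushforward Corollary \ref{LEMMA_StratImage}, which requires $g$ to be an $\A$-morphism for $\ali$'s strategies and a $\B$-morphism for $\bob$'s --- and the latter is impossible by the observation above.

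This is why the paper does not attempt a morphism here at all: it proves only the refinement statement and then invokes ``a standard argument for covering games'', i.e.\ a back-and-forth simulation in which $\ali$'s moves are translated from $\Cover^\Omega_1X$ to $\mathcal{I}_X$ (replacing $\mW$ by $\mU_0(A(\mW))$) while $\bob$'s responses are translated in the \emph{opposite} direction ($\varphi_F^{-1}(I_0)\mapsto W_F$), the inclusions $\varphi_F^{-1}(I_0)\subseteq W_F$ guaranteeing that the tail conditions transfer correctly both ways. Such a simulation mixes directions within a single play and is therefore not a chronological map in either direction, so it lies outside the Corollary \ref{LEMMA_StratImage}/\ref{LEMMA_StratPreimage} machinery; the categorical transfer results are reserved for $\theta_X$ itself, while this lemma is deliberately game-theoretic. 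To repair your proof you would either have to reproduce that simulation argument explicitly, or substantially re-engineer your map $g$ (e.g.\ make it the identity on moves already lying in $\mathcal{I}_X$ and arrange $g(U)\subseteq U$ on $\bob$'s moves to salvage the $\A$-morphism property), and even then you would obtain only two of the four required transfer directions from it.
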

\begin{proof}
	It suffices to show that, for every $\omega$-cover $\mW$, there is a refinement $\mU$ of $\mW$ of the form $\mU=\mU_0(A)$, for some $A\subseteq \Cp(X)$ with $\bar{0}\in \overline{A}$. Then the equivalence follows from a standard argument for covering games.
	
	Indeed, given $\mW$, for every finite set $F\subseteq X$ we can choose a set $W_F\in \mW$ with $F\subseteq W_F$. Then, since $X$ is  a $T_{3\frac{1}{2}}$-space,		there is a continuous function $\varphi_F\colon X \to [0,1]$, such that $\varphi_F[F]=\{0\}$ and $\varphi_F[X\setminus W_F]=\{1\}$.
	Now let
	\[
	A(\mW)=\set{\varphi_F: F\subseteq X \text{ finite}}.
	\]
	Then we clearly have 
	$\bar{0}\in \overline{A(\mW)}$. Moreover, for every finite set $F\subseteq X$, the set $\varphi_F^{-1}(I_0)$ is contained in $W_F$, so that $\mU_0(A(\mW))$ refines $\mW$ and serves as the desired $\omega$-cover.
\end{proof}

The previous two lemmata in conjunction with Corollaries \ref{LEMMA_StratImage}(b) and \ref{LEMMA_StratPreimage}(a) prove the first half of Theorem  \ref{THM_ClassOmega}:	
\begin{cor}
	Let $X$ be a $T_{3\frac{1}{2}}$-space. Then:
	\begin{itemize}
		\item If $\ali$ wins $\g_1(\W,\W)$ over $X$, then $\ali$ wins also $ \g_1(\W_{\bar{0}},\W_{\bar{0}})$ over $\Cp(X)$.
		\item If $\bob$ wins $\g_1(\W_{\bar{0}},\W_{\bar{0}})$ over $\Cp(X)$, then $\bob$ wins also $\g_1(\W,\W)$ over $X$.
	\end{itemize}
\end{cor}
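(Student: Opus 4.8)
The plan is to read off both implications directly from the machinery assembled above, by treating $\theta_X$ as a locally surjective $\B$-morphism onto its image and using the equivalence lemma as a bridge between that image and the full covering game. First I would fix notation: write $G=\Tight^\W_1({\Cp}_* X)$ for the tightness game $\g_1(\W_{\bar 0},\W_{\bar 0})$ over $\Cp(X)$, write $H=\Cover^\W_1 X$ for the covering game $\g_1(\W,\W)$ over $X$, and let $H'\le H$ denote the subgame determined by the image of $\theta_X$. Then Lemma \ref{LEMMA_thetaLocalSurj} says precisely that the codomain restriction $G\stackrel{\theta_X}{\to}_\B H'$ is a locally surjective $\B$-morphism, while Lemma \ref{LEMMA_thetaImgEqv} says that $H'$ and $H$ are equivalent, so that $\ali$ (resp. $\bob$) wins $H'$ exactly when $\ali$ (resp. $\bob$) wins $H$.

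For the first bullet I would argue as follows. Assuming $\ali$ wins $H$, the equivalence of Lemma \ref{LEMMA_thetaImgEqv} yields a winning strategy for $\ali$ in $H'$. Since $G\stackrel{\theta_X}{\to}_\B H'$ is a locally surjective $\B$-morphism, Corollary \ref{LEMMA_StratPreimage}(a) then pulls this winning strategy back along $\theta_X$ to a winning strategy for $\ali$ in $G$, that is, $\ali$ wins $\g_1(\W_{\bar 0},\W_{\bar 0})$ over $\Cp(X)$.

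For the second bullet I would reverse the direction of travel. Assuming $\bob$ wins $G$, Corollary \ref{LEMMA_StratImage}(b) applied to the same locally surjective $\B$-morphism $G\stackrel{\theta_X}{\to}_\B H'$ pushes $\bob$'s winning strategy forward to a winning strategy for $\bob$ in $H'$; the equivalence of Lemma \ref{LEMMA_thetaImgEqv} then upgrades this to a win for $\bob$ in $H$, i.e. in $\g_1(\W,\W)$ over $X$.

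I do not expect a genuine obstacle at this stage, since all the substantive content has already been discharged into the two lemmas and the strategy-transfer corollaries. The one point demanding care is the bookkeeping of directions, together with the systematic use of the codomain-restricted morphism $G\stackrel{\theta_X}{\to}_\B H'$ rather than the unrestricted $\theta_X\colon G\to_\B H$: local surjectivity holds only onto the image $H'$, which is exactly why the equivalence of $H'$ with $H$ is the indispensable bridging step, with $\ali$'s strategies pulled back on the domain side and $\bob$'s strategies pushed forward on the image side.
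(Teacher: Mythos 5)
Your proposal is correct and follows essentially the same route as the paper, which likewise deduces the corollary from Lemmas \ref{LEMMA_thetaLocalSurj} and \ref{LEMMA_thetaImgEqv} combined with Corollaries \ref{LEMMA_StratImage}(b) and \ref{LEMMA_StratPreimage}(a), applied to the codomain restriction of $\theta_X$ onto its image. Your explicit bookkeeping---pulling $\ali$'s winning strategy back along the locally surjective $\B$-morphism and pushing $\bob$'s forward, with the equivalence of the image game with $\Cover^\W_1X$ as the bridge---is exactly the intended reading of the paper's one-line proof.
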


To obtain the other half of Theorem  \ref{THM_ClassOmega} in a similar fashion, we modify the definition of $\theta_X$ and consider the mapping

\begin{center}
	\begin{tikzcd}[row sep = 0em]
		\Tight^\Omega_1({\Cp}_*X)\arrow[r, "\eta_X"] &	{\Cover^\W_1X} \\
		\seq{A_0, \varphi_0, \dotsc, A_n, \varphi_n} \arrow[r, mapsto] &   \seq{\mU_0(A_0), \varphi_0^{-1}(I_0), \dotsc,\mU_n(A_n), \varphi_n^{-1}(I_n)}.
	\end{tikzcd}
\end{center}
Here, for a space $X$ and $A\subseteq \Cp(X)$, consistently with the earlier notations $I_0$ and  $\mU_0(A)$, we have put $I_n=(\frac{-1}{n+1},\frac{1}{n+1})$ and 
\[
\mU_n(A)=\set{\varphi^{-1}\left(I_n\right): \varphi\in A.}
\]
The proof that the mapping $\eta_X$ is well-defined is completely analogous to that of $\theta_X$: 
whenever $\bar{0}\in\overline{A}$, then $\mU_n(A)$ is an $\omega$-cover for every $n<\omega$.

Furthermore, the proof that the naturality diagram of a continuous map $f:X\to Y$ for $\eta_X$ is commutative proceeds just like the one for $\theta_X$. However, as we will show next, unlike $\theta_X$, the mapping $\eta_X$ is an $\A$-morphism, but generally fails to be a $\B$-morphism, \textit{i.e.}, generally $\eta_X$ fails to live in the target category $\Games_\B$ of the functors $\Tight_1^{\Omega}$ and $\Cover_1^{\Omega}$ and therefore does {\em not} lead to a natural transformation $\Tight^\Omega_1\circ {\Cp}_*\longrightarrow\Cover^\Omega_1$.
Still, employing the forgetful functor $\mathrm{U}\colon \Games_{\B}\to \Gmes$	, which disregards the payoff set of a game, we can state:

\begin{prop}
	For every space $X$, the mapping $\eta_X$ is an $\A$-morphism, defining the natural transformation $\eta\colon \mathrm{U}\circ\Tight^\Omega_1\circ {\Cp}_* \longrightarrow \mathrm{U}\circ\Cover^\Omega_1$.
	
	\[
	\xymatrix{ \CTop^{\op}\ar[rr]^{{\Cp}_*}_\eta\ar[rd]_{\mathrm{U}\circ\Cover^\Omega_1}^{\qquad\;\;\,\Longleftarrow}			 && \CTopp\ar[ld]^{\mathrm{U}\circ\Tight^\Omega_1}		\\
		& \Gmes & \\
	}
	\]
	
\end{prop}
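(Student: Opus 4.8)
The plan is to verify two separate assertions: first, that each component $\eta_X$ is an $\A$-morphism (together with the counterexample promised in the surrounding text, showing it is generally not a $\B$-morphism), and second, that the family $(\eta_X)_X$ is natural with respect to the functors $\mathrm{U}\circ\Tight^\Omega_1\circ{\Cp}_*$ and $\mathrm{U}\circ\Cover^\Omega_1$ into $\Gmes$. The naturality half is already essentially done: since the structure maps of $\eta_X$ agree with those of $\theta_X$ (only the shrinking intervals $I_n$ differ, and the naturality computation only ever uses that preimage commutes with composition, $(\varphi\circ f)^{-1}(I_n)=f^{-1}(\varphi^{-1}(I_n))$), I would simply point out that the diagram chase is \emph{verbatim} the one carried out for $\theta_X$ in Proposition \ref{PROP_theta}, replacing $I_0$ by $I_n$ in the $n$-th slot throughout. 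So the naturality square commutes in $\Gmes$ by the identical argument.

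The substantive step is showing $\eta_X$ is an $\A$-morphism. By Definition \ref{DEF_Amorph}(A), recalling that $\Tight^\Omega_1({\Cp}_*X)$ and $\Cover^\Omega_1X$ live in $\Games_\B$ so that a \emph{run} is won by $\ali$ precisely when the $\bob$-winning condition fails, I must check: if $R=\seq{A_0,\varphi_0,A_1,\varphi_1,\dotsc}$ is a run won by $\ali$ in the tightness game over ${\Cp}_*X$, then $\overline{\eta_X}(R)$ is won by $\ali$ in the covering game over $X$. So suppose $\bob$ wins $\overline{\eta_X}(R)$, meaning $\set{\varphi_n^{-1}(I_n):n\ge k}$ is an $\omega$-cover for every $k<\omega$; I must derive that $\bob$ wins $R$, i.e.\ that $\bar 0\in\overline{\set{\varphi_n:n\ge k}}$ for every $k$. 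Fix $k$ and a basic neighborhood of $\bar 0$, determined by a finite set $F\subseteq X$ and an $\varepsilon>0$; choose $m\ge k$ with $\frac{1}{m+1}<\varepsilon$. Since $\set{\varphi_n^{-1}(I_n):n\ge m}$ is an $\omega$-cover, there is some $n\ge m\ge k$ with $F\subseteq\varphi_n^{-1}(I_n)$, i.e.\ $|\varphi_n(x)|<\frac{1}{n+1}\le\frac{1}{m+1}<\varepsilon$ for all $x\in F$. Hence $\varphi_n$ lies in the chosen neighborhood of $\bar0$ and $n\ge k$, proving $\bar0\in\overline{\set{\varphi_n:n\ge k}}$. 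This is the contrapositive of the $\A$-morphism condition, so $\eta_X$ is an $\A$-morphism.

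The main obstacle — and the whole reason $\eta_X$ behaves differently from $\theta_X$ — is understanding why the same argument does \emph{not} give a $\B$-morphism, which is what forces the passage to $\mathrm{U}$. The $\B$-condition would require: if $\bob$ wins $R$ (so $\bar0\in\overline{\set{\varphi_n:n\ge k}}$ for all $k$), then $\bob$ wins $\overline{\eta_X}(R)$. This is where the shrinking intervals break the argument: from $\bar 0\in\overline{\set{\varphi_n:n\ge k}}$ one can, for a given $F$, find $n\ge k$ with $\varphi_n(x)$ small on $F$, but to cover $F$ by some $\varphi_n^{-1}(I_n)$ one needs $|\varphi_n(x)|<\frac{1}{n+1}$, a threshold that shrinks with the \emph{index} $n$ rather than being the fixed $I_0$; there is no reason the witnessing function, which has large index, also satisfies the correspondingly tighter bound. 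I would make this concrete by supplying a single space $X$ (a countable convergent-sequence type space suffices) and a run on which $\bob$ wins the tightness side yet $\ali$ wins the covering side, confirming $\eta_X\notin\Games_\B$. Consequently $\eta$ can only be a natural transformation after forgetting payoff sets via $\mathrm{U}$, which is exactly the stated conclusion.
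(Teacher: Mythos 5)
Your proposal is correct and follows essentially the same route as the paper: naturality is delegated verbatim to the $\theta_X$ computation, and the $\A$-morphism verification is just the contrapositive of the paper's direct argument (the paper assumes $\ali$ wins the tightness run and picks $N\ge k$ with $\frac{1}{N+1}<\varepsilon$ to defeat the $\omega$-cover condition from $N$ on, while you assume $\bob$ wins the covering run and pick $m\ge k$ with $\frac{1}{m+1}<\varepsilon$ — the same threshold trick exploiting $I_n\subseteq I_m$ for $n\ge m$). Your closing discussion of why $\eta_X$ fails to be a $\B$-morphism is not part of the proposition (the paper treats it in a separate example, where even the singleton space suffices), so leaving the counterexample as a sketch is harmless here.
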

\begin{proof}
	As indicated above, it remains to be shown that $\eta_X$ is an $\A$-morphism, for every space $X$. Hence, consider a run $\seq{A_0, \varphi_0, \dotsc, A_n, \varphi_n,\dotsc}$ in $\Tight^\Omega_1({\Cp}_*X)$ in which $\ali$ wins. Then there is some $k<\omega$, an $\varepsilon>0$ and a finite set $F\subseteq X$ such that, for every $n\ge k$, $\varphi_n[F]\not\subseteq]-\varepsilon,\varepsilon[$. Let $N\ge k$ be such that $\frac{1}{N+1}<\varepsilon$. Then we also have that $\varphi_n[F]\not\subseteq I_n$ for every $n\ge N$ and, hence, that $F\not\subseteq \varphi_n^{-1}(I_n)$ for every $n\ge N$. 
\end{proof}

As we indicate next, there are plenty of examples of spaces $X$ for which $\theta_X$ fails to be an $\A$-morphism and $\eta_X$ fails to be a $\B$-morphism. Once again, this underlines the necessity of distinguishing between the two types of game morphisms, despite the fact that the respective categories are isomorphic.

\begin{ex}
	For $\theta_X$ failing to be an $\A$-morphism, we may consider any non-empty space $X$. We just have to note that, as the preimages are always  taken of $I_0$, we can deduce from $\bob$ winning a run $\seq{\mU_0(A_0), \varphi_0^{-1}(I_0), \dotsc, \mU_0(A_n),\varphi_n^{-1}(I_0), \dotsc}$ in $\g_1(\W,\W)$ over $X$ only that the condition for $\bar{0}\in \overline{\set{\varphi_n:n<\omega}}$ is met for $\varepsilon=1$. This means that  the functions $\varphi_n$ with constant value $ \frac{1}{2}$ for all $n<\omega$ provide a counter-example, regardless of what the non-empty space $X$ may be.
	
	As for $\eta_X$ failing to be a $\B$-morphism, the singleton space $X=\{0\}=1$ provides	 a simple counterexample. Since  $\Cp(X)\cong\RR$, we can consider $\varphi_n(0)= \frac{2}{n+1}$ for every $n<\omega$, so that $\varphi_n^{-1}(I_n)=\emptyset$, despite $\bar{0}\in \overline{\set{\varphi_n:n<\omega}}$.   
\end{ex}

The proofs of the following two statements are analogous to those of Lemma \ref{LEMMA_thetaLocalSurj} and \ref{LEMMA_thetaImgEqv}.

\begin{lemma}
	For every space $X$, the codomain restriction of $\Tight^\Omega_1({\Cp}_*X)\stackrel{\eta_X}{\to} {\Cover^\W_1}X$ to its image is locally surjective.
\end{lemma}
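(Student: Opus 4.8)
The plan is to mimic the proof of Lemma \ref{LEMMA_thetaLocalSurj}, since the only difference between $\eta_X$ and $\theta_X$ is that at the $n$th inning $\eta_X$ uses the shrinking interval $I_n$ and the cover $\mU_n(A_n)$ in place of $I_0$ and $\mU_0(A_n)$; this change does not affect the combinatorial structure of the preimages under the local surjectivity argument. I would fix a moment $t$ in $\Tight^\Omega_1({\Cp}_*X)$ and split into the two cases according to whose turn it is.

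First I would handle the case where $t$ is $\bob$'s turn, say $t = \seq{A_0, \varphi_0, \dotsc, A_n}$. Here $\eta_X(t) = \seq{\mU_0(A_0), \varphi_0^{-1}(I_0), \dotsc, \mU_n(A_n)}$, and an extension $\eta_X(t)^\smallfrown y$ in the image corresponds to $\bob$ choosing some open set $U \in \mU_n(A_n)$, that is $y = \varphi^{-1}(I_n)$ for some $\varphi \in A_n$. But then $t^\smallfrown \varphi$ is a legal moment in $\Tight^\Omega_1({\Cp}_*X)$ (as $\varphi \in A_n$ is exactly a valid response of $\bob$), and by definition $\eta_X(t^\smallfrown \varphi) = \eta_X(t)^\smallfrown \varphi^{-1}(I_n) = \eta_X(t)^\smallfrown y$. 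As in the $\theta_X$ case, local surjectivity here even holds for the unrestricted map.

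Next I would treat the case where $t$ is $\ali$'s turn, say $t = \seq{A_0, \varphi_0, \dotsc, A_n, \varphi_n}$, so $t$ sits in the $(n+1)$th inning. An extension $\eta_X(t)^\smallfrown y$ lying in the image of $\eta_X$ is precisely one where $y = \mU_{n+1}(A_{n+1})$ for some $A_{n+1} \subseteq \Cp(X)$ with $\bar{0} \in \overline{A_{n+1}}$; this is the defining condition for $\ali$'s move in the tightness game to be translated by $\eta_X$. Then $t^\smallfrown A_{n+1}$ is a legal moment of $\Tight^\Omega_1({\Cp}_*X)$ and satisfies $\eta_X(t^\smallfrown A_{n+1}) = \eta_X(t)^\smallfrown \mU_{n+1}(A_{n+1}) = \eta_X(t)^\smallfrown y$, establishing local surjectivity at $t$.

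I do not anticipate a genuine obstacle here, since the argument is essentially verbatim that of Lemma \ref{LEMMA_thetaLocalSurj}; the author's own remark that ``the proofs of the following two statements are analogous'' confirms this. The one point requiring minor care is the bookkeeping of the index $n$: because $\eta_X$ applies $I_n$ and $\mU_n$ at the $n$th inning rather than the fixed $I_0$, I must make sure that when extending a moment of length $2n+1$ (or $2n+2$) I invoke the correct interval $I_n$ (resp. the correct cover $\mU_{n+1}$), so that the image description of the admissible extensions $y$ matches what $\eta_X$ actually produces. This is purely notational and does not alter the structure of the argument.
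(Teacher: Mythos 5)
Your proposal is correct and is exactly the argument the paper intends: the paper gives no separate proof for this lemma, stating only that it is analogous to that of Lemma \ref{LEMMA_thetaLocalSurj}, and your writeup is precisely that analogous proof with the index bookkeeping ($I_n$, $\mU_n$ at the $n$th inning) handled correctly in both cases. In particular you correctly note that at $\bob$'s turns even the unrestricted $\eta_X$ is locally surjective (any $y\in\mU_n(A_n)$ is $\varphi^{-1}(I_n)$ for some $\varphi\in A_n$, which is a legal move), while at $\ali$'s turns the restriction to the image is what guarantees $y=\mU_{n+1}(A_{n+1})$ for some admissible $A_{n+1}$.
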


\begin{lemma}
	For every $T_{3\frac{1}{2}}$-space $X$, the image of $\eta_X$ is equivalent to ${\Cover^\Omega_1}X$.
\end{lemma}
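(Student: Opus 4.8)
The plan is to run the proof of Lemma~\ref{LEMMA_thetaImgEqv} essentially verbatim, the single new wrinkle being that the interval used at inning $n$ is $I_n$ instead of $I_0$. Reading off the definition of $\eta_X$, a moment $\eta_X\seq{A_0,\varphi_0,\dots,A_n}$ forces $\ali$'s $n$-th move to have the shape $\mU_n(A_n)$ with $\bar{0}\in\overline{A_n}$, while $\bob$ may respond with any member $\varphi^{-1}(I_n)$ of that cover. Hence the image of $\eta_X$ is precisely the subgame of $\Cover^\Omega_1 X$ in which, at inning $n$, $\ali$ is restricted to covers of the form $\mU_n(A)$ (with $\bar{0}\in\overline{A}$), $\bob$ remaining free within the chosen cover. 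So, exactly as in Lemma~\ref{LEMMA_thetaImgEqv}, it suffices to prove the refinement statement \emph{uniformly in the inning index}: for every $n<\omega$ and every $\omega$-cover $\mW$ there is some $A\subseteq\Cp(X)$ with $\bar{0}\in\overline{A}$ such that $\mU_n(A)$ refines $\mW$.

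For the construction I would fix $\mW$ and, for each finite $F\subseteq X$, choose $W_F\in\mW$ with $F\subseteq W_F$, then invoke $T_{3\frac{1}{2}}$-ness to obtain a continuous $\varphi_F\colon X\to[0,1]$ with $\varphi_F[F]=\{0\}$ and $\varphi_F[X\setminus W_F]=\{1\}$, and set $A(\mW)=\set{\varphi_F:F\subseteq X\text{ finite}}$, so that $\bar{0}\in\overline{A(\mW)}$ is immediate. The decisive point — the only place the index $n$ enters — is the nesting $I_n\subseteq I_0$ together with $0\in I_n$. Since $0\in I_n$ we still have $F\subseteq\varphi_F^{-1}(I_n)$, whence $\mU_n(A(\mW))$ is an $\omega$-cover; and since $\varphi_F$ takes the value $1$ off $W_F$ with $1\notin I_0$, we have $\varphi_F^{-1}(I_0)\subseteq W_F$, so $I_n\subseteq I_0$ gives $\varphi_F^{-1}(I_n)\subseteq W_F$ and $\mU_n(A(\mW))$ refines $\mW$. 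Thus the required $A$ exists at every inning.

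To conclude I would invoke the same standard covering-game argument that closes Lemma~\ref{LEMMA_thetaImgEqv}: the refinement property transfers winning strategies between $\Cover^\Omega_1 X$ and its restricted image, in both directions and for both players, using only the monotonicity that a family of supersets of an $\omega$-cover is again an $\omega$-cover (and contrapositively that shrinking selected sets can only destroy the $\omega$-cover property). Concretely, when $\ali$'s move in one game is replaced by a refinement in the other, each selected set $U_n$ of $\bob$ lies inside a member $W_n$ of the coarser cover, so that $\set{U_n:n\ge k}$ being an $\omega$-cover forces the same for $\set{W_n:n\ge k}$, and conversely failures propagate downward. I do not anticipate a genuine obstacle here: the entire substance of the original lemma survives untouched, and the only thing needing verification is that the refinement can be produced in the \emph{specific} shape $\mU_n(A)$ dictated by inning $n$ rather than as an arbitrary $\omega$-cover — and this is exactly what the observation $0\in I_n\subseteq I_0$ guarantees.
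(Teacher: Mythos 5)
Your proposal is correct and takes essentially the same route as the paper, which simply declares this proof ``analogous'' to that of Lemma \ref{LEMMA_thetaImgEqv}: you reuse the same set $A(\mW)$ of functions $\varphi_F$, and the single observation $0\in I_n\subseteq I_0$ (giving $F\subseteq\varphi_F^{-1}(I_n)\subseteq W_F$) is exactly what makes the refinement argument go through uniformly in the inning index $n$.
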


Hence, with these two lemmata and Corollaries \ref{LEMMA_StratImage}(a) and \ref{LEMMA_StratPreimage}(b) we have completed the proof of the second half of  Theorem \ref{THM_ClassOmega}, which is: 	
\begin{cor}
	Suppose $X$ is a $T_{3\frac{1}{2}}$-space. Then:
	\begin{itemize}
		\item If $\bob$ wins $\g_1(\W,\W)$ over $X$, then $\bob$ wins also $\g_1(\W_{\bar{0}},\W_{\bar{0}})$ over $\Cp(X)$.
		\item If $\ali$ wins $\g_1(\W_{\bar{0}},\W_{\bar{0}})$ over $\Cp(X)$, then $\ali $ wins also $\g_1(\W,\W)$ over $X$.
	\end{itemize}
\end{cor}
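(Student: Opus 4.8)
The plan is to run exactly the argument used for the first half of Theorem \ref{THM_ClassOmega}, but with the $\B$-morphism $\theta_X$ replaced by the $\A$-morphism $\eta_X$, which forces a swap in the roles of the two players. Write $G_1 = \Tight^\Omega_1({\Cp}_*X) = \g_1(\W_{\bar{0}},\W_{\bar{0}})$ over $\Cp(X)$ and $G_2 = \Cover^\W_1 X = \g_1(\W,\W)$ over $X$, and let $H\le G_2$ be the subgame carried by the image of $\eta_X$. By the first of the two preceding lemmata, the codomain restriction $\eta_X\colon G_1\to H$ is a locally surjective $\A$-morphism, and by the second lemma $H$ is equivalent to $G_2$. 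These two facts are all that the two remaining corollaries need as input.

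For the second bullet, I would start from a winning strategy for $\ali$ in $G_1$. Applying Corollary \ref{LEMMA_StratImage}(a) to the locally surjective $\A$-morphism $\eta_X\colon G_1\to H$ pushes this strategy forward to a winning strategy for $\ali$ in $H$; since $H$ is equivalent to $G_2$, it follows that $\ali$ has a winning strategy in $G_2$, that is, in $\g_1(\W,\W)$ over $X$.

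For the first bullet, I would begin with a winning strategy for $\bob$ in $G_2$. The equivalence of $H$ with $G_2$ supplies a winning strategy subgame $\sigma\le H$ for $\bob$, and Corollary \ref{LEMMA_StratPreimage}(b), applied to the same locally surjective $\A$-morphism $\eta_X\colon G_1\to H$, shows that the preimage $\eta_X^{-1}(T_\sigma)$ contains a winning strategy for $\bob$ in $G_1 = \g_1(\W_{\bar{0}},\W_{\bar{0}})$ over $\Cp(X)$, as required.

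The substance of the proof is already packaged into the two lemmata and Corollaries \ref{LEMMA_StratImage} and \ref{LEMMA_StratPreimage}, so what remains is essentially bookkeeping — and that bookkeeping is the one place where a slip is easy. Since $\eta_X$ is an $\A$-morphism running \emph{from} the tightness game over $\Cp(X)$ \emph{to} the covering game over $X$, pushing forward along it transfers $\ali$'s wins from $\Cp(X)$ to $X$ while pulling back transfers $\bob$'s wins from $X$ to $\Cp(X)$; this is precisely the mirror image of the first half, where $\theta_X$ was a $\B$-morphism and the image/preimage transfers applied to $\bob$ and $\ali$ respectively. I would also be careful to invoke both corollaries with the image $H$ in place of $G_2$ — because $\eta_X$ is only locally surjective after restricting its codomain to $H$ — and then to cross the remaining gap to $G_2$ using the equivalence furnished by the second lemma.
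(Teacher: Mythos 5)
Your proof is correct and follows exactly the route the paper takes: its own proof of this corollary consists precisely of invoking the two lemmata on $\eta_X$ (local surjectivity of the codomain restriction to its image, and equivalence of that image with $\Cover^\W_1X$) together with Corollaries \ref{LEMMA_StratImage}(a) and \ref{LEMMA_StratPreimage}(b), applied with the players' roles swapped relative to the $\theta_X$ half. Your bookkeeping---restricting the codomain of the $\A$-morphism $\eta_X$ to its image before using local surjectivity, then crossing back to $\g_1(\W,\W)$ via game equivalence---is exactly the intended reading.
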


As for Theorem \ref{THM_ClassGamma}, we consider again the natural mappings $\theta_X$ and $\eta_X$, but change the superscript $\Omega$ to $\Gamma$ in both, the domain and codomain, thus obtaining the mappings
\begin{center}
	\begin{tikzcd}[row sep = 0em]
		\Tight^\Gamma_1({\Cp}_*X) \arrow[r, "\tilde{\theta}_X"] &	\Cover^\Gamma_1X \\
		\seq{A_0, \varphi_0, \dotsc, A_n, \varphi_n} \arrow[r, mapsto] &   \seq{\mU_0(A_0), \varphi_0^{-1}(I_0), \dotsc,\mU_0(A_n), \varphi_n^{-1}(I_0)},
	\end{tikzcd}
\end{center}
\begin{center}
	\begin{tikzcd}[row sep = 0em]
		\Tight^\Gamma_1( {\Cp}_*X) \arrow[r, "\tilde{\eta}_X"] &	{\Cover^\Gamma_1}X \\
		\seq{A_0, \varphi_0, \dotsc, A_n, \varphi_n} \arrow[r, mapsto] &   \seq{\mU_0(A_0), \varphi_0^{-1}(I_0), \dotsc,\mU_n(A_n), \varphi_n^{-1}(I_n)}.
	\end{tikzcd}
\end{center}
Now, all proofs we have given in the ``$\W$-case'' remain intact also in the ``$\Gamma$-case'' (that is: when we trade $\theta_X$ and $\eta_X$ for $\tilde{\theta}_X$ and $\tilde{\eta}_X$, respectively), {\em except} for the proofs that $\theta_X$ is a $\B$-morphism and $\eta_X$ is an $\A$-morphism. Hence, let us state and prove these explicitly:

\begin{prop}
	For every space $X$, the mapping $\tilde{\theta}_X$ is a $\B$-morphism, and the mapping $\tilde{\eta}_X$ is an $\A$-morphism.
\end{prop}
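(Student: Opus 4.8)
The plan is to treat the two claims separately, since only the morphism (payoff) condition is at issue: chronology of $\tilde{\theta}_X$ and $\tilde{\eta}_X$ is inherited verbatim from the $\W$-case. First I would translate both conditions into convergence language. In $\g_1(\W_{\bar{0}},\Gamma_{\bar{0}})$ over $\Cp(X)$, $\bob$ wins a run $\seq{A_0,\varphi_0,\dotsc,A_n,\varphi_n,\dotsc}$ precisely when $\varphi_n\to\bar{0}$ pointwise (i.e. $\varphi_n(x)\to 0$ for every $x\in X$), and $\ali$ wins precisely when this fails, i.e. some $x_0\in X$ satisfies $\varphi_n(x_0)\not\to 0$. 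In $\g_1(\W,\Gamma)$ over $X$, $\bob$ wins the image run precisely when the produced sequence of open sets is a $\gamma$-cover, that is when $\set{U_n:n\in S}$ is an $\omega$-cover for every infinite $S\subseteq\omega$, while $\ali$ wins precisely when some infinite $S$ violates this.

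To show $\tilde{\theta}_X$ is a $\B$-morphism (where every window is the fixed $I_0=(-1,1)$), I would start from a $\bob$-won run, so $\varphi_n\to\bar{0}$ pointwise, fix an arbitrary infinite $S\subseteq\omega$ and a finite $F\subseteq X$, and produce a single $U_n=\varphi_n^{-1}(I_0)$ with $n\in S$ containing $F$. For each $x\in F$ pointwise convergence yields $N_x$ with $|\varphi_n(x)|<1$ for all $n\ge N_x$; setting $N=\max_{x\in F}N_x$ and choosing any $n\in S$ with $n\ge N$ (possible since $S$ is infinite) gives $F\subseteq\varphi_n^{-1}(I_0)$. Hence $\set{\varphi_n^{-1}(I_0):n\in S}$ is an $\omega$-cover for every infinite $S$, so $\bob$ wins the image run and $\tilde{\theta}_X$ is a $\B$-morphism.

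To show $\tilde{\eta}_X$ is an $\A$-morphism (where the $n$th window $I_n=(\tfrac{-1}{n+1},\tfrac{1}{n+1})$ shrinks), I would start from an $\ali$-won run and pick $x_0\in X$ with $\varphi_n(x_0)\not\to 0$, so there are $\varepsilon>0$ and an infinite $S\subseteq\omega$ with $|\varphi_n(x_0)|\ge\varepsilon$ for all $n\in S$. Taking $N$ with $\tfrac{1}{N+1}<\varepsilon$ and $S'=\set{n\in S:n\ge N}$, for every $n\in S'$ one has $|\varphi_n(x_0)|\ge\varepsilon>\tfrac{1}{n+1}$, so $\varphi_n(x_0)\notin I_n$ and $x_0\notin\varphi_n^{-1}(I_n)$. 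Thus no member of $\set{\varphi_n^{-1}(I_n):n\in S'}$ contains the finite set $\set{x_0}$, so this family is not an $\omega$-cover; as $S'$ is infinite, the image sequence is not a $\gamma$-cover and $\ali$ wins the image run, making $\tilde{\eta}_X$ an $\A$-morphism.

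The main point to get right, and the place where the two maps genuinely diverge, is the interplay between the shrinking windows $I_n$ and the universal quantifier over infinite $S$ in the definition of a $\gamma$-cover. For $\tilde{\eta}_X$ the shrinking is exactly what allows a single point staying $\varepsilon$-away from $0$ to eventually escape every $I_n$, furnishing the required bad infinite $S'$; dually, this shrinking is precisely why $\tilde{\eta}_X$ fails to be a $\B$-morphism in general, whereas the fixed window $I_0$ is what makes $\tilde{\theta}_X$ a $\B$-morphism. I expect everything else to match the $\W$-case line by line, so no further obstacle arises.
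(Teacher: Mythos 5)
Your proposal is correct and takes essentially the same approach as the paper: for $\tilde{\theta}_X$ you exploit pointwise convergence of $(\varphi_n)$ on a finite set together with the fixed window $I_0$ (the paper's choice $\varepsilon=1$) to capture $F$ cofinally inside every infinite $S$, and for $\tilde{\eta}_X$ you use a single point witnessing non-convergence together with the shrinking windows $I_n$ to produce an infinite set violating the $\gamma$-cover condition, exactly as the paper does. Your bookkeeping via $N=\max_{x\in F}N_x$ and the tail $S'=\set{n\in S: n\ge N}$ is merely a slightly more explicit rendering of the paper's increasing enumeration $S=\set{n_k:k<\omega}$ and choice of the threshold $K$.
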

\begin{proof}
	First let $\seq{A_0, \varphi_0, \dotsc, A_n, \varphi_n,\dotsc}$ be a run in $\Tight^\Gamma_1( {\Cp}_*X)$ in which $\bob$ wins, and let $S\subseteq\omega$ be infinite and $F\subseteq X$ be finite. We fix an increasing enumeration $S=\set{n_k:k<\omega}$ and have $\lim_{k\to\infty}\varphi_{n_k}=\bar{0}$. So, by taking $\varepsilon=1$ we obtain some $K<\omega$ with $\varphi_{n_K}[F]\subseteq I_0$ for every $k\le K$ and, hence, $F\subseteq \varphi_{n_K}^{-1}(I_0)$. Therefore $\seq{\mU_0(A_0), \varphi_0^{-1}(I_0), \dotsc,\mU_0(A_n), \varphi_n^{-1}(I_0),\dotsc}$ in $\Cover^\Gamma_1X$ is a winning run for $\bob$, which proves that $\theta_X$ is a $\B$-morphism. 
	
	Now suppose $\seq{A_0, \varphi_0, \dotsc, A_n, \varphi_n,\dotsc}$ is a run in $\Tight^\Gamma_1( {\Cp}_*X)$ in which $\ali$ wins. Then there is an $\varepsilon>0$ and $x\in X$ such that, for every $k<\omega$, there is an $n_k\ge k$ with $\varphi_{n_k}(x)\not\in ]-\varepsilon,\varepsilon[$. In particular, given $K<\omega$ with $\frac{1}{n_K+1}<\varepsilon$, one has $\varphi_{n_k}(x)\not\in I_{n_k}$ for all $k\ge K$. So, $S=\set{n_k:k\ge K}$ attests that $\set{\varphi_n^{-1}(I_n):n<\omega}$ is not a $\gamma$-cover and, hence, that $\ali$ wins the run $\seq{\mU_0(A_0), \varphi_0^{-1}(I_0), \dotsc,\mU_n(A_n), \varphi_n^{-1}(I_n),\dotsc}$ in $\Cover^\Gamma_1(X)$. This shows $\eta_X$ is an $\A$-morphism. 
\end{proof}	

Now the proof of Theorem	 \ref{THM_ClassGamma} may be finished exactly as our proof of Theorem \ref	{THM_ClassOmega}.

\section{The category \texorpdfstring{$\Gmes$}{TEXT} as a subcategory of a presheaf topos}\label{SEC_TreeTopos} 
Let $\Tree$ denote the category of directed rooted trees (with the direction defined in terms of the root---see the comments after Definition \ref{DEF_infinite_game}) whose morphisms are the homomorphisms of directed rooted trees; that is: $(V,E,r)\stackrel{f}{\to}(V',E',r')$ is a mapping $f\colon V\to V'$ with $(f\times f)(E)\subseteq E'$ and $f(r)=r'$. 
We call
\[
\Br(T) = \set{R\in V^\omega: \seq{r}^\smallfrown R\restrict n  \text{ is a path for every } n<\omega},
\]
the set of \textit{branches} of the directed rooted tree $T=(V,E,r)$.

We already know that the game tree $T$ of a non-empty game $G=(T,A)$ is in fact a directed rooted tree in the above sense, and the game tree of the empty game may be regarded as the rooted tree $(\{r\},\emptyset,r)$, with some element $r$ (which is why we opt to use the same letter ``$T$'' for game trees or directed rooted trees when there is no danger of confusion). The chronological mappings of games are then precisely the homomorphisms of the rooted trees arising from the games. Hence, once we have stripped the games of their payoff sets, we have a full functorial embedding 
\[
\mathrm{I}\colon\Gmes\hookrightarrow\Tree. 
\]
Let us determine the essential image  $\PrTree$ of $\mathrm{I}$, \textit{i.e.}, describe those trees which, up to isomorphism, arise as game trees of games. These are easily seen to be  precisely the direted rooted trees $T=(V,E,r)$ that are pruned and, hence, distinguished by the property that, for every $x\in V$, there are $R\in \Br(T)$ and $n<\omega$ with $R(n)=x$. Indeed, for a given directed rooted tree $(V,E,r)$ one just considers the 1-1 association 
\[x\in V \,\longleftrightarrow\, t_x\in \bigcup_{n<\omega}V^n, \]
where $t_x$ is the unique path from $r$ to $x$.

\begin{thm}\label{THM_GmeTree}
	The category $\Gmes$ is equivalent to the full coreflective subcategory $\PrTree$ of $\Tree$ given by the pruned rooted trees.
\end{thm}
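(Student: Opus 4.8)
The plan is to prove the two assertions separately: the equivalence $\Gmes\simeq\PrTree$ and the coreflectivity of $\PrTree$ in $\Tree$. For the equivalence I would simply invoke the standard fact that a fully faithful functor corestricts to an equivalence onto its essential image. The preceding discussion already exhibits $\mathrm{I}\colon\Gmes\hookrightarrow\Tree$ as a full functorial embedding (chronological maps being exactly the root-preserving homomorphisms between the rooted trees $\mathrm{I}(T)$, whose vertex set is $T$ itself), and it identifies the essential image as the pruned rooted trees via the path-bijection $x\leftrightarrow t_x$. Since $\PrTree$ is by definition the full subcategory of $\Tree$ on these objects, $\mathrm{I}$ restricts to an equivalence $\Gmes\simeq\PrTree$, with the empty game (the initial object) corresponding to the degenerate single-vertex tree $(\{r\},\emptyset,r)$.

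For coreflectivity I would construct a right adjoint (``pruning'') $\mathrm{C}\colon\Tree\to\PrTree$ to the inclusion. The crucial preliminary observation is that every tree homomorphism preserves levels: an edge in a directed rooted tree raises the level by $1$, and from $f(r)=r'$ an induction along the unique root-path gives $\level(f(x))=\level(x)$. Consequently $f$ carries every infinite branch to an infinite branch, since the images have strictly increasing levels (hence are distinct) and consecutive images are edges. Given $T=(V,E,r)$ I would then set $\mathrm{C}(T)=(V^\ast,E\restrict V^\ast,r)$, where $V^\ast$ is the set of vertices lying on some infinite branch, taking $\mathrm{C}(T)$ to be the initial single-vertex tree when $V^\ast=\emptyset$. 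One checks that $\mathrm{C}(T)$ is pruned (a vertex on a branch has, as a successor in $V^\ast$, the next vertex of that branch) and that, because homomorphisms send branches to branches, $f[V^\ast_T]\subseteq V^\ast_{T'}$; thus $f$ restricts to $\mathrm{C}(f)$ and $\mathrm{C}$ is functorial, with counit the subtree inclusion $\epsilon_T\colon\mathrm{C}(T)\hookrightarrow T$.

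It then remains to verify the couniversal property, namely that every morphism from a pruned tree into $T$ factors uniquely through $\epsilon_T$. For a pruned $P$ every vertex lies on an infinite branch (follow successors upward and prepend the root-path), so for any homomorphism $g\colon P\to T$ the level-preservation argument forces $g[P]\subseteq V^\ast_T$; hence $g=\epsilon_T\circ\tilde g$ for a unique $\tilde g\colon P\to\mathrm{C}(T)$, uniqueness holding because $\epsilon_T$ is monic. This exhibits $\mathrm{C}$ as right adjoint to the inclusion, i.e.\ $\PrTree$ as coreflective.

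The main obstacle I anticipate is not the adjunction bookkeeping but the correct treatment of the degenerate objects. One must reconcile the fact that the empty game tree lives in $\Gmes$ while its tree-theoretic avatar $(\{r\},\emptyset,r)$ is not pruned in the stated sense. The clean way out is to record that this single-vertex tree is precisely the initial object of both $\Tree$ and $\PrTree$, that the only homomorphism into a branch-free tree emanates from it, and that $\mathrm{C}$ sends every branch-free tree to it; the couniversal property then holds there as well. Once this edge case is pinned down, the level-preservation lemma does all the real work.
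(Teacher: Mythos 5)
Your proposal is correct and follows essentially the same route as the paper: the equivalence is delegated to the already-established full embedding $\mathrm{I}$ and its essential image, and the coreflector is the same pruning construction $V^\ast$ (the paper's $\mathrm{Pr}T$, which likewise always retains the root) with the subtree inclusion as counit and couniversality proved by the same branch-image argument. Your explicit level-preservation lemma and your treatment of the single-vertex/empty-game edge case merely make rigorous two points the paper leaves implicit, so they refine rather than alter the argument.
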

\begin{proof}
	It only remains to be shown that the inclusion of $\PrTree$ into $\Tree$ has a right adjoint. To this end, 
	given a directed rooted tree $T=(V,E,r)$, we construct a tree $\mathrm{Pr}T$ by \textit{pruning} its finite branches, so that $\mathrm{Pr}T$ becomes a pruned subtree of $T$; that is,		
	we define $\mathrm{Pr}T=(V^*,E^*,r)$, where		
	\begin{gather*}
		V^* = \{r\}\cup\set{x\in V: \exists R\in \Br(T) \exists n<\omega (R(n) = x)},\\
		E^* = \set{(x,y)\in E: x,y\in V^*}.
	\end{gather*}
	Now, letting \begin{tikzcd} \mathrm{Pr}T \arrow[r, "\varepsilon_{T}"{auto}] &  T \end{tikzcd} be  the inclusion map of $\mathrm{Pr}T$ into $T$, we just need to show that every homomorphism $T'\stackrel{f}{\to} T$ of directed rooted trees, where $ T'=(V',E',r')$ is pruned, factorizes uniquely through	 $\varepsilon_T$, \textit{i.e.,} the  image of $f$ must lie in $\mathrm{Pr}T$. 
	Indeed, necessarily $f(x)\in V^*$ whenever $x\in V'$, since by hypothesis on $T'$ there is $\seq{x_n: n<\omega}\in \Br(T')$ with $x = x_n$ for some $n<\omega$, so that $\seq{f(x_n):n<\omega}\in \Br(T)$ and $f(x) = f(x_n)$. In other words: the codomain restriction $g$ of $f$ to $\mathrm{Pr}T$
	is the unique morphism making the following diagram commute:
	\begin{center}
		$\xymatrix{\mathrm{Pr}T\ar[rr]^{\varepsilon_T}&& T\\
			T'\ar[u]^g\ar[rru]_f &&\\
		}$
	\end{center}
\end{proof}

In order to confirm the claim suggested by this section's header, we consider the well-studied (see \cite{Birkedal2012}) presheaf category $\SetNop$ whose
\begin{itemize}
	\item objects are inverse systems of sets $T(n)$ with connecting maps $\Gamma^n_m: T(n)\to T(m)$ for all $n\geq m$ in $\omega$, {\textit  i.e.}, functors $T\colon \omega^{\op} \to \Sets$, 
	and whose
	\item morphisms 
	are natural transformations $T\stackrel{f}{\to}T'$, \textit{i.e.}, families of maps $f_n\colon T(n)\to T'(n)$ commuting with the connecting maps of $T$ and $T'$. 
\end{itemize}
Of course, like every category of $\Sets$-valued presheaves, the category $\SetNop$ is a standard example of a  (Grothendieck) topos (see\cite{MacLane1992}). In order to confirm the claim of this section's header, it now suffices for us to sketch the proof of the following well-known fact:

\begin{prop}\label{THM_Tree_SetNop}
	The categories $\Tree$ and $\SetNop$ are equivalent.
\end{prop}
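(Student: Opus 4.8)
The plan is to exhibit mutually quasi-inverse functors $G \colon \SetNop \to \Tree$ and $F \colon \Tree \to \SetNop$. The guiding observation is that an object of $\SetNop$, that is an inverse system $\cdots \to T(2) \to T(1) \to T(0)$ with connecting maps $\Gamma^n_m \colon T(n) \to T(m)$, is nothing but a \emph{forest} of height at most $\omega$: one regards $\bigsqcup_{n<\omega} T(n)$ as a vertex set in which $x \in T(n)$ sits at level $n$ and has parent $\Gamma^n_{n-1}(x)$, so that $T(0)$ is the set of roots of the components. A connected directed rooted tree is then exactly such a forest together with one extra formal root sitting below level $0$, and the one-step index shift this forces is the only subtlety in the bookkeeping.

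First I would define $G$. Given $T \colon \omega^{\op} \to \Sets$, let $G(T)$ have vertex set $\{*\} \sqcup \bigsqcup_{n<\omega} T(n)$ with fresh root $*$, and directed edges $(*, x)$ for $x \in T(0)$ together with $(\Gamma^n_{n-1}(y), y)$ for $y \in T(n)$, $n \ge 1$. Because every vertex has a unique parent, every vertex is joined to $*$ by a unique directed path, so $G(T)$ is a connected directed rooted tree; a natural transformation $(f_n) \colon T \to T'$ extends to the homomorphism fixing the root and acting as $f_n$ on each $T(n)$, which is patently functorial. In the other direction I would set $F(V,E,r)(n) = \{x \in V : d(r,x) = n+1\}$, with connecting maps induced by the ancestor assignment. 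The point that makes $F$ act on morphisms is the lemma that every homomorphism of \emph{directed} rooted trees preserves levels: from $f(r) = r'$ and the fact that $f$ sends each edge $(p,y)$ with $y$ a child of $p$ to an edge $(f(p), f(y))$ with $f(y)$ a child of $f(p)$, an induction on the level gives $d(r', f(x)) = d(r,x)$; hence $f$ restricts to level-preserving, parent-compatible maps, i.e. to a natural transformation.

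Finally I would verify that the two functors are quasi-inverse: $F \circ G$ returns $T$ together with its original connecting maps (the levels of $G(T)$ below the root are exactly the $T(n)$), while $G \circ F$ is naturally isomorphic to the identity through the renaming $* \mapsto r$ of the adjoined root. The main obstacle --- the only genuinely non-formal point --- is precisely this handling of the root: one must use that it is the \emph{directed} structure together with root-preservation that forbids a homomorphism from folding a branch back and thereby lowering levels (without directedness this fails), so that tree homomorphisms coincide with natural transformations; and one must keep track of the single formal root, which is exactly what separates a connected rooted tree from the forest encoded by an arbitrary inverse system and is accounted for by the shift in the definition of $F$.
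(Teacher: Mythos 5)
Your proof is correct and takes essentially the same route as the paper's: your $F$ and $G$ are precisely the paper's functors $\mathrm{Fun}$ and $\mathrm{Tr}$, with the same level shift ($x$ at level $n$ exactly when the unique path from $r$ to $x$ has $n+2$ vertices) absorbing the adjoined formal root, and the same quasi-inverse verification. You even supply two details the paper leaves implicit---the induction showing that homomorphisms of \emph{directed} rooted trees preserve levels, and the edges from the fresh root to $T(0)$ (which the paper's displayed formula for $E_T$ inadvertently omits)---so nothing further is needed.
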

\begin{proof}
	One defines a functor $\mathrm{Fun}\colon \Tree \to \SetNop$, as follows. Given a directed rooted tree $T=(V,E,r)$, for every $x\in V\setminus\{r\}$ we denote by $\level(x) = \mathrm{length}(p)-2<\omega$ the \textit{level of $x$ in $T$}, where $p$ is the unique path from $r$ to $x$. 
	Now one defines $\mathrm{Fun}T\in \SetNop$  by assigning to $n<\omega$ the set
	\[		
	V(n) = \set{x\in V: \level(x)=n}
	\]
	and  letting $(\mathrm{Fun}T)(n\ge m)\colon V(n) \to V(m)$ send $x\in V(n)$ to the unique $y\in V(m)$ in the path from $r$ to $x$. 
	Furthermore, given a homomorphism of directed rooted trees $(V,E,r)\stackrel{f}{\to}(V',E',r')$, since $\level(f(x))=\level(x)$, we may let $\mathrm{Fun}f = (f_n)_{n<\omega}$ be defined by $f_n=f|_{V(n)}$ for all $n<\omega$.
	
	A pseudo-inverse of $\mathrm{Fun}$ is given by the functor
	$\mathrm{Tr}\colon \SetNop \to \Tree$, which assigns
	\begin{itemize}
		\item to an inverse system $T\colon \omega^{\op} \to \Sets$ the tree $(V_T,E_T,r_T)$, where 
		\begin{gather*}
			V_T=\{r_T\} \sqcup\bigsqcup_{n<\omega}T(n) \text{ with any } r_T \text{ not in } \bigcup_{n<\omega}T(n),\\  
			E_T=\set{(x,y)\in V_T^2: \exists n<\omega (y\in T(n+1),  x\in T(n) \text{ and } \Gamma^{n+1}_n(y)=x)},
		\end{gather*}
		\item to a natural transformation $T\stackrel{f}{\to} T'$ the mapping $V_T\stackrel{\mathrm{Tr}f}{\to} V_{T'}$ with
		\[
		(\mathrm{Tr}f)(x) = \begin{cases}
			r_{T'}, \text{ if $x = r_T$,}\\
			f(x), \text{ otherwise.}
		\end{cases}
		\]
	\end{itemize}
	We omit the straightforward verification that the composite functors $\mathrm{Tr}\circ\mathrm{Fun}$ and $\mathrm{Fun}\circ\mathrm{Tr}$ are isomorphic to identity functors.
\end{proof}

Further to the proof of Proposition  \ref{THM_Tree_SetNop}, we note that it may seem strange to have the functor $\mathrm{Fun}$ completely forget the root of a tree. 
But
intuitively, the root, as a single vertex, is \textit{redundant}, as a unique vertex in a rooted tree, preserved by every homomorphism. Its only role is to signal the \textit{upward direction} of the tree (parallel to the notion of \textit{chronology of moments} in games), which is preserved by homomorphisms. For  $T\in\SetNop$, we do not need such distinguished element (that is, we don't need to have $|T(0)|=1$) to play that role, since the order of the natural numbers already accomplishes the chronological effect.

\medskip	

We also note that the essential image of $\PrTree$ under the functor $\mathrm{Fun}\colon \Tree \to \SetNop$ has an easy description. Its objects are precisely the systems $T\colon \omega^{\op}\to \Sets$ for which all connecting maps are surjective, \textit{i.e.}, systems $T$ which take values in the category $\mathsf{Epi}(\Sets)$ of sets and surjective maps; we have the resulting full subcategory  $\mathsf{Epi}(\Sets)^{\omega^{\op}}$ of $\SetNop$.
Indeed, the connecting maps correspond precisely to the truncation maps $\mathrm{Tr}T(n)\to \mathrm{Tr}T(m),\, t\mapsto t\upharpoonright m,$ for all $n\geq m$, whose surjectivity describes $\mathrm{Tr}T$ as pruned. 

In this functorial environment then, ``pruning'' means ``enforcing surjectivity'' of all connecting maps. Consequently, the full inclusion functor $\mathsf{Epi}(\Sets)^{\omega^{\op}}	\hookrightarrow{\SetNop}$ has an easily described right adjoint:  it sends $T\in \Obj{{\SetNop}}$ to the system $T^*$ with
\[
T^\ast(n) =\pi_n[\mathrm{Lim}T],
\]
where $\mathrm{Lim}T$ is $T$'s (projective) limit in $\Sets$,
\[
\mathrm{Lim}T =\set{(t_n:n<\omega):t_n\in T(n),\, \Gamma^n_m(t_n)=t_m \;(n\geq m<\omega)}\subseteq \prod_{n<\omega}T(n),
\] 
and $\pi_n\colon \mathrm{Lim}T\to T(n)$ is its $n$th projection.

Let us summarize these observations:

\begin{cor}\label{COR_Tree_SetNop}
	The category $\Gmes$ is equivalent to the full coreflective subcategory  $\mathsf{Epi}(\Sets)^{\omega^{\op}}$ of the topos		$\SetNop$. The diagram 
	\begin{center}
		$\xymatrix{\Gmes\ar[r]^{\simeq\;}\ar[rd]_{\mathrm{I}} & \PrTree\ar@/_0.5pc/[d]^{\dashv}_{\mathrm{inc}}\ar@/^0.4pc/[rr]^{\mathrm{Fun}} && \mathsf{Epi}(\Sets)^{\omega^{\op}}\ar@/_0.5pc/[d]^{\dashv}_{\mathrm{inc}}\ar@/^0.4pc/[ll]_{\simeq}^{\mathrm{Tr}} & T^*\\
			& \Tree\ar@/_0.5pc/[u]_{\mathrm{Pr}}\ar@/^0.4pc/[rr]^{\mathrm{Fun}} && \SetNop\ar@/_0.5pc/[u]\ar@/^0.4pc/[ll]_{\simeq}^{\mathrm{Tr}} & T\ar@{|->}[u] \\	
		}$
	\end{center}	
	of categories and functors commutes in an obvious sense, up to natural isomorphisms.
\end{cor}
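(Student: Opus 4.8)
The plan is to assemble the corollary from Theorem~\ref{THM_GmeTree}, Proposition~\ref{THM_Tree_SetNop}, and the two observations recorded in the paragraphs immediately preceding the statement, relying throughout on the general principle that an equivalence of categories transports adjunctions and that right adjoints to a fixed functor are determined up to natural isomorphism. First I would establish the equivalence $\Gmes\simeq\mathsf{Epi}(\Sets)^{\omega^{\op}}$. By Theorem~\ref{THM_GmeTree} we already have $\Gmes\simeq\PrTree$, so it suffices to show that the equivalence $\mathrm{Fun}\colon\Tree\to\SetNop$ of Proposition~\ref{THM_Tree_SetNop} restricts to an equivalence $\PrTree\to\mathsf{Epi}(\Sets)^{\omega^{\op}}$. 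Since $\mathrm{Fun}$ and $\mathrm{Tr}$ are pseudo-inverse equivalences, this reduces to checking that $\mathrm{Fun}$ carries $\PrTree$ into $\mathsf{Epi}(\Sets)^{\omega^{\op}}$ and that $\mathrm{Tr}$ carries $\mathsf{Epi}(\Sets)^{\omega^{\op}}$ into $\PrTree$; this is exactly the identification already recorded above, namely that the connecting maps of $\mathrm{Fun}\,T$ are the truncation maps of $T$, whose surjectivity is equivalent to $T$ being pruned.

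Next I would address coreflectivity. The full inclusion $\PrTree\hookrightarrow\Tree$ admits the right adjoint $\mathrm{Pr}$ by Theorem~\ref{THM_GmeTree}, and the full inclusion $\mathsf{Epi}(\Sets)^{\omega^{\op}}\hookrightarrow\SetNop$ admits the right adjoint $T\mapsto T^\ast$ constructed just before the statement, where $T^\ast(n)=\pi_n[\mathrm{Lim}\,T]$. Hence $\mathsf{Epi}(\Sets)^{\omega^{\op}}$ is a full coreflective subcategory of the topos $\SetNop$, and by the equivalence of the previous step the same holds for the copy of $\Gmes$ sitting inside $\SetNop$.

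It then remains to verify that the displayed diagram commutes up to natural isomorphism. The leftmost triangle is merely the definition $\mathrm{I}=\mathrm{inc}\circ(\Gmes\xrightarrow{\simeq}\PrTree)$. The two squares formed by $\mathrm{Fun}$, $\mathrm{Tr}$ and the two full inclusions commute because $\mathrm{Fun}|_{\PrTree}$ and $\mathrm{Tr}|_{\mathsf{Epi}(\Sets)^{\omega^{\op}}}$ are by construction the (co)restrictions of $\mathrm{Fun}$ and $\mathrm{Tr}$. For the compatibility of the two coreflectors I would argue abstractly: the equivalence $\mathrm{Fun}$ transports the adjunction $\mathrm{inc}\dashv\mathrm{Pr}$ to an adjunction on the presheaf side whose right adjoint is $\mathrm{Fun}\circ\mathrm{Pr}\circ\mathrm{Tr}$; since a right adjoint to $\mathrm{inc}\colon\mathsf{Epi}(\Sets)^{\omega^{\op}}\hookrightarrow\SetNop$ is unique up to natural isomorphism, this must be isomorphic to $(-)^\ast$, yielding $\mathrm{Fun}\circ\mathrm{Pr}\cong(-)^\ast\circ\mathrm{Fun}$ and likewise on the $\mathrm{Tr}$ side.

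The only step demanding genuine (if entirely mechanical) checking is the concrete identification underpinning this last abstract argument, namely that $(-)^\ast$ corresponds under $\mathrm{Tr}$ to the pruning functor $\mathrm{Pr}$. Unwinding the definitions, a vertex $x$ at level $n$ of $\mathrm{Tr}\,T$ survives pruning precisely when it lies on some infinite branch, i.e.\ when there is a compatible family $(t_m)_{m<\omega}\in\mathrm{Lim}\,T$ with $t_n=x$, which is exactly membership in $\pi_n[\mathrm{Lim}\,T]=T^\ast(n)$. I expect this unwinding to be the main obstacle, and it confirms on the nose the slogan that ``pruning means enforcing surjectivity''; everything else in the proof is formal transport of structure across the two equivalences $\Gmes\simeq\PrTree$ and $\Tree\simeq\SetNop$.
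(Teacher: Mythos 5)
Your proposal is correct and takes essentially the same route as the paper: Corollary \ref{COR_Tree_SetNop} is presented there precisely as a summary of the observations you assemble, namely that the connecting maps of $\mathrm{Fun}\,T$ are the truncation maps, whose surjectivity characterizes prunedness (so $\mathrm{Fun}$ and $\mathrm{Tr}$ restrict to the equivalence $\PrTree\simeq\mathsf{Epi}(\Sets)^{\omega^{\op}}$), together with the explicit coreflector $T\mapsto T^*$ with $T^*(n)=\pi_n[\mathrm{Lim}\,T]$. Your uniqueness-of-right-adjoints transport and the unwinding identifying $\mathrm{Pr}$ with $(-)^*$ under the equivalence simply make explicit what the paper compresses into the slogan that pruning means enforcing surjectivity of the connecting maps.
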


The bottom equivalence of categories in the above diagram may be seen as ``sitting over $\Sets$'' (but not faithfully so). Indeed, the functor $\Delta: \Sets \to \SetNop$ assigning to every set $X$ the system with constant value $X$ provides a full coreflective embedding, with its standard right adjoint $\mathrm{Lim}\colon\SetNop\to\Sets$ assigning to the system $T$ its (projective) limit. 
Considering $\mathrm{Lim}T$ as ``the underlying set'' of the system $T$ and therefore treating $\mathrm{Lim}$ as ``forgetful'' (even though it is not faithful), we will then treat $\Delta X$ as the ``free system'' over $X$; briefly,
\[
\Delta\dashv\mathrm{Lim}\colon\SetNop\to\Sets\,.
\]		
Of course, the functor $\Delta$ actually takes values in $\mathsf{Epi}(\Sets)^{\omega^\op}$ (since for any set $X$, all connecting maps of the system $\Delta X$ are identity maps), so that (without name changes) we also have the restricted adjunction
\[
\Delta\dashv\mathrm{Lim}\colon\mathsf{Epi}(\Sets)^{\omega^\op}\to\Sets\,.
\]
Let us also describe the functor $\mathrm{Lim}\colon\SetNop\to\Sets$ after being precomposed with the equivalence $\mathrm{Fun}$. It then becomes (isomorphic to) the functor
$\mathrm{Branch}\colon \Tree\to \Sets$
whose effect on a rooted directed tree we have already described at the beginning of this section. It sends a homomorphism $T\stackrel{f}{\to}T'$ of rooted directed trees to the mapping $\overline{f}$ that assigns to $\seq{x_n:n<\omega}\in \Br(T)$ the sequence $\seq{f(x_n):n<\omega}\in \Br(T')$.

\begin{cor}\label{COR_Branch}
	The functor $\Br$ has a left adjoint, given by the full embedding $\mathrm{Free}:=\Pr\circ\Delta\colon\Sets\to\Tree$. It assigns to a set $X$ the tree of all finite sequences of the form $(x,\dots, x)$ $(x\in X)$. Furthermore, in the (up to isomorphism) commutative diagram
	\begin{center}
		$\xymatrix{\Tree\ar@/^0.4pc/[rr]^{\mathrm{Fun}}_{\simeq}\ar@/_0.5pc/[ddr]_{\mathrm{Branch}}^{\bot} && \SetNop\ar@/^0.4pc/[ll]^{\mathrm{Pr}}\ar@/^0.5pc/[ldd]^{\mathrm{Lim}}_{\bot}\\
			&&\\
			&\Sets\ar@/_0.5pc/[luu]_{\mathrm{Free}}\ar@/^0.5pc/[uur]^{\Delta}&\\
		}$
	\end{center}
	the two top categories may respectively be replaced by $\PrTree$ and $\mathsf{Epi}(\Sets)^{\omega^\op}$ and then give  the ``pruned version'' of the diagram.	
\end{cor}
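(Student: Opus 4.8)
The plan is to obtain the adjunction by \emph{transporting} the already-established adjunction $\Delta\dashv\mathrm{Lim}\colon\SetNop\to\Sets$ across the equivalence $\mathrm{Fun}\colon\Tree\to\SetNop$ of Proposition \ref{THM_Tree_SetNop}, exploiting the identification $\Br\cong\mathrm{Lim}\circ\mathrm{Fun}$ recorded just before the statement. Writing $\mathrm{Tr}$ for the pseudo-inverse of $\mathrm{Fun}$, for every set $X$ and every tree $T$ I would produce the chain of natural bijections
\[
\Tree(\mathrm{Tr}(\Delta X),T)\;\cong\;\SetNop(\Delta X,\mathrm{Fun}\,T)\;\cong\;\Sets(X,\mathrm{Lim}(\mathrm{Fun}\,T))\;\cong\;\Sets(X,\Br\,T),
\]
where the first bijection expresses that $\mathrm{Fun}$ is an equivalence with pseudo-inverse $\mathrm{Tr}$ (so $\mathrm{Tr}$ is in particular left adjoint to $\mathrm{Fun}$), the second is $\Delta\dashv\mathrm{Lim}$, and the third is $\Br\cong\mathrm{Lim}\circ\mathrm{Fun}$. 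Naturality in both variables is inherited from the three constituent isomorphisms, so $\mathrm{Tr}\circ\Delta$ is left adjoint to $\Br$.

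Next I would compute $\mathrm{Tr}(\Delta X)$ explicitly. Since $\Delta X$ is the constant system with value $X$ and identity connecting maps, its tree $\mathrm{Tr}(\Delta X)$ carries one copy of $X$ at each level, with the copy of $x$ at level $n$ joined to the copy of $x$ at level $n+1$; hence it is precisely the tree obtained by gluing, at a common root, one infinite ray per element $x\in X$, i.e. the tree of all finite constant sequences $\seq{x,\dots,x}$. This tree is already pruned, so applying $\mathrm{Pr}$ leaves it unchanged; this both matches the stated description and justifies writing $\mathrm{Free}=\mathrm{Pr}\circ\Delta$ while noting that $\mathrm{Free}$ takes values in $\PrTree$. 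As a self-contained check of the abstract computation, one verifies the adjunction directly: a homomorphism $\mathrm{Free}(X)\to T$ must send each ray to a branch of $T$ and is freely determined by that assignment, giving on the nose the bijection $\Tree(\mathrm{Free}(X),T)\cong\Sets(X,\Br\,T)$, with unit $X\to\Br(\mathrm{Free}(X))$ the bijection $x\mapsto\seq{x,x,\dots}$.

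For the remaining assertions: $\mathrm{Free}$ is fully faithful, being the composite of the fully faithful (full coreflective embedding) $\Delta$ with the fully faithful $\mathrm{Tr}$, and the explicit description shows it is injective on objects, so it is a full embedding. The triangles of the diagram commute up to isomorphism since $\mathrm{Fun}\circ\mathrm{Free}=\mathrm{Fun}\circ\mathrm{Tr}\circ\Delta\cong\Delta$ and $\Br\cong\mathrm{Lim}\circ\mathrm{Fun}$, whence also $\Br\circ\mathrm{Free}\cong\mathrm{Lim}\circ\Delta\cong\mathrm{id}_{\Sets}$ (the unit of $\mathrm{Free}\dashv\Br$ being invertible, as expected of a fully faithful left adjoint). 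Finally, the ``pruned version'' is obtained by mere restriction: $\mathrm{Free}$ already lands in $\PrTree$ and $\Delta$ in $\mathsf{Epi}(\Sets)^{\omega^{\op}}$ (its connecting maps are identities), $\mathrm{Fun}$ and $\mathrm{Tr}$ restrict to the equivalence $\PrTree\simeq\mathsf{Epi}(\Sets)^{\omega^{\op}}$ of Corollary \ref{COR_Tree_SetNop}, and the restricted adjunction $\Delta\dashv\mathrm{Lim}\colon\mathsf{Epi}(\Sets)^{\omega^{\op}}\to\Sets$ was already recorded, so the whole transport descends verbatim.

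The main obstacle I expect is not the abstract transport, which is routine, but the bookkeeping in matching the transported adjoint $\mathrm{Tr}\circ\Delta$ with the concrete star-of-rays tree — in particular reconciling the edge/root conventions of $\mathrm{Tr}$ in Proposition \ref{THM_Tree_SetNop} so that the level-$0$ copy of $X$ is correctly attached to the root and each ray is a genuine infinite branch. The direct verification of the adjunction sketched above is the cleanest way to pin this down and to exhibit the unit explicitly.
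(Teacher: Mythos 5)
Your proposal is correct and takes essentially the same route as the paper: the corollary is stated there without proof precisely because it follows, as you spell out, by transporting the recorded adjunction $\Delta\dashv\mathrm{Lim}$ across the equivalence $\mathrm{Fun}$/$\mathrm{Tr}$ via the identification $\Br\cong\mathrm{Lim}\circ\mathrm{Fun}$, together with the explicit computation of $\mathrm{Tr}(\Delta X)$ as the tree of constant sequences. Your direct verification of the hom-bijection, and the observation that $\mathrm{Tr}(\Delta X)$ is already pruned (so $\mathrm{Pr}$ acts trivially, which is how the shorthand $\mathrm{Free}=\mathrm{Pr}\circ\Delta$ is to be read), correctly supply the only details the paper leaves tacit, including the root-edge convention in $\mathrm{Tr}$.
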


\section{Arboreal and functorial games}\label{SEC_SubTree}
Games are more than just rooted trees of ``choice nodes'', since they must also specify the winning criteria for the players. In the context of trees, this amounts to specifying which are the branches won by $\ali$. In this section we employ a well-known general categorical procedure (see \cite{Adamek1990}, 5.40), for adding the missing pay-off sets to the objects of the categories appearing in the diagram of Corollary \ref{COR_Tree_SetNop}, which we recall first.

\begin{defn}
	Let $\mathbf{C}$ be a category and $K\colon \mathbf{C}\to \Sets$ be a functor. Then the category $$\mathsf{Sub}_K(\mathbf C)$$ is defined to have as
	\begin{itemize}
		\item objects all pairs $(X,A)$ with $X\in \Obj{\mathbf{C}}$ and $A\subseteq KX$, and as
		\item morphisms $(X,A) \stackrel{f}{\to}(X',A')$ those morphisms $X\stackrel{f}{\to}X'$ in $\mathbf{C}$ with $Kf[A]\subseteq A'$.
	\end{itemize} 
	Briefly, $\mathsf{Sub}_K(\mathbf C)$ is the full subcategory of the comma category $(\Sets\downarrow K)$ given by subset inclusion maps $A\hookrightarrow KX$, also known as {\em subscones}.  We sometimes write just $\mathsf{Sub}(\mathbf C)$ for this category when the context makes it clear which functor $K$ we are considering.
\end{defn}

A known (see \cite{Adamek1990}, 21.8(2)) and easy, but useful, fact reads as follows:

\begin{prop}\label{PROP_TopFunctor}
	For every functor $K\colon \mathbf{C}\to \Sets$, the functor $U\colon \mathsf{Sub}_K(\mathbf C)\to\mathbf C$ which forgets the selected subsets, is topological. As a consequence, $U$ has both, a left adjoint and a right adjoint, and the category $\mathsf{Sub}_K(\mathbf C)$ is complete or cocomplete if (and only if) $\mathbf C$ has the respective property. 
\end{prop}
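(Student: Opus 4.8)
The plan is to verify directly that $U$ is \emph{topological} in the sense of \cite{Adamek1990}, \textit{i.e.}, that every $U$-structured source admits a unique $U$-initial lift, and then to read off the three stated consequences from the general theory of topological functors. So I would begin with an arbitrary structured source: an object $X\in\Obj{\mathbf{C}}$ together with a family of $\mathbf{C}$-morphisms $f_i\colon X\to Y_i$ $(i\in I)$ whose codomains underlie objects $(Y_i,B_i)$ of $\mathsf{Sub}_K(\mathbf{C})$, so that $B_i\subseteq KY_i$. The candidate for the initial lift is the object $(X,A)$ with
\[
A=\bigcap_{i\in I}(Kf_i)^{-1}[B_i]\subseteq KX,
\]
where the empty intersection is read as $A=KX$. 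First I would check that each $f_i$ is indeed a morphism $(X,A)\to(Y_i,B_i)$; this is immediate, since $A\subseteq(Kf_i)^{-1}[B_i]$ gives $Kf_i[A]\subseteq B_i$.

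The key step is then the verification of initiality. Given any object $(Z,C)$ and any $\mathbf{C}$-morphism $g\colon Z\to X$ for which every composite $f_i\circ g$ is a morphism $(Z,C)\to(Y_i,B_i)$, I must show that $g$ itself is a morphism $(Z,C)\to(X,A)$. By functoriality of $K$, the condition $K(f_i\circ g)[C]=Kf_i[Kg[C]]\subseteq B_i$ says exactly that $Kg[C]\subseteq(Kf_i)^{-1}[B_i]$ for every $i\in I$, whence $Kg[C]\subseteq A$, as required. Uniqueness of the lift is automatic: the fibre of $U$ over $X$ is the powerset $\mathcal{P}(KX)$ ordered by inclusion, and $U$ is faithful and amnestic, so an initial lift, once it exists, is literally unique. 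This establishes that $U$ is topological.

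The remaining assertions are then invocations of the standard framework of \cite{Adamek1990}. Every topological functor carries both a left and a right adjoint; here they are described concretely by the \emph{discrete} functor $X\mapsto(X,\emptyset)$ (since $Kf[\emptyset]\subseteq B'$ holds trivially, one has $\mathsf{Sub}_K(\mathbf{C})((X,\emptyset),(Y,B))=\mathbf{C}(X,Y)$, so this is left adjoint to $U$) and the \emph{indiscrete} functor $X\mapsto(X,KX)$ (since $Kf[A']\subseteq KX$ always holds, this is right adjoint to $U$); both are full embeddings. Likewise, topological functors lift both limits and colimits, so $\mathsf{Sub}_K(\mathbf{C})$ is complete, respectively cocomplete, exactly when $\mathbf{C}$ is: one computes the (co)limit in $\mathbf{C}$, forms the associated structured source (respectively sink), and takes its initial (respectively final) lift, while the converse directions follow from $\mathbf{C}$ being reflective and coreflective in $\mathsf{Sub}_K(\mathbf{C})$ via the two embeddings above. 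I expect no genuine obstacle in any of this; the single point requiring care is fixing the correct direction of initiality—equivalently, recognising that the initial structure is the \emph{largest} admissible subset $A$, as reflected in the intersection formula—after which everything is a matter of citing the established theory.
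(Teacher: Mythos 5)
Your proposal is correct and takes essentially the same approach as the paper: the paper likewise exhibits the $U$-initial structure $A=\bigcap_{i\in I}(Kf_i)^{-1}[B_i]$ (recording dually the $U$-final structure $B=\bigcup_{i\in I}(Kg_i)[A_i]$, with reference to \cite{Adamek1990}) and obtains the two adjoints by equipping an object with the empty subset and with the full subset $KX$, exactly as you do. The only cosmetic difference is that the paper writes down the final structure explicitly while you recover the colimit side from the general theory of topological functors, which is equivalent.
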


Indeed, the topologicity of $U$ follows from the easy fact that, for any family $f_i:X\to Y_i\;(i\in I)$ of $\mathbf C$-morphisms of a fixed object $X$ to objects $Y_i$ that are provided with subsets $B_i\subseteq KY_i$, there is a so-called $U$-{\em initial} structure on $X$ making every $f_i$ a morphism in $\mathsf{Sub}_K(\mathbf C)$, given by the set $A=\bigcap_{i\in I}(Kf_i)^{-1}[B_i]$. Dually, for any family $g_i;X_i\to Y\;(i\in I)$ of $\mathbf C$-morphisms of objects $X_i$ that are provided with subsets $A_i\subseteq KX_i$ to a fixed object $Y$, there is a so-called $U$-{\em final} structure on $Y$ making every $g_i$ a morphism in $\mathsf{Sub}_K(\mathbf C)$, namely $B=\bigcup_{i\in I}(Kg_i)[A_i]$.

An important result about topological functors is Wyler's {\em Taut Lift Theorem} (see \cite{Wyler1971a, Wyler1971b}, and \cite{Tholen1978} for an extension to a wider class of functors). We record here only a very simple instance of it which suffices for the application in the remainder of this section, as follows. At first, we let $H:\mathbf C\to \mathbf D$ and $K:\mathbf C\to\Sets,\;L:\mathbf D\to\Sets$ be functors with $L\circ H=K$ and consider the ``lifted'' functor $\widehat{H}$ that makes the diagram (in which $V$ is, like $U$, forgetful)
\begin{center}
	$\xymatrix{\mathsf{Sub}_K(\mathbf C)\ar[d]_U\ar[rr]^{\widehat{H}} 
		&& \mathsf{Sub}_L(\mathbf D)\ar[d]^V\\
		\mathbf C\ar[rr]^H\ar[rd]_K && \mathbf D\ar[ld]^L\\
		& \Sets & \\
	}$
\end{center}
commute; it sends an object $(X,A)$ in $\mathsf{Sub}_K(\mathbf C)$ to $(HX,A)$ in $\mathsf{Sub}_L(\mathbf D)$ and maps morphisms like $H$ does. Since $\widehat{H}$ leaves the subsets unchanged, trivially $\widehat{H}$ sends $U$-initial or U-final families to $V$-initial or $V$-final families, respectively. As this is the essential hypothesis of the Taut Lift Theorem, we can deduce the following corollary which, of course,  may also be easily checked in a direct manner.

\begin{cor}\label{COR_TautLift}
	If $H$ has a left-adjoint functor $F$, then also $\widehat{H}$ has a left-adjoint functor, $\tilde{F}$, which is a lifting of $F$ along $V$ and $U$, just like $\widehat{H}$ is a lifting of $H$ along $U$ and $V$,  i.e., $U\circ\tilde{F}=F\circ V$. The same statement holds when ``left'' is traded for ``right''; furthermore, if $H$ is faithful, full, or an equivalence, then $\widehat{H}$ has the respective property.
\end{cor}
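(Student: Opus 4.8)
The plan is to exploit the fact that $\widehat{H}$ is, by construction, a \emph{lifting} of $H$ that leaves the selected subsets untouched, together with the topologicity of the forgetful functors $U$ and $V$ established in Proposition \ref{PROP_TopFunctor}. The governing principle is Wyler's Taut Lift Theorem, whose hypothesis---that $\widehat{H}$ preserves initiality (or finality)---I would verify directly, since in this simple instance it is immediate: a $U$-initial source on an object $X$ is given by the intersection $A=\bigcap_i(Kf_i)^{-1}[B_i]$, and since $\widehat{H}$ keeps this exact same subset $A$ while sending the underlying morphisms to $Hf_i$ (with $L(Hf_i)=Kf_i$), the image source is $V$-initial by the very same formula. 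Dually for $U$-final sinks and $B=\bigcup_i(Kg_i)[A_i]$.

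First I would treat the adjunction claim. Assume $H\dashv$ has left adjoint $F$ with unit $\eta$ and counit $\varepsilon$. To produce $\tilde F$ lifting $F$ (i.e. $U\circ\tilde F = F\circ V$), I would define $\tilde F$ on an object $(Y,B)$ of $\mathsf{Sub}_L(\mathbf D)$ by equipping $FY$ with the $U$-initial structure with respect to the single morphism $FY\xrightarrow{\;\text{(transpose)}\;}$ into the lift of the codomain---concretely, the subset $A=(K(\text{adjunct}))^{-1}[\text{appropriate subset of } KFY]$; the cleanest route is to take $A\subseteq KFY=L F Y$ to be the $U$-initial lift making the unit component $(Y,B)\to\widehat H\tilde F(Y,B)=\widehat H(FY,A)$ a morphism, which exists and is unique by topologicity of $U$. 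One then checks that the ordinary adjunction bijection $\mathbf D(Y,HX)\cong\mathbf C(FY,X)$ restricts to the subsconed morphisms: a $\mathbf D$-morphism $Y\to HX$ respects $B$ and the selected subset of $KX$ precisely when its transpose $FY\to X$ does, because the initial structure $A$ was chosen exactly so that this compatibility is transported across the bijection. This restriction of a bijection to the respective subsets of morphisms is the one genuinely load-bearing verification, and it is where I would spend the most care; the unit and counit of the lifted adjunction are then simply $\eta$ and $\varepsilon$ carried along, their naturality being inherited from below.

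The ``right'' version is the formal dual: replace $U$-initial by $U$-final and run the same argument with a right adjoint, using $U$-final structures in place of initial ones. I expect the main obstacle to be nothing conceptually deep but rather the bookkeeping of \emph{which} initial/final lift makes the adjunction transpose preserve selected subsets in \emph{both} directions simultaneously---it is easy to arrange one inclusion $Kf[A]\subseteq A'$ and accidentally lose its transpose. Topologicity guarantees the lift exists and is unique, so the real content is confirming that the universal property of that lift is exactly equivalent to the morphism condition one needs downstream; once phrased correctly this becomes a one-line consequence of the defining property of initial (resp. final) structures.

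Finally, the preservation of faithfulness, fullness, and equivalence follows purely formally. Since $U$ and $V$ are faithful (indeed topological functors are faithful) and $V\circ\widehat H = H\circ U$, faithfulness of $\widehat H$ reduces at once to faithfulness of $H$. For fullness, given a $\mathsf{Sub}_L(\mathbf D)$-morphism $\widehat H(X,A)\to\widehat H(X',A')$, its underlying $\mathbf D$-morphism $HX\to HX'$ lifts through $H$ to some $X\to X'$ by fullness of $H$, and this lift automatically respects the subsets $A,A'$ because $\widehat H$ left them unchanged and the target morphism already did. The equivalence case combines these two with essential surjectivity, which is inherited because every object $(Y,B)$ of $\mathsf{Sub}_L(\mathbf D)$ has $Y\cong HX$ for some $X$, and transporting $B$ back along this isomorphism (again using topologicity to lift) exhibits $(Y,B)\cong\widehat H(X,A)$.
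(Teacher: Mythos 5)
Your overall strategy is the paper's own: check that $\widehat{H}$ trivially preserves $U$-initial sources and $U$-final sinks (correct, and stated in the paper just before the corollary), then either invoke the Taut Lift Theorem or build the universal arrows directly; your treatment of faithfulness, fullness and essential surjectivity is also sound and fills in exactly what the paper leaves to the reader. The genuine flaw is your definition of $\tilde{F}$. You equip $FY$ with ``the $U$-initial lift making the unit component $(Y,B)\to\widehat{H}(FY,A)$ a morphism, which exists and is unique by topologicity'' --- this is backwards and, as stated, false. The structures $A\subseteq K(FY)$ making $\eta_Y$ a morphism are precisely those with $(L\eta_Y)[B]\subseteq A$; they are far from unique, and the $U$-initial (i.e.\ \emph{largest}) such choice is $A=K(FY)$ itself, with which universality collapses: for a morphism $g\colon(Y,B)\to\widehat{H}(X,A')$, the transpose $g^\sharp\colon FY\to X$ would have to satisfy $Kg^\sharp\bigl[K(FY)\bigr]\subseteq A'$, which fails in general. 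The left adjoint must carry the \emph{least} structure, namely the image $\tilde{F}(Y,B)=\bigl(FY,(L\eta_Y)[B]\bigr)$ --- a final-type formula, not an initial one --- and this is exactly what the paper takes. With that choice your ``the bijection restricts'' step becomes the one-line computation $Kg^\sharp\bigl[(L\eta_Y)[B]\bigr]=L\bigl(Hg^\sharp\circ\eta_Y\bigr)[B]=Lg[B]\subseteq A'$, while conversely any morphism $\tilde{F}(Y,B)\to(X,A')$ transposes back compatibly because $\eta_Y$ itself is a morphism into $\widehat{H}\tilde{F}(Y,B)$.

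Your dualization is inverted in the same way: you propose ``replace $U$-initial by $U$-final'' for the right adjoint, but the right adjoint is the one built from an initial-type (preimage, largest) structure, $\check{E}(Y,B)=\bigl(EY,(L\varepsilon_Y)^{-1}[B]\bigr)$ along the counit, as in the paper. The mnemonic to internalize is: left adjoints lift via final (smallest, image) structures at the unit; right adjoints lift via initial (largest, preimage) structures at the counit. Once these roles are swapped back, the remainder of your argument --- the Taut Lift hypothesis check via $L(Hf_i)=Kf_i$, faithfulness from $V\circ\widehat{H}=H\circ U$ with $U,V$ faithful, fullness because a lift $f$ of a subset-respecting morphism automatically satisfies $Kf[A]=L(Hf)[A]\subseteq A'$, and essential surjectivity by transporting $B$ along an isomorphism $Y\cong HX$ --- is correct and compiles into a complete proof.
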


\begin{proof}
	We indicate only the proof of the first statement. 
	For an object $(Y,B)$ in $\mathsf{Sub}_L(\mathbf D)$ and an $H$-universal arrow $\eta_Y:Y \to G(FY)$ for $Y$, with $FY$ in $\mathbf C$, one immediately shows that $\eta_Y$ serves also as an $\widehat{H}$-universal arrow $\eta_{(Y,B)}:(Y,B)\to \widehat{H}(\tilde{F}(Y,B))$ for $(Y,B)$, where $$\tilde{F}(Y,B):=(FY,(L\eta_Y)[B]).$$ Dually, in order to construct a right adjoint $\check{E}$ of $\widehat{H}$ from a right adjoint $E$ of $H$, one sees that an $H$-couniversal arrow $\varepsilon_Y:H(EY)\to Y$ for $Y$ serves also as an $\widehat{H}$-couniversal arrow $\varepsilon_{(Y,B)}:\widehat{H}(\check{E}(Y,B))\to (Y,B)$ for $(Y,B)$ when one puts $$\check{E}(Y,B)=(EY,(L\varepsilon_Y)^{-1}[B]).$$
	$$\xymatrix{\mathsf{Sub}_K(\mathbf C)\ar[rr]^{\bot}_{\bot}\ar[d]_U && \mathsf{Sub}_L(\mathbf D)\ar@/_0.8pc/[ll]_{\tilde{F}}\ar@/^0.8pc/[ll]^{\check{E}}\ar[d]^V\\
		\mathbf{C}\ar[rr]^{\bot}_{\bot} && \mathbf{D}\ar@/_0.8pc/[ll]_F\ar@/^0.8pc/[ll]^E\\
	}$$
\end{proof}

It is easy to see that the corollary still holds if the given functors $H,K,L$ satisfy only $L\circ H\cong K$, rather than the strict equality $L\circ H=K$. Indeed, with a slight adaptation of the definition of $\widehat{H}$ that utilizes this natural isomorphism, we still obtain a commutative rectangle of functors as above, and the assertions of the corollary remain intact. In this form, we now apply the above corollary to the following diagram of functors, of which $\mathrm I, \mathrm{Fun}$ and $\mathrm{Lim}$ have been defined in Section \ref{SEC_TreeTopos}, while $\mathrm{Run}$ and $\mathrm{Branch}$, defined on objects in Sections \ref{SEC_Objects} and \ref{SEC_Morph}, allow for the (obvious and already mentioned) extension $f\mapsto\bar{f}$ to morphisms:

\begin{center}
	$\xymatrix{\Gmes\ar[r]^{\mathrm I}\ar[rd]_{\mathrm{Run}}^{\cong} & \Tree\ar[r]^{\mathrm{Fun}}\ar[d]|{\mathrm{Branch}} & \SetNop\ar[ld]^{\mathrm{Lim}}_{\cong}\\
		& \Sets & \\
	}$
\end{center}

We thus obtain the categories
\begin{itemize}
	\item $\mathsf{Sub}_{\mathrm{Run}}(\Gmes)$, coinciding exactly with $\Games_{\A},$
	\item $\mathsf{Sub}_{\mathrm{Branch}}(\Tree),$ written shortly as $\mathsf{Sub}(\Tree)$,
	\item $\mathsf{Sub}_{\mathrm{Lim}}(\SetNop),$ written shortly as $\mathsf{Sub}(\SetNop).$
\end{itemize}

By Proposition \ref{PROP_TopFunctor}, they come with topological functors to their respective base categories, which has some previously mentioned convenient consequences. For example, the forgetful functor $\mathrm{U}\colon\Sub{\Tree}\to\Tree$ being topological, any limit or colimit in $ \Sub{\Tree}$ can first be formed in $\Tree$ and then, with the help of the limit projections and colimit injections, provided with the obvious $\Sub{\Tree}$-structure. Furthermore, the functor $\mathrm{U}$ not only preserves all limits and colimits, it actually has both, a 
left adjoint, which specifies the empty set of branches, and a right adjoint, which specifies the set of all branches. 

Just like we formed the category $\Sub{\Tree}$ from $\Tree$ and $\mathsf{Sub}(\SetNop)$ from $\SetNop$, by restricting the functors $\mathrm{Branch}$ and $\mathrm{Lim}$ we obtain the full subcategories
\begin{itemize}
	\item $\TrGame:=\mathsf{Sub}_{\mathrm{Branch}_{|\PrTree}}(\PrTree).$
	\item $\FunGame:=\mathsf{Sub}_{\mathrm{Lim}_{|\mathsf{Epi}(\Sets)^{\omega^{\op}}}}(\mathsf{Epi}(\Sets)^{\omega^{\mathrm{op}}})$
\end{itemize}
of the categories $\Sub{\Tree}$ and $\mathsf{Sub}(\SetNop)$, respectively. Their objects are to be considered as games in their \textit{arboreal} and \textit{functorial} descriptions, as we demonstrate next.

Indeed, by Corollary \ref{COR_TautLift}, the functors forming the left triangle of the diagram of Corollary \ref{COR_Tree_SetNop} all have obvious ``$\mathsf{Sub}$-liftings'' and, thus, produce the (up to isomorphism) commutative triangle
\begin{center}
	$\xymatrix{\Games_\A\ar[r]^{\simeq\;}\ar[rd]_{\mathrm{I}} & \TrGame\ar@/_0.5pc/[d]_{\mathrm{inc}}^{\dashv}\\
		& \Sub{\Tree} \ar@/_0.5pc/[u]_{\mathrm{Pr}} \\
	}$
\end{center}
In particular, with Theorem
\ref{THM_GmeTree}, we obtain:

\begin{thm}\label{THM_Graphs&Games}
	The category $\Games_\A$ is equivalent to the full coreflective subcategory $\TrGame$ of $\Sub{\Tree}$.
\end{thm}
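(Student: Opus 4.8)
The plan is to deduce the theorem directly from the taut-lift machinery of Corollary \ref{COR_TautLift}, applied to the left-hand triangle of the diagram in Corollary \ref{COR_Tree_SetNop}. That triangle consists of the equivalence $\Gmes\simeq\PrTree$ (the corestriction of $\mathrm{I}$, furnished by Theorem \ref{THM_GmeTree}) together with the full coreflective inclusion $\mathrm{inc}\colon\PrTree\hookrightarrow\Tree$, whose right adjoint is $\mathrm{Pr}$. The only compatibility data needed over $\Sets$ are the natural isomorphism $\mathrm{Branch}\circ\mathrm{I}\cong\mathrm{Run}$ and the definitional equality $\mathrm{Branch}_{|\PrTree}=\mathrm{Branch}\circ\mathrm{inc}$, both recorded in the diagram preceding the theorem; the former merely expresses that the runs of a game are the branches of its underlying tree. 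The remark following Corollary \ref{COR_TautLift} lets us work with isomorphisms rather than strict equalities, so no extra strictification is required.

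First I would lift the equivalence. Since the corestriction $\mathrm{I}\colon\Gmes\to\PrTree$ is an equivalence and satisfies $\mathrm{Branch}_{|\PrTree}\circ\mathrm{I}\cong\mathrm{Run}$, Corollary \ref{COR_TautLift} produces a lifted functor $\widehat{\mathrm{I}}\colon\mathsf{Sub}_{\mathrm{Run}}(\Gmes)\to\mathsf{Sub}_{\mathrm{Branch}_{|\PrTree}}(\PrTree)$ that is again an equivalence. Since $\mathsf{Sub}_{\mathrm{Run}}(\Gmes)$ coincides with $\Games_\A$ (as already observed) and $\mathsf{Sub}_{\mathrm{Branch}_{|\PrTree}}(\PrTree)$ is by definition $\TrGame$, this yields $\Games_\A\simeq\TrGame$.

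Next I would lift the inclusion. Applying Corollary \ref{COR_TautLift} to $\mathrm{inc}\colon\PrTree\hookrightarrow\Tree$---which is full, faithful, carries the right adjoint $\mathrm{Pr}$, and satisfies $\mathrm{Branch}\circ\mathrm{inc}=\mathrm{Branch}_{|\PrTree}$---produces a lifted functor $\widehat{\mathrm{inc}}\colon\TrGame\to\Sub{\Tree}$ that is again full and faithful and again carries a right adjoint. Because $\widehat{\mathrm{inc}}$ leaves payoff sets untouched, it is, up to isomorphism, exactly the inclusion of $\TrGame$ as the full subcategory of $\Sub{\Tree}$ on those objects whose underlying tree is pruned, and the existence of its right adjoint exhibits $\TrGame$ as coreflective in $\Sub{\Tree}$. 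Together with the previous step this fills in the commutative triangle displayed before the theorem and establishes the claim.

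I do not anticipate a genuine obstacle here, since all the substantive work is absorbed into Corollary \ref{COR_TautLift} and Theorem \ref{THM_GmeTree}; the one point meriting care is the bookkeeping that identifies the codomain $\mathsf{Sub}_{\mathrm{Branch}_{|\PrTree}}(\PrTree)$ of the lifted equivalence with the full subcategory $\TrGame$ sitting inside $\Sub{\Tree}$, and that confirms $\widehat{\mathrm{inc}}$ really is that subcategory inclusion. This rests on the fact that the forgetful functors leave the selected subsets unchanged, so a pruned tree equipped with a payoff set receives the same $\mathsf{Sub}$-structure whether it is computed over $\mathrm{Branch}_{|\PrTree}$ or over $\mathrm{Branch}$. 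Once this is noted, no further computation is needed.
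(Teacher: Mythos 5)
Your proof is correct and follows essentially the same route as the paper: the paper also obtains the theorem by applying Corollary \ref{COR_TautLift} (in its isomorphism-tolerant form) to the left triangle of the diagram of Corollary \ref{COR_Tree_SetNop}, lifting both the equivalence $\Gmes\simeq\PrTree$ from Theorem \ref{THM_GmeTree} and the coreflective inclusion $\PrTree\hookrightarrow\Tree$ to the $\mathsf{Sub}$-level, with $\mathsf{Sub}_{\mathrm{Run}}(\Gmes)=\Games_\A$ and $\mathsf{Sub}_{\mathrm{Branch}_{|\PrTree}}(\PrTree)=\TrGame$. Your extra bookkeeping remark, that the liftings leave the selected subsets unchanged and hence identify the lifted functors with the evident inclusion, is exactly the ``obvious $\mathsf{Sub}$-liftings'' point the paper leaves implicit.
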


In fact, we are ready to state the easily verified claim that, by Corollary \ref{COR_TautLift}, the entire diagram of Corollary  \ref{COR_Tree_SetNop} has a ``$\mathsf{Sub}$-lifting"', as in

\begin{center}
	$\xymatrix{\Games_{\A}\ar[r]^{\simeq\;}\ar[rd]_{\mathrm{I}} & \mathbf{ArbGame}\ar@/_0.5pc/[d]^{\dashv}_{\mathrm{inc}}\ar@/^0.4pc/[rr]^{\mathrm{Fun}} && \mathbf{FunGame}\ar@/_0.5pc/[d]^{\dashv}_{\mathrm{inc}}\ar@/^0.4pc/[ll]_{\simeq}^{\mathrm{Tr}} & (T^*,A)\\
		&\Sub{ \Tree}\ar@/_0.5pc/[u]_{\mathrm{Pr}}\ar@/^0.4pc/[rr]^{\mathrm{Fun}} && \Sub{\SetNop}\ar@/_0.5pc/[u]\ar@/^0.4pc/[ll]_{\simeq}^{\mathrm{Tr}} & (T,A)\ar@{|->}[u] \\	
	}$
\end{center}

We therefore have:

\begin{thm}\label{THM_IsoSubSetsNop}
	The above diagram commutes in an obvious sense, up to natural isomorphisms. In particular, the category $\Sub{\Tree}$ is equivalent to $\Sub{\SetNop}$, and the category $\Games_\A$ is equivalent to $\mathbf{FunGame}$, which is a full coreflective subcategory of the topological category $\Sub{\SetNop}$ over the topos $\SetNop$.
\end{thm}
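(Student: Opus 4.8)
The plan is to obtain the entire statement by applying Corollary \ref{COR_TautLift} mechanically to every functor occurring in the diagram of Corollary \ref{COR_Tree_SetNop}, thereby lifting that whole ground-level picture (which sits over $\Sets$ via $\mathrm{Run}$, $\mathrm{Branch}$, $\mathrm{Lim}$ and their restrictions) to the level of the $\mathsf{Sub}$-categories. Since $\Games_\A$ is literally $\mathsf{Sub}_{\mathrm{Run}}(\Gmes)$, and $\TrGame$, $\FunGame$ are by definition the $\mathsf{Sub}$-categories of $\PrTree$ and $\mathsf{Epi}(\Sets)^{\omega^{\op}}$ for the restricted functors, all the lifted categories are already named; what remains is to produce the lifted functors and check that they inherit the relevant properties (equivalence, coreflective adjunction, fullness) of their ground-level counterparts. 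The left-hand equivalence $\Games_\A\simeq\TrGame$ being coreflective is already Theorem \ref{THM_Graphs&Games}, so the genuinely new content is the right-hand half of the square.

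First I would verify the hypothesis of Corollary \ref{COR_TautLift} for each functor to be lifted: each must sit in a triangle $L\circ H\cong K$ over $\Sets$, with $K,L$ the appropriate underlying-set functors. These triangle identities are exactly the up-to-isomorphism commutativities already recorded in Section \ref{SEC_TreeTopos}: the relation $\mathrm{Branch}\circ\mathrm{I}\cong\mathrm{Run}$ is the left triangle drawn just before the theorem, the relation $\mathrm{Lim}\circ\mathrm{Fun}\cong\mathrm{Branch}$ is Corollary \ref{COR_Branch}, and for the two inclusion functors the required identity holds trivially because the underlying-set functor on a subcategory is by definition the restriction of the one on the ambient category. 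With these in hand, Corollary \ref{COR_TautLift} yields: (i) since $\mathrm{Fun}\colon\Tree\to\SetNop$ is an equivalence, its lift $\widehat{\mathrm{Fun}}\colon\Sub{\Tree}\to\Sub{\SetNop}$ is an equivalence, giving $\Sub{\Tree}\simeq\Sub{\SetNop}$; (ii) since the inclusion $\PrTree\hookrightarrow\Tree$ has right adjoint $\mathrm{Pr}$, the lifted inclusion $\FunGame\hookrightarrow\Sub{\SetNop}$ acquires a lifted right adjoint and is full (inclusions being fully faithful), so $\FunGame$ is full coreflective in $\Sub{\SetNop}$; and (iii) the lift of the restricted equivalence $\PrTree\simeq\mathsf{Epi}(\Sets)^{\omega^{\op}}$ gives $\TrGame\simeq\FunGame$.

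To see that the full lifted diagram commutes ``in an obvious sense, up to natural isomorphism'', I would invoke the defining feature of $\widehat{H}$ stressed before Corollary \ref{COR_TautLift}: it acts as $H$ on the underlying object and leaves the chosen subset unchanged (or transported along the fixed isomorphism $L\circ H\cong K$). Consequently each natural isomorphism witnessing a commutativity of the ground diagram lifts verbatim, its subset-component being an identity, so no coherence beyond what Section \ref{SEC_TreeTopos} supplies needs checking. Chaining $\Games_\A\simeq\TrGame$ from Theorem \ref{THM_Graphs&Games} with $\TrGame\simeq\FunGame$ from (iii) then yields $\Games_\A\simeq\FunGame$, while the assertion that $\Sub{\SetNop}$ is topological over the topos $\SetNop$ is exactly Proposition \ref{PROP_TopFunctor} applied to $K=\mathrm{Lim}$ (with $\SetNop$ a Grothendieck topos).

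The whole argument is thus essentially bookkeeping, and the only step demanding genuine care is the interaction between the isomorphism $\mathrm{Lim}\circ\mathrm{Fun}\cong\mathrm{Branch}$ of Corollary \ref{COR_Branch} and the subset-transport performed by $\widehat{\mathrm{Fun}}$. Because Corollary \ref{COR_TautLift} lifts adjoints through the universal arrows, a careless choice of this isomorphism could install the wrong pay-off set on the image object; so I would fix once and for all the isomorphism of Corollary \ref{COR_Branch} and confirm that, under it, $\widehat{\mathrm{Fun}}$ sends $(T,A)$ to $(\mathrm{Fun}\,T,A)$ with $A\subseteq\mathrm{Branch}(T)$ identified with the corresponding subset of $\mathrm{Lim}(\mathrm{Fun}\,T)$, and dually for $\mathrm{Tr}$. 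This identification of pay-off structures (branches versus compatible limit-sequences) is the one genuinely content-bearing point, and is where I expect the only real obstacle to lie.
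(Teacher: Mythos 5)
Your proposal is correct and is essentially the paper's own argument: the paper likewise treats the theorem as an ``easily verified claim'' obtained by applying Corollary \ref{COR_TautLift} (in its up-to-isomorphism form) to the entire diagram of Corollary \ref{COR_Tree_SetNop}, using the triangles $\mathrm{Branch}\circ\mathrm{I}\cong\mathrm{Run}$ and $\mathrm{Lim}\circ\mathrm{Fun}\cong\mathrm{Branch}$ over $\Sets$, the identification $\Games_\A=\mathsf{Sub}_{\mathrm{Run}}(\Gmes)$, Theorem \ref{THM_Graphs&Games}, and Proposition \ref{PROP_TopFunctor} for topologicity of $\Sub{\SetNop}$ over $\SetNop$. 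One tiny notational slip: the coreflective inclusion to be lifted on the right-hand side is $\mathsf{Epi}(\Sets)^{\omega^{\op}}\hookrightarrow\SetNop$ with coreflector $(-)^*$ (not $\PrTree\hookrightarrow\Tree$ with $\mathrm{Pr}$), but since these correspond under the lifted equivalence your conclusion is unaffected.
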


Let us finally point out that, of course, the diagram of Corollary \ref{COR_Branch}, in both the unpruned and the pruned form, also has a ``$\mathsf{Sub}$-lifting'', which then rests on the topological category
$$\Sub{\Sets} \coloneqq \mathsf{Sub}_{\mathrm{Id_{\Sets}}}(\Sets)$$
over $\Sets$, whose objects are pairs of sets $(X,S)$, with $S\subseteq X$, and whose morphisms $(X,S) \stackrel{f}{\to}(X',S')$ are mappings $f\colon X\to X'$ with $f[S]\subseteq S'$.
Now, 
considering the pruned form of the lifted diagram, and replacing its upper left category $\Sub{\PrTree}\cong\mathbf{ArbGame}$ equivalently by $\Games_\A$, for the record we may describe the  functors on the left side of the emerging (up to isomorphism) commutative diagram
\begin{center}
	$\xymatrix{\Games_\A\ar@/^0.4pc/[rr]^{\mathrm{}}_{\simeq}\ar@/_0.5pc/[ddr]_{\mathrm{Forget}}^{\bot} && \FunGame\ar@/^0.4pc/[ll]^{\mathrm{}}\ar@/^0.5pc/[ldd]^{\mathrm{Lim}}_{\bot}\\
		&&\\
		&\Sub{\Sets}\,,\ar@/_0.5pc/[luu]_{\mathrm{Free}}\ar@/^0.5pc/[uur]^{\Delta}&\\
	}$
\end{center}
as follows: the functor
$\mathrm{Forget}\colon\Games_\A\to\Sub{\Sets}$ sends

\begin{itemize}
	\item  a game $G=(T,A)$ to $(\Run(T),A)$, and
	\item an $\A$-morphism $(T,A)\stackrel{f}{\to} (T',A')$ to $\overline{f}\colon\Run(T)\to\Run(T')$;
\end{itemize}	
and the functor $\mathrm{Free}$ sends
\begin{itemize}
	\item a pair $(X,S)$ with $S\subseteq X$ to $( \bigcup_{n<\omega}\set{\seq{x: i<n}: x\in X}, \set{\seq{x: n<\omega}: x\in S})$ and \\
	\item extends a  morphism $f:(X,S)\to (X',S')$ by applying $f$ componentwise to sequences.
\end{itemize}

\section{Metric games}\label{SEC_MetGames}
There is a well-known adjunction between presheaves on $\omega$ and so-called {\em bisected} ultrametric spaces, {\em i.e.}, ultrametric spaces in which all non-zero distances are of the form $2^{-n}$ for some $n<\omega$, and this adjunction restricts to an equivalence between the category $\mathsf{Epi}(\Sets)^{\omega^{\op}}$ and the category of bisected complete
ultrametric spaces and their non-expanding maps: see Proposition 5.1 of \cite{Birkedal2012}. Replacing $\mathsf{Epi}(\Sets)^{\omega^{\op}}$ by the equivalent category $\Gmes$, in this section we want to describe the essence of this adjunction and the ensuing equivalence of $\Gmes$ with a category of certain ultrametric spaces in a more direct fashion. In the following sections we will see how the resulting equivalent description of games in terms of ultrametrics helps us establish properties of the category $\Games_{\A}$.

We denote by $\CUMet$ the category of complete ultrametric spaces of diameter at most $1$, with their non-expanding, or $1$-Lipschitz, mappings $f\colon (X,d)\to (Y,d')$ as morphisms: $d'(fx,fx')\leq d(x,x')$ for all $x,x'\in X$. As a minor technical difference with existing work, instead of metrics whose non-zero values are of the form $2^{-n}$, we will consider complete ultrametric spaces whose non-zero distances are of the form $\frac{1}{n+1}\; (n<\omega)$ and denote the resulting full subcategory of $\CUMet$  by $\mathbf{SeqSpa}$. Shortly we will see that its object behave indeed like ``spaces of sequences''.

Our goal now is to show:

\begin{thm}\label{THM_GmeUMet}
	There is an adjunction
	$$\xymatrix{\Gmes\ar@/^1.0pc/[rr]^{\mathrm{Run}\quad} &\bot& \CUMet\ar@/^1.0pc/[ll]^{\mathrm{Ball}\quad}
	}$$
	with a full and faithful functor $\mathrm{Run}$, which restricts to an equivalence of $\Gmes$ with the coreflective subcategory $\mathbf{SeqSpa}$ of $\CUMet$.
\end{thm}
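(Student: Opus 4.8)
The plan is to define the two functors explicitly and then recognize the equivalence as an instance of a fully faithful left adjoint landing in its essential image. For the functor $\mathrm{Run}$, I would equip $\mathrm{Run}(T)$ with the ultrametric $d$ given by $d(R,R')=\frac{1}{k+1}$ when $R\neq R'$ have longest common initial segment of length $k$ (and $d(R,R)=0$). The ultrametric inequality is immediate, since the common initial segment of $R,R''$ is at least as long as the shorter of those of $R,R'$ and $R',R''$; all non-zero distances lie in $\{\frac{1}{n+1}:n<\omega\}$, the diameter is at most $1$, and completeness holds because a Cauchy sequence of runs stabilises on each fixed coordinate, its coordinatewise limit being a run as $T$ is downward closed. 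Thus $\mathrm{Run}(T)\in\mathbf{SeqSpa}$, and $\overline{f}$ is non-expanding whenever $f$ is chronological (as $f$ preserves truncation), so $\mathrm{Run}$ is a functor into $\mathbf{SeqSpa}$. The key soft fact, used throughout, is that $h\colon\mathrm{Run}(T_1)\to X$ is non-expanding precisely when it \emph{preserves agreement of initial segments}: if $R,R'$ agree on their first $n$ coordinates then $d(h(R),h(R'))\le\frac{1}{n+1}$. Faithfulness of $\mathrm{Run}$ is then clear, since each $t\in T_1$ extends (by prunedness) to a run $R$ with $R\restrict|t|=t$ and $f(t)=\overline{f}(R)\restrict|t|$; and for fullness, given non-expanding $h\colon\mathrm{Run}(T_1)\to\mathrm{Run}(T_2)$, I would set $f(t):=h(R)\restrict|t|$ for any $R$ extending $t$, the soft fact making this independent of $R$, whence $f$ is chronological and $\overline{f}=h$.

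For the right adjoint, given $(X,d)\in\CUMet$ the relation $x\sim_n y\iff d(x,y)\le\frac{1}{n+1}$ is an equivalence whose classes are the closed balls of radius $\frac{1}{n+1}$, with finer relations refining coarser ones. I would take $\mathrm{Ball}(X)$ to be the game tree whose length-$n$ nodes are the nested chains $\seq{b_0,\dots,b_{n-1}}$ with $b_i$ a $\sim_{i+1}$-class and $b_0\supseteq\cdots\supseteq b_{n-1}$ (and $\mathrm{Ball}(\emptyset):=\emptyset$); this is pruned because every ball of radius $\frac{1}{i+2}$ contains one of radius $\frac{1}{i+3}$, and a non-expanding map sends $\sim_n$-classes into $\sim_n$-classes, defining $\mathrm{Ball}$ on morphisms. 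A run of $\mathrm{Ball}(X)$ is a descending chain of balls whose radii $\frac{1}{i+2}$ tend to $0$, so by completeness its intersection is a single point; sending a chain to that point gives the non-expanding map $\varepsilon_X\colon\mathrm{Run}(\mathrm{Ball}(X))\to X$, since if two chains first differ at coordinate $\ell$ their intersection points share a ball of radius $\frac{1}{\ell+1}$.

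To establish $\mathrm{Run}\dashv\mathrm{Ball}$ with counit $\varepsilon$, I would verify that $\varepsilon_X$ is couniversal: given non-expanding $h\colon\mathrm{Run}(T)\to X$, the assignment $t\mapsto\seq{[h(R)]_{\sim_1},\dots,[h(R)]_{\sim_n}}$ (for any $R$ extending $t$, with $n=|t|$) is the unique chronological $f\colon T\to\mathrm{Ball}(X)$ with $\varepsilon_X\circ\overline{f}=h$; well-definedness uses the soft fact, and uniqueness holds because $f(t)$ is forced to record the balls containing $h(R)$. As $\mathrm{Run}$ is full and faithful, this realizes $\Gmes$ as equivalent to a coreflective subcategory of $\CUMet$, namely the essential image of $\mathrm{Run}$. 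Finally, for $X\in\mathbf{SeqSpa}$ the chosen indexing makes $\varepsilon_X$ a bijective isometry: if $d(x,x')=\frac{1}{k+1}$, then the chains of $x$ and $x'$ first differ at coordinate $k$, so the ambient distance equals the $\mathrm{Run}$-distance of the chains. Hence $\varepsilon_X$ is an isomorphism exactly when $X\in\mathbf{SeqSpa}$, identifying the essential image of $\mathrm{Run}$ with $\mathbf{SeqSpa}$ and yielding the equivalence $\Gmes\simeq\mathbf{SeqSpa}$.

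\textbf{Main obstacle.} I expect the off-by-one indexing in $\mathrm{Ball}$ to be the subtle point: the naive choice (taking $b_i$ to be a $\sim_i$-class) makes $\varepsilon_X$ fail to be non-expanding, and only the shift—radius $\frac{1}{i+2}$ at coordinate $i$—reconciles the ball structure with the $\mathrm{Run}$-metric. The two genuinely substantive steps, fullness of $\mathrm{Run}$ and couniversality of $\varepsilon_X$, both reduce to the soft reformulation of the non-expanding condition, while completeness of $X$ enters exactly once, to guarantee that a descending chain of balls of vanishing radii meets in a single point.
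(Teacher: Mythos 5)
Your proposal is correct and follows essentially the same route as the paper: the same metric $d_T$ on $\Run(T)$, your ``soft fact'' is precisely the paper's Lemma \ref{PROP_ChronDeltaChar} (used exactly as there for fullness and couniversality), the same nested-ball game tree with the same radius shift $\frac{1}{i+2}$, the same counit via completeness of $X$, and the same observation that $\varepsilon_{(X,d)}$ is a bijective isometry exactly when all nonzero distances lie in $\set{\frac{1}{n+1}:n<\omega}$. The only cosmetic difference is that you present the moments of $\mathrm{Ball}(X,d)$ as chains of $\sim_n$-equivalence classes rather than closed balls with explicit centers, which coincide in an ultrametric space.
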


In order to ``lift'' the functor $\mathrm{Run}:\Gmes\to\Sets$ of Section \ref{SEC_SubTree} as indicated in the theorem, let us first state a technical lemma, whose proof can be left to the reader. For $R,R'\in T$ with $T$ in $\Gmes$ we use the notation
\[
\Delta(R,R') = \min\set{n<\omega: R(n)\neq R'(n)}
\]
when $R\neq R'$, and put $\Delta(R,R')=\infty$ otherwise.

\begin{lemma}\label{PROP_ChronDeltaChar}
	For $T_1,T_2\in \Gmes$, let $g\colon \Run(T_1)\to \Run(T_2)$ be any mapping. Then there is a chronological mapping $f\colon T_1\to T_2$ such that $g=\overline{f}$ if, and only if, $\Delta(g(R),g(R'))\ge \Delta(R,R')$ for all $t,t'\in \Run(T_1).$
\end{lemma}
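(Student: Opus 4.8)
The plan is to prove Lemma \ref{PROP_ChronDeltaChar} by characterizing which maps between run-sets come from chronological tree maps via the key observation that $\Delta$ measures how long two runs agree, and that chronological maps must preserve initial segments. Recall that $\Delta(R,R')$ is the first coordinate at which $R$ and $R'$ differ, so $\Delta(R,R')\ge n$ is equivalent to $R\restrict n = R'\restrict n$.

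\medskip
\emph{Necessity.} First I would assume $g=\overline{f}$ for some chronological $f\colon T_1\to T_2$. Fix $R,R'\in\Run(T_1)$ and set $n=\Delta(R,R')$; I may assume $n<\infty$, the case $n=\infty$ being immediate since then $R=R'$ forces $g(R)=g(R')$. By definition of $n$ we have $R\restrict n = R'\restrict n$, and since $f$ preserves truncation, $\overline{f}(R)\restrict n = f(R\restrict n) = f(R'\restrict n)=\overline{f}(R')\restrict n$. Hence $g(R)$ and $g(R')$ agree on their first $n$ coordinates, which is precisely $\Delta(g(R),g(R'))\ge n=\Delta(R,R')$.

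\medskip
\emph{Sufficiency.} This is the substantive direction. Assuming $\Delta(g(R),g(R'))\ge\Delta(R,R')$ for all $R,R'\in\Run(T_1)$, I must recover a chronological $f\colon T_1\to T_2$ with $\overline{f}=g$. The idea is to define $f$ on a moment $t\in T_1$ by picking any run $R\in\Run(T_1)$ extending $t$ (one exists since $T_1$ is pruned, condition (\rom{2}) of Definition \ref{DEF_GameTree}), and setting $f(t)=g(R)\restrict |t|$. The crucial point is \emph{well-definedness}: if $R,R'$ both extend $t$, with $|t|=k$, then $R\restrict k = t = R'\restrict k$, so $\Delta(R,R')\ge k$, and the hypothesis gives $\Delta(g(R),g(R'))\ge k$, i.e. $g(R)\restrict k = g(R')\restrict k$. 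Thus $f(t)$ does not depend on the chosen extension. That $f(t)\in T_2$ follows because $g(R)\in\Run(T_2)$ makes every truncation $g(R)\restrict k$ a member of $T_2$. Length preservation is built into the definition ($|f(t)|=|t|$), and truncation preservation $f(t\restrict j)=f(t)\restrict j$ for $j\le |t|$ follows because the \emph{same} run $R$ may be used to compute both $f(t)$ and $f(t\restrict j)$, giving $f(t\restrict j)=g(R)\restrict j=(g(R)\restrict |t|)\restrict j=f(t)\restrict j$. Finally I would verify $\overline{f}=g$: for $R\in\Run(T_1)$, the extension $\overline{f}(R)$ is determined by $\overline{f}(R)\restrict n = f(R\restrict n)=g(R)\restrict n$ for all $n$, so $\overline{f}(R)=g(R)$.

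\medskip
The main obstacle I anticipate is the well-definedness of $f$ on moments, and it is exactly here that the $\Delta$-hypothesis does its work: without the inequality $\Delta(g(R),g(R'))\ge\Delta(R,R')$, two runs through a common moment $t$ could be sent by $g$ to runs already diverging before coordinate $|t|$, making the prescription $g(R)\restrict|t|$ ambiguous. Everything else is routine bookkeeping with truncations, so the argument hinges on packaging the single inequality into this consistency statement. I would present the well-definedness check carefully and treat the remaining verifications briskly.
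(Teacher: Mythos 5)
Your proof is correct: the paper itself leaves this lemma's proof to the reader, and your argument is exactly the intended one --- necessity from preservation of length and truncation, and sufficiency by defining $f(t)=g(R)\restrict|t|$ for any run $R$ extending $t$ (which exists since $T_1$ is pruned), with the $\Delta$-hypothesis securing precisely the well-definedness you highlight. All the verifications (membership of $f(t)$ in $T_2$, chronology via a common extending run, and $\overline{f}=g$) are carried out correctly, so there is nothing to fix.
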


\begin{myproof}{Theorem}{THM_GmeUMet}
	
	For an object $T\in\Gmes$ we obtain an ultrametric $d_T$ on $\Run(T)$ of diameter at most $1$ by setting
	\[
	d_T(R,R')=\begin{cases}
		\frac{1}{\Delta(R,R')+1}\text{ if $R\neq R'$,}\\
		0 \text{\qquad\quad\,\, otherwise.}
	\end{cases}
	\]
	It clearly makes $(\Run(T),d_T)$ complete, and 
	for a chronological map $T\stackrel{f}{\to} T'$, the map $\overline{f}\colon \Run(T)\to \Run(T')$ becomes $1$-Lipschitz.
	With Lemma  \ref{PROP_ChronDeltaChar} one easily sees that the now well-established functor $\Run:\Gmes\to\CUMet$ is full and faithful.
	
	Conversely, given  $(X,d)\in \CUMet$ with induced topology $\tau$, for every $x\in X$ let us denote the closed ball of radius $r>0$ centered at $x$ by $\overline{B}(x, r)$ and define the game tree $\mathrm{Ball}(X,d)$ over $\tau$ as the set of all finite sequences of the form
	\[
	\seq{\overline{B}(x_i,\frac{1}{i+2}):i\le n}\subseteq\tau^n
	\]
	for some $n<\omega$ and $\seq{x_i:i\le n}\in X^n$,
	such that $\overline{B}(x_i,\frac{1}{i+2})\supseteq \overline{B}(x_j,\frac{1}{j+2})$ for all $i\le j\le n$. 
	Clearly,  $\mathrm{Ball}(X,d)$ 
	satisfies conditions \rom{1} and \rom{2} of Definition \ref{DEF_infinite_game}. 
	For $(X,d)\stackrel{f}{\to}(X',d')$ $1$-Lipschitz, let $\mathrm{Ball}$ assign to $f$ the well-defined chronological map $\tilde{f}$ that sends a sequence 
	$\left(\seq{\overline{B}\left(x_i,\frac{1}{i+2}\right):i\le n }\right)$ to $\seq{\overline{B}\left(f(x_i),\frac{1}{i+2}\right):i\le n }$, by just applying $f$ to the center of a ball.
	
	To complete the proof of the adjunction $\Run\dashv \mathrm{Ball}$, it suffices to establish a $\Run$-couniversal arrow 
	$$\varepsilon_{(X,d)}:\Run(\mathrm{Ball}(X,d))\to (X,d)$$
	for all $(X,d)\in \CUMet$. First we note that for every run $R$ in $\Run(\mathrm{Ball}(X,d))$ there is a sequence $\seq{x_n:n<\omega}\in X^\omega$ such that 
	$R=\seq{\overline{B}\left(x_n,\frac{1}{n+2}\right):n<\omega}$.
	Since $(X,d)$ is complete, there is an $x\in X$ such that $\bigcap_{n<\omega}\overline{B}\left(x_n,\frac{1}{n+2}\right)=\{x\}$. Moreover, because $d$ is an  ultrametric, $x\in \overline{B}\left(x_n,\frac{1}{n+2}\right)$ implies $\overline{B}\left(x,\frac{1}{n+2}\right)=\overline{B}\left(x_n,\frac{1}{n+2}\right)$ for every $n<\omega$, and
	\[
	R=\seq{\overline{B}\left(x,\frac{1}{n+2}\right):n<\omega}.
	\]
	follows. Hence, we can let $\varepsilon_{(X,d)}$ assign to $R$ the unique point $x\in X$ such that the above identity holds.
	Let us see that $\varepsilon_{(X,d)}$ is $1$-Lipschitz. 
	Indeed, given distinct runs $R_1= \seq{\overline{B}\left(x_1,\frac{1}{n+2}\right):n<\omega}$ and $R_2=\seq{\overline{B}\left(x_2,\frac{1}{n+2}\right):n<\omega}$, let
	\[
	M=\min\set{n<\omega: R_1(n)\neq R_2(n)}.
	\]
	Then, if $d(x_1,x_2)\le \frac{1}{M+1}$, with $d'$ denoting the metric of $\Run(\mathrm{Ball}(X,d))$ one has
	\[
	\frac{1}{M+1} = d'(R_1,R_2)\ge d(x_1,x_2)=d(\varepsilon_{(X,d)}(R_1),\varepsilon_{(X,d)}(R_2)).
	\]
	
	To see the couniversal property, we observe that $\varepsilon_{(X,d)}$ is bijective. Hence, given any $1$-Lipschitz map $h:(\Run(T),d_T)\to (X,d)$ with a game tree $T$, an application of Lemma \ref{PROP_ChronDeltaChar} gives us a unique chronological map $f:T\to \mathrm{Ball}(X,d)$ with $\varepsilon_{(X,d)}\circ \bar{f}=h$, as required.
	\begin{center}
		$\xymatrix{\mathrm{Ball}(X,d) & \Run(\mathrm{Ball}(X,d))\ar[rr]^{\varepsilon_{(X,d)}} && (X,d)\\
			T\ar@{-->}[u]^f & \Run(T)\ar[urr]_h\ar[u]^{\bar{f}}&& \\
		}$
		
	\end{center}
	
	For the completion of the proof it suffices to note that the map $\varepsilon_{(X,d)}$ becomes an isomorphism in $\CUMet$, {\em i.e.}, a bijective isometry, precisely when $(X,d)$ is an object of $\mathbf{SeqSpa}$. This, however is obvious, since the inequality in $(*)$ becomes an equality 
	when $d$ has its range restricted to $\{0\}\cup\set{\frac{1}{n+1}:n<\omega}$.
\end{myproof}

Considering the commutative triangle
\begin{center}
	$\xymatrix{\Gmes\ar[rr]^{\mathrm{Run}}\ar[rd]_{\mathrm{Run}} && \CUMet\ar[ld]^{\mathrm{Forget}} \\
		& \Sets  \\
	}$
\end{center}
we may apply the ``$\mathsf{Sub}$-machinery''
developed in the previous section and, with Theorem \ref{THM_GmeUMet} and Corollary \ref{COR_TautLift},
obtain the adjunction
$$\xymatrix{\qquad\qquad\Games_{\A}\ar@/^1.0pc/[rr]^{\mathrm{Run}} &\bot& \mathsf{Sub}(\CUMet)\ar@/^1.0pc/[ll]^{\mathrm{Ball}}}.$$ 
Here $\mathrm{Run}$ sends an $\A$-game $(T,A)$ to $(\mathrm{Run}(T),d_T,A\subseteq\mathrm{Run}(T))$, and $\mathrm{Ball}$ sends an arbitrary object $(X,d,S)$ in $\mathsf{Sub}(\CUMet)=\mathsf{Sub}_{\mathrm{Forget}}(\CUMet) $ to $(\mathrm{Ball}(X,d),A_S)$ with $A_S=\varepsilon_{(X,d)}^{-1}[S]$, so that $A_S$ contains precisely those runs $R=\seq{\overline{B}\left(x,\frac{1}{n+2}\right):n<\omega} $ in $\mathrm{Ball}(X,d)$ for which the unique center $x$ lies in $S$.
Clearly, if we now put 
\begin{itemize}
	\item $\mathbf{MetGame}:=\mathsf{Sub}_{\mathrm{Forget}_{|\mathbf{SeqSpa}}}(\CUMet)$,
\end{itemize}
applying Theorem \ref{THM_GmeUMet} and Corollary \ref{COR_TautLift} once again, we obtain that $\mathbf{MetGame}$ gives us an appropriate metric description of games, as follows:

\begin{thm}\label{THM_UMetEqv}
	The category $\Games_\A$ is equivalent to $\MetGame$, which is a full coreflective subcategory of $\Sub{\CUMet}$.
\end{thm}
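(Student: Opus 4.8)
The plan is to derive both assertions of the theorem as purely formal consequences of Theorem~\ref{THM_GmeUMet}, obtained by feeding it twice through the Taut Lift Corollary~\ref{COR_TautLift}, exactly in the spirit of the ``$\mathsf{Sub}$-machinery'' of Section~\ref{SEC_SubTree}. The genuine mathematical content, namely that $\mathrm{Run}$ corestricts to an equivalence $\Gmes\simeq\mathbf{SeqSpa}$ and that $\mathbf{SeqSpa}$ is coreflective in $\CUMet$, is already supplied by Theorem~\ref{THM_GmeUMet}; what remains is to check that the relevant triangles over $\Sets$ commute and to transport the equivalence and the coreflection along the lifts.

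First I would establish the equivalence $\Games_{\A}\simeq\MetGame$. The triangle displayed just before the theorem records that $\mathrm{Forget}\circ\mathrm{Run}=\mathrm{Run}$ for the full functor $\mathrm{Run}\colon\Gmes\to\CUMet$; I would observe that it corestricts to $\mathrm{Forget}_{|\mathbf{SeqSpa}}\circ\mathrm{Run}=\mathrm{Run}$, the corestriction of $\mathrm{Run}$ to $\mathbf{SeqSpa}$ being legitimate because, for any game tree $T$, the metric $d_T$ only takes the value $0$ and values $\tfrac{1}{\Delta(R,R')+1}$ with $\Delta(R,R')\ge 0$, so that $(\mathrm{Run}(T),d_T)$ indeed lies in $\mathbf{SeqSpa}$. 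Since Theorem~\ref{THM_GmeUMet} asserts that the corestricted $\mathrm{Run}\colon\Gmes\to\mathbf{SeqSpa}$ is an equivalence, the ``equivalence'' clause of Corollary~\ref{COR_TautLift} applies to this triangle and yields that the lifted functor
\[
\widehat{\mathrm{Run}}\colon\Games_{\A}=\mathsf{Sub}_{\mathrm{Run}}(\Gmes)\longrightarrow\mathsf{Sub}_{\mathrm{Forget}_{|\mathbf{SeqSpa}}}(\mathbf{SeqSpa})=\MetGame
\]
is again an equivalence, which is the first claim.

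Next I would address coreflectivity of $\MetGame$ in $\Sub{\CUMet}$. Here I would take the full inclusion $\iota\colon\mathbf{SeqSpa}\hookrightarrow\CUMet$, which satisfies $\mathrm{Forget}\circ\iota=\mathrm{Forget}_{|\mathbf{SeqSpa}}$ and which, by Theorem~\ref{THM_GmeUMet}, is full, faithful and has a right adjoint, the coreflector $C$. Because $\MetGame$ is by definition exactly the full subcategory of $\Sub{\CUMet}$ on the objects $(X,d,S)$ whose underlying space $(X,d)$ lies in $\mathbf{SeqSpa}$, the lifted functor $\widehat{\iota}\colon\MetGame\to\Sub{\CUMet}$ is precisely this full inclusion. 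Applying the ``full/faithful'' and ``right-adjoint'' clauses of Corollary~\ref{COR_TautLift} then shows $\widehat\iota$ to be full, faithful and to possess a right adjoint $\check C$, which by the formula in the proof of that corollary sends $(Y,B)$ to $(CY,(\mathrm{Forget}\,\varepsilon_Y)^{-1}[B])$, i.e.\ equips the coreflection $CY$ with the preimage of $B$ under the counit. Hence $\MetGame$ is a full coreflective subcategory of $\Sub{\CUMet}$, completing the proof.

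I expect no genuine obstacle here; the only point that demands care, rather than difficulty, is making the identification $\MetGame=\mathsf{Sub}_{\mathrm{Forget}_{|\mathbf{SeqSpa}}}(\mathbf{SeqSpa})$ explicit as the full subcategory of $\Sub{\CUMet}$ cut out by the condition ``underlying space in $\mathbf{SeqSpa}$'', together with verifying that the two triangles over $\Sets$ commute (up to the natural isomorphism permitted by the remark following Corollary~\ref{COR_TautLift}). Once this bookkeeping is in place, the equivalence and the coreflection transport mechanically along the two lifts.
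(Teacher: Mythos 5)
Your proposal is correct and follows essentially the same route as the paper, which likewise obtains the theorem by feeding Theorem \ref{THM_GmeUMet} through the ``$\mathsf{Sub}$-machinery'' of Corollary \ref{COR_TautLift} over the strictly commuting triangles to $\Sets$ (lifting the equivalence $\Gmes\simeq\mathbf{SeqSpa}$ to $\Games_{\A}\simeq\MetGame$, and the coreflective inclusion $\mathbf{SeqSpa}\hookrightarrow\CUMet$ to $\MetGame\hookrightarrow\Sub{\CUMet}$, with the coreflector given by the counit-preimage formula). Your explicit identification of $\MetGame$ as the full subcategory of $\Sub{\CUMet}$ cut out by the condition that the underlying space lie in $\mathbf{SeqSpa}$ matches the paper's intended reading of its definition.
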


We conclude this section with the diagrams of Figures \ref{diagram_Gme_tikzcd} and \ref{diagram_tikzcd}, which summarize the results we showed in Sections \ref{SEC_TreeTopos}, \ref{SEC_SubTree} and \ref{SEC_MetGames}. 
In these diagrams, double-sided arrows ``$\leftrightarrow$''indicate an equivalence between categories (which, accompanied by the symbol ``$\cong$'', further indicate that such functors are isomorphisms). We emphasize that both diagrams commute only up to isomorphism. But  the downward composition of functors from $\Gmes$ to $\Sets$ in \ref{diagram_Gme_tikzcd} and from $\Games_\A$ to $\Sets$ in \ref{diagram_tikzcd} on the left side commutes strictly with the functor $\Run$.

\begin{figure}
	\begin{tikzcd}[row sep=5em, column sep=6em]
		&  {\Gmes}  \arrow[d,leftrightarrow]\arrow[dr,leftrightarrow]\arrow[dl,leftrightarrow]	& 	 \\ 
		\mathbf{SeqSpa}\arrow[r,leftrightarrow] \arrow[d,hook,"\dashv"{auto,xshift=0.5ex}]\arrow[d, leftarrow, bend left, shift left=2, "\mathrm{Run}\circ \mathrm{Ball}"] & \PrTree \arrow[r, leftrightarrow] \arrow[d,hook,"\dashv"{auto,xshift=0.5ex}]\arrow[d, leftarrow, bend left, shift left=2, "\Pr"]	&    \mathsf{Epi}(\Sets)^{\omega^{\op}}\arrow[d,hook,"\dashv"{auto,xshift=0.5ex}]\arrow[d, leftarrow, bend left, shift left=2, "(-)^*"]\\
		\CUMet  \arrow[dr, "\mathrm{Forget}"description] & \Tree \arrow[r,leftrightarrow] \arrow[d,  "\Br"]&    \SetNop  \arrow[dl, "\mathrm{Lim}"description]\\
		&  \Sets \arrow[ul, bend left, "\tang"{sloped, auto,yshift=0.75ex}, "\msmall{\mathrm{Discrete}}"{sloped, auto, swap}] \arrow[ur, bend right, "\tang"{sloped, auto,yshift=0.75ex}, "{\Delta}"{sloped, auto, swap}] \arrow[u,  bend left, shift left=2, "\msmall{\mathrm{Free}}"{auto}, , "\dashv"{auto,swap,yshift=0.0ex, xshift=0.5ex}]
	\end{tikzcd}
	\caption{Equivalent categories of trees and their underlying categories}\label{diagram_Gme_tikzcd}	
\end{figure}
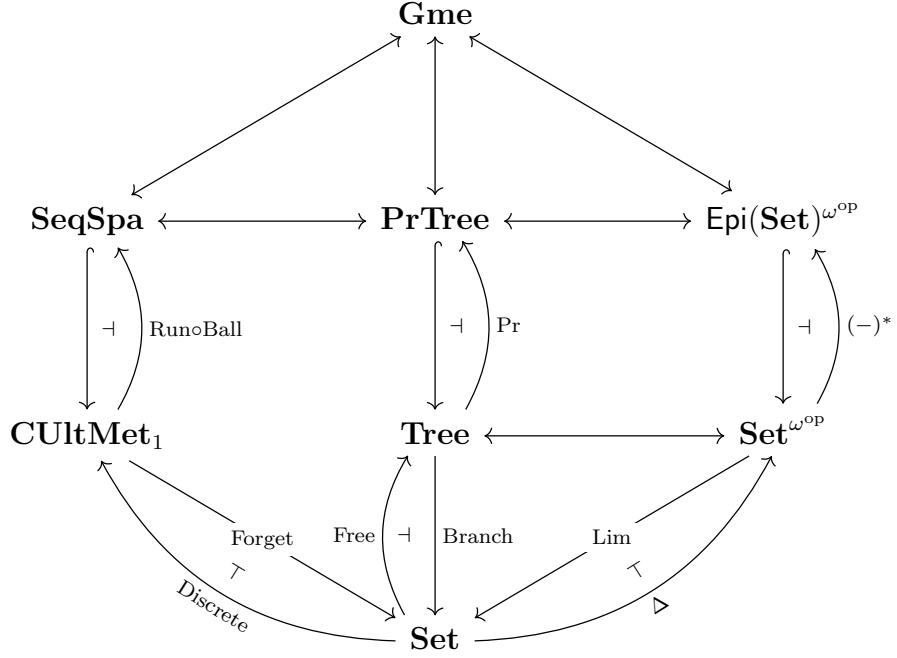

\begin{figure}
	\begin{tikzcd}[row sep=5em, column sep=6em]
		&  {\Games}_{\A}  \arrow[d,leftrightarrow]\arrow[dr,leftrightarrow]\arrow[dl,leftrightarrow]	& 	 \\ 
		\MetGame\arrow[r,leftrightarrow] \arrow[d,hook,"\dashv"{auto,xshift=0.5ex}]\arrow[d, leftarrow, bend left, shift left=2, "\mathrm{Run}\circ \mathrm{Ball}"] & \TrGame\arrow[r, leftrightarrow] \arrow[d,hook,"\dashv"{auto,xshift=0.5ex}]\arrow[d, leftarrow, bend left, shift left=2, "\Pr"]	&    \FunGame \arrow[d,hook,"\dashv"{auto,xshift=0.5ex}]\arrow[d, leftarrow, bend left, shift left=2, "(-)^*"]\\
		\Sub{\CUMet} \arrow[d, leftarrow, bend right, shift right=2, "\dashv"{xshift=0.5ex}]\arrow[d, leftarrow, bend left, shift left=2] \arrow[d, "\dashv"{xshift=0.5ex}]	 & \Sub{\Tree} \arrow[r,leftrightarrow] \arrow[d, bend right, leftarrow, shift right=2, "\dashv"{xshift=0.5ex}]\arrow[d, leftarrow, bend left, shift left=2] \arrow[d, "\dashv"{xshift=0.5ex}]  &    \Sub{\SetNop} \arrow[d, bend right, leftarrow, shift right=2, "\dashv"{xshift=0.5ex}]\arrow[d, bend left, leftarrow, shift left=2] \arrow[d, "\dashv"{xshift=0.5ex}]\\
		\CUMet  \arrow[dr, "\mathrm{Forget}"description] & \Tree \arrow[r,leftrightarrow] \arrow[d,  "\Br"]&    \SetNop  \arrow[dl, "\mathrm{Lim}"description]\\
		&  \Sets \arrow[ul, bend left, "\tang"{sloped, auto,yshift=0.75ex}, "\msmall{\mathrm{Discrete}}"{sloped, auto, swap}] \arrow[ur, bend right, "\tang"{sloped, auto,yshift=0.75ex}, "\msmall{\mathrm{\Delta}}"{sloped, auto, swap}] \arrow[u,  bend left, shift left=2, "\msmall{\mathrm{Free}}"{auto}, , "\dashv"{auto,swap,yshift=0.0ex, xshift=0.5ex}]
	\end{tikzcd}
	\caption{Equivalent categories of games and their underlying categories}\label{diagram_tikzcd}	
\end{figure}

\section{The Universality of the Banach-Mazur game}\label{SEC_BMUniversal}

Many classical results concern the existence of ``universal'' objects in a given category. For example, since by Cayley's Theorem every group is isomorphic to 
a subgroup of a symmetric group, symmetric groups may be considered universal in the category of groups. In this section we show that the Banach-Mazur games are universal in this sense in the category of games. More precisely:

\begin{thm}[Universality of the Banach-Mazur Game]\label{THM_BMUniversal}
	For every game $G$ there is an ultrametric space $\tilde{\K}(G)$ such that $G$ is isomorphic to a subgame of the Banach-Mazur game over $\tilde{\K}(G)$.
\end{thm}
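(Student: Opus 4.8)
The plan is to realize $G=(T,A)$ directly inside a Banach--Mazur game by turning the moments of $G$ into nested clopen balls of a suitable ultrametric space, exploiting the metric presentation of games (Theorems~\ref{THM_GmeUMet} and~\ref{THM_UMetEqv}). Recall that $\Run(T)$ carries the complete ultrametric $d_T(R,R')=\frac{1}{\Delta(R,R')+1}$, and that for a moment $s\in T$ of length $n$ the set $U_s=\set{R\in\Run(T):R\restrict n=s}$ is exactly the clopen ball $\overline{B}(R,\frac{1}{n+1})$ around any $R\in U_s$. The guiding idea is to let the points of the space carrying the Banach--Mazur game be precisely the runs of $G$ that are won by $\bob$, so that a nested sequence of such balls has empty intersection exactly when the corresponding run is won by $\ali$.

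First I would fix the underlying space. Deleting $A$ forces the intersections to vanish where wanted, but may empty some balls $U_s$, which is fatal since Banach--Mazur moves must be non-empty. To repair this I adjoin auxiliary points: choosing a fresh symbol $*\notin\M(G)$, set
\[
\tilde{\K}(G)=\bigl(\Run(T)\setminus A\bigr)\cup\set{s^\smallfrown\seq{*:i<\omega}:s\in T},
\]
equipped with the ultrametric $d(\rho,\rho')=\frac{1}{\Delta(\rho,\rho')+1}$, where $\Delta$ is the first-difference index. This is an ultrametric space of diameter at most $1$. The $*$-tail branching at $s$ lies at distance $\frac{1}{|s|+1}$ from every run extending $s$; hence the $*$-tails branching along an $A$-run $R$ converge to $R$, so each $R\in A$ disappears merely as a missing limit point of $\tilde{\K}(G)$, while the $*$-tail at $s$ witnesses that $U_s\neq\emptyset$ for every $s\in T$.

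Next I would define $f$ on moments of $G$, sending $t$ to the moment
\[
f(t)=\seq{U_{t\restrict 1},\,U_{t\restrict 2},\,\dots,\,U_{t\restrict|t|}}
\]
of $\BM(\tilde{\K}(G))$. Since each $t\restrict k$ is an initial segment of $t\restrict(k+1)$, the balls satisfy $U_{t\restrict 1}\supseteq\cdots\supseteq U_{t\restrict|t|}$, all non-empty and open, so $f(t)$ is a legitimate moment; $f$ preserves length and truncation, hence is chronological, and it is injective because any point of $U_t$ has initial segment $t$, so $U_t$ determines $t$. The key computation is that the induced map on runs sends $R\in\Run(T)$ to $\overline{f}(R)=\seq{U_{R\restrict 1},U_{R\restrict 2},\dots}$, whose $\bob$-moves are $U_{R\restrict 2},U_{R\restrict 4},\dots$, with
\[
\bigcap_{n<\omega}U_{R\restrict(2n+2)}=\set{R}\cap\tilde{\K}(G).
\]
As no $R\in\Run(T)$ is a $*$-tail, this set is $\set{R}$ when $R\notin A$ and $\emptyset$ when $R\in A$; that is, $\ali$ wins $\overline{f}(R)$ if and only if $R\in A$. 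Thus $f$ is simultaneously an $\A$- and a $\B$-morphism, and being injective it is an embedding in the sense of Definition~\ref{DEF_Emb}. By Proposition~\ref{PROP_morphproperties}(e), its image $\bigl(f[T],\overline{f}[A]\bigr)$ is a subgame of $\BM(\tilde{\K}(G))$ isomorphic to $G$, as required; the empty game is handled trivially by $\tilde{\K}(G)=\emptyset$.

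The main obstacle is exactly the tension between the two payoff requirements and the Banach--Mazur rule that every move be a \emph{non-empty} open set: removing the $\ali$-won runs is what makes the relevant nested intersections empty, yet it threatens to empty some ball $U_s$ and thereby outlaw it as a move. The adjoined $*$-tails resolve this precisely --- they keep every $U_s$ inhabited while, being disjoint from every original run, leaving the computation of $\bigcap_{n}U_{R\restrict(2n+2)}$ untouched. I expect the only mildly delicate points to be the verification that $\tilde{\K}(G)$ is genuinely ultrametric and that the auxiliary points never survive in a $\bob$-intersection along an $A$-run, both of which follow directly from the first-difference description of $d$.
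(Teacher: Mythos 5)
Your proposal is correct and takes essentially the same route as the paper: your space $\tilde{\K}(G)$ is, up to the inessential choice of grafting $*$-tails at \emph{every} moment rather than only at $\ali$'s turns (as the paper's functor $\D_{\B}$ does), exactly the Krom space $\K(\D_{\B}G)$ of the $\bob$-densified game, and your embedding $t\mapsto\seq{U_{t\restrict 1},\dotsc,U_{t\restrict|t|}}$ coincides with the paper's $\eta_G$, verified the same way (chronological, injective, $\bigcap_n U_{R\restrict n}$ meeting the space iff $R\notin A$) and closed off by the same appeal to Proposition \ref{PROP_morphproperties}(e). The only difference is presentational: the paper packages the construction functorially as $\tilde{\K}=\K\circ\D_{\B}$, which it needs later for the naturality statement of Theorem \ref{THM_BMUni_Fun}, whereas your ad hoc version suffices for the universality theorem itself.
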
 

In order to prove Theorem \ref{THM_BMUniversal}, let us first consider the following functorial construction:

\begin{defn}
	Let ${\D}_{\B}\colon \Games_{\B}\to \Games_{\B}$ be the functor such that,
	\begin{itemize}
		\item on objects, for a game $G = (T,A)$, with some element $0_G\not\in \M(G)$ define $\D_{\B}G= (\D_{\B} T,A)$ by
		\begin{equation}\label{eq_BMUni}
			\D_\B T = \begin{cases}
				T\cup \set{t^\smallfrown \seq{0_G:i\le n}: t\in T \text{, $t$ is $\ali$'s turn and $n<\omega$} }, \text{ if $T\neq\emptyset$}\\
				\set{\seq{0_G:i\le n}: n<\omega }, \text{ otherwise.}
			\end{cases}
		\end{equation}
		\item on morphisms, for a $\B$-morphism $(T,A)\stackrel{f}{\to}(T',A')$, let $(\D_{\B}T,A)\stackrel{\D_{\B}f}{\to}(\D_{\B}T',A')$ be such that
		\[
		(\D_{\B}f)(t) = \begin{cases}
			f(t), \text{ if $t\in T$},\\
			f(s)^\smallfrown \seq{0_{G'}:i\le n}, \text{ if $t=s^\smallfrown \seq{0_{G}:i\le n}$ for some $s\in T$.}
		\end{cases}
		\]
	\end{itemize}
\end{defn}

Intuitively, given a non-empty game $G$, $\D_{\B}G$ is the game that is identical to $G$, except that we additionally give $\ali$ the option to ``quit'' the game at every moment $t$ that is her turn. (Note that the runs of $\D_{\B}G$ in which $\ali$ wins are the same as in $G$, so that $\ali$ loses the run $t^\smallfrown \seq{0_G:n<\omega}$ for every $t$ that is her turn - and once she chooses $0_G$, both players have no choice but to keep choosing $0_G$, so that $\bob$ is determined to win from that moment onward.) The notation ``$\D_{\B}$''  refers to the fact that $\D_{\B}G$ makes sure that the runs in which $\bob$ wins are \textit{dense} in the space $(\Run(\D_{\B}T),d_{D_{\B}T})$.

From a game-theoretical point of view, $\D_{\B}G$ is clearly equivalent to $G$ (by construction, the existence of winning strategies for any of the players is preserved from $G$ to $\D_{\B}G$, and vice-versa) -- in spite of this, its importance will soon become clear.

We should note that $\D_{\B}$ could be defined in the same way as a functor from $\Games_{\A}$ to $\Games_{\A}$, however our search for a candidate for $\tilde{\K}(G)$ in Theorem \ref{THM_BMUniversal} leads us naturally to using $\Games_{\B}$, as follows.

\begin{defn}\label{DEF_KromSpace}
	The \textit{Krom space} $\mathsf KG$  of a game $G=(T,A)$ is the subspace $\Run(T)\setminus A$ of the ultrametric space $(\Run(T),d_T)$.
	
	With  $\mathbf{UltMet_1}$ denoting the category of ultrametric spaces of diameter at most $1$ and their $1$-Lipschitz mappings, the object assignment $G\mapsto \mathsf KG$ gives a functor 
	$$\K\colon \Games_{\B}\to \mathbf{UltMet_1}$$
	which sends a $\B$-morphism $(T,A)\stackrel{f}{\to}_\B (T',A')$ to the restriction $\Run(T)\setminus A\to \Run(T')\setminus A'$ of the map $\overline{f}$.
\end{defn}

Definition \ref{DEF_KromSpace} of $\K(G)$ is inspired by the work in \cite{Krom1974} which, for every topological space $X$, constructs an ultrametric space, now known as the \textit{Krom space of $X$}. In fact, one easily shows that such space is \textit{equal} (not just isometric) to $\K(G)$ when $G$ is the Banach-Mazur game over the space $X$, so that Definition \ref{DEF_KromSpace} is a straight generalization of its original.

We consider the composite functor  $\tilde{\K}=\K\circ\D_{\B}\colon \Games_{\B}\to \mathbf{UltMet_1}$ and note that, for a game $G=(T,A)$, the set $\set{[t]:t\in T}$ with $[t] := \set{R\in \Run(T)\setminus A: R\text{ restricts to } t}$ is a topological basis for the space $(\Run(\D_{\B}T),d_{\D_{\B}T})$ and, hence, for $\tilde{\K} G$ as well. 

Consequently, a promising candidate for an injective $\A$- and $\B$-morphism  $G\to\BM(\tilde{\K}G)$ is the mapping

\begin{center}
	\begin{tikzcd}[row sep = 0em]
		T \arrow[r, "\eta_G"] & \Run(\D_{\B}T) \\
		t  \arrow[r, mapsto] &   \seq{[t\upharpoonright 1],  \dotsc, [t]}.
	\end{tikzcd}
\end{center}
We note that taking $\D_{\B}G$ before taking the Krom space is crucial here. Otherwise, $[t]\cap \Run(T)\setminus A$ might be empty, hence an invalid move in the Banach-Mazur game, while this is certainly not the case in $\Run(\D_{\B}T)\setminus A$, which is a dense subspace of $(\Run(\D_{\B}T), d_{\D_{\B}T})$. 

We proceed with:

\begin{myproof}{Theorem}{THM_BMUniversal}
	Since  $\eta_G$ is an injective chronological mapping, 
	by item (e) of Proposition \ref{PROP_morphproperties} it suffices to show that $\eta_G$ is both an $\A$- and $\B$-morphism.
	Indeed, given a run $R\in \Run(T)$, one has that $\overline{\eta_G}(R) = \seq{[R\upharpoonright n]:n>0}$ and
	\[
	\bigcap_{n<\omega}[R\upharpoonright n]=\{R\},
	\]
	so that the intersection $\bigcap_{n>0}[R\upharpoonright n]$ taken in $\tilde{\K}G$ is non-empty if, and only if, $R\not\in A$. Therefore, $\ali$ wins the run $\overline{\eta_G}(R)$ in $\BM(\tilde{\K}G)$ \emph{if, and only if,} $R\in A$, as required.
\end{myproof}

We end this section with an important observation: the Banach-Mazur game leads to a functor $\BM\colon \CTop_{\mathrm{open}}\to \Games_{\B}$, where $\CTop_{\mathrm{open}}$ is the category of topological spaces with open mappings as morphisms. 
\begin{itemize}
	\item On objects, for a space $X$, let $\BM X$ be the Banach-Mazur game played over $X$. (If $X$ is empty, also $\BM X$ is taken to be empty: $\BM \emptyset=(\emptyset,\emptyset)$.)
	\item On morphisms, given an open mapping $X\stackrel{f}{\to}Y$, let \begin{tikzcd}[column sep = 2em]
		\BM X \arrow[r, "\BM f"] &	\BM Y
	\end{tikzcd} be such that 
	\[(\BM f)(\seq{U_0, V_0, \dotsc, U_n, V_n}) = \seq{f[U_0], f[V_0], \dotsc, f[U_n], f[V_n]}.\]
	(For $X$ empty, $\BM f$ is necessarily the unique $\B$-morphism from $(\emptyset,\emptyset)$ to $\BM Y$).
\end{itemize} 
However, in this sense, the embedding $\eta_G:G\to\mathsf{BM}(\tilde{\mathsf K}G)$ generally fails to be natural in $G$. This is due to the fact that we were able to functorially extend $\mathsf{BM}$ only for open maps; but, for a $\B$-morphism $f$,
the map $(\K( \D_{\B}f)$ is not necessarily open.

Fortunately, we can ameliorate the situation and ``make'' the embedding natural, by considering the (non-full) subcategory $\Games_{\mathrm{ls}\B}$  of $\Games_{\B}$ with the same objects, but whose morphisms are locally surjective. Since $\D_{\B}$ preserves the local surjectivity of $\B$-morphisms, and since $\K$ maps locally surjective $\B$-morphisms to open maps, we can restrict our functor $\tilde{\mathsf K}$ to $\Games_{\mathrm{ls}\B}$ and consider its values (via the metric-induced topology) as lying in the domain of the functor $\BM:\CTop_{\mathrm{open}}\to\Games_{\B}$. Then, with the inclusion functor $\mathsf J: \Games_{\mathrm{ls}\B}\to \Games_\B$, and without renaming the restriction of $\tilde{\mathsf K}$ as just described, we obtain:

\begin{thm}\label{THM_BMUni_Fun}
	Under the restriction to locally surjective $\B$-morphisms, the game embedding $\eta_G:G\to \BM(\tilde{\mathsf K}G)$ becomes natural in $G$, so that one has a natural transformation
	$\mathsf{J}\stackrel{\eta}{\to}\BM\circ \tilde{\K}$. 
\end{thm}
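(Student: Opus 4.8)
The plan is to verify the naturality square directly, reducing it to a single set-theoretic identity about the basic clopen sets of the Krom spaces. Fix a locally surjective $\B$-morphism $f\colon G_1\to G_2$ with $G_i=(T_i,A_i)$, and write $g=\tilde{\K}f\colon \tilde{\K}G_1\to \tilde{\K}G_2$. By the two facts recalled just before the statement (that $\D_\B$ preserves local surjectivity, and that $\K$ carries locally surjective $\B$-morphisms to open maps), $g$ is an open map, so $\BM(g)$ is a legitimate morphism of $\Games_\B$ and the square makes sense. Since $\mathsf Jf=f$ and $\BM(g)$ acts entrywise by $\seq{U_0,\dots,U_m}\mapsto \seq{g[U_0],\dots,g[U_m]}$, while $f$ is chronological (so $f(t)\restrict k=f(t\restrict k)$), checking $\BM(g)\circ\eta_{G_1}=\eta_{G_2}\circ f$ on a moment $t\in T_1$ reduces, coordinate by coordinate, to the equality
\[
g[[s]]=[f(s)]\qquad\text{for every } s\in T_1,
\]
where $[s]=\set{R\in\tilde{\K}G_1:R\restrict|s|=s}$ and $[f(s)]=\set{R'\in\tilde{\K}G_2:R'\restrict|s|=f(s)}$ are the basic clopen sets used to define $\eta_{G_1}$ and $\eta_{G_2}$.

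The inclusion $g[[s]]\subseteq[f(s)]$ is routine. If $R\in[s]$, i.e.\ $R$ is a run of $\D_\B G_1$ with $R\notin A_1$ and $R\restrict|s|=s$, then $g(R)=\overline{\D_\B f}(R)$, and since $\D_\B f$ is chronological we get $g(R)\restrict|s|=(\D_\B f)(R\restrict|s|)=(\D_\B f)(s)=f(s)$; moreover $\D_\B f$ is a $\B$-morphism, so $g(R)$ is again a Bob-win run and therefore $g(R)\in[f(s)]$.

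The reverse inclusion $[f(s)]\subseteq g[[s]]$ is the heart of the matter and is exactly where local surjectivity is indispensable. Given $R'\in[f(s)]$, I would build a preimage $R\in[s]$ by the same recursive lifting used earlier to produce sections of locally surjective maps: starting from $R\restrict|s|=s$ and invoking local surjectivity of $\D_\B f$ at each successive node, one obtains a run $R$ of $\D_\B G_1$ with $R\restrict|s|=s$ and $\overline{\D_\B f}(R)=R'$. What remains---and what I expect to be the main obstacle---is to guarantee that this lift can be taken to be a Bob-win run of $\D_\B G_1$, so that $R\in\tilde{\K}G_1$ and hence $R\in[s]$ with $g(R)=R'$. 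This is not automatic from the $\B$-morphism axiom alone, since a $\B$-morphism preserves Bob's wins but need not reflect them; the argument must exploit the specific structure of $\D_\B$. Concretely I would split on whether $R'$ eventually follows a quitting branch $0_{G_2}$: in that case the lifting is forced onto a corresponding $0_{G_1}$-branch, which is a Bob-win by the very construction of $\D_\B G_1$; the remaining case---an ``honest'' Bob-win run already living in $\Run(T_2)$---is where the density of Bob's wins engineered by $\D_\B$ must be combined with local surjectivity to secure an honest Bob-win preimage through $s$. Establishing that $g$ really surjects $[s]$ onto $[f(s)]$ in this last case is the crux of the whole proof.

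Once $g[[s]]=[f(s)]$ is secured for all $s\in T_1$, the two composites agree on every moment $t\in T_1$, hence coincide as chronological maps; since both are $\B$-morphisms into $\BM(\tilde{\K}G_2)$, this establishes the naturality square and the desired natural transformation $\mathsf J\stackrel{\eta}{\to}\BM\circ\tilde{\K}$.
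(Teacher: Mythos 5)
Your reduction of the naturality square to the single identity $g[[s]]=[f(s)]$ for all $s\in T_1$ is correct (both composites act coordinatewise by these sets, and $f$ chronological gives $f(t)\restrict k=f(t\restrict k)$), and your proof of the inclusion $g[[s]]\subseteq[f(s)]$ is fine. But the step you flag as the crux is not merely the hard part: it is false for general locally surjective $\B$-morphisms, so the plan cannot be completed as written. Take $T=\{*\}^{<\omega}$, let $G_1=(T,\Run(T))$ be the terminal game and $G_2=(T,\emptyset)$ the generating game, and let $f$ be the identity on $T$. This $f$ is locally surjective and is vacuously a $\B$-morphism, since $G_1$ has no Bob-win runs. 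Here $\tilde{\K}G_1$ consists only of the quitting runs $\seq{*:i<2m}^\smallfrown\seq{0_{G_1}:j<\omega}$, whereas $\tilde{\K}G_2$ contains in addition the honest run $\seq{*:i<\omega}$. For $s=\seq{*}$ one gets $[f(s)]=\set{\seq{*:i<\omega}}\cup\set{\text{quitting runs through }\seq{*}}$ but $g[[s]]=\set{\text{quitting runs through }\seq{*}}$: the honest run has no preimage in $\tilde{\K}G_1$ at all, since its only lift along $\overline{\D_{\B}f}$ is $\seq{*:i<\omega}$ itself, an Alice win of $G_1$. Note that $g$ is open here (every point of $g$'s codomain relevant to the image is isolated), so the square is well-formed and genuinely fails to commute at the moment $\seq{*}$.

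Your diagnosis of why is exactly right: local surjectivity lifts moments and runs at the level of trees, and the density engineered by $\D_{\B}$ does account for the entire quitting part of $[f(s)]$ (a lift of a run that eventually plays $0_{G_2}$ is forced onto a $0_{G_1}$-branch, hence is a Bob win), but a $\B$-morphism preserves Bob's wins without reflecting them, so an honest Bob-win run of $G_2$ through $f(s)$ may have only Alice-win lifts — or, as above, a unique lift that is an Alice win. The hypothesis that closes your argument is payoff-exactness: if $f$ is in addition an $\A$-morphism (equivalently, $A_1=\overline{f}^{-1}[A_2]$ on $\Run(T_1)$), then every honest lift of an honest Bob win is a Bob win, and your recursive lifting completes the reverse inclusion verbatim. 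The paper offers no displayed verification of this square beyond the remarks that $\D_{\B}$ preserves local surjectivity and that $\K$ sends locally surjective $\B$-morphisms to open maps; your computation pinpoints precisely the step those remarks do not cover, and it shows the statement needs a further restriction on the morphisms (or a modification of $\eta$), not a cleverer lifting argument.
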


\section{Categorical constructions}\label{SEC_CatProp}
In this section we establish some important properties of our category $\Games_\A\cong\Games_\B$. 

\subsection{Completeness and Cocompleteness}
From the equivalent presentation of $\Games_{\A}$ as a full coreflective subcategory of a topological category over a presheaf topos as given in Theorem \ref{THM_IsoSubSetsNop}, one immediately gets:

\begin{cor}
	The category $\Games_\A$ is complete and cocomplete.
\end{cor}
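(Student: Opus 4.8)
The plan is to derive completeness and cocompleteness of $\Games_\A$ purely formally, by chaining together the structural results already established in Sections \ref{SEC_TreeTopos} and \ref{SEC_SubTree}. The key ingredients are all in place: Theorem \ref{THM_IsoSubSetsNop} presents $\Games_\A$ as equivalent to $\FunGame$, which is a full coreflective subcategory of $\Sub{\SetNop}$, and Proposition \ref{PROP_TopFunctor} tells us that the forgetful functor $\Sub{\SetNop}\to\SetNop$ is topological.

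First I would recall that the base category $\SetNop$, being a presheaf topos, is complete and cocomplete (indeed, limits and colimits of presheaves are computed pointwise in $\Sets$). By Proposition \ref{PROP_TopFunctor}, since $\SetNop$ has all (small) limits and colimits, so does the topological category $\Sub{\SetNop}$; concretely, one forms the limit or colimit in $\SetNop$ and equips it with the $U$-initial or $U$-final structure provided by the projections or injections. Thus $\Sub{\SetNop}$ is complete and cocomplete.

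Next I would invoke the standard fact that a \emph{reflective} subcategory inherits completeness and a \emph{coreflective} subcategory inherits cocompleteness, and more: a full coreflective subcategory is closed under colimits (these are computed as in the ambient category) and admits all limits (formed by applying the coreflector to the ambient limit). Since $\FunGame$ is, by Theorem \ref{THM_IsoSubSetsNop}, a full coreflective subcategory of the complete and cocomplete category $\Sub{\SetNop}$, it is itself complete and cocomplete. Finally, completeness and cocompleteness are invariant under equivalence of categories, so the equivalence $\Games_\A\simeq\FunGame$ transports both properties back to $\Games_\A$.

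I do not anticipate a genuine obstacle here, since the statement is a formal consequence of the accumulated structural theorems; the work has already been done in establishing the topologicity of the forgetful functors (Proposition \ref{PROP_TopFunctor}) and the coreflective embedding (Theorem \ref{THM_IsoSubSetsNop}). The only point requiring a moment's care is the direction of inheritance for the coreflective subcategory: one must remember that coreflectivity gives colimits \emph{directly} (as colimits in $\FunGame$ agree with those in $\Sub{\SetNop}$), whereas limits must be obtained by post-composing the ambient limit with the coreflector. Both are routine, and the whole argument amounts to citing the right results in the right order.
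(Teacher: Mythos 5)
Your proposal is correct and follows essentially the same route as the paper, which derives the corollary ``immediately'' from the presentation of $\Games_\A$ (via Theorem \ref{THM_IsoSubSetsNop}) as equivalent to a full coreflective subcategory of a topological category over the presheaf topos $\SetNop$, using Proposition \ref{PROP_TopFunctor} to lift (co)completeness along the topological forgetful functor. Your added care about the direction of inheritance---colimits computed as in the ambient category, limits obtained by applying the coreflector---matches exactly the paper's own remark on how limits and colimits are constructed in the coreflective subcategory.
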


In arboreal terms, by the equivalence given in Theorem \ref{THM_Graphs&Games} it is clear how to obtain limits and colimits in $\Games_{\A}$: the construction of colimits in the coreflective subcategory $\TrGame$ of $ \Sub{\Tree}$, which is equivalent to $\Games_\A$, proceeds in $\Sub{\Tree}$, whereas for constructing limits in $\TrGame$ one must apply the pruning functor to the limit formed in $\Sub{\Tree}$.

In what follows we concentrate on the game-theoretic construction and interpretation of products and equalizers and their duals in $\Games_\A$.  As only some of these have appeared in the literature on infinite games, we give a summary of them, before turning to properties that are aided by, but do not automatically follow from, our various presentations of $\Games_{\A}$. 

\subsubsection{Products}\label{SUBSECT_products}
We start by describing products of games in $\Games_{\A}$ (see, for instance, \cite{Galvin2016}).
For a family $\mathcal{G}=\left(G^\alpha=(T^\alpha,A^\alpha):\alpha\in \kappa\right)$ of games, we let $\Multi_\A(\mathcal{G})=(T,A)$ denote the game with
\begin{gather*}
	T=\bigcup_{n<\omega}\set{t=\seq{(x_i^\alpha)_{\alpha\in\kappa}:i\le n}:  \seq{x_i^\alpha:i\le n}\in T^\alpha \text{ for every $\alpha\in \kappa$}},\\
	A=\set{t=\seq{(x_n^\alpha)_{\alpha\in\kappa}: n<\omega}:  \seq{x_n^\alpha:n<\omega}\in A^\alpha \text{ for every $\alpha\in \kappa$}}.
\end{gather*}
Of course, $\M(\Multi_\A((G^\alpha:\alpha\in \kappa))\subseteq \prod_{\alpha\in\kappa}\M(G^\alpha)$.
Note that $\kappa =0$ (empty families) is permitted, in which case we obtain the terminal game $\mathbf 1$, as already described in Example \ref{EX_boring}.

Intuitively, $\Multi_\A(\mathcal{G})$ is the game in which $\ali$ and $\bob$ play all the games of $\mathcal G$ \textit{at the same time}, and its winning criteria is that $\ali$ must win the runs of every game in $\mG$. Analogously, $\Multi_\B(\mathcal{G})$ is the same game, but the winning criteria reverses, in the sense that now $\bob$ must win the run of every game in $\mG$ (the games $\Multi_\A(\mathcal{G})$ and $\Multi_\B(\mathcal{G})$ are often called \emph{multiboard games} in the literature). 

Now, with the projections
\begin{center}
	\begin{tikzcd}[row sep = 0em]
		\Multi_\A(\mathcal{G}) \arrow[r, "\pi^\alpha"] &	G^\alpha \\
		\seq{(x_i^\alpha)_{\alpha\in\kappa}:i\le n}  \arrow[r, mapsto] &   \seq{x_i^\alpha:i\le n},
	\end{tikzcd}
\end{center}
one easily confirms that  $$\Multi_\A(\mathcal{G})\cong\prod_{\alpha\in\kappa} G^{\alpha}$$ serves as a product in $\Games_\A$.
(We note that, in what follows, we often write a finite sequence $\seq{(x_i^\alpha)_{\alpha\in\kappa}:i\le n} $ of $\kappa$-indexed families equivalently as a $\kappa$-indexed family 
\begin{equation}\label{EQ_multi}
	(\seq{x_i^\alpha:i\le n})_{\alpha\in\kappa}
\end{equation}
of finite sequences, even though, formally, for $\kappa$ infinite, the latter element can never be a moment of any game.)

\subsubsection{Coproducts}\label{SUBSECT_coproducts} Coproducts of games are even easier to describe than products.
For a family $\left(G^{\alpha}=(T^{\alpha},A^{\alpha}):\alpha\in \kappa\right)$ of games, one defines $\bigsqcup_{\alpha\in \kappa}G^{\alpha}=(T,A)$ by
\begin{gather*}
	T:=\{\seq{\,}\}\cup\left(\bigsqcup_{\alpha\in \kappa}\left(T^{\alpha}\setminus\{\seq{\,}\}\right)\right),\quad
	A:=\bigsqcup_{\alpha\in \kappa}A^{\alpha}.
\end{gather*}

Intuitively, $\bigsqcup_{\alpha\in \kappa}G^{\alpha}$ is the game in which $\ali$ also chooses at her first move which of the games $G^{\alpha}$ she and $\bob$ will be playing from that moment onward. With the canonical injections defined (on non-empty moments) as in $\Sets$, it is clear that $(T,A)$ serves as the coproduct $\coprod_{\alpha\in\kappa} G^{\alpha}$ of the given family of games, not only in $\Games_\A$, but also in $\Games_\B$.

\subsubsection{Equalizers and pullbacks}\label{SUBSECeqpb} The construction of the equalizer in $\Games_{\A}$ of a pair of morphisms $f,g\colon G\to G'$ with $G=(T,A)$ and $G'=(T',A')$ is, in functorial terms, predicated by the coreflector as described before Corollary  \ref{COR_Tree_SetNop}. Transcribed in game-theoretic notation, we simply must consider the subgame $G''=(T'',A'')\leq G$ defined by
\[T''(n):=\{ R\upharpoonright n: R\in\mathrm{Run}(T), \overline{f}(R)=\overline{g}(R)\}\quad\text{and}\quad A'':=A\cap T''\]
for all $n<\omega$. Indeed, one easily sees that the inclusion map $G''\to G$ serves as the desired equalizer.

The subgame $G''$ may be hard to play and win: at every moment of the game $G''$ one has to be sure that there is a run in $G$ which, once mapped by both,  $\overline{f}$ and $\overline{g}$, results in the same run in $G'$, and $\ali$ wins such run in $G''$ if it is a winning run in $G$.

Of course, given $\A$-morphisms $G_1\stackrel{f}{\to}G\stackrel{g}{\leftarrow}G_2$, their pullback is formed by taking the equalizer 
of the pair $f\circ \pi_1, g\circ \pi_2$,
with the projections 
$\Multi_{\A}(G_1,G_2)\stackrel{\pi_i}{\to} G_,\,i=1,2$. 
Explicitly, in the pullback span $G_1\stackrel{p_1}{\leftarrow}G_1\times_G  G_2 \stackrel{p_2}{\to} G_2$, the subgame $G_1\times_G  G_2$ of $\Multi_{\A}(G_1,G_2)$ is
obtained by pruning the tree 
\[
\set{(t_1,t_2)\in \Multi_{\A}(G_1,G_2): f(t_1)=g(t_2)},
\]
and $p_i$ is the restriction of the projection $\pi_i$. Equivalently, one forms the pullback of the cospan
$T_1\stackrel{f}{\to}T\stackrel{g}{\leftarrow}T_2$ in $\Gmes$ and then provides $T_1\times_T T_2$ with the payoff set $\overline{p_1}^{-1}[A_1]\cap \overline{p_2}^{-1}[A_2]$ in order to obtain the game $G_1\times_G  G_2$.

\subsubsection{Coequalizers and pushouts}\label{SUBSEC_Coeq_Pushouts}
In order to form the coequalizer of a pair of $\A$-morphisms $f,g\colon G\to G'$ with $G=(T,A)$ and $G'=(T',A')$, we let $\sim$ be the smallest equivalence relation on $T$ such that $f(t)\sim g(t)$ for every $t\in T$. Then, whenever $t\sim s$, one obviously has
$$ |t|=|s|\quad\text{and}\quad t\restrict i\sim s\restrict i\;\text{ for every } i\le |t|=|s|,$$
and with $[t]$ denoting the $\sim$-equivalence class of $t$, we define
\begin{gather*}
	T'' =\set{\seq{[t\restrict i]:0<i\le|t|}: t\in T},\qquad
	A'' = \set{ \seq{[R\restrict n]:0<n<\omega}: R\in A}.
\end{gather*}
Then the projection $G'\stackrel{q}{\to}_\A G''=(T'',A'')$ with $q(t)=\seq{[t\restrict i]:0<i\le|t|}$ serves  as a coequalizer of $f,g\colon G\to G'$ in $\Games_\A$.

We may think of $G''$ as the game arising from $G'$ in which all $\sim$-equivalent moves are declared equal, and for $\ali$ to have a winning run in $G''$ it suffices to have a winning $\sim$-representative in $G'$ for that run. The difficulty lies in determining all equivalent moves at every moment of the game $G''$.

In $\Games_\B$, the coequalizer $G'' =(T'',A'')$ of the $\B$-morphisms $f,g:G\to G'$ may be constructed in the same way, except that the pay-off set has to be changed to
\[
A'' = \Run(T'')\setminus \set{ \seq{[R\restrict n]:0< n<\omega}: R\in \Run(T)\setminus A}.
\]

The pushout $G_1\sqcup_GG_2$ of $\A$-morphisms $G_1\stackrel{f}{\leftarrow}G\stackrel{g}{\to}G_2$ is formed by taking the coequalizer 
of the pair $i_1\circ f, i_2\circ g$, 
with the coproduct injections  
$G_j\stackrel{i_j}{\to} G_1\sqcup G_2,\,j=1,2$.

\subsection{Infinitary extensiveness}

Knowing how to construct coproducts and pullbacks makes it easy to prove that the category $\Games_\A$ has an important ``space-like'' property, shared with the category of topological spaces as well as with all categories of $\Sets$-valued sheaves.
Recall that a finitely complete category $\mathbf C$ with coproducts is {\em infinitary extensive}  \cite{Carboni1993, Centazzo2004}  if, for every family of objects $A_j\,(j\in J)$, a morphism into the coproduct $\coprod_{j\in J}A_j$ is determined by the family of morphisms given by its pullbacks along every coproduct injection; more precisely, in terms of the ``slices'' of the category $\mathbf C$, if the canonical functor
$$\prod_{j\in J}\mathbf C/A_j\longrightarrow\mathbf C/\coprod_{j\in J}A_j$$
is an equivalence of categories.

\begin{thm}
	The category $\Games_\A$ is infinitary extensive.
\end{thm}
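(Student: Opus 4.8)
The plan is to verify the standard characterization of (infinitary) extensiveness: since $\Games_\A$ is complete and cocomplete, it suffices to prove that its coproducts are \emph{disjoint} and \emph{universal} (stable under pullback), as by \cite{Carboni1993} this pair of conditions is equivalent to the canonical functor $\prod_{\alpha}\Games_\A/G^\alpha\to\Games_\A/\coprod_\alpha G^\alpha$ being an equivalence. All the data needed are the explicit descriptions of coproducts and pullbacks recorded in \ref{SUBSECT_coproducts} and \ref{SUBSECeqpb}. The single combinatorial fact underpinning the whole argument is that the coproduct tree $\bigsqcup_\alpha T^\alpha$ glues the $T^\alpha$ only at the root, so that every moment $t$ with $|t|\ge 1$ belongs to a \emph{unique} summand, namely the one containing $t\restrict 1$; in particular $\Run(\bigsqcup_\alpha T^\alpha)=\bigsqcup_\alpha\Run(T^\alpha)$. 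Since any chronological $f\colon T_C\to\bigsqcup_\alpha T^\alpha$ satisfies $f(t)\restrict 1=f(t\restrict 1)$, the summand containing $f(t)$ is already pinned down by $f(t\restrict 1)$.

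With this in hand, disjointness is immediate. The injections $i_\alpha\colon G^\alpha\to\coprod_\beta G^\beta$ are monomorphisms, because $\overline{i_\alpha}$ is the inclusion $\Run(T^\alpha)\hookrightarrow\bigsqcup_\beta\Run(T^\beta)$, which is injective (Proposition \ref{PROP_morphproperties}(a)). For $\alpha\neq\beta$ the pullback of $i_\alpha$ and $i_\beta$ is the initial game: the tree $\set{(t_\alpha,t_\beta):i_\alpha(t_\alpha)=i_\beta(t_\beta)}$ collapses to $\set{(\seq{\,},\seq{\,})}$, since no non-root moments of distinct summands are identified, and pruning this root-only tree yields the empty game, which is initial (Example \ref{EX_trivial}).

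The heart of the argument is universality. Given an $\A$-morphism $f\colon C\to\coprod_\alpha G^\alpha$ with $C=(T_C,A_C)$, I would partition the non-root moments of $T_C$ by setting $T_C^\alpha=\set{\seq{\,}}\cup\set{t\in T_C:|t|\ge 1,\ f(t\restrict 1)\in T^\alpha}$. Because $T_C$ is pruned, each $T_C^\alpha$ containing a moment of length $1$ is again pruned, while the remaining $T_C^\alpha$ equal $\set{\seq{\,}}$. Using that $i_\alpha$ is the identity on non-root moments, the projection $p_1$ of the pullback $C_\alpha:=C\times_{\coprod_\beta G^\beta}G^\alpha$ identifies its (pruned) tree with $T_C^\alpha$; hence $C_\alpha\cong(T_C^\alpha,A_C\cap\Run(T_C^\alpha))$ when $T_C^\alpha$ is pruned, and $C_\alpha$ is initial otherwise. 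The payoff bookkeeping is the delicate point: the pullback payoff $\overline{p_1}^{-1}[A_C]\cap\overline{p_2}^{-1}[A^\alpha]$ collapses to $A_C\cap\Run(T_C^\alpha)$ precisely because $f$ is an $\A$-morphism, so that $R\in A_C$ forces $\overline{f}(R)\in A^\alpha$ and the second preimage becomes redundant. Since the $T_C^\alpha$ reassemble to $T_C$ at the root and the associated run sets partition $A_C$, the canonical comparison $\coprod_\alpha C_\alpha\to C$ is an isomorphism, which is universality.

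The main obstacle I anticipate is exactly this interplay of pruning and payoff: one must handle uniformly those summands $\alpha$ into which no first move of $C$ maps (these contribute the initial game and would otherwise spoil the claim that $T_C^\alpha$ is pruned), and one must invoke the $\A$-morphism hypothesis at precisely the right place to see the pullback payoff reduce to a restriction of $A_C$. Once disjointness and universality are established, the asserted equivalence of slice categories, and hence the infinitary extensiveness of $\Games_\A$, follows from the characterization of \cite{Carboni1993}.
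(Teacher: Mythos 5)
Your proof is correct, and it organizes the argument around a different, though standard, characterization than the paper does. The paper proves in a single biconditional that, for a $J$-indexed family of commutative squares over the coproduct injections with common vertical morphism $h\colon G\to\bigsqcup_{j\in J}G_j$, all squares are pullbacks if, and only if, the top row is a coproduct cocone; you instead verify separately that coproducts in $\Games_\A$ are disjoint and universal, and then invoke the equivalence of this pair of conditions with extensiveness (for the infinitary case this is covered by \cite{Centazzo2004}, which the paper cites alongside \cite{Carboni1993}). The combinatorial core is identical in both arguments: your $T_C^\alpha$ is exactly the paper's $h^{-1}(T_j)$, since for $|t|\ge 1$ one has $f(t)\in T^\alpha$ precisely when $f(t\restrict 1)\in T^\alpha$, and everything rests on the observation that the coproduct tree is glued only at the root, so the non-root moments of the domain are partitioned by the summand of $f(t\restrict 1)$ and $\Run$ of a coproduct is the disjoint union of the summands' run sets. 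Your write-up is in two respects more careful than the paper's: first, you isolate the degenerate summands receiving no first move of $C$, for which $T_C^\alpha=\{\seq{\,}\}$ is \emph{not} pruned and the pullback is the initial game --- the paper's blanket assertion that $h^{-1}(T_j)$ is a pruned subtree of $T$ fails in exactly this case and is silently repaired there by the pruning built into the pullback construction; second, you make explicit where the $\A$-morphism hypothesis enters, namely in collapsing the pullback payoff $\overline{p_1}^{-1}[A_C]\cap\overline{p_2}^{-1}[A^\alpha]$ to $A_C\cap\Run(T_C^\alpha)$. What the paper's route buys is brevity, establishing both directions of the pullback/coproduct criterion at once; what yours buys is modularity, plus an explicit verification (via Proposition \ref{PROP_morphproperties}(a)) that the injections are monomorphisms with initial pairwise intersections, which the paper's formulation leaves implicit.
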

\begin{proof}
	Let
	\begin{center}
		\begin{tikzcd}[column sep=15mm, row sep=15mm]
			G'_{j} \arrow[r, "f_j"] \arrow[d, "g_j"]&	G \arrow[d,"h"]\\
			G_j \arrow[r, hook, "i_j"] & \bigsqcup_{j\in J}G_j
		\end{tikzcd}
	\end{center}
	be a $J$-indexed family of commutative diagrams in $\Games_\A$, with $G=(T,A)$, $G_j=(T_j,A_j)$ and $G'_j=(T'_j,A'_j)$ for all  $j\in J$. We need to show that  every square is a   pullback diagram if, and only if, $(G'_{j}\stackrel{f_j}{\to}G)_{j\in J}$ is a coproduct diagram.
	
	First, note that, because $\set{T_j\setminus \{\seq{\,}\}:j\in J}$ disjointly covers the tree of $\bigsqcup_{j\in J}G_j$, one has
	\begin{equation}\label{EQ_extensiveness}
		T\setminus\{\seq{\,}\}=\bigsqcup_{j\in J}(h^{-1}(T_j)\setminus \{\seq{\,}\}).
	\end{equation}
	Then, every square is a pullback diagram if, and only if, $G'_j\le G$ where, as a pullback of the inclusion map $i_j$, also $f_j$ may be taken to be an inclusion map  
	and $T'_j$ to be obtained by pruning $h^{-1}(T_j)$.
	
	Now, given $j\in J$, let $t\in h^{-1}(T_j)$. Then $h(t^\smallfrown x)\in T_j$ for every $x$ such that $t^\smallfrown x\in T$ and, thus, $h^{-1}(T_j)$ is a pruned subtree of $T$. Hence, every square is a pullback diagram if, and only if, $G'_j\le G$ (with $f_j$ being an inclusion map)
	and $T'_j=h^{-1}(T_j)$ -- which, together with (\ref{EQ_extensiveness}), concludes the proof.
\end{proof}

We note the following (generally valid) consequence of the Theorem:

\begin{cor}
	The category $\Games_\A$ is infinitary distributive, i.e., the canonical morphism $\bigsqcup_{j\in J}G\times G_j\to G\times\bigsqcup_{j\in J}G_j$ is an isomorphism, for all games $G, G_j\, (j\in J)$.
\end{cor}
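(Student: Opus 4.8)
The plan is to observe that this is the completely general fact that every finitely complete category with coproducts which is infinitary extensive is automatically infinitary distributive; thus the conclusion will follow from the preceding Theorem together with the completeness of $\Games_\A$, with no further game-theoretic input. Accordingly, I would argue entirely inside an abstract $\mathbf C$ satisfying the hypotheses, using only the binary products $G\times G_j$, $G\times\bigsqcup_{j\in J}G_j$ and the relevant pullbacks, all of which exist in $\Games_\A$.

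First I would unwind the definition of extensivity in its slice-functor form. Fixing a game $G$ and a family $(G_j:j\in J)$, consider the second projection $\pi_2\colon G\times\bigsqcup_{j\in J}G_j\to\bigsqcup_{j\in J}G_j$ as an object of the slice category $\Games_\A/\bigsqcup_{j\in J}G_j$. Since the canonical functor $\prod_{j\in J}\Games_\A/G_j\to\Games_\A/\bigsqcup_{j\in J}G_j$ is an equivalence, this object is, up to isomorphism over $\bigsqcup_{j\in J}G_j$, the coproduct of the family obtained by pulling $\pi_2$ back along each coproduct injection $i_j\colon G_j\to\bigsqcup_{j\in J}G_j$. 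The next step is to identify those pullbacks: for any morphism $m\colon B\to Y$, the pullback of the product projection $G\times Y\to Y$ along $m$ is $G\times B$, with comparison map $\id_G\times m\colon G\times B\to G\times Y$, as one checks directly from the universal property of the product. Applying this with $Y=\bigsqcup_{j\in J}G_j$ and $m=i_j$, the pullback of $\pi_2$ along $i_j$ is precisely $G\times G_j$, with induced map $\id_G\times i_j$ into $G\times\bigsqcup_{j\in J}G_j$. Feeding these back through the equivalence yields $G\times\bigsqcup_{j\in J}G_j\cong\bigsqcup_{j\in J}(G\times G_j)$, with structure maps the $\id_G\times i_j$.

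Finally I would verify that the coproduct comparison thus produced is literally the canonical morphism $\bigsqcup_{j\in J}(G\times G_j)\to G\times\bigsqcup_{j\in J}G_j$ of the statement, which by definition is the unique morphism restricting to $\id_G\times i_j$ on the $j$-th summand; this is exactly what the previous step delivers, so the abstractly obtained isomorphism is the canonical one. I do not expect any serious obstacle here, as the argument is pure bookkeeping. The one point genuinely requiring care is tracking the \emph{structure maps}: extensivity hands back a coproduct decomposition of $G\times\bigsqcup_{j\in J}G_j$ whose summands are only \emph{a priori} abstractly isomorphic to $G\times G_j$, so I would trace through the (pseudo-)inverse of the slice equivalence to confirm that the component maps are equal to $\id_G\times i_j$ and not merely isomorphic to them. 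Once this is pinned down, distributivity follows immediately.
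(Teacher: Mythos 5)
Your proposal is correct and matches the paper's proof in essence: the paper likewise forms, for each $j\in J$, the pullback of the projection $p_2\colon G\times\bigsqcup_{j\in J}G_j\to\bigsqcup_{j\in J}G_j$ along the injection $i_j$, identifies it as $G\times G_j$ with comparison map $\id_G\times i_j$, and invokes extensivity to conclude that these top arrows form a coproduct. Your extra care in tracing the structure maps through the slice equivalence is a sound (if slightly more verbose) rendering of the same argument.
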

\begin{proof}
	Just consider the pullback diagrams
	\begin{center}
		\begin{tikzcd}[column sep=15mm, row sep=15mm]
			G\times G_{j} \arrow[r, hook, "\mathrm{id}_G\times i_j"] \arrow[d, "p_2^j"]&	G\times \bigsqcup_{j\in J}G_j \arrow[d,"p_2"]\\
			G_j \arrow[r, hook, "i_j"] & \bigsqcup_{j\in J}G_j\;,
		\end{tikzcd}
	\end{center}
	whose top arrows must represent a coproduct.
\end{proof}

\subsection{Orthogonal factorization systems}
Recall that  an orthogonal factorization system $(\mathcal E,\mathcal M)$ in a category $\mathcal C$ is given by two classes of morphisms, both closed under composition with isomorphisms, such that every morphism $f$ factors as $f=m\cdot e$ with $e\in\mathcal E, m\in\mathcal M$, and the diagonalization (or lifting) property holds: whenever $v\cdot e'=m'\cdot u$ with $e'\in\mathcal E, m'\in\mathcal M$, then $t\cdot e'=u$ and $m'\cdot t=v$ for a unique morphism $t$ (in which case one says that $e'$ is orthogonal to $m'$). Such systems are completely determined by just one of the two classes since, when fixing one of them, one obtains the other as containing precisely those morphisms that are orthogonal to all morphisms in the class considered first. The system $(\mathcal E,\mathcal M)$ is  proper if $\mathcal E$ is a class epimorphisms and $\mathcal M$ is a class of monomorphisms in the category $\mathcal C$.

Here we will present four proper orthogonal factorization systems in $\Games_{\A}$, taking advantage of the fact that the forgetful functor $$\mathsf{Sub}_{\mathrm{Run}}(\Gmes)=\Games_{\mathrm A}\longrightarrow\Gmes$$ is topological (Proposition \ref{PROP_TopFunctor}). In fact, it is well known (see 21.14 in \cite{Adamek1990} or, more generally, II.5.7.1 in \cite{Hofmann2014}) that for any topological functor $U:\mathcal T\to \mathcal C$ and any (proper) orthogonal factorization system $(\mathcal E,\mathcal M)$ in $\mathcal C$, in the category $\mathcal T$ one then has the (proper) orthogonal factorization systems
$$(U^{-1}[\mathcal E],\,  U^{-1}[\mathcal M]\cap\{U\text{-initial morphisms}\})\quad\text{and}\quad(U^{-1}[\mathcal E]\cap\{U\text{-final morphisms}\},\, U^{-1}[\mathcal M])$$
which, neglecting the functor $U$, we will allow ourselves to respectively denote simply by
$$(\mathcal E,\mathcal M_*)\quad\text{and}\quad (\mathcal E^*,\mathcal M),$$
thus denoting the classes $U^{-1}[\mathcal E]$ and $U^{-1}[\mathcal M]$ in $\mathcal T$ again by $\mathcal E$ and $\mathcal M$, respectively.

Hence, for the forgetful functor $U:\mathcal T=\mathsf{Sub}_{\mathrm{Run}}(\Gmes)\longrightarrow\mathcal C=\Gmes$ one has the following Lemma which the reader may also verify easily in a direct manner.

\begin{lemma}\label{LEMMA_SUBOrtFactSyst}
	For any (proper) orthogonal factorization system $(\mathcal E,\mathcal M)$ in $\Gmes$ one has the (proper) orthogonal factorization systems $(\mathcal E,\mathcal M_*)$ and $(\mathcal E^*,\mathcal M)$ in $\Games_{\A}$, with
	\begin{align*}
		\mathcal M_* &= \set{(T,A)\stackrel{f}{\to}(T',A'): (T\stackrel{f}{\to}T' )	\in\mathcal M, \,A = \overline{f}^{-1}[A']},\\
		\mathcal E^* &= \set{(T,A)\stackrel{f}{\to}(T',A'): (T\stackrel{f}{\to}T' )\in\mathcal E,\, A' = \overline{f}[A]}.\\
	\end{align*}
\end{lemma}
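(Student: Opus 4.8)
The plan is to lean on the general theorem for topological functors already invoked above (21.14 in \cite{Adamek1990}, or II.5.7.1 in \cite{Hofmann2014}), which hands us the two orthogonal factorization systems $(\mathcal E,\mathcal M_*)$ and $(\mathcal E^*,\mathcal M)$ for free. Thus the diagonalization property, the existence of factorizations, and closure under composition with isomorphisms require no further work; what remains is purely to translate the defining descriptions $\mathcal M_*=U^{-1}[\mathcal M]\cap\{U\text{-initial}\}$ and $\mathcal E^*=U^{-1}[\mathcal E]\cap\{U\text{-final}\}$ into the explicit game-theoretic conditions displayed in the statement. Since the forgetful functor $U=\mathrm{Run}\colon\Games_{\A}\to\Gmes$ acts on a morphism $f$ by $f\mapsto\overline{f}$, I would specialize the initial- and final-structure formulas recorded immediately after Proposition \ref{PROP_TopFunctor} to a single morphism.

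For $U$-initiality, observe that an $\A$-morphism $(T,A)\stackrel{f}{\to}(T',A')$ satisfies $\overline{f}[A]\subseteq A'$, equivalently $A\subseteq\overline{f}^{-1}[A']$, and that $\overline{f}^{-1}[A']$ is precisely the $U$-initial structure on $T$ induced by $f$ from $A'$ (the singleton case of the intersection formula $\bigcap_{i}(\mathrm{Run}\,f_i)^{-1}[B_i]$). Hence $f$ is $U$-initial exactly when $A=\overline{f}^{-1}[A']$. I would confirm both directions directly: if $A=\overline{f}^{-1}[A']$ and $h\colon S\to T$ is chronological with $f\circ h$ an $\A$-morphism out of some $(S,C)$, then $\overline{f}[\overline{h}[C]]\subseteq A'$ forces $\overline{h}[C]\subseteq\overline{f}^{-1}[A']=A$, so $h$ is itself an $\A$-morphism; conversely, testing $U$-initiality against the identity chronological map on $(T,\overline{f}^{-1}[A'])$ yields $\overline{f}^{-1}[A']\subseteq A$, and the reverse inclusion is automatic. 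Intersecting with $U^{-1}[\mathcal M]$ gives exactly the stated class $\mathcal M_*$.

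Dually, the singleton case of the union formula shows that $\overline{f}[A]$ is the $U$-final structure on $T'$ induced by $f$ from $A$, so $f$ is $U$-final precisely when $A'=\overline{f}[A]$; the direct check tests against the identity on $(T',\overline{f}[A])$. Intersecting with $U^{-1}[\mathcal E]$ gives the stated class $\mathcal E^*$. The whole argument is bookkeeping: the only point needing any care is the repeated use of $\overline{f\circ h}=\overline{f}\circ\overline{h}$ together with the observation that, for $K=\mathrm{Run}$, the single-morphism initial and final structures are simply the preimage $\overline{f}^{-1}[A']$ and the image $\overline{f}[A]$. I do not anticipate a genuine obstacle, since all the substantive factorization-system content is already delivered by the general topological-functor result.
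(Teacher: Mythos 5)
Your proposal is correct and follows essentially the same route as the paper, which likewise obtains the Lemma by specializing the general topological-functor result (21.14 in \cite{Adamek1990}, or II.5.7.1 in \cite{Hofmann2014}) to the forgetful functor $\Games_{\A}\to\Gmes$ and remarks only that the statement ``may also be verified easily in a direct manner.'' Your identification of the $U$-initial and $U$-final morphisms as those with $A=\overline{f}^{-1}[A']$ and $A'=\overline{f}[A]$ is exactly the singleton case of the initial- and final-structure formulas the paper records after Proposition \ref{PROP_TopFunctor}, so nothing is missing.
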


We will now present two proper orthogonal factorization systems in $\Gmes$, each of  which giving two systems in $\Games_{\A}$ by Lemma \ref{LEMMA_SUBOrtFactSyst}, leading us to four distinct proper orthogonal factorization systems in $\Games_{\A}$.

\begin{prop}\label{PROP_Gmes_RegMon}
	The category $\Gmes$ has the proper orthogonal factorization system 
	$$(\mathcal E,\mathcal M):=(\{\text{surjective morphs}\},\{\text{injective morphs}\})=(\{\text{epis}\},\{\text{regular monos}\}).$$
\end{prop}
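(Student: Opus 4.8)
The plan is to establish the orthogonal factorization system between surjective and injective chronological maps directly, and then to identify these two classes with the epimorphisms and the regular monomorphisms of $\Gmes$.

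\textbf{Factorization and diagonalization.} For a chronological $f\colon T_1\to T_2$ I first check that the image $f[T_1]$ is again a game tree: it is closed under truncation because $f(t)\restrict k=f(t\restrict k)$, and it is pruned because, for $t\in T_1$ with $t^\smallfrown x\in T_1$, the node $f(t^\smallfrown x)$ has length $|f(t)|+1$ and truncates to $f(t)$, so it equals $f(t)^\smallfrown y$ for some $y$. Hence $f$ factors as the surjective corestriction $T_1\twoheadrightarrow f[T_1]$ followed by the injective inclusion $f[T_1]\hookrightarrow T_2$, and both classes are plainly stable under composition with isomorphisms. For the diagonalization property, given a commutative square $v\circ e'=m'\circ u$ with $e'$ surjective and $m'$ injective, I define the diagonal $t$ by $t(b)=u(a)$ for any $a$ with $e'(a)=b$. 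This is well defined by injectivity of $m'$ (as $m'u(a)=ve'(a)=ve'(a')=m'u(a')$), it is chronological because $e'$ and $u$ are, it satisfies both triangles $t\circ e'=u$ and $m'\circ t=v$, and it is unique since $e'$ is surjective. This yields the system $(\mathcal E,\mathcal M)$; essential uniqueness of factorizations then follows formally from orthogonality.

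\textbf{The two identifications, via one construction.} Surjective maps are trivially epic and injective maps trivially monic, so the system is proper once the identifications are proved; the substance lies in the reverse containments, which I handle simultaneously. Write $S:=f[T_1]$, a pruned subtree of $T_2$, and observe that $T_2\setminus S$ is closed under passing to children, since $S$ is closed under truncation. Assuming $\seq{\,}\in S$ (i.e. $T_1\neq\emptyset$), form $T_3:=T_2\sqcup_S T_2$ by gluing two copies of $T_2$ along $S$ — concretely $S$ together with two disjoint copies of $T_2\setminus S$ — which is again a pruned rooted tree, hence an object of $\PrTree\simeq\Gmes$ (it is the pushout of $\iota\colon S\hookrightarrow T_2$ with itself). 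The two coprojections $g,h\colon T_2\to T_3$ are chronological, they agree exactly on $S$, and they differ at every node of $T_2\setminus S$. For ``epi $\Rightarrow$ surjective'': since $g,h$ agree on $S\supseteq f[T_1]$ we get $g\circ f=h\circ f$, so if $f$ is epic then $g=h$, forcing $T_2\setminus S=\emptyset$, i.e. $f$ surjective. For ``injective $\Rightarrow$ regular mono'': the agreement set of $g,h$ is the pruned tree $S$, so the equalizer of $(g,h)$ in $\Gmes$ is exactly $\iota$ (equalizers here are the prunings of the agreement subtree); thus any injective $m$, being isomorphic onto its image $S$, is regular. Conversely every equalizer in $\Gmes$ is the inclusion of a pruned subtree, so regular monos are injective.

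\textbf{Edge case and main obstacle.} The degenerate case $T_1=\emptyset$ (so $S=\emptyset$) fails the gluing construction, as it would produce two roots; I treat it separately using the cogenerator $C$ of Example \ref{EX_cogen}. The two chronological maps $T_2\to C$ sending every node respectively to a string of $1$'s and to $\seq{\,}$ followed by a string of $0$'s agree only at $\seq{\,}$; hence when $T_2\neq\emptyset$ they witness that $\emptyset\to T_2$ is not epic, while they realize $\emptyset\hookrightarrow T_2$ (the pruning of the agreement tree $\{\seq{\,}\}$) as an equalizer. The step I expect to be most delicate is verifying that the glued tree $T_3$ really is a pruned game tree and that $g,h$ are chronological with agreement set precisely $S$, which is exactly where the truncation-closure of $S$ and the extension-closure of $T_2\setminus S$ must be used, together with splitting off the rootless case. (Alternatively, ``epi $=$ surjective'' follows more cheaply from Corollary \ref{COR_Tree_SetNop}: the inclusion $\mathsf{Epi}(\Sets)^{\omega^{\op}}\hookrightarrow\SetNop$ is a left adjoint, hence preserves epimorphisms, and presheaf epimorphisms are exactly the pointwise surjections, which correspond to surjective maps of trees.)
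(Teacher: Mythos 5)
Your proof is correct and follows essentially the same route as the paper: the image factorization $T_1\twoheadrightarrow f[T_1]\hookrightarrow T_2$, and your glued tree $T_2\sqcup_S T_2$ is precisely the paper's cokernel pair $(T'\setminus T)\sqcup T\sqcup(T'\setminus T)$, used identically to exhibit injective maps as equalizers (and, in your case, also to show epimorphisms are surjective, which the paper delegates to its earlier omitted elementary proof). The only departures are cosmetic strengthenings: you verify the diagonalization property by hand where the paper invokes the general fact that regular monomorphisms are strong, and you explicitly dispose of the $T_1=\emptyset$ case via the cogenerating tree, a degenerate case the paper's uniform formula silently absorbs.
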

\begin{proof}
	Every chronological mapping $f\colon T\to T'$ factors into a surjective chronological mapping followed by an injective one: simply consider the obvious factorization through the inclusion of the subgame $f[T]\leq T'$. The surjections are precisely the epimorphisms in $\Gmes$---in Proposition \ref{PROP_morphcat}(b) we already noted the corresponding statement to hold in $\Games_{\A}$. It now suffices to show that the injective chronological maps are precisely the regular monomorphisms ({\em i.e.}, the equalizers of parallel pairs of morphisms) in $\Gmes$, since their regularity makes them {\em a fortiori} strong ({\em i.e.}, orthogonal to all epimorphisms).

	Presented as equalizers, regular monomorphisms in $\Gmes$ are clearly injective. Conversely, having an injective chronological mapping, we may assume that, up to isomorphism, it is the inclusion map $f:T\hookrightarrow T'$ of a subtree $T\leq T'$. Forming the cokernelpair of $f$ ({\em i.e.}, the pushout of two copies of $f$), one easily confirms that one has the equalizer diagram
	\[
	\begin{tikzcd}[column sep=normal,row sep=huge]
		T \arrow[r, "f"] & T' \arrow[r,shift left=.75ex,"q_1"] \arrow[r,shift right=.75ex,"q_2"{below}]  &
		(T'\setminus T)\sqcup T\sqcup(T'\setminus T),
	\end{tikzcd}
	\]
	where the two pushout injections $q_i$ map the moments in $T$ identically but separate the other moments into distinct summands $T'\setminus T$ of their codomain.
\end{proof}

Having shown that the class of epimorphisms belongs to an orthogonal factorization system, we now deduce from general categorical facts that also the class of monomorphisms in $\Gmes$ (characterized like in Proposition \ref{PROP_morphcat}(a) for $\Games_{\A}$) belongs to an orthogonal factorization system in $\Gmes$.

\begin{prop}\label{PROP_GmesRegMonFact}	
	The category $\Gmes$ has the proper orthogonal factorization system $(\mathcal E',\mathcal M')$ with $$\mathcal M':=\{T_1\stackrel{f}{\to}T_2:\mathrm{Run}(T_1)\stackrel{\overline{f}}{\to}\mathrm{Run}(T_2) \text{  injective}\}=\{\text{monomorphisms}\}$$ and (necessarily) $\mathcal E':=\{\text{strong epimorphisms}\}$.
\end{prop}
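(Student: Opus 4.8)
The plan is to identify $(\mathcal E',\mathcal M')$ with the standard \emph{(strong epi, mono)}-factorization system, using that $\Gmes$ is complete and well-powered; Proposition \ref{PROP_Gmes_RegMon}, which places all epimorphisms in the \emph{left} class of a system, is the companion statement at the opposite end of the morphism spectrum. I first dispatch the soft part. The class $\mathcal M'$ of all monomorphisms contains the isomorphisms and is closed under composition, and by the characterization recorded in the statement it is exactly $\{f:\overline f\text{ injective}\}$. In any category the left-orthogonal class ${}^{\perp}\mathcal M'$ of morphisms admitting the unique diagonal fill-in against every monomorphism is precisely the class of strong epimorphisms: such an $e$ is automatically epic, since if $a\circ e=b\circ e$ one fills $e$ in against the equalizer $j$ of $a,b$ to see that $j$ is a split epimorphism, hence (being monic) invertible, so $a=b$. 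Consequently, once $(\mathcal E',\mathcal M')$ is shown to be a factorization system its left class is forced to equal $\mathrm{StrongEpi}$, and the system is proper because $\mathrm{StrongEpi}\subseteq\mathrm{Epi}$.

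The crux is the existence of factorizations: every chronological $f\colon T_1\to T_2$ must split as $f=m\circ e$ with $m$ a monomorphism and $e$ a strong epimorphism. I would invoke the general fact (\cite{Adamek1990}) that a complete, well-powered category admits such factorizations, and recall that the mono part is produced as an intersection. Concretely, let $\mathcal S$ be the (small, by well-poweredness) collection of subobjects $m_i\colon M_i\to T_2$ through which $f$ factors; it contains $\mathrm{id}_{T_2}$. Completeness of $\Gmes$ (being equivalent, by Theorem \ref{THM_GmeTree} and Proposition \ref{THM_Tree_SetNop}, to a coreflective subcategory of the topos $\SetNop$) provides the intersection $m\colon M\hookrightarrow T_2$ of $\mathcal S$, through which $f$ still factors, say $f=m\circ e$. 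That $e$ is a strong epimorphism follows from minimality of $m$: given a monomorphism $n\colon C\to D$ and a commuting square $n\circ u=v\circ e$, pulling $n$ back along $v$ yields a pullback $P$ with a monomorphism $n'\colon P\to M$ and a map $u'\colon P\to C$, and $f$ now factors through the monomorphism $m\circ n'$; minimality forces $m$ to factor through $m\circ n'$, so $n'$ is split and hence invertible, whereupon $u'\circ (n')^{-1}$ is the required diagonal, unique because $e$ is epic. With the orthogonality already noted, $(\mathrm{StrongEpi},\mathrm{Mono})$ is a proper orthogonal factorization system.

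For well-poweredness I would argue that a pruned tree equals the truncation-closure of its set of branches, so that a monomorphism $m\colon T'\to T_2$ is determined up to isomorphism by the subset $\overline m[\Run(T')]\subseteq\Run(T_2)$ together with the $\Delta$-structure transported onto it; there is only a set of such data, so $T_2$ has a set of subobjects.

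The main obstacle, and the reason for routing through intersections rather than a cheaper image construction, is precisely that $\Gmes$ (like $\Games_\A$) is \emph{not} regular: the naive candidate, the coequalizer of the kernel pair of $f$, need not have a monic comparison map and so fails to deliver a (strong epi, mono)-factorization in general. Thus the well-poweredness argument, routine as it is, does essential work. It is also here that one must keep the distinction between a genuine monomorphism (injective $\overline m$) and an injective tree map (a \emph{regular} monomorphism, the right class of Proposition \ref{PROP_Gmes_RegMon}): it is exactly this gap that makes the strong-epimorphism part of the present system genuinely larger than the epi part of the former one.
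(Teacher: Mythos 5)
Your proposal is correct and takes essentially the same route as the paper: the paper also deduces the factorization from the general fact that every complete, well-powered category has $(\text{strong epi},\text{mono})$-factorizations (citing II.5.3.3 of \cite{Hofmann2014}), checking well-poweredness of $\Gmes$ by bounding the cardinality of $T_1$ by that of $\Run(T_2)$ for any monomorphism $T_1\to T_2$. You merely inline the standard intersection-of-subobjects proof of that cited fact and replace the paper's cardinality bound with an equivalent classification of subobjects by the set-sized data $\bigl(\overline{m}[\Run(T')],\Delta\text{-structure}\bigr)$, so the mathematical content coincides.
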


\begin{proof}
	The morphisms of any complete and well-powered category have $(\{\text{strong epis}\},\{\text{monos}\})$-factorizations. (Recall that well-powered means that, for every given object, the isomorphism classes of monomorphisms into it may be labelled by a set.) For a proof, see for example II.5.3.3 of \cite{Hofmann2014}. Since in $\Gmes$, for any monomorphism $T_1\to T_2$, the cardinality of $T_1$ is bounded by the cardinality of $\mathrm{Run}(T_2)$, the complete category $\Gmes$ is well-powered, and therefore has the desired factorizations.
\end{proof}

Keeping the notation of the two propositions, with Lemma \ref{LEMMA_SUBOrtFactSyst} we obtain easily most of the following claims:
\begin{cor}\label{COR_ortfactsyst}
	The pairs $(\mathcal E,\mathcal M_*),\,(\mathcal E^*,\mathcal M),\, (\mathcal E',(\mathcal M' )_*)$, and $((\mathcal E')^*,\mathcal M')$ constitute four distinct proper orthogonal factorization systems in $\Games_\A$, ordered by
	\begin{center}
		$\xymatrix{& \mathcal E &&&& \mathcal M' & \\
			\mathcal E^*\ar@{-}[ur]\ar@{-}[dr] && \mathcal E'\ar@{-}[lu]\ar@{-}[ld] && \mathcal M\ar@{-}[ru]\ar@{-}[rd] && (\mathcal M')_*\ar@{-}[lu]\ar@{-}[ld]\\
			& (\mathcal E')^* &&&& \mathcal M_*\\\
		}$
	\end{center}
	Here the systems
	\begin{align*}
		(\mathcal E,\mathcal M_*)&=(\{\text{surjective morphs}\},\{\text{game embeddings}\})=(\{\text{epis}\},\{\text{regular monos}\})\\
		((\mathcal E')^*,\mathcal M')&=(\{\text{strong epis}\},\{f:\overline{f} \text{ injective}\})=(\{\text{strong epis}\},\{\text{monos}\})	\\
	\end{align*}
	assume the roles of the least and largest proper orthogonal factorization systems in $\Games_{\A}$.	
\end{cor}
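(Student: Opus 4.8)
The existence and properness of all four systems is immediate from Lemma~\ref{LEMMA_SUBOrtFactSyst}, applied to the two proper orthogonal factorization systems on $\Gmes$ furnished by Propositions~\ref{PROP_Gmes_RegMon} and~\ref{PROP_GmesRegMonFact}; the same lemma also supplies the explicit descriptions of $\mathcal M_*,\ \mathcal E^*,\ (\mathcal M')_*$ and $(\mathcal E')^*$ that I would use throughout. The plan is then to (i) verify the two displayed identifications, (ii) read off the ordering diagram from inclusions among the six classes, (iii) separate the four systems by two small examples, and (iv) deduce the extremality claim from a general fact about proper factorization systems.

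For the identifications, Proposition~\ref{PROP_morphproperties}(b) gives $\mathcal E=\{\text{epis}\}$ (surjective $=$ epic) and Proposition~\ref{PROP_morphproperties}(a) gives $\mathcal M'=\{\text{monos}\}$ ($\overline f$ injective $=$ monic); since in any orthogonal factorization system $\mathcal E={}^\perp\mathcal M$, the class $(\mathcal E')^*={}^\perp\mathcal M'$ orthogonal to \emph{all} monomorphisms is by definition the class of strong epimorphisms. The one identification needing an argument is $\mathcal M_*=\{\text{game embeddings}\}=\{\text{regular monos}\}$. By Lemma~\ref{LEMMA_SUBOrtFactSyst}, a morphism $(T,A)\stackrel{f}{\to}(T',A')$ lies in $\mathcal M_*$ exactly when $f$ is an injective tree map with $A=\overline f^{-1}[A']$; here $A\subseteq\overline f^{-1}[A']$ expresses that $f$ is an $\A$-morphism and $A\supseteq\overline f^{-1}[A']$ expresses that $f$ is a $\B$-morphism, so $\mathcal M_*$ is precisely the class of game embeddings of Definition~\ref{DEF_Emb}.

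Showing that game embeddings are the regular monomorphisms is the technical heart, and the step I expect to be the main obstacle. By Proposition~\ref{PROP_morphproperties}(e) a game embedding is, up to isomorphism, a subgame inclusion $\iota\colon G'=(T',A')\hookrightarrow G=(T,A)$ with $A'=A\cap\Run(T')$, and I would exhibit $\iota$ as an equalizer by lifting the cokernel-pair presentation used in Proposition~\ref{PROP_Gmes_RegMon}. Equip the tree $Z=(T\setminus T')\sqcup T\sqcup(T\setminus T')$ carrying the two $\Gmes$-injections $q_1,q_2\colon T\rightrightarrows Z$ with the $U$-final payoff set $A_Z=\overline{q_1}[A]\cup\overline{q_2}[A]$, making $q_1,q_2\colon G\rightrightarrows(Z,A_Z)$ into $\A$-morphisms. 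Because $U\colon\Games_\A\to\Gmes$ is topological (Proposition~\ref{PROP_TopFunctor}) it creates limits, so the equalizer of $q_1,q_2$ in $\Games_\A$ carries the $\Gmes$-equalizer $T'$ with its $U$-initial payoff $\overline{\iota}^{-1}[A]=A\cap\Run(T')=A'$; that is, the equalizer is $G'$ and $\iota$ is regular. Since $\mathcal M_*=\mathcal E^\perp=\{\text{epis}\}^\perp$ is exactly the class of strong monomorphisms, the chain $\{\text{embeddings}\}\subseteq\{\text{regular monos}\}\subseteq\{\text{strong monos}\}=\mathcal M_*=\{\text{embeddings}\}$ then forces all three classes to coincide.

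The diagram's ordering follows from the evident inclusions $(\mathcal E')^*\subseteq\mathcal E'\subseteq\mathcal E$ and $(\mathcal E')^*\subseteq\mathcal E^*\subseteq\mathcal E$ (with the dual inclusions on the $\mathcal M$-side), using that strong epis are epis and injective maps are monic. To see that the four systems are genuinely distinct and the two middle ones incomparable, I would use two witnesses: the identity tree map $(T,\emptyset)\stackrel{\id}{\to}(T,A')$ with $A'\neq\emptyset$ lies in $\mathcal E\cap\mathcal E'$ but in neither $\mathcal E^*$ nor $(\mathcal E')^*$ (as $\overline{\id}[\emptyset]=\emptyset\neq A'$), while a surjective non-injective monomorphism as in Remark~\ref{REM_generator}(2), built on the tree of Example~\ref{EX_PreImgNotSub} and lifted with empty payoffs, lies in $\mathcal E\cap\mathcal E^*$ but not in $\mathcal E'$, since a monic strong epimorphism is an isomorphism. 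These realize $\mathcal E'\not\subseteq\mathcal E^*$ and $\mathcal E^*\not\subseteq\mathcal E'$ and make all the displayed inclusions strict. Finally, every proper orthogonal factorization system $(\mathcal E_0,\mathcal M_0)$ of $\Games_\A$ satisfies $\mathcal E_0\subseteq\{\text{epis}\}=\mathcal E$ and $\mathcal M_0\subseteq\{\text{monos}\}=\mathcal M'$; as $(\mathcal E,\mathcal M_*)$ and $((\mathcal E')^*,\mathcal M')$ are themselves such systems, they are respectively the largest and the least, proving the extremality assertion.
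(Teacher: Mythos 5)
Your proposal is correct, and its skeleton---producing the four systems via Lemma~\ref{LEMMA_SUBOrtFactSyst} from Propositions~\ref{PROP_Gmes_RegMon} and~\ref{PROP_GmesRegMonFact}---is the paper's. Where you genuinely diverge is at the one point needing substance: identifying $\mathcal M_*$ with the regular monomorphisms and $(\mathcal E')^*$ with the strong epimorphisms of $\Games_\A$. The paper settles both in one stroke by invoking the general fact about topological functors recorded in Remark~\ref{REM_topfun_RegEpi_StrEpi}: a morphism is regularly (resp.\ strongly) monic upstairs if, and only if, it is initial and its underlying morphism is regularly (strongly) monic downstairs, and dually for epimorphisms; since $\mathcal M_*$ is by definition ``initial over an injection'' and Proposition~\ref{PROP_Gmes_RegMon} identifies injections with regular monos in $\Gmes$, nothing more is needed. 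You instead re-prove the crucial instance by hand: you lift the cokernel-pair equalizer presentation from the proof of Proposition~\ref{PROP_Gmes_RegMon} to $\Games_\A$ by endowing the pushout tree with the final payoff $\overline{q_1}[A]\cup\overline{q_2}[A]$ and computing that the initial payoff on the lifted equalizer is $\overline{\iota}^{-1}[A]=A'$, and you obtain the strong-epi identification purely formally from $(\mathcal E')^*={}^{\perp}\mathcal M'$ together with Proposition~\ref{PROP_morphproperties}(a),(b). Both routes are sound. The paper's is shorter and its general remark is reused later (Theorem~\ref{THEOREM_non-regular}, Corollary~\ref{THM_Game_Proj_PullbackStable}); yours is self-contained and, beyond the paper, makes explicit two things the published proof leaves tacit: concrete witnesses separating the four systems and showing the two middle ones incomparable (the identity $(T,\emptyset)\to(T,A')$ and the surjective non-injective monomorphism of Remark~\ref{REM_generator}(2) do exactly what you claim), and the extremality argument from properness.

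Two small repairs, neither affecting the argument. First, in your cokernel pair the middle summand should be $T'$, not $T$: the pushout of two copies of $\iota\colon T'\hookrightarrow T$ is $Z=(T\setminus T')\sqcup T'\sqcup(T\setminus T')$, matching the paper's $(T'\setminus T)\sqcup T\sqcup(T'\setminus T)$ after the swap of roles. Second, topological functors do not \emph{create} limits in the strict sense---they lift limit cones uniquely via initial structures, and only the initial lift is a limit; that initial-lift computation is precisely what your argument uses, so the proof stands as written once the wording is adjusted.
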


\begin{proof}
	Only the claims about the classes of (regular/strong) monomorphisms and epimorphisms in $\Games_{\A}$ vis-\'{a}-vis the corresponding classes in $\Gmes$ may warrant further explanation. But, just like Lemma \ref{LEMMA_SUBOrtFactSyst}, they are all easy consequences of the topologicity of the forgetful functor $\mathsf{Sub}_{\mathrm{Run}}(\Gmes)=\Games_{\mathrm A}\longrightarrow\Gmes$, as recorded in generality in the following remark.
\end{proof}

\begin{remark}\label{REM_topfun_RegEpi_StrEpi}
	A topological functor $U:\mathcal T\to\mathcal C$ preserves and reflects monomorphisms and epimorphisms. Furthermore, for every morphism $f$ in $\mathbf C$, one has (\cite{Tholen1974}, Prop. 9.7) that
	\begin{itemize}
		\item $f$ is regularly monic in $\mathbf C$ if, and only if, $f$ is $P$-initial and $Pf$ is regularly monic in $\mathbf D$;
		\item $f$ is regularly epic in $\mathbf C$ if, and only if, $f$ is $P$-final and $Pf$ is regularly epic in $\mathbf D$.	
	\end{itemize}
	Likewise, when ``regularly'' is replaced by ``strongly''.
\end{remark}

\subsection{Regular versus strong epimorphisms}
Proposition \ref{PROP_Gmes_RegMon} and Corollary \ref{COR_ortfactsyst} imply that the classes of strong monomorphisms and regular monomorphisms coincide, in both, $\Gmes$ and $\Games_{\A}$. It is therefore natural to ask the corresponding question for strong and regular epimorphisms. Next we will characterize the regular epimorphisms and then construct two consecutive regular epimorphisms whose composite fails to be regular, even though it is trivially strong.  

In order to characterize the regular epimorphisms in the category $\Gmes$, we recall from Section \ref{SUBSECeqpb} that the kernel pair of a morphism $T\stackrel{f}{\to} T'$ in  $\Gmes$, {\em i.e.}, the pullback of two copies of $f$, 
is described by the projections
$\begin{tikzcd}
	T\times_{T'} T \arrow[r,shift left=.75ex,"\pi_1"] \arrow[r,shift right=.75ex,"\pi_2"{below}]  &	T
\end{tikzcd}$ of 
\[
T\times_{T'} T = \set{(t,s):|t|=|s| \text{ and }\exists\, R,S\in \Run(T)\, (R\restrict|t| = t, \, S\restrict |s| = s \text{ and }\overline{f}(R)=\overline{f}(S))}\,.
\]
We will show that the characteristic property of the coequalizer of $\pi_1,\pi_2$ is captured by the following definition:

\begin{defn}
	A surjective morphism $T\stackrel{q}{\to}Q$ in $\Gmes$ is a {\em quotient map} if, for all $N<\omega$ and $t,s\in T(N)$, one has $q(t)=q(s)$ if, and only if, there are runs $\seq{R_0, \dotsc, R_n}$ in $T$ such that
	\[
	t=R_0\restrict N, \; s = R_n\restrict N, \; \Delta(R_{2i}, R_{2i+1})\ge N \text{  and  } \overline{q}(R_{2i+1}) = \overline{q}(R_{2i+2})
	\]
	for every $i < n/2$.
\end{defn}

\begin{prop}\label{PROP_Gme_RegEpis}
	A morphism $T\stackrel{f}{\to}T'$ in $\Gmes$ is regularly epic if, and only if, it is a quotient map.
\end{prop}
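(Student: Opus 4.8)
The plan is to use the standard fact that a morphism is a regular epimorphism precisely when it is (isomorphic to) the coequalizer of its own kernel pair, and then to verify that the explicit construction of this coequalizer, as carried out in Section \ref{SUBSEC_Coeq_Pushouts}, reproduces exactly the defining condition of a quotient map.

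First I would recall from Section \ref{SUBSECeqpb} the kernel pair $\pi_1,\pi_2\colon T\times_{T'}T\to T$ of $T\xrightarrow{f}T'$, and let $\approx$ denote the smallest equivalence relation on $T$ with $t\approx s$ whenever $(t,s)\in T\times_{T'}T$; explicitly, for $t,s\in T(N)$ one has $t\approx s$ iff there is a chain $t=u_0,u_1,\dots,u_k=s$ in $T(N)$ such that for each $j$ there are runs $R,S\in\Run(T)$ with $R\restrict N=u_j$, $S\restrict N=u_{j+1}$ and $\overline f(R)=\overline f(S)$. Since each generating pair already forces equal $f$-images and $\overline f$ respects truncation, one checks directly that $\approx$ is compatible with truncation and that $\approx\,\subseteq\,\ker f$, where $t\mathbin{(\ker f)}s$ abbreviates $f(t)=f(s)$ (note $f$ preserves length, so this pairs only equal-length moments).

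Next I would compute the coequalizer $q\colon T\to Q$ of $\pi_1,\pi_2$ by the recipe of Section \ref{SUBSEC_Coeq_Pushouts}: its moments are the sequences $\seq{[u\restrict i]:0<i\le|u|}$ of $\approx$-classes, with $q(u)=\seq{[u\restrict i]:0<i\le|u|}$. Truncation-compatibility of $\approx$ identifies such a sequence with the single class $[u]$, so that $q(t)=q(s)\iff t\approx s$. Because $\approx\,\subseteq\,\ker f$, the map $f$ factors as $f=\overline h\circ q$ for a chronological comparison morphism $\overline h\colon Q\to T'$, $\seq{[u\restrict i]}\mapsto f(u)$. Since $q$ is surjective and bijective chronological maps are isomorphisms in $\Gmes$, the morphism $f$ is regularly epic iff $\overline h$ is an isomorphism, i.e. iff $\overline h$ is both surjective and injective. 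Surjectivity of $\overline h$ amounts to surjectivity of $f$ (which is forced in any case, as regular epis are epis and epis in $\Gmes$ are the surjections, recalled in the proof of Proposition \ref{PROP_Gmes_RegMon}), while injectivity of $\overline h$ says exactly that $f(t)=f(s)\Rightarrow t\approx s$. Combined with $\approx\,\subseteq\,\ker f$, this gives the clean criterion: $f$ is regularly epic iff $f$ is surjective and $\ker f\,=\,\approx$.

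Finally I would match this with the quotient-map condition by reading the alternating chain $\seq{R_0,\dots,R_n}$ at level $N$: the hypotheses $\Delta(R_{2i},R_{2i+1})\ge N$ merely say $R_{2i}\restrict N=R_{2i+1}\restrict N$ (these steps keep the level-$N$ moment fixed), whereas $\overline q(R_{2i+1})=\overline q(R_{2i+2})$ says the pair $(R_{2i+1}\restrict N,R_{2i+2}\restrict N)$ lies in the kernel-pair relation of $q$. Hence such a chain exists iff $R_0\restrict N$ and $R_n\restrict N$ are linked by finitely many kernel-pair steps, i.e. iff $t\approx s$; conversely any $\approx$-chain is realized by interleaving chosen run-extensions with trivial ($\Delta=\infty$) steps. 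Thus the quotient-map condition reads precisely $q(t)=q(s)\iff t\approx s$, that is $\ker q\,=\,\approx$, which is exactly the regular-epimorphism criterion established above, completing both implications. The only place that demands genuine care — and the expected main obstacle — is this translation between the syntactic chain condition and the transitive closure of the kernel-pair relation, together with the bookkeeping needed to see that $\approx$ descends bijectively to the moments of $Q$.
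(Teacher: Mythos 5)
Your proposal is correct and takes essentially the same route as the paper's own proof: both reduce regular epicness to $f$ being the coequalizer of its kernel pair (using the explicit construction of Section \ref{SUBSEC_Coeq_Pushouts}, with the pruned pullback forcing the run-extension form of the generating relation) and then identify the generated equivalence relation with the transitive closure $\approx$ underlying the chain condition in the quotient-map definition --- your write-up simply fills in the translation step that the paper compresses into ``one obviously has that the relations $\sim$ and $\approx$ coincide''. Two cosmetic remarks only: the interleaved trivial steps generally satisfy $\Delta\ge N$ rather than $\Delta=\infty$ (which is all the definition requires), and in your final paragraph the letter $q$ is overloaded, denoting both the constructed coequalizer and, as in the paper's definition of quotient map, the morphism $f$ under test.
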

\begin{proof}
	Let $\begin{tikzcd}
		T\times_{T'} T \arrow[r,shift left=.75ex,"\pi_1"] \arrow[r,shift right=.75ex,"\pi_2"{below}]  &
		T
	\end{tikzcd}$  be the kernel pair of $T\stackrel{f}{\to} T'$ in $\Gmes$. 
	As shown in Section \ref{SUBSEC_Coeq_Pushouts}, the coequalizer 
	of  $\pi_1,\pi_2$ is formed with 
	the least equivalence relation $\sim$ on $T$ satisfying $t\sim s$ for all $(t,s)\in T\times_{T'} T$. 
	Hence, on one hand, for the surjective morphism $f$ to be regularly epic, $\sim$ must coincide with the equivalence relation induced by $f$.	
	On the other hand, for $f$ to qualify as a quotient map, the defining property tells us that the equivalence relation induced by $f$ should coincide with the transitive closure $\approx$ of the relation $\mathcal R$ on $T$, defined by $t\mathcal R s$ if, and only if, there are $R,S\in \Run(T)$ extending $t$ and $s$, respectively, and satisfying $\overline{f}(R)=\overline{f}(S)$. 
	
	One obviously has that the relations $\sim$ and $\approx$ coincide and may routinely finish the proof.
\end{proof}

Now, for the construction of two consecutive quotient maps whose composite fails to be a quotient map, first, for every $n<\omega$, we consider the game tree
\[
T_n = \set{\seq{*_n:j<k}:k<n} \cup \set{\seq{*_n:i<n}^\smallfrown \seq{x:j<k}:k<\omega,\, x\in \{0,1\} }.
\]
and let $T = \bigsqcup_{n<\omega} T_n$. We write an element $t\in T$ in the form $t_n$ to indicate the location of $t$ within the coproduct.
With the cogenerating game tree
\[
Q = \set{\seq{\ast:i<n}^\smallfrown \seq{0:j<k}: n,k<\omega}
\]
we define the map $q\colon T\to Q$ by
\[
q(t_n) = \begin{cases}
	\seq{\ast: i<n+t_n(n)}^\smallfrown\seq{0: j<|t|-(n+t_n(n))}, \text{ if $|t|>n$},\\
	\seq{\ast:i<|t|}, \text{ otherwise.}
\end{cases}
\]

It is clear that $q$ is chronological. Furthermore:

\begin{lemma}\label{LEMMA_q_regular}
	The morphism $T\stackrel{q}{\to}Q$ in $\Gmes$ is a quotient map.
\end{lemma}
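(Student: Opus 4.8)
The plan is to verify the two directions of the quotient‑map equivalence separately, with the forward implication being essentially automatic and the reverse being the real content. I would first fix the combinatorics of $T$. Its branches are exactly the rays $R^0_n=\seq{*_n:i<n}^\smallfrown\seq{0:j<\omega}$ and $R^1_n=\seq{*_n:i<n}^\smallfrown\seq{1:j<\omega}$, one pair per summand $T_n$, and surjectivity of $q$ is immediate since $\seq{*_a:i<a}^\smallfrown\seq{0:j<b}$ maps onto $\seq{\ast:i<a}^\smallfrown\seq{0:j<b}$. A direct computation from the defining formula gives $\overline q(R^0_n)=\seq{\ast:i<n}^\smallfrown\seq{0:j<\omega}$ and $\overline q(R^1_n)=\seq{\ast:i<n+1}^\smallfrown\seq{0:j<\omega}$; hence the only coincidences of $\overline q$ on \emph{distinct} runs are the identities $\overline q(R^1_n)=\overline q(R^0_{n+1})$, while $\overline q(R^0_0)=\seq{0:j<\omega}$ is attained by no other run. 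This short list of coincidences is the engine of the whole argument.

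For the ``if'' direction I would note that it holds for any chronological map: $\Delta(R_{2i},R_{2i+1})\ge N$ forces $R_{2i}\restrict N=R_{2i+1}\restrict N$, so $q(R_{2i}\restrict N)=q(R_{2i+1}\restrict N)$, and $\overline q(R_{2i+1})=\overline q(R_{2i+2})$ gives $q(R_{2i+1}\restrict N)=\overline q(R_{2i+1})\restrict N=\overline q(R_{2i+2})\restrict N=q(R_{2i+2}\restrict N)$; chaining these along $\seq{R_0,\dots,R_n}$ yields $q(t)=q(s)$.

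The heart is the ``only if'' direction, which I would reformulate as an equivalence‑relation statement. Writing $u\mathrel{\mathcal R}u'$ for $u,u'\in T(N)$ when some runs $R\restrict N=u$, $R'\restrict N=u'$ satisfy $\overline q(R)=\overline q(R')$, the existence of a run sequence as in the definition says exactly that $t$ and $s$ lie in one class of the transitive closure $\approx$ of $\mathcal R$ (each $\mathcal R$‑step contributes the pair of witnessing runs to the sequence). Since the previous paragraph shows $\mathcal R\subseteq\ker(q\restrict T(N))$, hence $\approx\,\subseteq\ker(q\restrict T(N))$, it remains only to check that each fiber of $q\restrict T(N)$ is a single $\approx$‑class. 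Here I would read off the fibers from the three possible shapes of a length‑$N$ node (a stem node $\seq{*_n:i<N}$ with $n\ge N$, or a $0$‑/$1$‑ray node $\seq{*_n:i<n}^\smallfrown\seq{x:j<N-n}$ with $n<N$): the fiber over $\seq{\ast:i<a}^\smallfrown\seq{0:j<N-a}$ with $1\le a\le N-1$ is the pair $\seq{*_a:i<a}^\smallfrown\seq{0:j<N-a}$ and $\seq{*_{a-1}:i<a-1}^\smallfrown\seq{1:j<N-a+1}$, joined by the coincidence $\overline q(R^0_a)=\overline q(R^1_{a-1})$; the fiber over $\seq{0:j<N}$ is the singleton $\{\seq{0:j<N}\}$; and the fiber over $\seq{\ast:i<N}$ consists of \emph{all} stem nodes $\seq{*_n:i<N}$ $(n\ge N)$ together with the single $1$‑ray node $\seq{*_{N-1}:i<N-1}^\smallfrown\seq{1}$.

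The main obstacle — and the reason the lemma is not purely formal — is this last fiber: it is infinite, so although infinitely many stem nodes collapse to $\seq{\ast:i<N}$, I must exhibit a \emph{finite} $\mathcal R$‑chain between any two of them. This is done by walking along the successive coincidences $\overline q(R^1_{N-1})=\overline q(R^0_N),\ \overline q(R^1_N)=\overline q(R^0_{N+1}),\dots$, which link the nodes in a single ladder $\seq{*_{N-1}:i<N-1}^\smallfrown\seq{1}-\seq{*_N:i<N}-\seq{*_{N+1}:i<N}-\cdots$, with the stray $1$‑ray node attached at the bottom; any two entries are then a finite distance apart. Once the three fiber types are handled, converting a finite $\mathcal R$‑chain back into an explicit run sequence $\seq{R_0,\dots,R_n}$ completes the verification.
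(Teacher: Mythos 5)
Your proof is correct and runs on the same engine as the paper's: the run coincidences $\overline{q}(R^1_n)=\overline{q}(R^0_{n+1})$ and the resulting ladder $\seq{\ast_n:i<n}^\smallfrown\seq{1:j<\omega},\ \seq{\ast_{n+1}:i<n+1}^\smallfrown\seq{0:j<\omega},\ \seq{\ast_{n+1}:i<n+1}^\smallfrown\seq{1:j<\omega},\dots$ are exactly the chains $\seq{R_0,\dotsc,R_k}$ the paper builds in its case analysis on $t_n,s_m$. Your fiber-by-fiber bookkeeping (the singleton fiber over $\seq{0:j<N}$, the two-element fibers, and the single infinite fiber over $\seq{\ast:i<N}$) is a tidier repackaging of the paper's cases $n=m$, $|t_n|>n+1$, and $|t_n|\le n$ / $|t_n|=n+1$, with the only genuine addition being that you spell out the routine ``if'' direction (valid for any chronological map), which the paper leaves implicit.
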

\begin{proof}
	Suppose that $q(t_n)=q(s_m)$ for $t_n,s_m\in T$. 
	
	In case $n=m$, we note that $q(t_n)=q(s_n)$ equivalently means that $t_n=s_n$. Indeed, this is clear for $|t_n|=|s_n| \le n$, and if $|t_n|=|s_n| > n$ and $t_n(n) \neq s_n(n)$, then
	\begin{align*}
		q(t_n) &= \seq{\ast: i<n+t_n(n)}^\smallfrown\seq{0: j<|t|-(n+t_n(n))} \\
		&\neq \seq{\ast: i<n+s_m(n)}^\smallfrown\seq{0: j<|t|-(n+s_n(n))} = q(s_n),
	\end{align*}
	contradicting our hypothesis. Thus $t_n(n) = s_n(n)$ and $t_n = s_n$.
	
	If $n<m$, we will show the existence of a sequence of runs of $T$ which attests the quotient property of $q$ at $t_n,s_m\in T$, by considering the following two possible cases:
	\begin{itemize}
		\item $|t_n|\le n$: Then $t_n = \seq{\ast_n: i<|t_n|}$, in which case the sequence $\seq{R_0, \dotsc, R_k}$ with 
		\begin{gather*}
			R_0 = \seq{\ast_n: i<n}^\smallfrown\seq{1:j<\omega},\\
			R_1 = \seq{\ast_{n+1}: i<n+1}^\smallfrown\seq{0:j<\omega}\\
			R_2 = \seq{\ast_{n+1}: i<n+1}^\smallfrown\seq{1:j<\omega},\\
			\vdots\\
			R_k = \seq{\ast_{m}: i<m}^\smallfrown\seq{0:j<\omega},
		\end{gather*}
		satisfies the desired properties.
		
		\item $|t_n|> n$: We claim that $t_n(n) =  1$. Indeed, suppose that $t_n(n) = 0$, in which case 
		\begin{gather*}
			t_n = \seq{\ast_n: i<n}^\smallfrown \seq{0:j<|t_n|-n}\\
			\text{and } q(t_n) = \seq{\ast: i<n}^\smallfrown \seq{0:j<|t_n|-n}.
		\end{gather*}
		However, 
		\begin{equation}\label{EQ_Gme_Strong_not_Reg}
			\forall k>n \, \forall t_k\in T_k \left(|t_k|>n\implies q(t_k)(n)=\ast \right).
		\end{equation}
		Thus, in particular,
		\[
		q(s_m)(n)=\ast\neq 0 = q(t_n)(n)
		\] 
		and $q(t_n)\neq q(s_m)$, contradicting our hypothesis. 
		
		Hence, 
		\begin{gather*}
			t_n = \seq{\ast_n: i<n}^\smallfrown \seq{1:j<|t_n|-n}\\
			\text{and } q(t_n) = \seq{\ast: i<n+1}^\smallfrown \seq{0:j<|t_n|-n-1}.
		\end{gather*}
		
		We now further separate the rest of the proof into two subcases:
		
		\begin{itemize}
			\item $|t_n|>n+1$: Then $q(t_n)(n+1) =q(s_m)(n+1)= 0$ and, by \eqref{EQ_Gme_Strong_not_Reg}, $m=n+1$. In this case, $\seq{R_0,R_1}$ with
			\begin{gather*}
				R_0=\seq{\ast_n:i<n}^\smallfrown \seq{1:j<\omega}\\
				R_1 = \seq{\ast_{n+1}:i<n+1}^\smallfrown\seq{0:j<\omega}.
			\end{gather*} 
			is such that $R_0\restrict (n+1) = t_n$, $R_1\restrict (n+1) = s_m$,  $\overline{q}(R_0)=\overline{q}(R_1)$, and we are done.
			\item $|t_n|=n+1$: Then the sequence $\seq{R_0, \dotsc, R_k}$ with 
			\begin{gather*}
				R_0 = \seq{\ast_n: i<n}^\smallfrown\seq{1:j<\omega},\\
				R_1 = \seq{\ast_{n+1}: i<n+1}^\smallfrown\seq{0:j<\omega}\\
				R_2 = \seq{\ast_{n+1}: i<n+1}^\smallfrown\seq{1:j<\omega},\\
				\vdots\\
				R_k = \seq{\ast_{m}: i<m}^\smallfrown\seq{x:j<\omega},
			\end{gather*}
			where $x=s_m(m)$ if $|s_m|>m$ and $x=0$ otherwise, concludes the proof that $q$ is a quotient map.
		\end{itemize} 
	\end{itemize}
\end{proof}

Since the coproduct of regular epimorphisms in any category (with coproducts) is again a regular epimorphism, we conclude:

\begin{cor}\label{PROP_Gme_QotientCoprods}
	The morphism $q\sqcup q \colon T\sqcup T\to Q\sqcup Q$ in $\Gmes$ is a quotient map.	
\end{cor}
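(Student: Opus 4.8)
The plan is to treat this statement exactly as its name suggests---a corollary---by transporting the problem out of the concrete combinatorial world of quotient maps and into the categorical language of regular epimorphisms, where behaviour under coproducts is standard. The essential input is the characterization of Proposition \ref{PROP_Gme_RegEpis}: a morphism in $\Gmes$ is a quotient map if, and only if, it is regularly epic.

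First I would invoke Lemma \ref{LEMMA_q_regular} to conclude that $T\stackrel{q}{\to}Q$ is a quotient map, and hence, by Proposition \ref{PROP_Gme_RegEpis}, a regular epimorphism in $\Gmes$. Since $\Gmes$ has coproducts, the coproduct $q\sqcup q\colon T\sqcup T\to Q\sqcup Q$ of the two regular epimorphisms is again regularly epic: if $q$ is the coequalizer of a pair $\pi_1,\pi_2$ (for instance, its kernel pair), then $q\sqcup q$ is the coequalizer of $\pi_1\sqcup\pi_1,\pi_2\sqcup\pi_2$, as one checks directly by feeding the injections of the various coproducts into the universal properties of the summand coequalizers. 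Applying Proposition \ref{PROP_Gme_RegEpis} once more, this time in the reverse direction, yields that $q\sqcup q$ is a quotient map, as claimed.

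Since the only step that is not a bare citation is the preservation of regular epimorphisms under coproducts, this is the sole place where any argument is needed, and it is precisely the general categorical fact announced immediately before the statement; I would either cite it or record the one-line coequalizer verification above. As a fully self-contained alternative that avoids this appeal, I would check the defining property of a quotient map for $q\sqcup q$ by hand. Because coproducts in $\Gmes$ glue the summand trees only at their common root, every non-root moment, every run of $T\sqcup T$, and (by the conditions $\Delta(R_{2i},R_{2i+1})\ge N\ge 1$ and $\overline{q}(R_{2i+1})=\overline{q}(R_{2i+2})$) every witnessing chain $\seq{R_0,\dotsc,R_n}$ lies entirely within a single summand. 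Hence for $t,s\in (T\sqcup T)(N)$ with $N\ge 1$ the quotient condition reduces to that of $q$ inside the relevant summand when $t,s$ share a summand, while moments in distinct summands have images in distinct summands of $Q\sqcup Q$ and cannot be linked by such a chain; the case $N=0$ is trivial, as the only moment of length $0$ is the root. Either route completes the proof, the first being the shorter and the one matching the remark preceding the corollary.
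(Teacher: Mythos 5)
Your primary argument is exactly the paper's proof: the paper derives the corollary from Lemma \ref{LEMMA_q_regular} and Proposition \ref{PROP_Gme_RegEpis} via the general fact, stated immediately before the corollary, that coproducts of regular epimorphisms are regular epimorphisms. Your one-line coequalizer verification of that fact, and the optional hands-on check that witnessing chains stay within a single summand, are both correct supplements but add nothing the paper's route needs.
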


For ease of notation, let us write $T\sqcup T = T^0\sqcup T^1$ and $Q\sqcup Q = Q^0\sqcup Q^1$ with $T^k = T$ and $Q^k = Q$ ($k\in \{0,1\}$), and for an element $t\in T^0\sqcup T^1$ we use the notation $t_n^k$ with $n<\omega$ and $k\in \{0,1\}$ to indicate its location within the coproduct: $t_n^k\in T_n\subseteq T^k$.

Now, for the game tree 
\[
C = \set{\seq{*:i<n}^\smallfrown \seq{x:j<k}: n,k<\omega, \, x\in \{0,1\}}
\]
we define $c\colon Q^0\sqcup Q^1 \to C$ by assigning to $t^k=\seq{\ast^k:i<n}^\smallfrown \seq{0:j<m}\in Q^k$ the moment
\[
c(t^k) = \seq{\ast:i<n}^\smallfrown \seq{k:j<m}.
\]
Clearly, $c$ is a chronological surjection. Furthermore,
for distinct $t,s\in  Q^0\sqcup Q^1$ one has $c(t)=c(s)$ if, and only if, $t=\seq{\ast^{k}:i<n}\in Q^k$ and $s=\seq{\ast^{1-k}:i<n}\in Q^{1-k}$ for some $n<\omega$, in which case
\[
R_0 = \seq{\ast^k:i<\omega} \text{ and }
R_1 = \seq{\ast^{1-k}:i<\omega}
\]
are runs in $Q^0\sqcup Q^1$ such that $R_0\restrict n = t$, $R_1\restrict n = s$ and $\overline{c}(R_0) = \overline{c}(R_1) = \seq{\ast: i<\omega}$. So, $c$ is a quotient map.

However:
\begin{lemma}\label{LEMMA_e_not_RegEpic}
	Whereas the morphisms $q\sqcup q$ and $c$ are quotient maps, the composite morphism $e = c\circ (q\sqcup q)\colon T^0\sqcup T^1\to C$ in $\Gmes$ is not.
\end{lemma}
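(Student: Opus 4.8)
The plan is to exhibit two moments $t,s$ at a common level $N$ with $e(t)=e(s)$ for which the zigzag characterization of quotient maps from Proposition \ref{PROP_Gme_RegEpis} fails. Since $e=c\circ(q\sqcup q)$ is a composite of surjections it is itself surjective, so producing such a pair suffices to show that $e$ is not a quotient map — equivalently, not regularly epic — while it is trivially strongly epic. The guiding observation is that \emph{both} relations generating the zigzag condition, namely ``having equal truncations at level $N$'' and ``having equal $\overline{e}$-images'', keep their two runs inside a single coproduct summand $T^k$, whereas the identification $e(t)=e(s)$ that I will exploit straddles the two summands $T^0$ and $T^1$.

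First I would record the action of $\overline{e}$ on runs. Every branch of $T^0\sqcup T^1$ lies in a single summand $T_n\subseteq T^k$ and is the all-$0$ or the all-$1$ continuation beyond the stem $\seq{*_n:i<n}$; unwinding the definitions of $q$ and $c$ gives
\[
\overline{e}\!\left(\seq{*_n:i<n}^\smallfrown\seq{0:j<\omega}\right)=\seq{*:i<n}^\smallfrown\seq{k:j<\omega},\qquad
\overline{e}\!\left(\seq{*_n:i<n}^\smallfrown\seq{1:j<\omega}\right)=\seq{*:i<n+1}^\smallfrown\seq{k:j<\omega}.
\]
Two features are crucial. Each such image has a \emph{finite} star-prefix followed by a constant tail equal to the copy index $k$, so no branch of $T^0\sqcup T^1$ is sent to the all-star branch $\seq{*:i<\omega}$ of $C$; consequently $\overline{e}(R)=\overline{e}(R')$ forces $R$ and $R'$ to share the same copy index, i.e.\ to lie in the same summand $T^k$.

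Next I would establish copy-invariance of the zigzag. Fix $N=1$. If $\Delta(R,R')\ge 1$ then $R(0)=R'(0)$, and since each length-one moment of $T^0\sqcup T^1$ belongs to exactly one summand, $R$ and $R'$ lie in the same summand. Combined with the previous paragraph, every edge of a putative zigzag — whether of the truncation type $\Delta(R_{2i},R_{2i+1})\ge N$ or of the type $\overline{e}(R_{2i+1})=\overline{e}(R_{2i+2})$ — keeps both endpoints in the same $T^k$. Hence any chain $\seq{R_0,\dots,R_\ell}$ meeting the conditions of Proposition \ref{PROP_Gme_RegEpis} has all of its runs in a single copy.

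Finally, take $t=\seq{*_1}$ in the summand $T_1\subseteq T^0$ and $s=\seq{*_1}$ in the summand $T_1\subseteq T^1$, the length-one stem nodes in the two copies. These are distinct moments of $T^0\sqcup T^1$ with $|t|=|s|=1$ and $e(t)=e(s)=\seq{*}$. Any branch $R_0$ with $R_0\restrict 1=t$ passes through $t$ and so lies in the copy-$0$ summand $T_1$, while any branch $R_\ell$ with $R_\ell\restrict 1=s$ lies in the copy-$1$ summand $T_1$; by copy-invariance no zigzag can join them. Thus the equivalence demanded by the quotient-map definition fails at $t,s$, and $e$ is not a quotient map. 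The main bookkeeping obstacle is the step verifying that $\overline{e}$ genuinely retains the copy index — that is, that no run collapses onto the all-star branch of $C$ — since this is what prevents an $\overline{e}$-edge from secretly crossing between the summands; the remaining assertions are a direct reading of the definitions of $q$, $c$, and $\Delta$.
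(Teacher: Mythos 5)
Your proof is correct and takes essentially the same route as the paper's: both arguments use the witness pair $\seq{\ast_1^0}$, $\seq{\ast_1^1}$ at level $N=1$, together with the two observations that runs with equal $\overline{e}$-images must lie in the same coproduct summand (their images have eventually constant tail equal to the copy index, so none hits the all-star branch) and that runs in distinct summands have $\Delta = 0 < 1$, whence no zigzag can cross between the copies. Your explicit computation of $\overline{e}$ on branches and the ``copy-invariance'' framing merely spell out in more detail what the paper states tersely.
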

\begin{proof}
	We first note that any $R,R'\in \Run(T^0\sqcup T^1)$ with $\overline{e}(R) = \overline{e}(R')$ must  lie in $ \Run(T^k)$, for the same $k\in\{0,1\}$. Indeed, if we had $R\in \Run(T^0)$ and $R'\in \Run(T^1)$, then $\overline{e}(R)(n)=0$ and $\overline{e}(R')(n)=1$ for sufficiently large $n<\omega$.
	
	Now, the singleton sequences $t^0_1 = \seq{\ast_1^0}\in T_1\subset T^0$ and $t^1_1 = \seq{\ast_1^1}\in T_1\subset T^1$ satisfy $e(t^0_1)=e(t^1_1) = \seq{\ast}$. However, since  (in the notation of Lemma \ref{PROP_ChronDeltaChar}) any  $R\in T^0$ and $R'\in T^1$ satisfy
	\[
	\Delta(R,R')= 0 < 1 = |t^0_1| = |t^1_1|,
	\]
	it follows that there can be no sequence $\seq{R_0, \dotsc, R_n}$ of runs in $\Run(T^0\sqcup T^1)$ attesting the quotient map property of $e$ at $t^0_1, t^1_1$.
\end{proof}

\begin{thm}\label{THEOREM_non-regular}
	In both categories, $\Gmes$ and $\Games_{\A}$, the composite of regular epimorphisms may fail to be a regular epimorphism. As a consequence, these categories contain strong epimorphisms that fail to be regular, and their morphisms generally lack $(\{\text{regular epis}\},\{\text{monos}\})$-factorizations. 
\end{thm}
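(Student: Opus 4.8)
The plan is to harvest the explicit counterexample assembled in Lemmas \ref{LEMMA_q_regular}--\ref{LEMMA_e_not_RegEpic} and upgrade it to all three assertions. First, in $\Gmes$: by Proposition \ref{PROP_Gme_RegEpis} the quotient maps are precisely the regular epimorphisms, so Corollary \ref{PROP_Gme_QotientCoprods} together with the computation preceding Lemma \ref{LEMMA_e_not_RegEpic} exhibits $q\sqcup q$ and $c$ as regular epimorphisms, whereas Lemma \ref{LEMMA_e_not_RegEpic} shows that their composite $e=c\circ(q\sqcup q)$ fails to be a quotient map, hence fails to be regularly epic. This already establishes the first assertion for $\Gmes$.

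Next I would invoke two formal facts. Every regular epimorphism is a strong epimorphism, since a coequalizer is orthogonal to every monomorphism (if $q$ coequalizes $a,b$ and $m$ is monic with $v\circ q=m\circ u$, then $u$ coequalizes $a,b$ and so factors uniquely through $q$, yielding the diagonal). Moreover, strong epimorphisms are closed under composition because, by Proposition \ref{PROP_GmesRegMonFact}, they constitute the left class $\mathcal E'$ of the orthogonal factorization system $(\mathcal E',\mathcal M')$ on $\Gmes$. Consequently $e$, being the composite of the two regular, hence strong, epimorphisms $q\sqcup q$ and $c$, is itself strong, yet not regular, which proves the second assertion for $\Gmes$.

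To transfer this to $\Games_{\A}$ I would lift along the topological forgetful functor $U\colon\Games_{\A}\to\Gmes$ via Remark \ref{REM_topfun_RegEpi_StrEpi}. Equipping the trees with $U$-final payoff sets---taking $A\subseteq\Run(T^0\sqcup T^1)$ arbitrary (say $A=\emptyset$), $B=\overline{q\sqcup q}[A]$ and $D=\overline{c}[B]$---renders $q\sqcup q$ and $c$ both $U$-final, and since their $U$-images are regularly epic in $\Gmes$, Remark \ref{REM_topfun_RegEpi_StrEpi} makes them regularly epic in $\Games_{\A}$. For the composite $e\colon(T^0\sqcup T^1,A)\to(C,D)$ one has $\overline{e}[A]=\overline{c}[\overline{q\sqcup q}[A]]=D$, so $e$ is a legitimate $\A$-morphism; but $Ue$ is non-regular in $\Gmes$, so by the necessity direction of Remark \ref{REM_topfun_RegEpi_StrEpi} the morphism $e$ is not regularly epic in $\Games_{\A}$, while it remains strong as a composite of strong epimorphisms in the system $((\mathcal E')^*,\mathcal M')$ of Corollary \ref{COR_ortfactsyst}.

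Finally, the failure of $(\{\text{regular epis}\},\{\text{monos}\})$-factorizations follows in both categories by one orthogonality argument: were $e=m\circ r$ with $r$ regularly epic and $m$ monic, then the commutative square $m\circ r=\id\circ e$, having the strong epimorphism $e$ on the left and the monomorphism $m$ on the right, would admit a diagonal $d$ with $m\circ d=\id$; being simultaneously monic and a split epimorphism, $m$ would be an isomorphism, forcing $e$ to be regularly epic---contradicting the above. I expect the only genuinely substantive step to be the verification, already carried out in Lemma \ref{LEMMA_e_not_RegEpic}, that $e$ violates the quotient-map criterion at the two length-$1$ moments; the sole remaining care point in the synthesis is ensuring, through the finality clause of Remark \ref{REM_topfun_RegEpi_StrEpi}, that regularity genuinely lifts to $\Games_{\A}$ along $U$, with everything else reducing to formal category theory.
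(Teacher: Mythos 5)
Your proposal is correct, and it rests on the same counterexample as the paper (Lemmas \ref{LEMMA_q_regular}--\ref{LEMMA_e_not_RegEpic} plus Proposition \ref{PROP_Gme_RegEpis}), but you organize the formal categorical part differently. The paper first proves, once and for all, the ``folklore'' equivalence that in any category with kernel pairs and their coequalizers the following coincide: (i) strong epis are regular, (ii) regular epis compose, (iii) $(\{\text{regular epis}\},\{\text{monos}\})$ is an orthogonal factorization system; it then reads off all three assertions from the single failure of (ii) witnessed by $e=c\circ(q\sqcup q)$. You instead bypass the equivalence and argue each consequence directly: regular $\Rightarrow$ strong via the standard diagonal argument, closure of strong epis under composition via the system $(\mathcal E',\mathcal M')$ of Proposition \ref{PROP_GmesRegMonFact}, and the non-existence of $(\{\text{regular epis}\},\{\text{monos}\})$-factorizations by the orthogonality argument that a factorization $e=m\circ r$ with $e$ strong and $m$ monic forces $m$ to split, hence be iso, hence $e$ regular --- which is exactly the substance of the paper's (iii)$\Rightarrow$(i) step, instantiated at $e$. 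The trade-off: the paper's route yields a reusable general lemma, while yours is shorter and self-contained at the cost of repeating orthogonality reasoning. Your transfer to $\Games_{\A}$ is in fact \emph{more} explicit than the paper's one-line appeal to Remark \ref{REM_topfun_RegEpi_StrEpi}: you correctly note that with empty payoff sets $A=B=D=\emptyset$ the factors $q\sqcup q$ and $c$ carry $U$-final structures and so lift to regular epimorphisms in $\Games_{\A}$, that $\overline{e}[A]=D$ keeps $e$ in $(\mathcal E')^*$ (hence strong, by Corollary \ref{COR_ortfactsyst}), and that the necessity direction of the Remark blocks $e$ from being regularly epic upstairs; this fills in precisely the details the paper leaves to the reader.
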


\begin{proof}
	A ``folklore'' fact of category theory says that, in any category with kernel pairs and their coequalizers, the following properties are equivalent:
	\begin{itemize}
		\item[(i)] Strong epimorphisms are regular.
		\item[(ii)] The class of regular epimorphisms is closed under composition.
		\item[(iii)] $(\{\text{regular epis}\},\{\text{monos}\})$ is an (orthogonal) factorization system.
	\end{itemize}
	Since a direct proof of this fact may not be found easily, let us sketch one here. Indeed, regular epimorphisms are always strong, and the class of strong  epimorphisms is always closed under composition. This shows (i)$\Rightarrow$(ii). For (ii)$\Rightarrow$(iii), given a morphism $f$, one factors $f=m\cdot e$ with $e$ the coequalizer of the kernel pair of $f$ and then factors the unique morphism $m=n\cdot d$ in the same manner. Then, since by hypothesis (ii) $d\cdot e$ is a regular epimorphism, its kernel pair factors through the kernel pair of $f$. This makes the coequalizer $d\cdot e$ factor through $e$, so that $d$ must be an isomorphism. Hence, $d$ has a trivial kernel pair, which makes $m$ a monomorphism. Finally, assuming (iii), one sees that even all extremal epimorphisms ({\em i.e.}, those, which admit only isomorphisms as a second monic factor) are regular. Since strong epimorphisms are trivially extremal, one obtains (i).
	
	Equipped with this general fact, since Proposition \ref{PROP_Gme_RegEpis} and Lemma \ref{LEMMA_e_not_RegEpic} tell us that condition (i) fails in $\Gmes$, we obtain all assertions of the Theorem for that category. Providing the principal witness
	$e = c\circ (q\sqcup q)\colon T^0\sqcup T^1\to C$ with empty payoff sets, we conclude (see Remark \ref{REM_topfun_RegEpi_StrEpi}) that $e$ maintains this role also in $\Games_{\A}$.
\end{proof}

\subsection{Coregularity, non-regularity, and descent} We will show that the categories $\Gmes$ and $\Games_{A}$ are coregular ({\em i.e.}, their duals are regular), so that, in addition to the existence of  the (epi, regular mono)-factorizations, in these categories not only the class of epimorphisms, but also the class of regular monomorphisms is stable under pushout. By contrast, these categories fail badly to be regular: not only are they missing the needed factorizations (by Theorem \ref{THEOREM_non-regular}) but, as we will show, their classes of regular epimorphisms also fail to be stable under pullback.

\begin{thm}
	$\Gmes$ and $\Games_{A}$ are coregular categories.
\end{thm}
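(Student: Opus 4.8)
The plan is to use the standard fact that coregularity of a category $\mathcal C$ is exactly regularity of $\mathcal C^{\op}$, which unwinds to three requirements on $\mathcal C$: it is finitely cocomplete, every cokernel pair admits an equalizer, and the class of regular monomorphisms is stable under pushout. For both $\Gmes$ and $\Games_\A$ the first two requirements are already secured: both are complete and cocomplete (each being, up to equivalence, a coreflective subcategory of the topos $\SetNop$, respectively of a topological category over it; cf.\ Theorem~\ref{THM_GmeTree} and the (co)completeness recorded above), so cokernel pairs and their equalizers exist; moreover Corollary~\ref{COR_ortfactsyst} already furnishes the $(\text{epi},\text{regular mono})$-factorizations that the dual of Barr's definition calls for. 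Hence everything reduces to one assertion — that \emph{regular monomorphisms are stable under pushout} — which I would first prove in $\Gmes$ and then lift to $\Games_\A$ along the topological forgetful functor.

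For $\Gmes$, I would recall from Proposition~\ref{PROP_Gmes_RegMon} that the regular monomorphisms are precisely the injective chronological maps, which under the equivalences of Proposition~\ref{THM_Tree_SetNop} and Theorem~\ref{THM_GmeTree} correspond exactly to the levelwise-injective morphisms, i.e.\ the monomorphisms of $\SetNop$ between epi-systems. Since $\Gmes\simeq\mathsf{Epi}(\Sets)^{\omega^{\op}}$ is \emph{coreflective} in $\SetNop$, pushouts in $\Gmes$ are computed as in the ambient topos, hence pointwise in $\Sets$. In $\Sets$ the pushout of an injection $m\colon A\to B$ along any map $g\colon A\to C$ produces a monomorphism on the opposite side of the square, and the resulting square is moreover a pullback; both facts hold pointwise, hence in $\SetNop$ (equivalently, $\SetNop$ is a topos, so the pushout of a monomorphism is a van Kampen square, in particular a pullback). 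It follows that the pushout of an injective chronological map is again injective chronological, which settles pushout-stability in $\Gmes$.

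To transfer this to $\Games_\A$, I would use that the forgetful functor $U\colon\Games_\A=\mathsf{Sub}_{\Run}(\Gmes)\to\Gmes$ is topological (Proposition~\ref{PROP_TopFunctor}), so colimits in $\Games_\A$ are the $U$-final lifts of those in $\Gmes$, while by Corollary~\ref{COR_ortfactsyst} together with Remark~\ref{REM_topfun_RegEpi_StrEpi} the regular monomorphisms of $\Games_\A$ are exactly the $U$-initial morphisms whose underlying map is a regular monomorphism of $\Gmes$. Consider the pushout of a regular mono $m\colon G_0\to G_1$ along $g\colon G_0\to G_2$, say $G_1\xrightarrow{j_1}P\xleftarrow{j_2}G_2$ with $P=(T_P,B_P)$, where $T_P$ is the pushout of the underlying trees and $B_P=\overline{j_1}[B_1]\cup\overline{j_2}[B_2]$ is the $U$-final payoff set (Section~\ref{SUBSEC_Coeq_Pushouts}). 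By the $\Gmes$-case the underlying map $Uj_2$ is injective chronological (so $\overline{j_2}$ is injective), and it remains only to verify that $j_2$ is $U$-initial, i.e.\ $B_2=\overline{j_2}^{-1}[B_P]$. Since $\overline{j_2}^{-1}[\overline{j_2}[B_2]]=B_2$, the entire point is the inclusion $\overline{j_2}^{-1}[\overline{j_1}[B_1]]\subseteq B_2$.

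This last inclusion is the crux — and the main obstacle — and it is exactly where the pullback property proved in the $\Gmes$-case is decisive. The pushout square of trees, being the pushout of a monomorphism in the topos $\SetNop$, is simultaneously a pullback there; applying the functor $\Run$, which corresponds to the limit functor $\mathrm{Lim}$ and is a right adjoint (Corollary~\ref{COR_Branch}, and $\Delta\dashv\mathrm{Lim}$ after Corollary~\ref{COR_Tree_SetNop}), hence preserves pullbacks, turns it into a pullback of run-sets. Thus, given $R\in\Run(T_2)$ and $R'\in B_1$ with $\overline{j_2}(R)=\overline{j_1}(R')$, there is a unique $R_0\in\Run(T_0)$ with $\overline{m}(R_0)=R'$ and $\overline{g}(R_0)=R$. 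Because $m$ is $U$-initial we have $B_0=\overline{m}^{-1}[B_1]$, so $R_0\in B_0$; and because $g$ is an $\A$-morphism, $\overline{g}[B_0]\subseteq B_2$, whence $R=\overline{g}(R_0)\in B_2$. This yields the required inclusion, shows $j_2$ is a game embedding, and so completes the proof that regular monos are pushout-stable in $\Games_\A$. Assembling the three dual conditions then gives coregularity of both $\Gmes$ and $\Games_\A$.
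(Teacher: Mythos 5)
Your proposal is correct, but it takes a genuinely different route from the paper's. Both arguments make the same opening reduction---cocompleteness together with the $(\text{epi},\text{regular mono})$-factorizations of Corollary \ref{COR_ortfactsyst} leave only pushout-stability of regular monomorphisms (game embeddings) to verify---but from there the paper works entirely by hand, and in the opposite order: it proves the statement for $\Games_\A$ directly and obtains $\Gmes$ by forgetting payoffs, whereas you prove it for $\Gmes$ structurally and then lift along the topological functor. Concretely, the paper builds the pushout as a quotient of $T_1\sqcup T_2$ by the explicit equivalence relation $\sim$ (whose classes are tame precisely because $m$ is injective), reads off injectivity of the pushout injection opposite $m$, and establishes the $\B$-morphism condition by extracting, from the level-wise $\sim$-equivalences $R\restrict n\sim S\restrict n$, a coherent sequence $t_n\in T$ assembling into a run $S'$ with $\overline{m}(S')=S$ and $\overline{f}(S')=R$; that bare-hands construction of $S'$ is exactly a direct verification of the fact you invoke abstractly, namely that the run functor sends the pushout square to a pullback of sets. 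You obtain this from standard machinery: pushouts along monos in the topos $\SetNop$ are pullbacks and preserve monos (pointwise over $\Sets$), $\Gmes\simeq\mathsf{Epi}(\Sets)^{\omega^{\op}}$ is coreflective and hence closed under the ambient colimits, and $\Run\cong\mathrm{Lim}$ is a right adjoint, so preserves the pullback; the final-lift description of colimits and the characterization of regular monos as initial morphisms over regular monos (Remark \ref{REM_topfun_RegEpi_StrEpi}) then transport everything to $\Games_\A$, where your closing computation $\overline{j_2}^{-1}[\overline{j_1}[B_1]]\subseteq B_2$ matches the content of the paper's final step. What your approach buys is conceptual economy and reusability: all combinatorics is delegated to topos facts and the topologicity framework already set up in Sections \ref{SEC_TreeTopos} and \ref{SEC_SubTree}. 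What the paper's approach buys is a self-contained, game-theoretic description of the pushout and of why the payoff condition propagates, with no appeal to adhesivity-style properties of $\SetNop$.
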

\begin{proof}
	We carry out the proof for $\Games_{\A}$. The claim about $\Gmes$ then follows by ignoring the payoff sets from games.
	Since the cocomplete category  $\Games_\A$ has (epi, regular mono)-factorizations, with the regular monomorphism given by game embeddings, it remains to be shown that the pushout of an embedding along any A-morphism is again an embedding.
	
	For games $G=(T,A)$, $G_1 = (T_1,A_1)$ and $G_2 = (T_2,A_2)$, let $G_{1} \stackrel{f}{\leftarrow} G\stackrel{m}{\hookrightarrow} G_2$ be morphisms,  with $m$ a game embedding. As explained at the end of Subsection \ref{SUBSEC_Coeq_Pushouts}, the pushout of such diagram can be expressed as $G_1 \stackrel{q\circ i_1}{\longrightarrow}G_1{\sqcup}_{G} G_2\stackrel{q\circ i_2}{\longleftarrow}G_2$, where $G_1\sqcup G_2\stackrel{q}{\to}G_1{\sqcup}_{G} G_2$ is the coequalizer of $\begin{tikzcd}[column sep=normal,row sep=huge]
		G \arrow[r,shift left=.75ex,"i_1\circ m"] \arrow[r,shift right=.75ex,"i_2\circ f"{below}]  &
		G_1\sqcup G_2,
	\end{tikzcd}$
	with $G_1\stackrel{i_1}{\to}G_1\sqcup G_2$ and $G_2\stackrel{i_2}{\to}G_1\sqcup G_2$ the coproduct injections.
	The equivalence relation $\sim$ on $T_1\sqcup T_2$ satisfying the properties that
	\begin{itemize}
		\item if $t\in T_1$, then the $\sim$-equivalence class of $t$ is $[t] = \{t\}\sqcup m[f^{-1}(t)]$,
		\item and otherwise, if $t\in T_2 \setminus m[f^{-1}(T_1)]$, then $[t] = \{t\}$,
	\end{itemize}
	is the minimal equivalence relation on $T_1\sqcup T_2$ such that $f(t)\sim m(t)$ for every $t\in T$. So, as described in Section \ref{SUBSEC_Coeq_Pushouts}, we can take $G_1\sqcup_G G_2= (T_1\sqcup T_2/_\sim, A_1\sqcup A_2/_\sim )$ and $q\colon G_1\sqcup G_2\to G_1\sqcup_G G_2$ to be given by $q(t)=\seq{[t\restrict i]:i<|t|}$.
	
	By the definition of $\sim$, if $t,s\in T_1$, then $q(t)=[t]\neq [s]=q(s)$, so $i_1\circ q$ is injective. Thus, it only remains to be shown that $i_1\circ q$ is an $\B$-morphism.
	Let $R\in \Run(T_1)$ and suppose that $\overline{i_1\circ q}(R)\in A_1\sqcup A_2/_\sim$. Then, by definition of $A_1\sqcup A_2/_\sim$, there is an $S\in \Run(A_1\sqcup A_2)$ such that $\overline{q}(S)=\overline{i_1\circ q}(R_1)$. If $S\in A_1$, then $R=S$ (because $q$ is injective when restricted to $T_1$). On the other hand, note that $R\restrict n\in T_1$ and $S\restrict n\in T_2$ are $\sim$ equivalent for every $n<\omega$. According to the definition of $\sim$, this means that for every $n<\omega$, there is $t_n\in T$ such that $f(t_n)=R\restrict n$ and $m(t_n)=S\restrict n$. Now, because $m$ is injective, each $t_{n+1}$ extends its predecessor $t_n$, so there is an infinite sequence $S'\in \Run(T)$ extending every $t_n$. Since $m$ is an embedding and $\overline{m}(S') = S\in A_2$, we have $S'\in A$. But $\overline{f}(S')=R$, so $R\in A_1$. This completes the proof for $\Games_{A}$.
\end{proof}

In any category (with pullbacks), a pullback-stable regular epimorphism (so that its pullback along any morphism with the same codomain is again a regular epimorphism) is often called a {\em descent morphism}. In pursuit of the characterization of descent morphisms in the categories $\Gmes$ and $\Games_{\A}$, we define:  

\begin{defn}
	A morphism $T\stackrel{q}{\to}Q$ in $\Gmes$ is a {\em  strict quotient map} 
	if $\overline{q}\colon \Run(T)\to \Run(Q)$ is surjective and, for all distinct $R,R'\in \Run(Q)$, there are $S,S'\in \Run(T)$ such that $\overline{q}(S)=R$, $\overline{q}(S') = R'$ and $\Delta(S,S') = \Delta(R,R')$.	
\end{defn}

Just invoking the relevant definitions, one immediately sees that strict quotient maps are in fact quotient maps. However, there are quotient maps which fail to be strict, as we show next.

\begin{ex}\label{EX_RegEpic_not_Porj}
	For  $k\in \{0,1\}$, we let
	\[
	T_k = \set{\seq{\ast_k: i<n}:n<\omega}\cup\set{\seq{\ast_k}^\smallfrown\seq{k:i<n}:n<\omega}
	\]
	and then put $T = T_0\sqcup T_1$. Furthermore, we let
	\[
	C = \set{\seq{\ast: i<n}:n<\omega}\cup\set{\seq{\ast}^\smallfrown\seq{k:i<n}:n<\omega, k\in \{0,1\}}
	\]
	and define $c\colon T\to C$ by
	\[
	c(t) = \begin{cases}
		\seq{\ast:i<n} \text{ if $t = \seq{\ast_k:i<n}$},\\
		\seq{\ast}^\smallfrown\seq{k:i<n} \text{ if $t = \seq{\ast_k}^\smallfrown\seq{k:i<n}$}.
	\end{cases}
	\]
	This is clearly a surjective chronological map. In order to show that $c$ is a quotient map, we consider distinct  elements $t,s\in T$ with $c(t)=c(s)$. Then, for some $n<\omega$, we must have  $t = \seq{\ast_k:i<n}$ and $s = \seq{\ast_{1-k}:i<n}$, and the runs
	\[
	R_0 = \seq{\ast_k:i<\omega} \text{ and }
	R_1 = \seq{\ast_{1-k}:i<\omega}
	\]
	are such that $R_0\restrict n = t$, $R_1\restrict n = s$ and $\overline{c}(R_0)=\seq{\ast:i<\omega}=\overline{c}(R_1)$. This confirms that $c$ is a quotient map.
	
	However, for the runs $R = \seq{\ast}^\smallfrown \seq{0:n<\omega}$  and $R' =  \seq{\ast}^\smallfrown \seq{1:n<\omega}$ in $C$ one has $\overline{c}^{-1}(R) = \{S\}$ and $\overline{c}^{-1}(R') = \{S'\}$, where 
	\[
	S = \seq{\ast_0}^\smallfrown \seq{0:n<\omega} \text{ and }
	S' = \seq{\ast_1}^\smallfrown \seq{1:n<\omega}.
	\]
	Since $\Delta(S,S') = 0 < 1 = \Delta(R,R')$, we conclude that $c$ is not a strict quotient map..
\end{ex}

\begin{thm}\label{THM_Proj_PullbackStable}
	The strict quotient maps are precisely the descent morphisms of the category $\Gmes$. 
\end{thm}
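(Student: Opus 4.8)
The plan is to exploit two facts already in place: strict quotient maps are quotient maps, and by Proposition~\ref{PROP_Gme_RegEpis} quotient maps are precisely the regular epimorphisms of $\Gmes$. Since a descent morphism is by definition a regular epimorphism all of whose pullbacks are again regular epic, the whole statement reduces to understanding how strictness behaves under pullback, and I would prove the two inclusions separately.

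For ``strict quotient $\Rightarrow$ descent'' I would establish the stability lemma that \emph{the pullback of a strict quotient map is again a strict quotient map}. Let $q\colon T\to Q$ be strict quotient and $g\colon S\to Q$ arbitrary. Recall from Section~\ref{SUBSECeqpb} that the pullback $P=T\times_Q S$ satisfies $\Run(P)=\set{(\sigma,W):\overline q(\sigma)=\overline g(W)}$ with $\overline{p_2}(\sigma,W)=W$. Surjectivity of $\overline{p_2}$ is immediate from that of $\overline q$ (lift $\overline g(W)$ and pair it with $W$). For the $\Delta$-condition, take distinct $W,W'\in\Run(S)$ and put $N=\Delta(W,W')$. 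If $\overline g(W)=\overline g(W')$, a single common lift $\sigma$ gives $P$-runs $(\sigma,W),(\sigma,W')$ with $\Delta=\min(\infty,N)=N$; otherwise, writing $N'=\Delta(\overline g(W),\overline g(W'))\ge N$, strictness of $q$ yields lifts $\sigma,\sigma'$ with $\Delta(\sigma,\sigma')=N'$, and then $(\sigma,W),(\sigma',W')$ have $\Delta=\min(N',N)=N$. Hence $p_2$ is a strict quotient map, so a quotient map, so a regular epimorphism; as $g$ was arbitrary, $q$ is a descent morphism.

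For the converse ``descent $\Rightarrow$ strict'' I would start from $q$ being a descent morphism, hence a quotient map, and recover the two clauses of strictness by well-chosen probes. First, if some $R_*\in\Run(Q)$ lay outside the image of $\overline q$, then pulling $q$ back along the inclusion of the single branch $\set{R_*\restrict n:n<\omega}\hookrightarrow Q$ gives a pullback tree with no infinite branch, which prunes to the empty tree; the resulting projection into a non-empty tree is not surjective, hence not epic, contradicting descent. So $\overline q$ is surjective. Next, suppose $q$ failed to be strict at distinct $R,R'$ with $N=\Delta(R,R')$, so that every pair of lifts has $\Delta<N$. I would pull $q$ back along the inclusion $g\colon B\hookrightarrow Q$ of the two-branch tree $B=\set{R\restrict n:n<\omega}\cup\set{R'\restrict n:n<\omega}$. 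Choosing lifts $\sigma$ of $R$ and $\sigma'$ of $R'$, the distinct moments $(\sigma\restrict N,\,R\restrict N)$ and $(\sigma'\restrict N,\,R'\restrict N)$ of $P(N)$ have the same $p_2$-image $R\restrict N=R'\restrict N$. Were $p_2$ a quotient map, a connecting zigzag would exist; but tracking the branch-label ($R$ versus $R'$) along the zigzag, a switch can occur only at a ``$\Delta\ge N$'' step, which forces two lifts of $R$ and $R'$ agreeing to depth $N$, i.e.\ with $\Delta=N$, while the no-switch alternative forces exactly the same at the two endpoints. Either way this contradicts non-strictness, so $p_2$ is not regular epic, contradicting descent.

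The main obstacle is this last argument. The subtlety is that a non-strict quotient map can still be an honest quotient map and may even fail to lift individual branches through individual moments, so non-descent cannot be detected locally; the diagnosis must be routed through a carefully engineered probe. The two-branch tree $B$ is designed precisely so that the rigid alternation of the zigzag---label-preserving ``match'' steps against depth-$N$-preserving ``$\Delta$'' steps---converts any hypothetical connecting zigzag into a pair of aligned lifts realizing $\Delta=N$, which is what strictness forbids. Verifying that this conversion is forced in every case, including the endpoint analysis when no label switch occurs, is where the care will be needed.
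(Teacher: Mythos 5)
Your proof is correct and follows essentially the same route as the paper's: the forward direction via the same $\min$-of-$\Delta$ computation showing strictness is stable under pullback, and the converse by contraposition using a one-branch probe for surjectivity and a two-branch probe diverging at depth $N$ whose pullback projection is shown not to be a quotient map by the same zigzag label-tracking argument (your inclusion probes $B$ and the single branch are isomorphic to the paper's trees $X$ and $\mathbf{1}$ mapped into $Q$). Your endpoint analysis of the zigzag matches the paper's propagation of the constancy of the $\overline{p_2}$-image via the fact that $\Delta\ge N$ forces lifts of the same run.
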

\begin{proof}
	For a strict quotient map $T_1\stackrel{e}{\to} T$ and any chronological map $T_2\stackrel{f}{\to} T$, according to Proposition \ref{PROP_Gme_RegEpis} we should show that that the pullback projection $p_2: P=T_1\times_T T_2\to T_2$ is a (strict) quotient map, in order to conclude that $e$ is a descent morphism. Here we present the pullback $T_1\stackrel{p_1}{\leftarrow}P \stackrel{p_2}{\to} T_2$  of $T_1\stackrel{e}{\to}T\stackrel{f}{\leftarrow}T_2$ as the equalizer $P \stackrel{i}{\to} \Multi_\A(G_1,G_2)$ of the pair $f\circ\pi_1, e\circ \pi_2$, putting $p_1=\pi_1\circ i, p_2=\pi_2\circ i$ with the product projections $\pi_1,\pi_2$.
	
	Now let $R_2\in \Run(T_2)$. Then there is $R_1\in \Run(T_1)$ such that $\overline{e}(R_1)=\overline{f}(R_2)$ (because $\overline{e}$ is surjective), so $(R_1,R_2)\in \Run(P)$ and $\overline{p_2}(R_1,R_2)=R_2$. Hence, $\overline{p_2}$ is surjective. 
	
	Consider distinct $R_2,S_2\in \Run(T_2)$. If $\overline{f}(R_2) = \overline{f}(S_2)=R$, then there is  $R_1\in T_1$ such that $\overline{e}(R_1)=R$ and, thus, $(R_1, R_2), (R_1,S_2)\in \Run(T')$ satisfy $\Delta((R_1,R_2),(R_1,S_2)) = \Delta(R_2,S_2)$. So, suppose that $R = \overline{f}(R_2) \neq \overline{f}(S_2) = S$. Then, because $e$ is a descent morphism, there are $R_1,S_1\in \Run(T_1)$ such that
	\begin{gather*}
		\overline{e}(R_1) = R,\;\;
		\overline{e}(S_1) = S,\\
		\Delta(R_1,S_1) = \Delta(R,S) = \Delta(\overline{f}(R_2), \overline{f}(S_2)) \ge \Delta(R_2,S_2).
	\end{gather*}
	Therefore, for $(R_1, R_2), (S_1,S_2)\in \Run(P)$ one has
	\[
	\Delta(\overline{p_2}(R_1,R_2), \overline{p_2}(S_1,S_2)) = \Delta(R_2,S_2)  = \min\{\Delta(R_1,S_1),\Delta(R_2,S_2)\} = \Delta((R_1,R_2),(S_1,S_2)),
	\]
	and we conclude that $p_2$ is a descent morphism.
	
	Conversely, in order to show that every descent morphism $T\stackrel{q}{\to}Q$ in $\Gmes$ is a  strict quotient map, by Proposition \ref{PROP_Gme_RegEpis} it suffices that, assuming its failure to be a strict quotient map, we present a pullback of $q$ which is not a quotient map. First, if $\overline{q}$ is not surjective, with $R\in \Run(Q)$ not in the image of $\overline{q}$, with the terminal game $\mathbf 1$ (of Example \ref{EX_boring}) one may consider $f\colon \mathbf{1}\to Q$ with $\overline{f}(\seq{\ast:n<\omega}) = R$. Then  $T\stackrel{p_1}{\leftarrow}\emptyset \stackrel{p_2}{\to} \mathbf{1}$ is the pullback of $T\stackrel{q}{\to}Q\stackrel{f}{\leftarrow}\mathbf{1}$ and it is clear that $p_2$ is not even an epimorphism.
	
	If $\overline{q}$ is surjective, its failure to be a strict quotient map lets us fix $R_0,R_1\in \Run(Q)$ such that 
	\begin{equation}\label{EQ_RegEpi_not_Proj}
		N:= \Delta(R_0,R_1) > \Delta(S_0,S_1)
	\end{equation}
	for all $S_0,S_1\in \Run(T)$ with $\overline{q}(S_k)=R_k$. For the game tree 
	\[
	X = \set{\seq{\ast:i<N}^\smallfrown \seq{x:j<n}:n<\omega, x\in \{0,1\}}\cup \set{\seq{\ast:i<m}:m<N}
	\]
	we now define $f\colon X\to Q$ by
	\[
	f(t) = \begin{cases}
		R_0\restrict |t|, \text{ if $|t|>N$ and $t(N)=0$,}\\
		R_1\restrict |t|, \text{ otherwise.}
	\end{cases}
	\]
	In this case, $f$ is chronological and the pullback $T\stackrel{p_1}{\leftarrow}P \stackrel{p_2}{\to} X$  of $T\stackrel{q}{\to}Q\stackrel{f}{\leftarrow}X$ can be expressed as
	\[
	P = \set{S\restrict n: n<\omega, S\in \Run(T) \left( \overline{q}(S) = R_k \text{ for some } k\in\{0,1\}\right)},
	\]
	with $p_1$ the inclusion of $X$ into $T$ and $p_2$ defined by
	\[
	\overline{p_2}(S) = 
	\seq{\ast:i<N}^\smallfrown \seq{k:n<\omega}, \text{ whenever $\overline{q}(S)=R_k$,}
	\]
	
	Indeed, suppose that $T\stackrel{p_1'}{\leftarrow}Z \stackrel{p_2'}{\to} X$ such that $q\circ p_1' = f\circ p_2'$ is given. Note that for all $t\in Z$ and for all $S\in \Run(Z)$ extending $t$, $\overline{q}(\overline{p_1'}(S)) = R_k$ for some $k\in\{0,1\}$. In this case, the mapping $h\colon Z\to P$ such that $h(t) = p_1'(t)$ for all $t\in Z$ is well defined (and it is clear that such $h$ is chronological and the unique mapping such that $p_1' =p_1\circ h$).
	
	Furthermore, if $S\in \Run(Z)$, then
	\[
	\overline{q}(\overline{h}(S)) = \overline{q}(\overline{p_1'}(S)) = \overline{f}(\overline{p_2'}(S)) = R_k.
	\]
	Thus, $\overline{p_2'}(S) = \overline{p_2\circ h}(S) = \seq{\ast:i<N}^\smallfrown \seq{k:n<\omega}$, so $p_2' = p_2\circ h$ and $T\stackrel{p_1}{\leftarrow}P \stackrel{p_2}{\to} X$ is the pullback of $T\stackrel{q}{\to}Q\stackrel{f}{\leftarrow}X$.
	
	We will show that $p_2$ is not a quotient map (which, in view of Proposition \ref{PROP_Gme_RegEpis}, concludes the proof). Let $S,S'\in \Run(X)\subseteq \Run(T)$ be such that $\overline{q}(S) = R$ and $\overline{q}(S')=R'$. Note that $p_2(S\restrict N) = \seq{\ast:i<N} = p_2(S\restrict N)$, and, by \eqref{EQ_RegEpi_not_Proj}, $S\restrict N \neq S'\restrict N$. But, by \eqref{EQ_RegEpi_not_Proj} again, for all $S_0,S_1\in \Run(X)$, $\Delta(S_0,S_1)\ge N$ implies that 
	\[
	\overline{p_2}(S_0) = \overline{q}(S_0) = \overline{q}(S_1) = \overline{p_2}(S_1),
	\]
	Thus a sequence $\seq{S_0, \dotsc, S_k}$ of runs of $X$ such that 
	\[
	S\restrict N = S_0\restrict N, \; \Delta(S_{2i}, S_{2i+1})\ge N \text{  and  } \overline{p_2}(S_{2i+1}) = \overline{P_2}(S_{2i+2})
	\]
	for every $i < k/2$ will inevitably be such that $\overline{p_2}(S_j) = \overline{p_2}(S) = \seq{\ast:i<N}^\smallfrown \seq{0:n<\omega}$ for all $j\le k$. In particular, by \eqref{EQ_RegEpi_not_Proj} one more time, $S_k\restrict N \neq S'\restrict N$, so $p_2$ is not a quotient.
\end{proof}

With $\mathcal S$ denoting the class of strict quotient maps in $\Gmes$, in the notation of Lemma \ref{LEMMA_SUBOrtFactSyst} we now consider the class
\[
\mathcal{S}^* = \set{(T,A)\stackrel{p}{\to}_\A (T',A'): p\in\mathcal S,  A' = \overline{p}[A]}
\]
in $\Games_{\A}$ and prove:

\begin{cor}\label{THM_Game_Proj_PullbackStable}
	The class $\mathcal S^*$ is the class of descent morphisms in the category $\Games_{\A}$. 
\end{cor}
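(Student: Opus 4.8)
The plan is to deduce the Corollary from Theorem \ref{THM_Proj_PullbackStable} by a transfer argument along the topological forgetful functor $U\colon\Games_\A=\mathsf{Sub}_{\mathrm{Run}}(\Gmes)\to\Gmes$, exploiting that $U$ preserves all limits (Proposition \ref{PROP_TopFunctor}) and that, by Remark \ref{REM_topfun_RegEpi_StrEpi}, a morphism $f$ of $\Games_\A$ is a regular epimorphism precisely when $f$ is $U$-final and $Uf$ is a regular epimorphism, i.e.\ a quotient map (Proposition \ref{PROP_Gme_RegEpis}). Recall that $p\in\mathcal S^*$ means exactly that $Up$ is a strict quotient map and $p$ is $U$-final (that is, $A'=\overline p[A]$), while by Theorem \ref{THM_Proj_PullbackStable} a strict quotient map is the same thing as a descent morphism of $\Gmes$. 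Thus the Corollary reduces to the assertion that a morphism of $\Games_\A$ is a descent morphism if, and only if, it is $U$-final and its $U$-image is a descent morphism of $\Gmes$.

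For the inclusion $\mathcal S^*\subseteq\{\text{descent morphisms}\}$, fix $p\colon(T,A)\to(T',A')$ in $\mathcal S^*$ and an arbitrary morphism $g\colon(H,B)\to(T',A')$. Since $U$ preserves pullbacks, the pullback of $p$ along $g$ in $\Games_\A$ has underlying tree the pullback $P$ of $Up$ along $Ug$ in $\Gmes$, equipped with the $U$-initial payoff set $C=\overline{\pi_1}^{-1}[A]\cap\overline{\pi_2}^{-1}[B]$, and its projection $\tilde p_2$ satisfies $U\tilde p_2=\pi_2$. Because $Up$ is a descent morphism of $\Gmes$, the map $\pi_2$ is a quotient map; it then remains to check that $\tilde p_2$ is $U$-final, i.e.\ $B=\overline{\pi_2}[C]$. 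Given $R\in B$, the $\A$-morphism $g$ yields $\overline g(R)\in A'=\overline p[A]$, so $\overline g(R)=\overline p(R_1)$ for some $R_1\in A$; then $(R_1,R)\in\Run(P)$ lies in $C$ and projects to $R$, which gives the required equality. By Remark \ref{REM_topfun_RegEpi_StrEpi}, $\tilde p_2$ is a regular epimorphism, and since $g$ was arbitrary, $p$ is a descent morphism.

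For the reverse inclusion, let $p$ be a descent morphism of $\Games_\A$. Pulling back along the identity shows $p$ is a regular epimorphism, so by Remark \ref{REM_topfun_RegEpi_StrEpi} it is $U$-final (hence $A'=\overline p[A]$) and $Up$ is a quotient map; it remains only to upgrade $Up$ to a strict quotient map. Assume, for contradiction, that it is not one; then, by Theorem \ref{THM_Proj_PullbackStable}, $Up$ is not a descent morphism of $\Gmes$, and the proof of that theorem produces a chronological map $f\colon X\to T'$ whose pullback projection $p_2$ in $\Gmes$ fails to be a quotient map. Lifting $f$ to the $U$-initial $\A$-morphism $f\colon(X,\overline f^{-1}[A'])\to(T',A')$ and forming the pullback of $p$ along it in $\Games_\A$, preservation of pullbacks by $U$ gives a projection $\tilde p_2$ with $U\tilde p_2=p_2$. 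Since $p_2$ is not a regular epimorphism of $\Gmes$, Remark \ref{REM_topfun_RegEpi_StrEpi} forbids $\tilde p_2$ from being a regular epimorphism, contradicting the descent property of $p$. Hence $Up$ is a strict quotient map and $p\in\mathcal S^*$.

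The main obstacle is bookkeeping rather than conceptual: one must verify that pullbacks in $\Games_\A$ are computed with the $U$-initial payoff set and that $U$ genuinely sends them to the corresponding pullbacks in $\Gmes$ (so that the identities $U\tilde p_2=\pi_2$ and $U\tilde p_2=p_2$ hold on the nose), and one must carry out the short run-lifting computation establishing $U$-finality of the projection in the forward direction. Everything else is a direct application of the regular-epimorphism criterion of Remark \ref{REM_topfun_RegEpi_StrEpi} together with Theorem \ref{THM_Proj_PullbackStable}.
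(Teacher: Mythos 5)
Your proof is correct and takes essentially the same route as the paper: the forward inclusion is the paper's argument verbatim in substance (compute the pullback in $\Gmes$, invoke Theorem \ref{THM_Proj_PullbackStable} for the underlying strict quotient, and use the same run-lifting computation $\overline g(R)\in A'=\overline p[A]$ to get $U$-finality of the projection), while your converse merely makes explicit, via the contrapositive and the $U$-initial lift of the witness map from the proof of Theorem \ref{THM_Proj_PullbackStable}, what the paper asserts in one line — that the topological forgetful functor with its adjoints preserves descent morphisms, with $A'=\overline p[A]$ coming from Remark \ref{REM_topfun_RegEpi_StrEpi}. No gaps; the bookkeeping points you flag (pullbacks in $\Games_\A$ carry the $U$-initial payoff and are preserved by $U$) are exactly as set up in Section \ref{SUBSECeqpb}.
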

\begin{proof}
	Let the game morphism $G_1=(T_1,A_1)\stackrel{e}{\to}_\A (T,A)$ be in $\mathcal{S}^*$. For $G_2 = (T_2,A_2)\stackrel{f}{\to} (T,A)$, let $(T_1,A_1)\stackrel{p_1}{\leftarrow}(T',A') \stackrel{p_2}{\to} (T_2,A_2)$  be the pullback of $(T_1, A_1)\stackrel{e}{\to}(T,A)\stackrel{f}{\leftarrow}(T_2,A_2)$ in $\Games_{\A}$. Then  
	$T_1\stackrel{p_1}{\leftarrow}T' \stackrel{p_2}{\to} T_2$ is the pullback of $T_1\stackrel{e}{\to}T\stackrel{f}{\leftarrow}T_2$ in $\Gmes$, and it follows from Theorem \ref{THM_Proj_PullbackStable} that with $e$ also $p_2$ is a strict quotient map. In fact,  one has $p_2\in \mathcal{S}^*$ since, given $R\in A_2$ one has $S\in A_1$ with $\overline{e}(S) = \overline{f}(R)\in A$, so that  $(S,R)\in A'$ satisfies $\overline{p_2}(S,R) = R\in\overline{p_2}[A']$.
	
	Similarlyly, considering conversely a descent  morphism $(T,A)\stackrel{p}{\to}_\A(T',A')$ in $\Games_\A$, since the forgetful topological functor $\Games_{\A}\to \Gmes$ with its right and left adjoints must preserve not only regular epimorphisms and pullbacks, but also descent morphisms, $T\stackrel{p}{\to}T'$ is a strict quotient map by Theorem \ref{THM_Proj_PullbackStable}	satisfying $A' = \overline{p}[A]$ (by Remark \ref{REM_topfun_RegEpi_StrEpi}). This confirms $p\in\mathcal S^*$.	
\end{proof}

\begin{cor}\label{COR_NotRegular}
	In the categories $\Gmes$ and $\Games_{A}$, regular epimorphisms may fail to be stable under pullback.
\end{cor}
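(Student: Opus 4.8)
The plan is to exploit the characterizations of regular epimorphisms and of descent morphisms obtained immediately above, together with the concrete witness of Example \ref{EX_RegEpic_not_Porj}. Recall that a descent morphism is, by definition, a regular epimorphism all of whose pullbacks are again regular epimorphisms. Hence, to prove the Corollary it suffices to exhibit, in each of the two categories, a regular epimorphism that is \emph{not} a descent morphism: any such morphism must, by the very meaning of the term, admit a pullback along some morphism which fails to be a regular epimorphism.

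For $\Gmes$ this is immediate from the results just established. By Proposition \ref{PROP_Gme_RegEpis} the regular epimorphisms are precisely the quotient maps, while by Theorem \ref{THM_Proj_PullbackStable} the descent morphisms are precisely the strict quotient maps. The map $T \stackrel{c}{\to} C$ of Example \ref{EX_RegEpic_not_Porj} is a quotient map that is not a strict quotient map; it is therefore a regular epimorphism that is not a descent morphism, and the Corollary follows for $\Gmes$.

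For $\Games_{\A}$ I would lift the same witness by equipping $T$ and $C$ with the empty payoff set, so that $c$ is viewed as an $\A$-morphism $(T,\emptyset)\to(C,\emptyset)$. Since the empty payoff set is the $U$-final structure along $c$ and the underlying chronological map is regularly epic in $\Gmes$, Remark \ref{REM_topfun_RegEpi_StrEpi} shows that $c$ is a regular epimorphism in $\Games_{\A}$. On the other hand, Corollary \ref{THM_Game_Proj_PullbackStable} identifies the descent morphisms of $\Games_{\A}$ with the class $\mathcal S^*$, whose members have an underlying strict quotient map; as $c$ fails to be a strict quotient map, it is not a descent morphism of $\Games_{\A}$ either. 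Hence $c$ is again a regular epimorphism that is not pullback-stable.

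I do not expect any genuine obstacle here, since all the substantive work has already been carried out in Example \ref{EX_RegEpic_not_Porj}, Theorem \ref{THM_Proj_PullbackStable}, and Corollary \ref{THM_Game_Proj_PullbackStable}. The only point demanding a moment's care is the passage to $\Games_{\A}$, where one must confirm that decorating the counterexample with empty payoff sets keeps it a regular epimorphism (via the $U$-finality criterion of Remark \ref{REM_topfun_RegEpi_StrEpi}) while the identification of descent morphisms with $\mathcal S^*$ still detects its failure to descend.
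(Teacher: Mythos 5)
Your proposal is correct and takes essentially the same route as the paper's proof: the same witness $c$ from Example \ref{EX_RegEpic_not_Porj}, combined with Proposition \ref{PROP_Gme_RegEpis} and Theorem \ref{THM_Proj_PullbackStable} for $\Gmes$, and the same passage to $\Games_{\A}$ by equipping the counterexample with empty payoff sets. Your additional justifications via Remark \ref{REM_topfun_RegEpi_StrEpi} and Corollary \ref{THM_Game_Proj_PullbackStable} merely make explicit what the paper leaves implicit.
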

\begin{proof}
	Example \ref{EX_RegEpic_not_Porj} combined with Theorem \ref{THM_Proj_PullbackStable} settles the claim in $\Gmes$; the claim for $\Games_{\A}$ follows by providing the counterexample constructed in $\Gmes$ with empty pay-off sets. 
\end{proof}

\subsection{Exponentiation}
Since by Theorem \ref{THM_GmeUMet} the category  $\Gmes$ is equivalent to
the subcategory $\mathbf{SeqSpace}$ of $\CUMet$, in this section we will use the cartesian closedness of the latter category to immediately conclude the same property for the former and then extend it to the category $\Games_{\A}$, giving a full interpretation of how to play, and win, ``exponential games'', {\em i.e.,} internal hom objects in $\Games_{\A}$. 

First, describing the internal hom  for objects $X$ and $Y$ in $\mathbf{SeqSpa}$, we equip the set
\[ 
[X,Y] = \set{f\in Y^X: f \text{ is $1$-Lipschitz}}
\]
with the sup-metric of the product $Y^X$ and consider the evaluation map 
\begin{center}
	\begin{tikzcd}[row sep = 0em]
		[X,Y]\times X \arrow[r, "\mathrm{ev}"] &	Y \\
		(f,x) \arrow[r, mapsto] &   f(x).
	\end{tikzcd}
\end{center}
Clearly, the space $[X,Y]$  is ultrametric, with its distance function (like that of $Y$) ranging in the set $\{0\}\cup\set{\frac{1}{n+1}:n<\omega}$ and, as a closed subspace of the space $Y^X$ (which is complete, because completeness of $Y$ alone already implies that every Cauchy sequence of mappings from $X$ to $Y$ is pointwise convergent, and pointwise convergence, together with the Cauchy property, implies uniform convergence), it is also complete. Furthermore, one easily checks that the evaluation map presented above is $1$-Lipschitz and satisfies the required universal property: every 1-Lipschitz map $h: Z\times X\to Y$ (with $Z$ in $\mathbf{SeqSpa}$) factors through $\mathrm{ev}$ as $h=\mathrm{ev}\circ (g\times\mathrm{id}_X)$, for a unique 1-Lipschitz map $g:Z\to [X,Y]$. Therefore:

\begin{prop}
	The category $\Gmes$ is cartesian closed.
\end{prop}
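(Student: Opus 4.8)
The plan is to deduce cartesian closedness of $\Gmes$ from that of $\mathbf{SeqSpa}$ by transporting the structure along the equivalence of Theorem \ref{THM_GmeUMet}; essentially no new work is required beyond invoking the invariance of this property under equivalence. First I would record what the preceding paragraphs already establish: for objects $X,Y\in\mathbf{SeqSpa}$, the internal hom $[X,Y]$ of $1$-Lipschitz maps with the sup-metric is again an object of $\mathbf{SeqSpa}$ (it is complete, ultrametric, with distances in $\{0\}\cup\set{\frac{1}{n+1}:n<\omega}$), and the evaluation map $\mathrm{ev}\colon[X,Y]\times X\to Y$ satisfies the universal property of the exponential. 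Hence $\mathbf{SeqSpa}$ has finite products and each functor $-\times X$ admits a right adjoint, i.e.\ it is cartesian closed.

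Next I would invoke the purely formal fact that cartesian closedness is invariant under equivalence of categories: it is the conjunction of having finite products and, for each object $X$, having a right adjoint to $-\times X$. Any equivalence preserves and reflects limits (so finite products transport) and transports adjunctions (so a right adjoint to $-\times FX$ in the target yields one to $-\times X$ in the source). Applying this to the equivalence $\mathrm{Run}\colon\Gmes\xrightarrow{\ \simeq\ }\mathbf{SeqSpa}$ of Theorem \ref{THM_GmeUMet}, with quasi-inverse the restriction of $\mathrm{Ball}$, yields at once that $\Gmes$ is cartesian closed.

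Should an explicit description be desired, I would transport the exponential directly, setting $[T_1,T_2]=\mathrm{Ball}\bigl([\mathrm{Run}(T_1),\mathrm{Run}(T_2)]\bigr)$ for game trees $T_1,T_2$ and conjugating $\mathrm{ev}$ through the unit and counit of the equivalence. The one point needing (routine) checking is that $\mathrm{Run}$ sends the categorical product of $\Gmes$ to the product of $\mathbf{SeqSpa}$ employed above---but this is automatic, being an instance of the preservation of finite products by any equivalence. Thus I expect no genuine obstacle: the substantive construction was that of the internal hom for $\mathbf{SeqSpa}$, already carried out, and the present proposition follows by transport of structure.
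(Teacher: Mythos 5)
Your proposal is correct and takes essentially the same route as the paper: the paper likewise first verifies that $[X,Y]$ (the $1$-Lipschitz maps with the sup-metric) is an object of $\mathbf{SeqSpa}$ whose evaluation map satisfies the exponential's universal property, and then concludes cartesian closedness of $\Gmes$ immediately by transport along the equivalence of Theorem \ref{THM_GmeUMet}. Your added remark that equivalences preserve finite products and transport the adjunctions $-\times X\dashv [X,-]$ merely makes explicit the formal step the paper leaves implicit.
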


With its exponentiation built on top of that of $\mathbf{SeqSpace}$, one routinely checks that also the category $\mathbf{MetGame}$ is cartesian closed. Indeed, given $(X,A_X)$ and $(Y,A_Y)$ in $\MetGame$, one puts
\[
A_{[X,Y]} = \set{f\in [X,Y]: f[A_X]\subset A_Y},
\]
so that $\mathrm{ev}(f,x)=f(x)\in A_Y$ for every $(f,x)\in A_{[X,Y]}\times A_X$. This makes not only the map $\mathrm{ev}\colon [X,Y]\times X\to Y$ a morphism in $\MetGame$, but also the map $g$ induced by $h$ as above, as long as $h$ lives in $\mathbf{MetGame}$.

So, in view of Theorem \ref{THM_UMetEqv}, we obtain:

\begin{cor}
	The category $\Games_\A$ is cartesian closed, and the payoff-forgetting functor $\mathrm{U}\colon \Games_{\A}\to \Gmes$ preserves exponentiation.
\end{cor}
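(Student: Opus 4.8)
The plan is to derive both assertions directly from the metric presentation of games, exploiting the fact that cartesian closedness has just been established for $\MetGame$. First I would invoke Theorem~\ref{THM_UMetEqv}, which supplies the equivalence $\Games_\A\simeq\MetGame$. Since cartesian closedness is a property invariant under equivalence of categories---finite products are limits and hence preserved and reflected by any equivalence, while the defining adjunctions $(-)\times G\dashv[G,-]$ transport along an equivalence---the cartesian closedness of $\MetGame$ shown in the preceding discussion immediately yields that of $\Games_\A$. Here I would stress that the internal hom of $(X,A_X)$ and $(Y,A_Y)$ in $\MetGame$ is $([X,Y],A_{[X,Y]})$, with $[X,Y]$ the internal hom of $\mathbf{SeqSpa}$ and $A_{[X,Y]}=\{f:f[A_X]\subseteq A_Y\}$, so that under the equivalence the exponential games in $\Games_\A$ inherit an explicit description.

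For the second assertion, the key observation is that the payoff-forgetting functor $\mathrm{U}\colon\Games_\A\to\Gmes$ corresponds, under the equivalences of Theorems~\ref{THM_GmeUMet} and~\ref{THM_UMetEqv}, to the subset-forgetting functor $V\colon\MetGame\to\mathbf{SeqSpa}$. This correspondence is precisely the commutativity up to isomorphism of the relevant square in the diagram of Figure~\ref{diagram_tikzcd}, whose vertical arrows are the forgetful functors and whose horizontal arrows are the equivalences. Granting this, preservation of exponentiation by $\mathrm{U}$ reduces to preservation by $V$; and the latter is immediate, since $V([X,Y],A_{[X,Y]})=[X,Y]$ is exactly the internal hom in $\mathbf{SeqSpa}$ of $V(X,A_X)=X$ and $V(Y,A_Y)=Y$. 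Transporting this back along the equivalences completes the argument.

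The step I expect to require the most care is the identification of $\mathrm{U}$ with $V$ up to natural isomorphism---that is, checking that the two forgetful functors are genuinely intertwined by the equivalences rather than merely running in parallel. This is encoded in the compatibility of the ``$\mathsf{Sub}$-liftings'' produced by Corollary~\ref{COR_TautLift} with the underlying forgetful functors, a compatibility already built into the commuting diagram of Figure~\ref{diagram_tikzcd}; so the obstacle is one of bookkeeping rather than of substance. As an alternative, should a reader prefer an argument internal to $\Games_\A$, one could transport the space $[X,Y]$ through the functor $\mathrm{Ball}$ of Theorem~\ref{THM_GmeUMet} to read off the game tree of the exponential game directly and verify that it coincides with the exponential in $\Gmes$; but the route through $V$ is shorter and makes the preservation statement transparent.
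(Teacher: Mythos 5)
Your proposal is correct and follows essentially the same route as the paper: the paper likewise deduces the corollary from the cartesian closedness of $\MetGame$ (with the internal hom $([X,Y],A_{[X,Y]})$ built on top of that of $\mathbf{SeqSpa}$) together with the equivalence of Theorem~\ref{THM_UMetEqv}, the preservation statement for $\mathrm{U}$ being exactly your observation that forgetting the payoff subset of the $\MetGame$-exponential returns the $\mathbf{SeqSpa}$-exponential, with the intertwining of the forgetful functors encoded in the commuting diagram of Figure~\ref{diagram_tikzcd}.
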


Beyond its mere existence, in what follows we should give a direct game-theoretic description of the internal hom of any given games $G_1$ and $G_2$, denoted by $\Multi_\A^{G_1}(G_2)$. Roughly, this is the game in which $\ali$ and $\bob$ take turns constructing a chronological mapping from the tree of $G_1$ to the tree of $G_2$, so that the mapping is completely defined by the end of a run of the game. The winning criteria, then, is that $\ali$ wins the run if the constructed chronological mapping is an $\A$-morphism (and $\bob$ wins otherwise). More precisely:

\begin{defn}
	For games $G_1=(T_1,A_1)$ and $G_2=(T_2,A_2)$, the $\A$-\textit{exponential} game $\Multi_\A^{G_1}(G_2)$) proceeds as follows:
	\begin{itemize}
		\item In the first inning,
		\begin{itemize}
			\item $\ali$ chooses a mapping $f_0\colon T_1(1)\to T_2(1)$;
			\item $\bob$ responds by choosing a mapping $f_1\colon T_1(2)\to T_2(2)$ such that $f_1(t)\restrict 1=f_0(t\restrict 1)$ for every $t\in T_1(2)$.
		\end{itemize}
		\item In the n-th inning $n\ge 1$,
		\begin{itemize}
			\item $\ali$ chooses a mapping $f_{2n}\colon T_1(2n+1)\to T_2(2n+1)$ such that $f_{2n}(t)\restrict (2n)=f_{2n-1}(t\restrict (2n))$ for every $t\in T_1(2n+1)$;
			\item $\bob$ responds by choosing a mapping $f_{2n+1}\colon T_1(2n+2)\to T_2(2n+2)$ such that $f_{2n+1}(t)\restrict (2n+1)=f_{2n}(t\restrict (2n+1))$ for every $t\in T_1(2n+2)$.
		\end{itemize}
	\end{itemize}
	That is, a moment $s$ of the game $\Multi_\A^{G_1}(G_2)$ is a sequence of mappings $\seq{f_0, \dotsc, f_n}$ such that each mapping $f_k$ has $T_1 (k+1)$ as its domain and $f_j(t)\restrict i+1 = f_i(t\restrict i+1)$ for all $i\le j\le n$ and $t\in T(j+1)$. 
	$\ali$ is then said to have won the run $\seq{f_n:n<\omega}$ if the resulting chronological mapping $\bigcup_{n<\omega}f_n=f\colon T_1\to T_2$ (in set-theoretic notation) defined throughout the run is an $\A$-morphism.
	
	The \textit{evaluation morphism}
	\begin{center}
		\begin{tikzcd}[row sep = 0em]
			\Multi_\A^{G_1}(G_2)\times G_1 \arrow[r, "\mathrm{ev}"] &	G_2 \\
			(\seq{f_i:i\le n},t) \arrow[r, mapsto] &   f_n(t),
		\end{tikzcd}
	\end{center}
	is well-defined since $|t|=|\seq{f_i:i\le n}|=n+1$ (so that $t\in \dom(f_n)$). It is clearly chronological since the $f_k$s are constructed \textit{chronologically} throughout $\Multi_\A^{G_1}(G_2)$, and it is an $\A$-morphism since a run $(\seq{f_n:n<\omega}, R)$ in $\Multi_\A^{G_1}(G_2)\times G_1$ is won by $\ali$ in $\Multi_\A^{G_1}(G_2)\times G_1$ if, and only if, $f=\bigcup_{n<\omega}f_n$ is an $\A$-morphism and $R$ is won by $\ali$ in $G_1$, therefore $\overline{ev}(\seq{f_n:n<\omega}, R)=\overline{f}(R)\in A_2$.
\end{defn}
Let us indicate that the required universal property is satisfied and consider a game morphism
$G\times G_1\stackrel{h}{\to}G_2$, with $G=(T,A)$. Then, with 
\begin{center}
	\begin{tikzcd}[row sep = 0em]
		T_1 (|t|) \arrow[r, "f_t"] &	T_2 (|t|)\\
		s \arrow[r, mapsto] &   h(t ,s).
	\end{tikzcd}
\end{center}
for every $t\in T$, we obtain the moment
$\seq{f_{t\restrict 1},\dotsc, f_t}$ in $\Multi_\A^{G_1}(G_2)$. One checks routinely that
\begin{center}
	\begin{tikzcd}[row sep = 0em]
		G \arrow[r, "g"] &	\Multi_\A^{G_1}(G_2) \\
		t \arrow[r, mapsto] &   \seq{f_{t\restrict 1},\dotsc, f_t}.
	\end{tikzcd}
\end{center}
is in fact the desired unique $\A$-morphism $G\stackrel{g}{\to}\Multi_\A^{G_1}(G_2)$ making the following diagram commute:

\begin{center}
	\begin{tikzcd}[column sep=20mm, row sep=20mm]
		\Multi_\A^{G_1}(G_2)\times G_1 \arrow[r, "\mathrm{ev}"] &	G_2 \\
		G\times G_1 \arrow[u, "g\times \mathrm{id}_{G_1}"]\arrow[ur,"h"{auto,swap}]&
	\end{tikzcd}
\end{center}

We should give some justification for denoting the internal hom by $\Multi_\A^{G_1}(G_2)$, thus using a notation reminiscent of product games. 
Indeed, just like in $\Sets$, where the internal hom is given by the power $X^Y$ of $Y$-many copies of $X$, also in $\Games_{\A}$ the internal hom is closely related to a product game.  

Indeed, for the given games $G_1=(T_1,A_1)$ and $G_2 = (T_2,A_2)$, let $T'$ be the tree of the game $\Multi_\A((G_2)_{R\in \Run(T_1)})$ and then consider the game $(T,A)$ with
\begin{gather*}
	T=\set{t=\seq{(x_i^R)_{R\in\Run(T_1)}:i\le n}\in T': \seq{x_i^{R}:i\le n}=\seq{x_i^{R'}:i\le n} \text{ if $R\restrict n = R'\restrict n$}},\\
	A=\set{t=\seq{(x_n^R)_{R\in\Run(T_1)}: n<\omega}:  \seq{x_n^R:n<\omega}\in A_2 \text{ for every $R\in A_1$}}.
\end{gather*}
Now, denoting the exponential game $\Multi_\A^{G_1}(G_2)$ by $(\tilde{T},\tilde{A})$, we define the map $f\colon T\to \tilde{T}$ by
\[
f(\seq{(x_i^R)_{R\in\Run(T_1)}:i\le n})=\seq{f_i: i\le n},
\]
where $f_i(R\restrict (i+1))=\seq{x_j^{R}:j\le i}$ for every $i\le n$; here, the condition imposed on the moments of $T$ makes sure that the $f_i$s are well defined and chronological.
It is easy to check that $f$ is an isomorphism between games, with its inverse $g\colon \tilde{T}\to T$ given
\[
g(\seq{f_i: i\le n})=\seq{(x_i^R)_{R\in\Run(T_1)}:i\le n},
\]
where $\seq{x_j^{R}:j\le i}=f_i(R\restrict (i+1))$.

So, intuitively, $\Multi_\A^{G_1}(G_2)$ may also be seen as the game in which $\ali$ and $\bob$ play the game $G_2$ simultaneously on $\Run(T_1)$-many boards, in such a way that, if $R\restrict n = R'\restrict n $, then, up to $n$, they must make the same moves on the boards corresponding to $R$ and $R'$. Moreover, $\ali$ must win in every board relating to runs $R\in A_1$ in order to win in $\Multi_\A^{G_1}(G_2)$.

Other than asserting an important categorical property of our ludic categories, 
the exponential game $\Multi_\A^{G_1}(G_2)$ is interesting also from a game-theoretical point of view:

\begin{prop}
	Let $G_1=(T_1,A_1)$ and $G_2=(T_2,A_2)$ be games, with $A_1\neq\emptyset$. Then the following is true:
	\begin{itemize}
		\item[(a)] $\ali$ has a winning strategy in $\Multi_\A^{G_1}(G_2)$ if, and only if, $\ali$ has a winning strategy in $G_2$.
		\item[(b)] If $\bob$ has a winning strategy in $G_2$, then $\bob$ has a winning strategy in $\Multi_\A^{G_1}(G_2)$.
	\end{itemize}
\end{prop}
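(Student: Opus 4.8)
The plan is to reduce everything to a single ``board'' of the exponential game and then invoke the machinery of locally surjective morphisms from Section~\ref{SEC_Morph}. Since $A_1\neq\emptyset$, I would first fix a run $R^*\in A_1$ and define the \emph{board projection} $\pi_{R^*}\colon\Multi_\A^{G_1}(G_2)\to G_2$ sending a moment $\seq{f_0,\dots,f_n}$ to $f_n(R^*\restrict (n+1))\in T_2(n+1)$. The coherence condition $f_n(t)\restrict k=f_{k-1}(t\restrict k)$ imposed on the moments of $\Multi_\A^{G_1}(G_2)$ immediately shows that $\pi_{R^*}$ preserves length and truncation, hence is chronological, and that $\overline{\pi_{R^*}}(\seq{f_n:n<\omega})=\overline{f}(R^*)$ for the limit map $f=\bigcup_n f_n$. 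Because $\ali$ wins a run of $\Multi_\A^{G_1}(G_2)$ exactly when $f$ is an $\A$-morphism, and then $\overline{f}(R^*)\in A_2$ \emph{precisely because} $R^*\in A_1$, the map $\pi_{R^*}$ is an $\A$-morphism; this is the single place where the hypothesis $A_1\neq\emptyset$ is essential. Finally I would verify that $\pi_{R^*}$ is locally surjective: given $y$ with $\pi_{R^*}(\seq{f_0,\dots,f_n})^\smallfrown y\in T_2$, one extends by a map $f_{n+1}$ sending $R^*\restrict(n+2)$ to $f_n(R^*\restrict(n+1))^\smallfrown y$ and sending every other node of $T_1(n+2)$ to an arbitrary legal successor, which exists since $T_2$ is pruned.

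With $\pi_{R^*}$ established, the forward implication of (a) and the whole of (b) would follow at once. For the forward direction of (a), a winning strategy for $\ali$ in $\Multi_\A^{G_1}(G_2)$ is carried by the locally surjective $\A$-morphism $\pi_{R^*}$ to a winning strategy for $\ali$ in $G_2$ by Corollary~\ref{LEMMA_StratImage}(a). For (b), a winning strategy $\sigma=(T_\sigma,A_\sigma)$ for $\bob$ in $G_2$ pulls back along the same $\pi_{R^*}$: Corollary~\ref{LEMMA_StratPreimage}(b) guarantees that $\pi_{R^*}^{-1}(T_\sigma)$ contains a winning strategy for $\bob$ in $\Multi_\A^{G_1}(G_2)$. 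I would also remark why the converse of (b) is \emph{not} claimed: $\bob$ winning the exponential game only forces \emph{some} board indexed by a run in $A_1$ to be lost by $\ali$, and the offending board may depend adaptively on $\ali$'s play, so no single board strategy need witness it.

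It then remains to prove the backward direction of (a) by a direct construction, since $\pi_{R^*}$ is generally \emph{not} a $\B$-morphism and so cannot transport $\ali$'s strategy upward. Given a winning strategy $\gamma=(T_\gamma,A_\gamma)$ for $\ali$ in $G_2$, so that $\Run(T_\gamma)\subseteq A_2$, I would let $\ali$ ``play $\gamma$ on every board simultaneously'' by taking as her strategy subgame in $\Multi_\A^{G_1}(G_2)$ the set of moments $\seq{f_0,\dots,f_n}$ whose associated partial map has image contained in $T_\gamma$. The two defining conditions of a strategy for $\ali$ then check out: at $\ali$'s turn the requirement $f(t)\in T_\gamma$ forces a \emph{unique} extension, since each $f(t\restrict 2n)\in T_\gamma$ sits at $\ali$'s turn in $G_2$ and has a unique $T_\gamma$-successor; at $\bob$'s turn \emph{every} legal move stays inside $T_\gamma$, since $T_\gamma$ contains all $G_2$-successors of its nodes lying at $\bob$'s turns. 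Along any resulting run, $\overline{f}(R)\in\Run(T_\gamma)\subseteq A_2$ for every $R\in\Run(T_1)$, so $f$ is an $\A$-morphism and the run is won by $\ali$; hence the strategy is winning. The board-coherence causes no trouble here, being automatic from the fact that $f$ is a genuine map on the tree $T_1$.

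The hard part will be the careful verification that $\pi_{R^*}$ is \emph{simultaneously} an $\A$-morphism and locally surjective, as these are the two properties feeding the corollaries: pinning down exactly how $R^*\in A_1$ enters the $\A$-morphism claim, and that the pruning of $T_2$ supplies the successors needed on the ``irrelevant'' boards for local surjectivity, is the delicate bookkeeping. Once $\pi_{R^*}$ is in place and the strategy subgame in the backward direction is described as ``image inside $T_\gamma$'', the remaining verifications are routine.
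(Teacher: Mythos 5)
Your proposal is correct, and for the harder halves it takes a genuinely different formal route from the paper. The paper proves all three implications directly through the ``product identification'' of $\Multi_\A^{G_1}(G_2)$ as a multiboard game: for the backward direction of (a), $\ali$ plays $\gamma$ on every board (exactly your image-in-$T_\gamma$ subgame, which you spell out more carefully, verifying conditions (c) and (d) of Definition \ref{DEF_AStrat} via the unique $T_\gamma$-successor at $\ali$'s turns and closure under all successors at $\bob$'s turns); for (b), the paper has $\bob$ play $\sigma$ \emph{on every board} and notes that $A_1\neq\emptyset$ then defeats $\ali$ on some board in $A_1$; and for the forward direction of (a), the paper has $\ali$ read off $\gamma$'s instructions on a fixed board $R\in A_1$, ``filling anything on the other boards for $\bob$''. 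You instead package the fixed board into an explicit chronological map $\pi_{R^*}\colon\Multi_\A^{G_1}(G_2)\to G_2$, check it is an $\A$-morphism (correctly isolating $R^*\in A_1$ as the point where the hypothesis enters) and locally surjective (the arbitrary legal successors on the other boards, supplied by prunedness of $T_2$, being the rigorous version of the paper's ``filling''), and then invoke Corollaries \ref{LEMMA_StratImage}(a) and \ref{LEMMA_StratPreimage}(b) to transfer strategies in both directions along $\pi_{R^*}$ --- note that your (b) even differs game-theoretically from the paper's, since your $\bob$ plays $\sigma$ only on the distinguished board rather than everywhere, which is equally valid. What your route buys is rigor and reuse: it replaces the informal simulation arguments by the Section \ref{SEC_Morph} machinery that the paper itself developed for Theorems \ref{THM_ClassOmega} and \ref{THM_ClassGamma}, at the cost of the extra bookkeeping you flag; the paper's route is shorter and keeps the multiboard intuition in the foreground. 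Your closing remark on why the converse of (b) fails to follow --- the losing board may depend adaptively on $\ali$'s play --- is also consistent with the paper, which claims no such converse.
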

\begin{proof}
	Using the ``product' identification" of $\Multi_\A^{G_1}(G_2)$ as described above, we
	note that, if $\gamma$ is a winning strategy for $\ali$ in $G_2$, then $\ali$ can use $\gamma$ on every board of the game $\Multi_\A^{G_1}(G_2)$, so that she will win on every board (not just on the boards corresponding to $A_1$). In particular, doing so provides her with a winning strategy for $\Multi_\A^{G_1}(G_2)$. This proves the ``if''-part of (a), and  the proof of (b) is analogous, with the condition $A_1\neq \emptyset$ being necessary, as $\Multi_\A^{G_1}(G_2)$ would be trivial for $\ali$ otherwise.
	
	Now suppose that $\ali$ has a winning strategy $\gamma$ in $\Multi_\A^{G_1}(G_2)$ and fix $R\in A_1$. Then $\ali$ can use the responses which $\gamma$ instructs her to use on the $R$-th board as a strategy in $G_2$ (filling anything on the other boards for $\bob$), so that the fact that $\gamma$ wins at every board $R'\in A_1$ implies that doing so makes $\ali$ win in $G_2$.
\end{proof}

So the cases in which $G_2$ is determined are trivial for $\Multi_\A^{G_1}(G_2)$ (in a game-theoretical sense), and $\Multi_\A^{G_1}(G_2)$ is equivalent to $G_2$ for $\ali$ -- much like $\Multi_\A((G_2)_{R\in \Run(T_1)})$. Therefore, $\Multi_\A^{G_1}(G_2)$ can possibly be seen as a way to make the undetermined game $G_2$ \textit{easier} for $\bob$, so that he may possibly then have a winning strategy.

Hence, we propose a natural question:

\begin{prob}
	Which (non-trivial) conditions on an undetermined game $G_2$ and/or $G_1$ guarantee the existence of a winning strategy for $\bob$ in the game $\Multi_\A^{G_1}(G_2)$?
\end{prob}

The category $\Sets$ and, in fact, every (quasi)topos (as considered in the next section), enjoy a stronger property than cartesian closedness, called local cartesian closednes. What about $\Games_{\A}$? We give a negative answer, but first recall:
\begin{defn}\label{DEF_LocalCC}
	A finitely complete category $\mathbf{C}$ is \emph{locally cartesian closed} if its comma categories $\mathbf{C}/Z$ are cartesian closed, for all objects $Z$ in $\mathbf{C}$. 
	
	This equivalently means \cite{Dyckhoff1987} that, given any morphism $X\stackrel{f}{\to} Z$ and any object $Y$ in $\mathbf{C}$, there are morphisms $P\stackrel{p}{\to}Z$ and $P\times_Z X\stackrel{\varepsilon}{\to} Y$ such that, for all $Q\stackrel{q}{\to}Z$ and $Q\times_Z X\stackrel{g}{\to} Y$, there is a unique $Q\stackrel{h}{\to}P$ such that $p\circ h = q$ and $\varepsilon\circ(h\times_Z \id_X)=g$. 
\end{defn}

Considering for $Z$ the terminal object  in $\mathbf C$ one sees that locally cartesian closedness implies cartesian closedness, but the converse implication fails in many categories, including our ludic categories. We first show: 

\begin{prop}\label{PROP_GMEnotLCC}
	$\Gmes$ is not locally cartesian closed.
\end{prop}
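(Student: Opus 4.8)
The plan is to deduce the failure of local cartesian closedness directly from the failure of pullback-stability of regular epimorphisms already recorded in Corollary~\ref{COR_NotRegular}, rather than by a fresh ad hoc construction. The guiding principle is the standard fact that, in any locally cartesian closed category $\mathbf C$, each pullback functor $g^*\colon \mathbf C/Z\to\mathbf C/X$ (along a morphism $X\stackrel{g}{\to}Z$) admits a right adjoint $\Pi_g$ --- this is exactly what the partial-product universal property of Definition~\ref{DEF_LocalCC} provides --- and that therefore, being a left adjoint, $g^*$ preserves all colimits.

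First I would isolate the only consequence I actually need, namely that pullback preserves regular epimorphisms. Indeed, for any object $Z$ the forgetful functor $\mathbf C/Z\to\mathbf C$ creates colimits, so a morphism of $\mathbf C/Z$ is a regular epimorphism (a coequalizer) precisely when its underlying morphism in $\mathbf C$ is. Hence, if $e\colon A\to B$ is regularly epic in $\mathbf C$ and $g\colon C\to B$ is arbitrary, then $e$ may be viewed as the regular epimorphism $(A\stackrel{e}{\to}B)\to(B\stackrel{\id}{\to}B)$ in $\mathbf C/B$; applying the colimit-preserving functor $g^*\colon\mathbf C/B\to\mathbf C/C$ yields a regular epimorphism in $\mathbf C/C$ whose underlying arrow in $\mathbf C$ is exactly the pullback $A\times_B C\to C$ of $e$ along $g$. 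Thus local cartesian closedness forces regular epimorphisms to be stable under pullback.

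The proof then concludes by contraposition: Corollary~\ref{COR_NotRegular} exhibits, in $\Gmes$, a regular epimorphism whose pullback fails to be regular, so $\Gmes$ cannot be locally cartesian closed.

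The only genuinely delicate point is the passage from the defining hypothesis ``all slice categories are cartesian closed'' (Definition~\ref{DEF_LocalCC}) to the statement ``every $g^*$ has a right adjoint''. This is the well-known equivalence between cartesian closedness of all slices and the existence of all dependent products $\Pi_g$; since Definition~\ref{DEF_LocalCC} already phrases local cartesian closedness through the partial-product universal property, I would simply invoke this equivalence rather than re-derive it, noting only that it is from this right adjoint that colimit-preservation of $g^*$ follows. Everything else in the argument is formal and requires no game-theoretic computation.
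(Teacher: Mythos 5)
Your proof is correct, but it takes a genuinely different route from the paper's. The paper argues via the metric presentation: by Theorem \ref{THM_GmeUMet} it reduces the claim to $\mathbf{SeqSpa}$, where it builds a concrete counterexample from a non-trivially convergent sequence $Z$, its discretization $X$, the identity map $X\to Z$, and the two-point discrete space $Y$; the partial-product universal property of Definition \ref{DEF_LocalCC} then forces a sequence in the would-be object $P$ to converge to two distinct points, which is absurd. You instead deduce the result formally from Corollary \ref{COR_NotRegular}, and this is legitimate with respect to the paper's ordering, since that corollary (resting on Example \ref{EX_RegEpic_not_Porj} and Theorem \ref{THM_Proj_PullbackStable}) precedes Proposition \ref{PROP_GMEnotLCC} and does not depend on it; your intermediate steps (slice forgetful functors create colimits, kernel pairs and coequalizers in slices are computed as in the base, left adjoints preserve coequalizers) are all sound, and $\Gmes$ has the required completeness and cocompleteness. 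Regarding the one point you flag as delicate: you can in fact sidestep the equivalence with dependent products $\Pi_g$ entirely, because cartesian closedness of the \emph{single} slice $\Gmes/B$ already makes the product functor $-\times(C\stackrel{g}{\to}B)$ a left adjoint, and the product of $(A\stackrel{e}{\to}B)$ with $(C,g)$ in $\Gmes/B$ has underlying morphism exactly the pullback $A\times_B C\to C$ of $e$ along $g$; so the defining property of Definition \ref{DEF_LocalCC} is not even needed at full strength. What each approach buys: yours is shorter, purely formal, and applies verbatim to $\Games_{\A}$ (Corollary \ref{COR_NotRegular} covers that category too, whereas the paper handles it separately by adding maximal payoff sets); the paper's construction, on the other hand, exhibits an explicit witness of where the universal property fails, and the same space $Z$ is recycled in the proof of Proposition \ref{PROP_noClassifiers}, so the concrete example earns its keep twice.
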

\begin{proof}
	By  Theorem \ref{THM_GmeUMet} it suffices to show that $\mathbf{SeqSpa}$ is not locally cartesian closed. 
	We consider any object\footnote{Such object is described explicitly in the proof of Proposition \ref{PROP_noClassifiers}.} 
	$Z=\set{z_n:n<\omega}\sqcup\{\bar{z}\}$ in $\mathbf{SeqSpa}$ given by a non-trivially converging sequence $z_n\stackrel{n\to\infty}{\longrightarrow}\bar{z}$, and let $X$ be the discrete metric space with the same underlying set as $Z$. 
	Now suppose that for the discrete metric space $Y=\{0,1\}$ and the (1-Lipschitz) identity map 
	$X\stackrel{f}{\to}Z$ we would have $P, p, \varepsilon$ satisfying the universal property described in the Definition.
	
	First, considering for $Q$ the terminal object in $\mathbf{SeqSpace}$ 
	we see that, for every $z\in Z$ and $i\in \{0,1\}$, there is a unique $x_z^i\in P$ such that $p(x_z^i)=z$ and $\varepsilon(x_z^i,z)=i$. 
	Next, considering for every $n<\omega$ the (two-point) subspace $Q_n=\{z_n,\bar{z}\}$ of $Z$ and the inclusion map $q_n\colon Q_n\to Z$, one obtains the discrete subspace $Q_n\times_Z X \cong \{z_n,\bar{z}\}$ of $X$. 
	Then, by the universal property, the 1-Lipschitz map $g_n\colon Q\times_Z X\to Y$ with $g_n(z_n)=0$ and $g_n(\bar{z})=0$ induces the 1-Lipschitz map $h_n\colon Q_n\to P$ which, by the uniqueness property stated in our previous choice for $Q$, must satisfy 
	$h_n(z_n)=x_{z_n}^0$ and $h_n(\bar{z})=x_{\bar{z}}^0$, so that 
	$d_P(x_{z_n}^0, x_{\bar{z}}^ 0)\le d_{Q_n}(z_n,\bar{z})=d_Z(z_n,\bar{z})$ follows.
	
	Likewise, instead of $g_n$ we may also consider the map $g_n'\colon Q\times_Z X\to Y$ with  $g_n'(z_n)=0$ and $g_n'(\bar{z})=1$ and, following the same argumentation as before, conclude that also $d_P(x_{z_n}^0, x_{\bar{z}}^1) \le d_Z(z_n,\bar{z})$ holds. Consequently, the sequence
	$(x_{z_n}^0)_{n<\omega}$ must converge in $P$ to both $x_{\bar{z}}^0$ and $x_{\bar{z}}^1$. But this is impossible since $\varepsilon(x^0_{\bar{z}},\bar{z})=0\neq 1=\varepsilon(x^1_{\bar{z}},\bar{z})$ forces $x_{\bar{z}}^0\neq x_{\bar{z}}^1$.
\end{proof}

\begin{prop}
	$\Games_{\A}$ is not locally cartesian closed.
\end{prop}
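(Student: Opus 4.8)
The plan is to avoid reconstructing a counterexample from scratch and instead transfer any hypothetical local cartesian closed structure on $\Games_\A$ down to $\Gmes$, contradicting Proposition \ref{PROP_GMEnotLCC}. Write $\mathrm{U}\colon\Games_\A\to\Gmes$ for the payoff-forgetting functor, which is topological by Proposition \ref{PROP_TopFunctor}; being a right adjoint (to the functor $D$ below) it is faithful and preserves all limits, in particular all pullbacks. Its left adjoint $D\colon\Gmes\to\Games_\A$ is the fully faithful functor $DT=(T,\emptyset)$ equipping a game tree with the empty payoff set, so $\mathrm{U}D=\mathrm{id}_{\Gmes}$ and the adjunction supplies a natural bijection $\Games_\A(DQ,\tilde P)\cong\Gmes(Q,\mathrm{U}\tilde P)$, $\tilde h\mapsto\mathrm{U}\tilde h$. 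The one auxiliary fact I would record first is that $D$ preserves pullbacks: by the description of pullbacks in Section \ref{SUBSECeqpb}, the pullback of $DX\to DZ\leftarrow DY$ carries the payoff $\overline{p_1}^{-1}[\emptyset]\cap\overline{p_2}^{-1}[\emptyset]=\emptyset$ over the underlying tree $X\times_Z Y$, and hence equals $D(X\times_Z Y)$ with the expected projections.

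Assume for contradiction that $\Games_\A$ is locally cartesian closed and fix $f\colon X\to Z$ and an object $Y$ in $\Gmes$ as in Definition \ref{DEF_LocalCC}. Applying the hypothesis to $Df\colon DX\to DZ$ and the object $DY$ yields $\tilde p\colon\tilde P\to DZ$ and $\tilde\varepsilon\colon\tilde P\times_{DZ}DX\to DY$ with the stated universal property in $\Games_\A$. I would then set $P:=\mathrm{U}\tilde P$, $p:=\mathrm{U}\tilde p\colon P\to Z$, and $\varepsilon:=\mathrm{U}\tilde\varepsilon\colon P\times_Z X\to Y$, using that $\mathrm{U}$ preserves pullbacks and $\mathrm{U}Df=f$ to identify $\mathrm{U}(\tilde P\times_{DZ}DX)$ with $P\times_Z X$. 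To check the universal property in $\Gmes$, given $q\colon Q\to Z$ and $g\colon Q\times_Z X\to Y$, I would apply $D$ and use $D(Q\times_Z X)=DQ\times_{DZ}DX$ to read $Dg$ as a morphism $DQ\times_{DZ}DX\to DY$; the universal property in $\Games_\A$ produces a unique $\tilde h\colon DQ\to\tilde P$ with $\tilde p\circ\tilde h=Dq$ and $\tilde\varepsilon\circ(\tilde h\times_{DZ}\mathrm{id}_{DX})=Dg$. Putting $h:=\mathrm{U}\tilde h$ and applying $\mathrm{U}$ (which sends $\tilde h\times_{DZ}\mathrm{id}_{DX}$ to $h\times_Z\mathrm{id}_X$, as $\mathrm{U}$ preserves pullbacks) gives $p\circ h=q$ and $\varepsilon\circ(h\times_Z\mathrm{id}_X)=g$, which settles existence.

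The delicate point, and the step I expect to be the main obstacle, is \emph{uniqueness} of $h$ in $\Gmes$: a competing $h'\colon Q\to P=\mathrm{U}\tilde P$ need not, a priori, arise from a morphism into $\tilde P$. Here I would exploit the adjunction $D\dashv\mathrm{U}$: the bijection $\Games_\A(DQ,\tilde P)\cong\Gmes(Q,\mathrm{U}\tilde P)$ lifts $h'$ to a unique $\tilde h'\colon DQ\to\tilde P$ with $\mathrm{U}\tilde h'=h'$. Since $\mathrm{U}$ is faithful, and since $\tilde p\circ\tilde h'$ and $Dq$ lie in $\Games_\A(DQ,DZ)$ while $\tilde\varepsilon\circ(\tilde h'\times_{DZ}\mathrm{id}_{DX})$ and $Dg$ lie in $\Games_\A(D(Q\times_Z X),DY)$, the identities $\mathrm{U}(\tilde p\circ\tilde h')=q=\mathrm{U}Dq$ and $\mathrm{U}(\tilde\varepsilon\circ(\tilde h'\times_{DZ}\mathrm{id}_{DX}))=g=\mathrm{U}Dg$ force $\tilde h'$ to satisfy the defining equations of $\tilde h$; uniqueness in $\Games_\A$ then gives $\tilde h'=\tilde h$, whence $h'=\mathrm{U}\tilde h'=\mathrm{U}\tilde h=h$. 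Thus $(P,p,\varepsilon)$ witnesses local cartesian closedness of $\Gmes$ for the arbitrary data $(f,Y)$, contradicting Proposition \ref{PROP_GMEnotLCC}. The essential inputs are therefore precisely the two preservation facts ($D$ and $\mathrm{U}$ preserve pullbacks) together with $\mathrm{U}D=\mathrm{id}$ and the faithfulness of $\mathrm{U}$.
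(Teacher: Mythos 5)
Your proof is correct, but it takes a genuinely different route from the paper's. The paper transfers the \emph{concrete} counterexample of Proposition \ref{PROP_GMEnotLCC}: working in the equivalent categories $\mathbf{SeqSpa}\simeq\Gmes$ and $\MetGame\simeq\Games_\A$, it equips the three spaces $Z,X,Y$ of that proof with their \emph{maximal} payoff sets, yielding $(Z,Z),(X,X),(Y,Y)\in\MetGame$ --- i.e., it uses the right adjoint of the forgetful functor, where you use the left adjoint $D$ with empty payoffs --- and observes that the argument of Proposition \ref{PROP_GMEnotLCC} then runs verbatim, since with full payoff sets the relevant hom-sets and pullbacks are unchanged. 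You instead prove an abstract descent lemma: given $D\dashv\mathrm{U}$ with $\mathrm{U}D=\id$, $\mathrm{U}$ faithful, and both functors preserving pullbacks, any partial product $(\tilde P,\tilde p,\tilde\varepsilon)$ in $\Games_\A$ for data in the image of $D$ pushes down to one in $\Gmes$, so local cartesian closedness would descend, contradicting Proposition \ref{PROP_GMEnotLCC}. Your treatment of uniqueness --- lifting a competitor $h'$ through the adjunction bijection, which is harmless precisely because $DQ=(Q,\emptyset)$ makes every chronological map out of it vacuously an $\A$-morphism, and then invoking faithfulness of $\mathrm{U}$ to see the lift satisfies the defining equations --- is exactly right and is the only genuinely delicate step; your direct verification that $D$ preserves pullbacks via the payoff formula $\overline{p_1}^{-1}[\emptyset]\cap\overline{p_2}^{-1}[\emptyset]=\emptyset$ from Section \ref{SUBSECeqpb} is also sound. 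Your version buys generality: it never revisits the metric counterexample and shows once and for all that the forgetful functor reflects failure of local cartesian closedness, whereas the paper's version buys brevity by reusing the metric construction directly. One cosmetic slip worth fixing: faithfulness of $\mathrm{U}$ does \emph{not} follow from its being a right adjoint (right adjoints are faithful only when all counits are epimorphisms); here it holds for the elementary reason that morphisms of games are literally their underlying chronological maps, or because $\mathrm{U}$ is topological.
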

\begin{proof}
	Equipping the objects $Z,X,Y\in\mathbf{SeqSpa}$ as in the proof of Proposition \ref{PROP_GMEnotLCC} with their maximal ``pay-off sets'', so that $(Z,Z), (X,X),(Y,Y)\in\MetGame$, one sees immediately that $\MetGame$ cannot be locally cartesian closed either. By 
	Proposition \ref{PROP_GMEnotLCC}, this completes the proof also here. 
\end{proof}

\subsection{Classifiers}
Recall that a \emph{quasitopos} is a locally cartesian closed category with a classifier for strong subobjects, as defined below.
Not being locally cartesion closed, neither $\Gmes$ nor $\Games_{\A}$ can be a quasitopos. But what about the other defining property of a quasitopos? We will show that, although these categories do not have a classifier for strong subobjects either, each one does possess an object satisfying the classifying property {\em weakly}, whereby this property is stripped of its uniqueness requirement. 
In fact, more generally, we will see that they have weak classifiers for all strong partial morphisms, as defined below.
\begin{defn}\label{DEF_StrSubClass}
	A \emph{classifier for strong subobjects} in a finitely complete category $\mathbf{C}$ is an object $\Omega$ equipped with a \emph{truth morphism} $1\stackrel{\tau}{\to}\Omega$ from $\mathbf{C}$'s terminal object such that, for every strong monomorphism $A\stackrel{m}{\to}X$, there is a unique morphism $X\stackrel{\chi}{\to}\Omega$ such that 
	\begin{center}
		\begin{tikzcd}[column sep=15mm, row sep=15mm]
			A\arrow[r, "!"] \arrow[d, "m"]&	1 \arrow[d,"\tau"]\\
			X \arrow[r, "\chi"] & \Omega.
		\end{tikzcd}
	\end{center}
	is a pullback diagram.
\end{defn}

\begin{prop}\label{PROP_noClassifiers}
	The categories $\Gmes$ and $\Games_{\A}$ do not have a classifier for strong subobjects.
\end{prop}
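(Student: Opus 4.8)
The plan is to argue through the metric description of games and to defeat any candidate classifier using a single, carefully chosen object. By Theorem \ref{THM_GmeUMet} it suffices to prove the claim for the equivalent category $\mathbf{SeqSpa}$ (and, for $\Games_\A$, for $\MetGame$ via Theorem \ref{THM_UMetEqv}), since possession of a classifier for strong subobjects is invariant under equivalence of categories. In $\mathbf{SeqSpa}$ the terminal object $1$ is the one-point space (the constant map is $1$-Lipschitz and is the only map into a point), so a truth morphism $1\stackrel{\tau}{\to}\Omega$ merely selects a point $\top=\tau(1)\in\Omega$. The object on which everything will hinge is the convergent sequence
\[
Z=\set{z_n:n<\omega}\sqcup\{\bar z\},\qquad d(z_n,\bar z)=\tfrac{1}{n+1},
\]
whose remaining distances are forced by ultrametricity to be $d(z_n,z_m)=\tfrac{1}{\min(n,m)+1}$; thus $z_n\to\bar z$ and $Z\in\mathbf{SeqSpa}$. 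I will focus on the single strong subobject $\{\bar z\}\hookrightarrow Z$: under the equivalence it is the inclusion into $Z$'s game tree of the pruned subtree whose unique branch is $\bar z$, an injective chronological map, hence a regular (so strong) monomorphism by Proposition \ref{PROP_Gmes_RegMon}.

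Suppose, for contradiction, that $\Omega$ with $1\stackrel{\tau}{\to}\Omega$ classified strong subobjects. Since $\tau$ is monic, the classifying square for $\{\bar z\}\hookrightarrow Z$ is a pullback exactly when $\chi^{-1}(\top)=\{\bar z\}$ for the classifying map $\chi\colon Z\to\Omega$. I would then unwind what such a $1$-Lipschitz map amounts to: necessarily $\chi(\bar z)=\top$, and writing $w_n:=\chi(z_n)$, the requirements are precisely $w_n\neq\top$ (to exclude each $z_n$ from the preimage) together with $d(w_n,\top)\le\tfrac{1}{n+1}$ (from the Lipschitz condition against $d(z_n,\bar z)$). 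The pairwise inequalities $d(w_n,w_m)\le d(z_n,z_m)$ then hold automatically by ultrametricity, and $\chi^{-1}(\top)=\{\bar z\}$ is recovered. Hence the classifying maps for $\{\bar z\}$ are in bijection with the elements of the product $\prod_{n<\omega}\big(\overline{B}(\top,\tfrac{1}{n+1})\setminus\{\top\}\big)$.

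The classifier axiom demands that this product be a singleton, and all three ways that can fail yield a contradiction. If some factor $\overline{B}(\top,\tfrac{1}{n+1})\setminus\{\top\}$ is empty, then no classifying map for $\{\bar z\}$ exists at all, contradicting the existence half of the axiom. If every factor is nonempty but one has at least two points, there are two distinct classifying maps, contradicting uniqueness. In the remaining case every punctured ball $\overline{B}(\top,\tfrac{1}{n+1})\setminus\{\top\}$ is a single point $w_n$; but the nesting $\overline{B}(\top,\tfrac{1}{n+2})\subseteq\overline{B}(\top,\tfrac{1}{n+1})$ forces $w_{n+1}=w_n$ for all $n$, so all $w_n$ coincide, while $d(w_n,\top)\le\tfrac{1}{n+1}\to 0$ forces $w_n=\top$, contradicting $w_n\neq\top$. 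Thus no classifier exists in $\mathbf{SeqSpa}\simeq\Gmes$.

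Finally I would lift the counterexample to $\Games_\A\simeq\MetGame$ exactly in the spirit of Corollary \ref{COR_NotRegular}, equipping $Z$ with the empty payoff set and the subobject $\{\bar z\}$ with its induced (again empty) payoff. With $A_Z=\emptyset$ the $\A$-morphism condition $\overline{f}[A_Z]\subseteq A_\Omega$ is vacuous, so the maps $Z\to\Omega$ classifying $(\{\bar z\},\emptyset)$ are precisely the $1$-Lipschitz maps found above, and the pullback payoff computed as in Section \ref{SUBSECeqpb} is again empty; the identical telescoping argument therefore applies verbatim. The crux of the whole proof—and the only nontrivial step—is the realization that the $1$-Lipschitz (chronological) constraint converts "being a classifier" into the impossible geometric demand that $\top$ possess a unique nontrivial neighbour within each radius $\tfrac{1}{n+1}$; once the subobjects of the single object $Z$ are analysed, the contradiction is forced in every case.
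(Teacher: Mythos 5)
Your proof is correct and takes essentially the same approach as the paper: the same reduction via Theorems \ref{THM_GmeUMet} and \ref{THM_UMetEqv} to $\mathbf{SeqSpa}$ and $\MetGame$, the same witness object (the convergent sequence with $d(z_n,\bar z)=\frac{1}{n+1}$) and the same strong subobject $\{\bar z\}\hookrightarrow Z$, with the contradiction coming from the non-uniqueness of $1$-Lipschitz characteristic maps forced by the geometry of the closed balls around $\tau(1)$. The only differences are cosmetic: where the paper exhibits a second classifying map directly via the shift $\chi'(x_n)=\chi(x_{n+1})$, you classify \emph{all} classifying maps as the product $\prod_{n<\omega}\bigl(\overline{B}(\top,\tfrac{1}{n+1})\setminus\{\top\}\bigr)$ and rule out its being a singleton (your cases 2 and 3 together recover exactly the paper's argument, and case 1 is superfluous given the existence half of the axiom), and you lift to $\MetGame$ with empty rather than maximal payoff sets, which works equally well.
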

\begin{proof}
	By Theorems \ref{THM_GmeUMet} and \ref{THM_UMetEqv}, it suffices to show the absence of a classifier for strong subobjects in the categories $\mathbf{SeqSpa}$ and $\MetGame$. 
	Suppose we had such classifier $\{\ast\}\stackrel{\tau}{\to}\Omega$ in $\mathbf{SeqSpa}$. We metricize the set $X=\set{x_n:n<\omega}\sqcup\{\bar{x}\}$ of pairwise distinct elements by $d_X(\bar{x},x_n) = \frac{1}{n+1}$ and $d_X(x_n,x_m) = \max\{\frac{1}{n+1}, \frac{1}{m+1}\}$ for all $m,n<\omega$, 
	to obtain the object $X\in\mathbf{SeqSpa}$. 
	Its strong subobject $\{\bar{x}\}\hookrightarrow X$ then comes with the``characteristic morphism'' $X\stackrel{\chi}{\to}\Omega$ producing the pullback diagram
	\begin{center}
		\begin{tikzcd}[column sep=15mm, row sep=15mm]
			\{\bar{x}\}\arrow[r, "!"] \arrow[d, ""]&	\{\ast\} \arrow[d,"\tau"]\\
			X \arrow[r, "\chi"] & \Omega .
		\end{tikzcd}
	\end{center}
	Hence, $\chi^{-1}(\chi(\bar{x}))=\{\bar{x}\}$, which implies $\chi(x_n)\neq\chi(\bar{x})$ for all $n<\omega$. Furthermore, since $x_n\stackrel{n\to\infty}{\longrightarrow}\bar{x}$ in $X$, one has $\chi(x_n)\stackrel{n\to\infty}{\longrightarrow}\chi(\bar{x})$.

	But there is another morphism, $\chi'$, which fits into the above pullback diagram in lieu of $\chi$. Indeed, the map
	\begin{center}
		\begin{tikzcd}[row sep = 0em]
			X \arrow[r, "\chi'"] &	\Omega\\
			x_n \arrow[r, mapsto] &   \chi(x_{n+1}),\\
			\bar{x} \arrow[r, mapsto] &   \chi(\bar{x}).
		\end{tikzcd}
	\end{center}
	is clearly 1-Lipschitz and satisfies $(\chi')^{-1}(\chi'(\bar{x}))=\{\bar{x}\}$, but it is distinct from $\chi$ since, otherwise, $\chi$ would be constant on $\{x_n:n<\omega\}$ and thus preventing $(\chi(x_n))_{n<\omega}$ to converge to $\chi(\bar{x})$ in $\Omega$.
	
	This concludes the non-existence proof for $\mathbf{SeqSpa}$. The claim for $\MetGame$ follows (just as in the proof of Proposition \ref{PROP_GMEnotLCC}) when we choose the ``pay-off sets'' to be maximal. 
\end{proof}

Note that, in the above proof, we relied exclusively on the non-uniqueness of the characteristic morphism $\chi$. Discarding that condition in Definition \ref{DEF_StrSubClass} and thus arriving at the notion of {\em weak classifier for strong subobjects} would indeed allow us to show the existence of such objects in $\Gmes$ and $\Games$, as we will prove more generally now.

\begin{defn}\label{DEF_weakClassifier}
	A \emph{weak classifier for strong partial maps into $B$} in a finitely complete category $\mathbf{C}$ with $B\in \Obj{\mathbf{C}}$ is a morphism $B\stackrel{i_B}{\to}B_{\perp}$ such that, for every span $X\stackrel{m}{\leftarrow} A\stackrel{f}{\to}B$ with a strong monomorphism $m$, there is a morphism $X\stackrel{f_\perp}{\to}B_\perp$ such that the following diagram is a pullback:
	\begin{center}
		\begin{tikzcd}[column sep=15mm, row sep=15mm]
			A\arrow[r, "f"] \arrow[d, "m"]&	B \arrow[d,"i_B"]\\
			X \arrow[r, "f_\perp"] & B_\perp.
		\end{tikzcd}\label{DIAG_weakClassifier}
	\end{center}
	(The notion of weak classifier for strong subobjects emerges when we specialize $B$ to be the terminal object $1$, thus setting $\Omega=1_{\bot}$.) We say that $\mathbf{C}$ {\em has weak classifiers for strong partial maps} if a weak classifier for strong partial maps into $B$ exists for all objects $B$ in $\mathbf{C}$.
\end{defn}

\begin{thm}\label{THM_GamesClassifier}
	The categories $\Gmes$ and $\Games_{\A}$ have weak classifiers for strong partial maps.
\end{thm}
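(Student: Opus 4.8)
The plan is to give a single explicit tree-level construction that serves both categories at once, exploiting that strong monomorphisms are completely understood here. By Proposition~\ref{PROP_Gmes_RegMon} and Corollary~\ref{COR_ortfactsyst}, a strong monomorphism in $\Gmes$ is, up to isomorphism, the inclusion of a pruned subtree, and in $\Games_{\A}$ it is a game embedding, hence by Proposition~\ref{PROP_morphproperties}(e) a subgame inclusion. So for a span $X\stackrel{m}{\leftarrow}A\stackrel{f}{\to}B$ with $m$ strongly monic I may assume $T_A\le T_X$ is a pruned subtree, $m$ its inclusion, and (in the game case) that $A$'s payoff set is $A_X\cap\Run(T_A)$. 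For the classifying object, fix a fresh symbol $\perp\notin\M(B)$ and let $B_\perp$ be the game tree whose moments are the $t\in T_B$ together with all $s^\smallfrown\seq{\perp:i<k}$ with $s\in T_B\cup\{\seq{\,}\}$ and $k\ge 1$; this grafts an infinite $\perp$-spine onto every moment (and onto the root), so $B_\perp$ is pruned, and for $B=\emptyset$ it reduces to $\{\perp\}^{<\omega}$. Let $i_B\colon B\to B_\perp$ be the inclusion.

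Next I would define $f_\perp\colon T_X\to B_\perp$ by a \emph{freeze-at-exit} rule. For $t\in T_X$ let $k=\max\set{j\le|t|:t\restrict j\in T_A}$ be its exit level, set $f_\perp(t)=f(t)$ if $t\in T_A$, and $f_\perp(t)=f(t\restrict k)^\smallfrown\seq{\perp:i<|t|-k}$ otherwise (and $f_\perp(t)=\seq{\perp:i<|t|}$ in the degenerate case $T_A=\emptyset$). A straightforward induction on $|t|$, using that both $f$ and the inclusion preserve length and truncation, shows that $f_\perp$ is chronological and that $f_\perp\circ m=i_B\circ f$. The point is that a run is allowed to descend into the bottom \emph{exactly at the first moment it leaves $T_A$}, so neighbouring runs land on neighbouring $\perp$-spines.

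For the $\Games_{\A}$ statement I would put $A_{B_\perp}=A_B\cup(\Run(B_\perp)\setminus\Run(T_B))$, declaring every branch that ever uses $\perp$ to be won by $\ali$. Then $A_{B_\perp}\cap\Run(T_B)=A_B$, so $i_B$ is a subgame inclusion and thus a strong monomorphism; and $f_\perp$ is an $\A$-morphism, since an $A_X$-run either stays inside $\Run(T_A)$, where it lies in $A$'s payoff and is carried by $\overline{f}$ into $A_B$ because $f$ is an $\A$-morphism, or it exits $T_A$ and is sent to a $\perp$-branch, again in $A_{B_\perp}$. It then remains to check the square is a pullback, which I would do via the description in Section~\ref{SUBSECeqpb}. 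As $i_B$ is monic with image the $\perp$-free moments, the underlying-tree pullback is $\mathrm{prune}\set{t\in T_X:f_\perp(t)\in T_B}=\mathrm{prune}(T_A)=T_A$, with projections the inclusion into $X$ and $f$ into $B$; and in the game case the pullback payoff is
\[
\overline{p_1}^{-1}[A_X]\cap\overline{p_2}^{-1}[A_B]=\bigl(A_X\cap\Run(T_A)\bigr)\cap\overline{f}^{-1}[A_B]=A_X\cap\Run(T_A),
\]
the last equality being once more exactly the $\A$-morphism property of $f$. Hence $(A,m,f)$ is the pullback, so $i_B$ is a weak classifier for strong partial maps into $B$; the $\Gmes$ statement is the payoff-free shadow of this argument.

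The main obstacle — and the reason the classifier can only be \emph{weak} — is that one may not send $X\setminus A$ to a single ``undefined'' point: under a chronological map this would violate truncation, and metrically it would fail $1$-Lipschitzness, since points of $X\setminus A$ may accumulate on $A$ (this is precisely the phenomenon driving the non-uniqueness argument in Proposition~\ref{PROP_noClassifiers}). The spine construction is engineered to avoid this, and the delicate points are therefore verifying that the gradual-descent map $f_\perp$ is well defined and chronological, and choosing $A_{B_\perp}$ so that $i_B$ remains an embedding while $f_\perp$ remains an $\A$-morphism simultaneously. Uniqueness of $f_\perp$ genuinely fails, consistent with the impossibility of a strict classifier established earlier.
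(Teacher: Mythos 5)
Your construction coincides with the paper's proof in every essential respect: the same spine-grafted classifying object (the paper's $T_\perp=\set{t^\smallfrown\seq{\ast_T:i<n}:t\in T,\,n<\omega}$), the same freeze-at-exit map (the paper defines it via $n_s=\max\set{n\le|s|:s\restrict n\in m[S]}$), the same payoff $A_\perp=A\cup\left(\Run(T_\perp)\setminus\Run(T)\right)$, and the same pullback verification, so the proposal is correct and takes essentially the same approach. Your explicit treatment of the degenerate cases $T_B=\emptyset$ and $T_A=\emptyset$ (grafting a spine at the root) and your spelled-out computation of the pullback payoff are minor refinements of details the paper's proof leaves implicit.
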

\begin{proof}
	For every game tree $T$ in $\Gmes$ we let $\ast_T$ be an element outside $\M(T)$ and put
	\[
	T_\perp = \set{t^\smallfrown \seq{\ast_T : i<n}:t\in T,n<\omega}.
	\]
	Being injective, the inclusion map $i_T\colon T\to T_{\perp}$ is trivially a strong monomorphism in $\Gmes$ (by Proposition \ref{PROP_Gmes_RegMon}).
	Given a span $\tilde{S}\stackrel{m}{\leftarrow} S\stackrel{f}{\to}T$ with a strong monomorphism $m$, 
	for every $s\in \tilde{S}$ we let $n_s=\max\{n\le |s|: s\restrict n\in m[S]\}$ and consider the map
	\begin{center}
		\begin{tikzcd}[row sep = 0em]
			\tilde{S} \arrow[r, "f_\perp"] &	T_\perp\\
			s \arrow[r, mapsto] &  	f(s\restrict n_s)^\smallfrown \seq{\ast_T:i<|s|-n_s }.
		\end{tikzcd}
	\end{center}
	Clearly, $f_\perp$ is chronological. Moreover, $f_\perp(m(s))=i_T(f(s))$ for every $s\in S$ and $m[S]=f_\perp^{-1}(i_T[T])$, so we obtain the pullback diagram
	\begin{center}
		\begin{tikzcd}[column sep=15mm, row sep=15mm]
			S\arrow[r, "f"] \arrow[d, "m"]&	T \arrow[d,"i_T"]\\
			\tilde{S} \arrow[r, "f_\perp"] & T_\perp.
		\end{tikzcd}
	\end{center}
	
	This completes the proof of the claim for $\Gmes$. Augmenting it to the level of games, for $G = (T,A)$ in $\Games_{\A}$ we consider $G_\perp = (T_\perp, A_\perp)$ with
	\[
	A_\perp = A\cup \set{\Run(T_\perp)\setminus \Run(T)}.
	\]
	In this way $i_T\colon T\to T_{\perp}$ becomes a game embedding, {\em i.e.}, a strong monomorphism in $\Games_{\A}$.
	Considering any span
	$(\tilde{S},\tilde{A})\stackrel{m}{\leftarrow} (S,\overline{m}^{-1}[\tilde{A}])\stackrel{f}{\to}(T,A)$ in $\Games_{\A}$ with $m$ injective, 
	the map $f_\perp: \tilde{S}\to T_\perp$ becomes an $\A$-morphism. 
	Indeed, given any $R\in \tilde{A}$, if $R\notin\Run(m[S])$, then $\overline{f_\perp}(R)\in A_\perp$, and if $R\in \Run(m[S])$, since $m$ is a game embedding, we find  $R'\in \Run(S)$ with  $R=\overline{m}(R')$. Because $f$ and $i_T$ are $\A$-morphisms, in this case $\overline{f_\perp}(R) = \overline{i_T}(\overline{f}(R'))\in A_\perp$ follows, which makes 
	$f_\perp$ an $\A$-morphism. One concludes the argument for $\Games_{\A}$ just like for $\Gmes$.
\end{proof}

\end{document}